\providecommand{\tabularnewline}{\\}
\numberwithin{equation}{section}
\numberwithin{figure}{section}
\theoremstyle{plain}
\newtheorem{thm}{\protect\theoremname}[section]
  \theoremstyle{plain}
  \newtheorem{prop}[thm]{\protect\propositionname}
  \theoremstyle{plain}
  \newtheorem{lem}[thm]{\protect\lemmaname}
  \theoremstyle{plain}
  \newtheorem{cor}[thm]{\protect\corollaryname}
 \theoremstyle{definition}
 \newtheorem*{defn*}{\protect\definitionname}
  \theoremstyle{remark}
  \newtheorem*{rem*}{\protect\remarkname}
  \theoremstyle{remark}
  \newtheorem{claim}[thm]{\protect\claimname}
  \providecommand{\claimname}{Claim}
  \providecommand{\corollaryname}{Corollary}
  \providecommand{\definitionname}{Definition}
  \providecommand{\lemmaname}{Lemma}
  \providecommand{\propositionname}{Proposition}
  \providecommand{\remarkname}{Remark}
\providecommand{\theoremname}{Theorem}
\begin{document}

\title{On the bad reduction of certain $U(2,1)$ Shimura varieties}

\author{Ehud de Shalit and Eyal Z. Goren}

\keywords{Picard surfaces, Shimura varieties, supersingular strata}

\subjclass[2000]{11G18, 14G35}

\address{Ehud de Shalit, Hebrew University of Jerusalem, Israel}

\address{ehud.deshalit@mail.huji.ac.il}

\address{Eyal Z. Goren, McGill University, Montréal, Canada}

\address{eyal.goren@mcgill.ca}

\maketitle
\global\long\def\Ver{{\rm Ver}}

\global\long\def\Hom{\text{Hom}}

\global\long\def\End{{\rm {\rm End}}}

\global\long\def\Fr{{\rm Fr}}

\global\long\def\KS{{\rm KS}}

\global\long\def\Lie{{\rm Lie}}

\global\long\def\Spec{{\rm Spec}}

\begin{center}
\textit{Dedicated to V. Kumar Murty on the occasion of his 60$^{th}$ birthday}
\end{center}

\begin{abstract}
Let $E$ be a quadratic imaginary field and let $p$ be a prime which
is inert in $E.$ We study three types of Picard modular surfaces in positive characteristic
$p$ and the morphisms between them. The first Picard surface, denoted $S$,
parametrizes triples $(A,\phi,\iota)$ comprised of an abelian
threefold $A$ with an action $\iota$ of the ring of integers $\mathcal{O}_{E}$,
and a principal polarization $\phi$. The second surface, $S_{0}(p)$, parametrizes, in addition,
a suitably restricted choice of a subgroup $H\subset A[p]$ of rank
$p^{2}$. The third Picard surface, $\widetilde{S}$, parametrizes
triples $(A,\psi,\iota)$ similar to those parametrized by $S$, but
where $\psi$ is a polarization of degree $p^{2}$. We study the components, singularities and naturally defined stratifications of these surfaces, and their behaviour under the morphisms. A particular role is played by a foliation we define on the blow-up of $S$ at its superspecial points. 
\end{abstract}

\tableofcontents{}

\section*{Introduction}

Let $E$ be a quadratic imaginary field and let $p$ be a prime which
is inert in $E.$ This paper is concerned with the detailed study
of three types of Picard modular surfaces in positive characteristic
$p$ and the morphisms between them. Deferring precise definitions
to the body of the paper, the first Picard surface, denoted $S$,
parametrizes triples $(A,\phi,\iota)$ comprised of a certain abelian
threefold $A$ with an action $\iota$ of the ring of integers $\mathcal{O}_{E}$,
and a principal polarization $\phi$. Unlike the other two, $S$ is
smooth. The second surface, $S_{0}(p)$, parametrizes, in addition,
a suitably restricted choice of a subgroup $H\subset A[p]$ of rank
$p^{2}$. The third Picard surface, $\widetilde{S}$, parametrizes
triples $(A,\psi,\iota)$ similar to those parametrized by $S$, but
where $\psi$ is a polarization of degree $p^{2}$. There are natural
morphisms providing us with a diagram \[ \xymatrix{& S_0(p)\ar[dr]^\pi\ar[dl]_{\tilde \pi} &\\ \tilde S && S.}\] 

From another perspective, there are three Shimura varieties associated
with the unitary group of $E$ of signature (2,1), having parahoric
level structure at $p.$ The above mentioned moduli spaces are the
special fibers at $p$ of the integral models of these Shimura varieties,
studied by Rapoport and Zink in \cite{Ra-Zi}.

Before describing the main results of this article, we provide some
background, context and motivation. Picard modular surfaces appear
in many places in the literature; the book by Langlands and Ramakrishnan
\cite{La-Ra} provides a strong motivation for their study as a test
case for the Langlands conjectures on modularity of $L$-functions,
as well as a guide to the literature at the time. The local structure
at $p$ of $S_{0}(p)$ and related moduli spaces was studied in Bellaïche's
thesis \cite{Bel}, and later in the work of Bültel-Wedhorn \cite{Bu-We}
and Koskivirta \cite{Kos}, where the authors applied it to lifting
problems of Picard modular forms, Galois representations, and congruence
relations for Hecke operators. However, the global structure of $S_{0}(p)$
and of the map $S_{0}(p)\to S$ remained opaque. Thus, one of our
original motivations was to make this global structure precise.

Unlike $S_{0}(p),$ there is little information in the literature
on $\widetilde{S}$, or in general on moduli spaces of abelian varieties
with non-separable polarizations. The main examples we are aware of
are \cite{Cri,dJ1,Nor,N-O}, and they tend to exhibit rather pathological
phenomena. It is desirable to have additional examples available,
and indeed $\widetilde{S}$, in contrast to loc. cit., has proven
to be extremely well-behaved.

Our main reason for studying the three Picard modular surfaces, was
however different. Motivated by questions on the canonical subgroup,
or by the search for a geometric proof of the congruence relation
(as in \cite{Bu-We,Kos}), it is desirable to have a surface parametrizing
tuples $(A,\phi,\iota,H)$, where $H$ is a finite flat subgroup scheme
which may reduce mod $p$ to the kernel of Frobenius. As this kernel
has rank $p^{3}$ and in characteristic $0$ the rank of a $p$-primary
$\mathcal{O}_{E}$-subgroup must be an even power of $p,$ such a
surface does not exist. To remedy the situation, one is forced to
consider a moduli space as above, but where $H$ is now of rank $p^{6}$.
In the context of modular curves this is akin to passing from $X_{0}(p)$
to $X_{0}(p^{2})$; a process which is, of course, unnecessary for
modular curves, but would be required for many Shimura varieties.

It turns out that it is beneficial to modify the moduli problem somewhat
and following \cite{dJ2} to consider a filtration of $H$ as part
of the datum. That is, (roughly) the following data: $(A,\phi,\iota,H_{0}\subseteq H)$,
where $(A,\phi,\iota,H_{0})$ is an object parametrized by $S_{0}(p)$
and $H$ is a suitable rank $p^{6}$ finite flat subgroup scheme.
We call this moduli problem $\mathcal{T}$, and one of our initial
observations is that 
\[
\mathcal{T}\cong S_{0}(p)\times_{\widetilde{S}}S_{0}(p).
\]
In characteristic $0,$ this surface is finite flat of degree $(p+1)(p^{3}+1)$
over $S,$ and represents the Hecke operator $T(p).$ This, therefore,
motivated both the introduction of $\widetilde{S}$ and the study
of the morphism $\widetilde{\pi}$. The study of the moduli space
$\mathcal{T}$ will be carried out in a subsequent paper. Nonetheless,
the foundations are laid down here.

While studying the three moduli spaces $S,S_{0}(p)$ and $\widetilde{S}$,
we discovered a new interesting phenomenon. The generic stratum of
$S$ in characteristic $p$ parametrizes $\mu$-ordinary abelian threefolds.
Although their $p$-divisible groups are all isomorphic, studying
their cotangent spaces we were able to distinguish in the tangent
space of $S$ a certain ``foliation'', amounting in this very simple
example to a line sub-bundle closed under the operation of raising
to power $p$ (see §\ref{tangent bundle}). The link between the cotangent
space of the universal abelian variety and that of $S$ is supplied
by the Kodaira-Spencer map. This foliation extends to the general
supersingular locus of $S,$ but fails to extend, in a way made precise,
to the superspecial points there. Moreover, we found two other ways
to characterize it: the first, as the foliation of ``unramified directions''
(in the sense of \cite{Ru-Sh}) for a map $\overline{\pi}:S_{0}(p)^{(p)}\to S$
derived from the map $\pi$ (Theorem \ref{unramified direction}).
The second, in terms of Moonen's generalized Serre-Tate coordinates
\cite{Mo} (Proposition \ref{Moonen}). Shimura curves embedded in
$S,$ as well as the supersingular curves in $S,$ are integral curves
of this foliation (Theorem \ref{IntCurves}). Does it have any other
global integral curves? We expect this new phenomenon to generalize
to other Shimura varieties of PEL type whose generic stratum is $\mu$-ordinary
but not ordinary; cf. our forthcoming paper \cite{dS-G3} where such a foliation is studied for unitary Shimura varieties of arbitrary signatures.

\subsection*{A summary of the results}

We now describe briefly the content of this paper. Chapter 1 reviews
the three Shimura varieties and their integral models. We explain
the precise relation between the moduli problem with parahoric level
structure as in \cite{Ra-Zi} and the Raynaud condition appearing
in \cite{Bel}. The last section reviews the embeddings of modular
curves and Shimura curves in the Picard modular surface. 

Chapter 2 deals with the Picard modular surface $S$, where the level
at $p$ is a hyperspecial maximal compact. The mod $p$ fiber is smooth,
and its stratification was studied by Vollaard in \cite{Vo}. It consists
of three strata. The dense open stratum $S_{\mu}$ parametrizes $\mu$-ordinary
abelian threefolds. Its complement $S_{ss}$ parametrizes supersingular
ones, and consists (at least when the tame level $N$ is large, depending
on $p$) of Fermat curves of degree $p+1,$ intersecting transversally
at their $\mathbb{F}_{p^{2}}$-rational points. These intersection
points support superspecial abelian threefolds (isomorphic, not only
isogenous, to a product of supersingular elliptic curves), and constitute
the third stratum $S_{ssp}.$ The non-singular locus of the curve
$S_{ss}$ supports supersingular, but not superspecial, abelian threefolds,
and is denoted $S_{gss}.$ This is the intermediate stratum. The number
of its irreducible components was determined in \cite{dS-G1} using
intersection theory on $S$ and a secondary Hasse invariant constructed
there. It turns out to be related to the second Chern number of $S,$
and via a result of Holzapfel, expressible as an $L$-value. Our contribution
to the study of $S$ in the present paper is: (a) We introduce the
foliation $TS^{+}$ in the tangent bundle of $S$, outside $S_{ssp}$,
and prove the results to which we alluded above; (b) We introduce
the blow-up $S^{\#}$ of $S$ at $S_{ssp}$ and give it a modular
interpretation. It has the advantage that the irreducible components
of $S_{ss}$ become, after blowing up, disjoint non-singular Fermat
curves (even when $N$ is small), i.e. all their intersections, including
self-intersections, are resolved. The exceptional divisor at every
blown-up point $x$ is a projective line $E_{x}$ defined over $\mathbb{F}_{p^{2}}.$
The components of $S_{ss}$ intersect $E_{x}$ at points $\zeta$
satisfying $\zeta^{p+1}=-1.$ Embedded Shimura curves, on the other
hand, intersect $E_{x}$ at $\mathbb{F}_{p^{2}}$-rational points
satisfying $\zeta^{p+1}\neq-1.$ The proofs of these results will
have to wait until Theorem \ref{S_0(p)-ssp} and §\ref{embeddedcurvesintersection}.

Chapter 3 is based on chapter III of Bellaïche's thesis \cite{Bel}
and describes the local models for the completed local rings of the
three Shimura varieties, at any point of the special fiber. We are
nevertheless interested not only in the completed local rings \emph{per
se,} but in the maps between them. The theory of local models yields
these maps only modulo $p$th powers of the maximal ideal. This is
evident already in the case of the germ of the map $X_{0}(p)\to X$
between two modular curves, with and without $\Gamma_{0}(p)$-level
structure, at a supersingular point. In this ``baby case'' the map
between the local models is
\[
k[[x]]\hookrightarrow k[[x,y]]/(xy),
\]
which is not even flat. The correct map, however, is known ever since
Kronecker to be
\[
k[[x]]\hookrightarrow k[[x,y]]/((x^{p}-y)(x-y^{p})),
\]
which is finite flat of degree $p+1$. Similar but more serious problems
arise when we study the maps between the completed local rings of
our three Picard surfaces. Luckily, a general theorem of Rudakov and
Shafarevich \cite{Ru-Sh} on the local structure of inseparable maps
between smooth surfaces, allows us to give a partial answer to our
question. In essence, it allows us to determine the maps between the
completed local rings of the \emph{analytic branches} through any
given point. Once again, results of this type have to await the study
of $S_{0}(p)$ and $\widetilde{S}$ in subsequent chapters, where
we relate them also to the foliation $TS^{+}$ mentioned above.

Chapter $4$ is the longest, and deals with the Picard surface $S_{0}(p)$
of Iwahori level structure, and the map $\pi$ from $S_{0}(p)$ to
$S.$ We caution that $\pi$ is neither finite nor flat. The special
fiber of $S_{0}(p)$ consists of vertical and horizontal components
intersecting transversally. There are two horizontal components, multiplicative
and étale. The multiplicative component maps under $\pi$ isomorphically
onto $S^{\#}$. The map from the étale component is purely inseparable
of degree $p^{3}$ and factors through Frobenius. The factored map
$\overline{\pi}_{et}$ is inseparable of degree $p,$ and we show
that its ``field of unramified directions'' is just the foliation
$TS^{+}$, which was defined before intrinsically on $S.$ The vertical
components of $\pi$ are $\mathbb{P}^{1}$-bundles over Fermat curves,
which we call the ``supersingular screens''. Above each superspecial
point $x\in S_{ssp}$ lies in $S_{0}(p)$ a ``comb'', whose base
$F_{x}$ is a $\mathbb{P}^{1}$ along which the two horizontal sheets
of $S_{0}(p)$ meet, and whose ``teeth'' $G_{x}[\zeta]$ belong
to the supersingular screens. For a more precise description we refer
to Theorems \ref{S_0(p)-1}, \ref{S_0(p)-gss} and \ref{S_0(p)-ssp}
and their corollaries.

Chapter 5 deals with $\widetilde{S}$ and the map $\widetilde{\pi}$.
Unlike $\pi,$ this map is finite flat of degree $p+1.$ Here again
there are horizontal and vertical components. This time $\widetilde{\pi}$
is an isomorphism on the étale component of $S_{0}(p)$ and purely
inseparable of degree $p$ on the multiplicative component. The maps
$\pi$ and $\widetilde{\pi}$ allow us to go back and forth between
$S$ and $\widetilde{S}$ and produce maps that we are able to analyze
easily in light of the modular interpretation. On the vertical components
of $S_{0}(p)$ (the supersingular screens) the map $\widetilde{\pi}$
is pretty intricate. We collect some results on it in the last section
of Chapter 5, but leave some other questions unanswered.

The appendix contains some ugly but unavoidable computations with
Dieudonné modules, that would have interrupted the presentation, had
they been left where needed.

\medskip{}

Deformation theory of $p$-divisible groups clearly is a central tool
in this work. Unfortunately, there are at least three traditional
approaches to it: Grothendieck's theory of crystals, contravariant
Dieudonné theory, and covariant Dieudonné-Cartier theory (not counting
displays, $p$-typical curves etc.). We made every effort to remain
faithful to the language and notation used by the various references
cited by us. This resulted, however, in a mixture of the three approaches.
A very useful guide, and a dictionary between the various languages,
can be found in the appendix to \cite{C-C-O}.

\bigskip{}

\textbf{Notation}
\begin{itemize}
\item If $A$ is an abelian scheme over a base $S,$ $A^{t}$ denotes its
dual abelian scheme.
\item If $H$ is a finite flat group scheme over a base $S,$ $H^{D}$ denotes
its Cartier dual.
\item If $S$ is a scheme over $\mathbb{F}_{p}$ we denote by $\Phi_{S}:S\to S$
the absolute Frobenius morphism of $S.$ If $X\to S$ is any scheme,
we denote by $X^{(p)/S},$ or simply by $X^{(p)},$ if no confusion
may arise, the fiber product
\[
X^{(p)}=S\times_{\Phi_{S},S}X
\]
and by $Fr_{X/S}:X\to X^{(p)}$ the unique morphism over $S$ such
that
\[
(\Phi_{S}\times1)\circ Fr_{X/S}=\Phi_{X}.
\]
\item If $A$ is an abelian scheme over $S$ then $\Fr=Fr_{A/S}:A\to A^{(p)}$
is an isogeny (the Frobenius of $A$). The Verschiebung $\Ver:A^{(p)}\to A$
is the isogeny dual to the Frobenius of $A^{t}$.
\item If $\lambda:A\to A^{t}$ is a polarization of an abelian scheme $A$
and $K=\ker\lambda$, we denote by $e_{\lambda}:K\times K\to\mathbb{G}_{m}$
the Mumford pairing on $K$. If $\lambda=n\phi$ where $\phi$ is
a principal polarization, then $e_{\lambda}$ is Weil's $e_{n}$-pairing
associated with $\phi.$
\item $E$ is a quadratic imaginary field, $\mathcal{O}_{E}$ its ring of
integers, $p$ a prime that remains inert in $E,$ $\kappa=\mathcal{O}_{E}/p\mathcal{O}_{E}$
and $\mathcal{O}_{p}$ is the completion of $\mathcal{O}_{E}$ at
$p.$ We write $\sigma$ for the non-trivial automorphism of $E,$
extended to $\mathcal{O}_{p}.$
\item If $R$ is an $\mathcal{O}_{p}$-algebra we denote by $\Sigma$ the
given homomorphism $\mathcal{O}_{p}\to R$ and $\overline{\Sigma}=\Sigma\circ\sigma.$ 
\item If $G$ is a commutative group scheme over a base $S$ we denote by
$\mathcal{O}_{E}\otimes G$ the $S$-group scheme representing the
functor $S'\mapsto\mathcal{O}_{E}\otimes_{\mathbb{Z}}G(S')$. It has
an obvious $\mathcal{O}_{E}$ action.
\item If $X$ is a non-singular algebraic variety over a field $k$ we denote
its tangent bundle by $TX$. The fiber of $TX$ at $x\in X(k)$ (the
tangent space at $x$) will be denoted by $T_{x}X=TX|_{x}.$
\item If $X$ is any scheme we denote by $X^{red}$ the same underlying
space, equipped with the reduced induced subscheme structure.
\end{itemize}
\textbf{Acknowledgments}: We thank Ben Moonen and George Pappas for
helpful discussions. We are grateful to the research institutes IHES,
Bures-sur-Yvette and MFO, Oberwolfach, where part of the research
on this paper has been done, for their hospitality.

\

\

\section{Three integral models with Parahoric level structure}

\subsection{Shimura varieties}

Let $E$ be a quadratic imaginary field. Let $\Lambda=\mathcal{O}_{E}^{3}$,
equipped with the hermitian form

\[
(u,v)=\,^{t}\bar{u}\left(\begin{array}{ccc}
 &  & 1\\
 & 1\\
1
\end{array}\right)v,
\]
which is of signature $(2,1)$ over $\mathbb{R}$. We denote by $e_{0},e_{1},e_{2}$
the three standard basis vectors. Let $\boldsymbol{G}$ be the group
of unitary similtudes $GU(\Lambda,(,))$, regarded as a linear algebraic
group over $\mathcal{\mathbb{Z}}$. The Shimura varieties in the title
will be associated with $\boldsymbol{G}$. More precisely, $G_{\infty}=\boldsymbol{G}(\mathbb{R})$
acts by projective linear transformations on $\mathbb{P}^{2}(\mathbb{C})$.
The bounded symmetric domain
\[
\mathscr{D}=\left\{ (z_{0}:z_{1}:z_{2})|\,\,\overline{z}_{0}z_{2}+\overline{z}_{1}z_{1}+\overline{z}_{2}z_{0}<0\right\} ,
\]
biholomorphic to the unit ball in $\mathbb{C}^{2}$, is preserved
by $G_{\infty}$, which acts on it transitively. Denote by $K_{\infty}$
the stabilizer of the ``center'' $(-1:0:1).$ For any compact open
subgroup $K_{f}\subset\boldsymbol{G}(\mathbb{A}_{f})$ we put $K=K_{\infty}K_{f}\subset\boldsymbol{G}(\mathbb{A})$. 

The Shimura variety $S_{K}$ is a quasi-projective variety over $E$
whose complex points are identified, as a complex manifold, with
\[
S_{K}(\mathbb{C})\simeq\boldsymbol{G}(\mathbb{Q})\setminus\boldsymbol{G}(\mathbb{A})/K\simeq\boldsymbol{G}(\mathbb{Q})\setminus[\mathscr{D}\times\boldsymbol{G}(\mathbb{A}_{f})/K_{f}].
\]

Fix an odd prime $p$ which is inert in $E,$ and let $N\ge3$ be
an integer such that $p\nmid N$. Let $\kappa=\mathcal{O}_{E}/p\mathcal{O}_{E}$
and denote by $\mathcal{O}_{p}$ the ring of integers in the completion
$E_{p}.$ Assume that $K_{f}=K_{p}K^{p}$ where $K^{p}\subset\boldsymbol{G}(\mathbb{A}_{f}^{p})$
is the principal level subgroup of level $N,$ and $K_{p}\subset G_{p}=\boldsymbol{G}(\mathbb{Q}_{p})$. 

In this paper we are interested in three choices of $K_{p}.$ As $p$
is inert in $E,$ $G_{p}$ is non-split, and its semi-simple rank
is 1. Its Bruhat-Tits building is a biregular tree of bi-degree $(p^{3}+1,p+1)$.
The vertices of degree $p^{3}+1$ are stabilized by hyperspecial maximal
compact subgroups of $G_{p}$, which are all conjugate to $K_{p}^{0}=\boldsymbol{G}(\mathbb{Z}_{p}).$
This subgroup is the stabilizer of the standard self-dual lattice
\begin{equation}
\Lambda_{0}=\Lambda\otimes\mathbb{Z}_{p}=\left\langle e_{0},e_{1},e_{2}\right\rangle _{\mathcal{O}_{p}}.\label{eq:Lambda_0}
\end{equation}
The vertices of degree $p+1$ are stabilized by special, but not hyperspecial,
maximal compact subgroups, which are all conjugate to the stabilizer
$\widetilde{K}_{p}^{0}$ of the lattice 
\[
\Lambda_{1}=\left\langle pe_{0},e_{1},e_{2}\right\rangle _{\mathcal{O}_{p}}.
\]
 Note that this is also the stabilizer of $p^{-1}\Lambda_{2}$, the
\emph{dual} lattice with respect to the hermitian pairing, where
\[
\Lambda_{2}=\left\langle pe_{0},pe_{1},e_{2}\right\rangle _{\mathcal{O}_{p}}.
\]
 We call the vertices of degree $p^{3}+1$ vertices of type (hs) and
the ones of degree $p+1$ of type (s). The vertices $v_{0}$ and $\widetilde{v}_{0}$
corresponding to $K_{p}^{0}$ and $\widetilde{K}_{p}^{0}$ are called
the \emph{standard }vertices of the respective types. The oriented
edge $(v_{0},\widetilde{v}_{0})$ is then stabilized by the standard
Iwahori subgroup
\[
K_{p}^{1}=K_{p}^{0}\cap\widetilde{K}_{p}^{0}.
\]

We denote by $S$ (resp. $\,\widetilde{S}$, resp. $\,S_{0}(p)$)
the Shimura variety over $E$ of level $K_{f}=K_{p}K^{p},$ where
$K^{p}$ is as above (of full tame level $N$) and $K_{p}=K_{p}^{0}$
(resp. $\,\widetilde{K}_{p}^{0},$ resp. $\,K_{p}^{1}$). The following
result is well-known.
\begin{prop}
\label{maps}The Shimura varieties S, $\widetilde{S}$ and $S_{0}(p)$
are non-singular quasi-projective surfaces over $E$ and the natural
maps
\[
\pi:S_{0}(p)\to S,\,\,\,\widetilde{\pi}:S_{0}(p)\to\widetilde{S}
\]
are finite étale of degrees $p^{3}+1$ and $p+1$ respectively.
\end{prop}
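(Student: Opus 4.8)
The plan is to prove each assertion of Proposition~\ref{maps} in turn, working throughout over the reflex field $E$. The statements are essentially formal consequences of the general theory of Shimura varieties attached to the reductive group $\boldsymbol{G}=GU(\Lambda,(,))$ together with the group-theoretic description of the three level subgroups $K_p^0$, $\widetilde{K}_p^0$ and $K_p^1$ given above. First I would recall that for a fixed Shimura datum $(\boldsymbol{G},\mathscr{D})$ and a compact open $K_f\subset\boldsymbol{G}(\mathbb{A}_f)$ which is \emph{neat} (automatic here since the tame level $N\ge 3$ forces $K^p$ to be torsion-free, hence $K_f$ acts freely), the double-coset space $S_{K}(\mathbb{C})\simeq\boldsymbol{G}(\mathbb{Q})\setminus[\mathscr{D}\times\boldsymbol{G}(\mathbb{A}_f)/K_f]$ is a complex manifold, and the theory of Baily--Borel and Shimura's algebraicity results provide a canonical model which is a smooth quasi-projective variety over $E$. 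Since $\mathscr{D}$ is a bounded symmetric domain of dimension $2$, each connected component is a quotient of $\mathscr{D}$ by a torsion-free arithmetic group acting freely and properly discontinuously, so the quotient is smooth of dimension $2$; this gives non-singularity and the surface claim simultaneously for all three varieties.

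Next I would address the maps and their degrees. Since the three level subgroups satisfy $K_p^1=K_p^0\cap\widetilde{K}_p^0$ with $K_p^1\subset K_p^0$ and $K_p^1\subset\widetilde{K}_p^0$, the corresponding inclusions of full level subgroups $K_f^{(1)}\subset K_f^{(0)}$ and $K_f^{(1)}\subset\widetilde{K}_f^{(0)}$ induce the natural covering maps $\pi$ and $\widetilde{\pi}$ on the double-coset descriptions, simply by relaxing the level at $p$. Because $K^p$ is common to all three and neat, these maps are finite \'etale coverings of smooth varieties: at the level of complex points they are quotient maps between free quotients of the same $\mathscr{D}\times\boldsymbol{G}(\mathbb{A}_f)$, hence local isomorphisms, and the covering group is finite; by the canonical-model formalism the maps descend to finite \'etale morphisms over $E$. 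The degree of each map equals the index of the smaller level in the larger, which by the common tame part reduces to the local index at $p$, namely $[K_p^0:K_p^1]$ for $\pi$ and $[\widetilde{K}_p^0:K_p^1]$ for $\widetilde{\pi}$.

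It therefore remains to compute these two local indices, and this is the one genuinely computational step. The cleanest route is through the Bruhat--Tits building of $G_p=\boldsymbol{G}(\mathbb{Q}_p)$, which is described in the excerpt as a biregular tree of bidegree $(p^3+1,\,p+1)$: the hyperspecial vertex $v_0$ has valence $p^3+1$ and the special vertex $\widetilde{v}_0$ has valence $p+1$. The index $[K_p^0:K_p^0\cap\widetilde{K}_p^0]$ counts the number of vertices of type (s) adjacent to the (hs)-vertex $v_0$, that is, the vertices of $G_p/\widetilde{K}_p^0$ fixed under $K_p^0$ lying in the $K_p^0$-orbit of $\widetilde{v}_0$; this equals the valence of $v_0$, namely $p^3+1$. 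Symmetrically, $[\widetilde{K}_p^0:K_p^0\cap\widetilde{K}_p^0]$ equals the valence of $\widetilde{v}_0$, namely $p+1$. Concretely one verifies these counts by letting $K_p^0$ act on the set of lattices in the $\widetilde{K}_p^0$-homothety class of $\Lambda_1$ that are neighbours of $\Lambda_0$, and enumerating the resulting $\kappa$-rational isotropic configurations in the reduction $\Lambda_0/p\Lambda_0$; the stabilizer computation reproduces the two valences.

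The step I expect to be the main obstacle is this last index computation, specifically making the correspondence between cosets $K_p^0/K_p^1$ (resp. $\widetilde{K}_p^0/K_p^1$) and adjacent vertices of the tree fully rigorous, including checking that $G_p$ acts transitively on each type of vertex so that the orbit-stabilizer argument applies and that the two valences are indeed $p^3+1$ and $p+1$ rather than some related quantity. Once the building is identified and its bidegree established, everything else is formal; and since the proposition is labelled well-known, I would present the building computation succinctly and cite the general theory (e.g.\ \cite{Ra-Zi}) for the existence of the canonical models and the \'etale descent of the covering maps to $E$.
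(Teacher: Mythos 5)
The paper offers no proof of this proposition at all (it is stated as well-known), and your argument is exactly the standard one the paper implicitly invokes: canonical models plus neatness of the level (here $N\ge 3$ makes $K_f$ neat and kills $Z(\mathbb{Q})\cap K_f$, so the covering degree is genuinely the local index $[K_p^0:K_p^1]$, resp. $[\widetilde{K}_p^0:K_p^1]$) together with the Bruhat--Tits tree of bidegree $(p^{3}+1,\,p+1)$. Your outline is correct, and the one step you flag is indeed standard: $K_p^0$ (resp. $\widetilde{K}_p^0$) acts transitively on the neighbours of its fixed vertex because the parahoric surjects onto the $\mathbb{F}_p$-points of the reductive quotient of its special fibre, which acts transitively (Witt's theorem) on the isotropic lines in $\Lambda_0/p\Lambda_0$ --- of which there are $p^{3}+1$ --- resp. on the corresponding $p+1$ configurations at the special vertex, so orbit--stabilizer gives the two indices.
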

We denote by $\mathscr{S}$ (resp. $\,\mathscr{\widetilde{S}},$ resp.
$\,\mathscr{S}_{0}(p)$) the integral models of these varieties over
$\mathcal{O}_{p}$ constructed in chapter 6 of \cite{Ra-Zi}. They
are of relative dimension $2,$ $\mathscr{S}$ is smooth over $\mathcal{O}_{p}$,
but the other two are not. The relative surface $\mathscr{S}$ is
the integral model of the Picard modular surface which has been studied
in detail by Vollaard \cite{Vo} §§4-6. See \cite{dS-G1} for related
results. The surface $\mathscr{S}_{0}(p)$ has been studied to some
extent in Bellaïche's thesis \cite{Bel}. Previous to this paper,
little was known about $\mathscr{\widetilde{S}}$, apart from the
general facts that follow from \cite{Ra-Zi}. We review these three
integral models in the next section.

From a general theorem of G\"ortz \cite{Goertz}, or from the computations
of the local models cited in §\ref{local rings}, it follows that
all three integral models are \emph{flat} over $\mathcal{O}_{p}$,
and their \emph{special fibers are reduced}. As we shall later show,
they are also \emph{regular}.

\subsection{The moduli problems}

\subsubsection{\label{Raynaud}The Raynaud condition}

Let $R$ be a commutative $\mathcal{O}_{p}$-algebra and $H$ a finite
flat group scheme over $R$ of rank $p^{2}.$ Assume that we are given
a ring homomorphism $\iota:\mathcal{O}_{E}\rightarrow\End_{R}(H)$,
and that $H$ is killed by $p,$ or, equivalently, $\iota$ factors
through the field $\kappa=\mathcal{O}_{E}/p\mathcal{O}_{E}.$ Locally
on $Spec(R),$ $\mathcal{O}(H)=A$ is free of rank $p^{2}$; the zero section of $H$ is given by an $R$-homomorphism $\epsilon:A\rightarrow R$
whose kernel $I,$ the augmentation ideal, is free of rank $p^{2}-1.$
Letting $a\in\kappa^{\times}$ act on $A$ via $\iota(a)^{*},$ this
becomes a group action, which preserves $I$. Let $\omega:\kappa^{\times}\to\mathcal{O}_{p}^{\times}\to R^{\times}$
be the Teichmüller character, and for $1\le i\le p^{2}-1$ let
\[
I^{(i)}=\{f\in I|\,\forall a\in\kappa^{\times},\,\iota(a)^{*}(f)=\omega^{i}(a)f\}.
\]
Thanks to the fact that $p^{2}-1$ is invertible in $R$, these are
distinct $R$-submodules, and $I$ is their direct sum. Following
\cite{Bel,Ray}, we call $H$ \emph{Raynaud }if each $I^{(i)}$ is
free of rank 1 over $R$. The following facts are easily checked.
\begin{itemize}
\item Let $R\to R'$ be any base change. Then if $H$ is Raynaud, so is
$H\times_{Spec(R)}Spec(R')$. 
\item The converse holds if $Spec(R)$ is connected. In particular, it is
enough to check then the Raynaud condition at one geometric point.
\item The constant group scheme $\mathcal{O}_{E}\otimes\mathbb{Z}/p\mathbb{Z}$
and its dual $\mathcal{O}_{E}\otimes\mu_{p}$ are Raynaud.
\end{itemize}
It follows from the three properties that étale and multiplicative
(dual to étale) group schemes are automatically Raynaud. 

Assume now that $R=k$ is a perfect field containing $\kappa.$ Let
$M=M(H)$ be the covariant Dieudonné module\footnote{We adhere to the conventions of \cite{C-C-O}, Appendix B.3. Our $M(H)$
is denoted there $M_{*}(H).$ $F$ and $V$ can be regarded also as
$\sigma$ or $\sigma^{-1}$-linear maps on $M$. Recall that $V$
is induced by $\Fr:H\to H^{(p)}$ and $F$ is induced by $\Ver:H^{(p)}\to H.$} of $H.$ Since $H$ is killed by $p,$ $M$ is a $2$-dimensional
vector space over $k,$ equipped with linear maps
\[
F\colon M^{(p)}\to M,\,\,V\colon M\to M^{(p)},
\]
where $M^{(p)}=k\otimes_{\sigma,k}M$ and $\sigma(x)=x^{p}$ is the
Frobenius on $k.$ The action of $\kappa$ on $H$ induces an action
of $\kappa$ on $M$; we let $M(\Sigma)$ be the subspace on which
$\kappa$ acts through the natural embedding $\Sigma:\kappa\hookrightarrow k$,
and $M(\overline{\Sigma})$ the subspace on which it acts via $\overline{\Sigma}=\sigma\circ\Sigma.$
Then $M=M(\Sigma)\oplus M(\overline{\Sigma}).$ Note that $(M^{(p)})(\Sigma)=(M(\overline{\Sigma}))^{(p)}$
and vice versa. We call $M$ \emph{balanced} if both $M(\Sigma)$
and $M(\overline{\Sigma})$ are $1$-dimensional.
\begin{lem}
$H$ is Raynaud if and only if $M(H)$ is balanced.
\end{lem}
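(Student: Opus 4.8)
The plan is to read both conditions off the explicit structure of $H$, exploiting that the $\kappa^{\times}$-action makes $H$ an $\mathbb{F}_{p^{2}}$-vector space scheme of rank one in the sense of Raynaud. First I record the two structural inputs. Viewing $F,V$ as semilinear endomorphisms of $M$ (as in the footnote), compatibility of $F,V$ with the operators $\iota(a)$ together with their $\sigma^{\pm1}$-semilinearity shows that both interchange the summands: $F,V\colon M(\Sigma)\to M(\overline{\Sigma})$ and $M(\overline{\Sigma})\to M(\Sigma)$ (for $m\in M(\Sigma)$ one has $V(\Sigma(a)m)=\Sigma(a)^{p}Vm=\overline{\Sigma}(a)Vm$, forcing $Vm\in M(\overline{\Sigma})$). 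On the scheme side, $\kappa$ acts $k$-linearly on the cotangent space $\omega_{H}=I/I^{2}$, which is therefore a module over $k\otimes_{\mathbb{F}_{p}}\kappa\cong k\times k$ and carries only the two fundamental characters, $\omega^{1}$ on the $\Sigma$-part and $\omega^{p}$ on the $\overline{\Sigma}$-part (the Teichm\"uller lift reduces to the identity, and $\omega^{p}(a)\equiv\overline{\Sigma}(a)$).

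For the implication ``Raynaud $\Rightarrow$ balanced'' I would argue by contraposition. If $M$ is not balanced, say $M=M(\Sigma)$ is two-dimensional and $M(\overline{\Sigma})=0$, then since $F$ and $V$ interchange the summands they both vanish, so $H\cong\alpha_{p}\times\alpha_{p}$ with $\kappa$ acting through the single embedding $\Sigma$. Then $\mathcal{O}(H)=k[x,y]/(x^{p},y^{p})$, the monomials $x^{i}y^{j}$ with $(i,j)\neq(0,0)$, $0\le i,j\le p-1$, form a basis of $I$, and each lies in the isotypic component determined by its total degree $i+j$. The multiplicity of a given weight is $\#\{(i,j):i+j=d\}$, which already equals $2$ for $d=1$; hence some $I^{(i)}$ is not free of rank one and $H$ is not Raynaud. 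The case $M=M(\overline{\Sigma})$ is identical.

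For ``balanced $\Rightarrow$ Raynaud'' I would invoke Raynaud's structure theorem \cite{Ray}. A balanced $M$ has $M(\Sigma)$ and $M(\overline{\Sigma})$ one-dimensional, so $F,V$ are prescribed by scalars; matching this datum with Raynaud's classification of $\mathbb{F}_{p^{2}}$-module schemes of rank one realizes $H$ in normal form $\mathcal{O}(H)=k[\theta_{0},\theta_{1}]/(\theta_{0}^{p}-c_{0}\theta_{1},\,\theta_{1}^{p}-c_{1}\theta_{0})$, with $\kappa^{\times}$ acting on $\theta_{0},\theta_{1}$ through the two \emph{distinct} fundamental characters $\omega^{1},\omega^{p}$. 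These relations are homogeneous for the grading, and the $p^{2}$ monomials $\theta_{0}^{a_{0}}\theta_{1}^{a_{1}}$ with $0\le a_{0},a_{1}\le p-1$ form a $k$-basis of $\mathcal{O}(H)$. By uniqueness of base-$p$ expansions the weight $\omega^{a_{0}+pa_{1}}$ runs exactly once through all characters $\omega^{i}$, $1\le i\le p^{2}-1$, as $(a_{0},a_{1})$ ranges over this set minus the origin; hence every $I^{(i)}$ is one-dimensional and $H$ is Raynaud.

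The hard part is the bridge in the last paragraph: passing from the balanced Dieudonn\'e module to the normal form, i.e. the dictionary sending $(F,V)$ on $M(\Sigma)\oplus M(\overline{\Sigma})$ to Raynaud's data $(c_{i},d_{i})$ and, crucially, certifying that the two cotangent generators carry the two \emph{distinct} fundamental characters rather than a repeated one. This is precisely the Dieudonn\'e-module computation of the type deferred to the appendix (compare the conventions of \cite{C-C-O} and the treatment of the Raynaud condition in \cite{Bel}); once it is in place, the weight count is purely combinatorial and both directions follow.
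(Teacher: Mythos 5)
Your contrapositive argument for ``Raynaud $\Rightarrow$ balanced'' is correct and complete: the equivariance computation showing that $F$ and $V$ interchange $M(\Sigma)$ and $M(\overline{\Sigma})$ (using $\sigma^{\pm1}=\sigma$ on $\kappa$) does force $F=V=0$ when $M$ is isotypic, hence $H\cong\alpha_{p}\times\alpha_{p}$ with isotypic cotangent space, and the multiplicity-two weight count is valid (note you need $i+j\le 2p-2<p^{2}-1$ so that equal weights mean equal total degree, and you need to lift a $\kappa^{\times}$-eigenbasis of $I/I^{2}$ to coordinates $x,y$, which is possible since $p^{2}-1$ is invertible). This is essentially the isotypical subcase of the paper's local--local analysis, reached from the module side rather than the scheme side.

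The converse direction, however, has a genuine gap, and you located it yourself: the ``bridge'' from a balanced $(M,F,V)$ to Raynaud's normal form is asserted, not proved. Appealing to Raynaud's structure theorem directly for $H$ would be circular, since its hypothesis is exactly the Raynaud condition (each $I^{(i)}$ invertible); the non-circular route is to classify balanced modules directly (write $Ve_{0}=v_{0}e_{1}$, $Ve_{1}=v_{1}e_{0}$, $Fe_{0}=f_{0}e_{1}$, $Fe_{1}=f_{1}e_{0}$, impose $v_{0}f_{1}=v_{1}f_{0}=0$ from $FV=VF=p=0$, and normalize the scalars to $0$ or $1$ over the algebraically closed $k$), compute the Dieudonn\'e modules of the nine normal forms $k[X,Y]/(X^{p}-b_{0}Y,\,Y^{p}-b_{1}X)$, and verify that every balanced class is hit, whence $H$ is isomorphic to a normal form by full faithfulness of Dieudonn\'e theory. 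That verification is short but it is nowhere in your write-up, and your deferral points to the paper's appendix, which concerns gss Dieudonn\'e modules and does not contain it. The paper sidesteps the classification entirely by a scheme-theoretic dichotomy in the local--local case: either $\mathcal{O}(H)\cong k[X]/(X^{p^{2}})$, in which case \emph{both} conditions hold automatically ($\kappa^{\times}$ acts on $I^{i}/I^{i+1}$ by $\omega^{i}$, resp.\ $\omega^{pi}$, so each character occurs once; and $M(H)$ is an extension of $M(H_{1})^{(p)}$ by $M(H_{1})$ with $H_{1}=\ker\Fr$ of rank $p$, so is balanced), so this case contributes nothing to the equivalence; or $H$ is killed by $\Fr$, where $V=0$, $\Lie(H)=M[V]=M$, and balancedness of $M$ is equivalent to both fundamental characters occurring in $I/I^{2}$, after which your base-$p$ weight count on $\theta_{0}^{a_{0}}\theta_{1}^{a_{1}}$ finishes the proof. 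Inserting that dichotomy (together with the observation, already available from the base-change bullets, that \'etale and multiplicative $H$ are automatically Raynaud and balanced) closes your gap without ever invoking the classification.
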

\begin{proof}
We may assume that $k$ is algebraically closed, as both conditions
are invariant under passage to an algebraic closure. If $H$ is étale,
it is constant, and must then be isomorphic, with the $\mathcal{O}_{E}$
action, to $\mathcal{O}_{E}\otimes\mathbb{Z}/p\mathbb{Z},$ whose
Dieudonné module is evidently balanced. Similarly, if $H$ is multiplicative. 

There remains the local\textendash local case. As a \emph{scheme,
}stripped of the group structure, $H$ is then either (i) $Spec(k[X]/(X^{p^{2}}))$
or (ii) $Spec(k[X,Y]/(X^{p},Y^{p})),$ where the second case occurs
if and only if $H$ is killed by the Frobenius morphism $\Fr\colon H\to H^{(p)}$.
Since $I$ is of codimension 1 and, in the local case, also nilpotent,
it coincides with the maximal ideal of $A=\mathcal{O}(H).$ The cotangent
space at the origin, $I/I^{2}$, is then $k\overline{X}$ in case
(i) and $k\overline{X}\oplus k\overline{Y}$ in case (ii).

In case (i) $\kappa$ may act on the one-dimesional $I/I^{2}$ by
$\Sigma$ or $\overline{\Sigma}$, and so does the group $\kappa^{\times}$
act. Either way, $\kappa^{\times}$ acts on $I^{i}/I^{i+1}$ ($1\le i\le p^{2}-1)$
via $\Sigma^{i}$ (or $\overline{\Sigma}^{i})$, so every character
$\omega^{i}:\kappa^{\times}\to k^{\times}$ must occur in $I$ with
multiplicity 1, and $H$ is automatically Raynaud. But in case (i)
we also have an exact sequence of finite flat $\mathcal{O}_{E}$-group
schemes
\[
0\to H_{1}\to H\overset{\Fr}{\to}H_{1}^{(p)}\to0.
\]
Here $H_{1}=\ker(\Fr\colon H\to H^{(p)})$ is a subgroup scheme of
rank $p,$ and $H_{1}^{(p)}$ is its image. It follows that in case
(i) $M(H)$ is an extension of $M(H_{1})^{(p)}$ by $M(H_{1}),$ so
is automatically balanced.

Case (ii) is the only case where the ``balanced'' condition may
fail. In this case $\Fr$ annihilates $H$ so $V=0$ on $M=M(H)$
and
\[
\Lie(H)=M[V]=M
\]
(see \cite{C-C-O}, B.3.5.6-3.5.7). We find that $M$ is balanced
if and only if $\Lie(H),$ equivalently its dual $I/I^{2}$, is balanced.
If this is the case, i.e. both $\Sigma$ and $\overline{\Sigma}$
occur in $I/I^{2}$, we may choose the variables $X$ and $Y$ so
that $\kappa^{\times}$ acts on $X$ via $\omega$ and on $Y$ via
$\omega^{p},$ so on $X^{i}Y^{j}$ ($i,j<p,$ not both $0$) it acts
via $\omega^{i+jp}$ and every character occurs with multiplicity
1 in $I$. Thus $H$ is Raynaud in this case. If, on the contrary,
$I/I^{2}$ is $\kappa^{\times}$-isotypical, we can not have $\dim I^{(i)}=1$
for every $i,$ and $H$ is not Raynaud.
\end{proof}
Let $H^{D}$ denote the Cartier dual of $H,$ which is also finite
flat of rank $p^{2},$ and endow it by an $\mathcal{O}_{E}$-action
$\iota^{D}:\mathcal{O}_{E}\rightarrow\End_{R}(H^{D})$ via the formula
\[
\iota^{D}(a)=\iota(a)^{t},
\]
i.e. for any $R$-algebra $R'$ and any $x\in H(R'),$ $y\in H^{D}(R')$,
\[
\left\langle x,\iota^{D}(a)y\right\rangle =\left\langle \iota(a)x,y\right\rangle \in(R')^{\times}.
\]

\begin{cor}
$H$ is Raynaud if and only if $H^{D}$ is Raynaud.
\end{cor}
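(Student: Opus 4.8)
The plan is to reduce to the Dieudonn\'e-module criterion of the preceding Lemma and then trace the $\kappa$-action through Cartier duality. First I would observe that the formation of the Cartier dual commutes with base change, so that $(H\times_{\Spec(R)}\Spec(R'))^{D}\cong H^{D}\times_{\Spec(R)}\Spec(R')$ compatibly with the $\mathcal{O}_{E}$-actions $\iota$ and $\iota^{D}$. Combined with the first two bullet points following the definition of the Raynaud condition, this lets me check the asserted equivalence one connected component at a time, and then at a single geometric point of each component; hence it suffices to prove the statement when $R=k$ is an algebraically closed field containing $\kappa$.

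Over such a $k$ the Lemma reduces everything to balancedness: $H$ is Raynaud iff $M(H)$ is balanced, and likewise $H^{D}$ is Raynaud iff $M(H^{D})$ is balanced. So the whole corollary comes down to the assertion that $M(H)$ is balanced if and only if $M(H^{D})$ is. Here I would invoke the behaviour of the covariant Dieudonn\'e functor under Cartier duality, in the conventions of \cite{C-C-O}, Appendix B.3: since $H$ is killed by $p$, so is $H^{D}$, and both $M(H)$ and $M(H^{D})$ are $2$-dimensional over $k$, with a canonical perfect $k$-pairing identifying $M(H^{D})$ with the $k$-linear dual of $M(H)$ (up to a Frobenius twist that will turn out to be irrelevant).

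It then remains to see what this duality does to the eigenspace decomposition $M=M(\Sigma)\oplus M(\overline{\Sigma})$. By construction the action of $a\in\kappa$ on $M(H^{D})$ is the one induced by $\iota^{D}(a)=\iota(a)^{t}$; that is, it is adjoint, under the perfect pairing, to the action of $a$ on $M(H)$. Therefore the pairing matches each $\kappa$-eigenspace of $M(H^{D})$ with a $\kappa$-eigenspace of $M(H)$, and according to whether the Frobenius twist noted above intervenes, the characters $\Sigma$ and $\overline{\Sigma}$ are either preserved or interchanged. In either case the multiset of dimensions $\{\dim M(H^{D})(\Sigma),\,\dim M(H^{D})(\overline{\Sigma})\}$ equals $\{\dim M(H)(\Sigma),\,\dim M(H)(\overline{\Sigma})\}$.

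The key observation is that this last ambiguity costs nothing: being balanced means exactly that this multiset equals $\{1,1\}$, a property visibly symmetric under interchanging the two characters. Hence $M(H)$ is balanced iff $M(H^{D})$ is, and the corollary follows. I expect the only genuinely delicate step to be the bookkeeping of the previous paragraph---pinning down the precise semilinearity of the duality isomorphism and hence the exact action of $\kappa$ on $M(H^{D})$ in the chosen conventions---and it is precisely the symmetry of the balanced condition that lets me sidestep having to decide whether $\Sigma$ and $\overline{\Sigma}$ are swapped.
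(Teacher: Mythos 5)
Your proof is correct and takes essentially the same route as the paper, whose entire argument is that $M(H^{D})$, identified with the \emph{contravariant} Dieudonn\'e module of $H$, is the $k$-linear dual of $M(H)$, so that one is balanced if and only if the other is. Your additional bookkeeping --- the reduction to an algebraically closed point via base change and the remark that any Frobenius twist in the duality is harmless because balancedness is symmetric in $\Sigma$ and $\overline{\Sigma}$ --- merely makes explicit what the paper leaves implicit.
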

\begin{proof}
$M(H^{D})$ (identified with the \emph{contravariant} Dieudonné module
of $H$) is the $k$-linear dual of $M(H),$ so one is balanced if
and only if the other is.
\end{proof}

\subsubsection{The moduli problem $(S)$}

We now define the three integral models for the Shimura varieties
with parahoric level structure at $p$ as moduli schemes for moduli
problems of PEL type. It is well known and easy to check that in the
generic fiber these moduli problems yield the given Shimura varieties.
For the relation with the models defined by Rapoport and Zink, and
the representability of the moduli problems, see  \ref{Relation to =00005BRa-Zi=00005D}
below. 

The Picard modular surface $S$ has a smooth integral model $\mathscr{S}$
over $\mathcal{O}_{p}$. It is a fine moduli scheme for the moduli
problem which assigns to each $\mathcal{O}_{p}$-algebra $R$ isomorphism
classes of tuples $\underline{A}=(A,\phi,\iota,\eta),$ where
\begin{itemize}
\item $A$ is an abelian $3$-fold over $R$.
\item $\phi\colon A\overset{\sim}{\to}A^{t}$ is a principal polarization.
\item $\iota\colon\mathcal{O}_{E}\to\End_{R}(A)$ is a ring homomorphism,
such that the Rosati involution induced by $\phi$ on $\End_{R}(A)$
preserves its image, and is given on it by $\iota(a)\mapsto\iota(\overline{a}).$
We furthermore require that $\Lie(A)$ becomes an $\mathcal{O}_{E}$-module
of type $(2,1)$ in the sense that it is the direct sum of a locally
free $R$-module of rank 2 on which $\iota(a)_{*}$ acts like the
image of $a$ in $R$, and a locally free rank 1 module on which it
acts like $\overline{a}.$ 
\item $\eta$:$\Lambda/N\Lambda\simeq A[N]$ is a full level-$N$ $\mathcal{O}_{E}$-structure
(recall $p\nmid N\ge3$).
\end{itemize}
Our reference to moduli problems and representability is the comprehensive
volume by Lan. In particular, we refer the reader to the precise definition
of level structure given there (\cite{Lan} 1.3.6.2), and to the condition
of \emph{étale liftability}. In addition to being compatible with
the $\mathcal{O}_{E}$-action, $\eta$ should carry the polarization
pairing 
\[
\left\langle ,\right\rangle :\Lambda/N\Lambda\times\Lambda/N\Lambda\to\mathbb{Z}/N\mathbb{Z}
\]
derived from $(,)$ to the Weil $e_{N}$-pairing induced by $\phi$
on $A[N]\times A[N].$ Part of the data involved in $\eta$ is an
isomorphism between the (étale) target groups of the two pairings:
$\nu_{N}:\mathbb{Z}/N\mathbb{Z}\simeq\mu_{N}$, making the last condition
meaningful. These isomorphisms form a torsor $\underline{\text{Isom}}(\mathbb{Z}/N\mathbb{Z},\mu_{N})$
under $(\mathbb{Z}/N\mathbb{Z})^{\times}$, and in this way $\nu_{N}$
becomes a morphism form $\mathcal{\mathscr{S}}$ to $\underline{\text{Isom}}(\mathbb{Z}/N\mathbb{Z},\mu_{N}),$
regarded as a scheme over $\mathcal{O}_{p}$ of relative dimension
0. We call $\nu_{N}$ the \emph{multiplier morphism}.

\subsubsection{\label{moduliStilde}The moduli problem $(\widetilde{S})$}

The Shimura variety $\widetilde{S}$ has an integral model $\mathscr{\widetilde{S}}$
over $\mathcal{O}_{p}.$ It is a fine moduli scheme for the moduli
problem which assigns to each $\mathcal{O}_{p}$-algebra $R$ isomorphism
classes of tuples $\underline{A}'=(A',\psi,\iota',\eta'),$ where
\begin{itemize}
\item $A'$ is an abelian $3$-fold over $R$.
\item $\psi\colon A'\to A'^{t}$ is a polarization of degree $p^{2}$.
\item $\iota'\colon\mathcal{O}_{E}\to\End_{R}(A')$ is a ring homomorphism,
satisfying the same requirements as for $(S).$ In addition, we require
that $\ker(\psi)$ is preserved by $\iota'(\mathcal{O}_{E})$ and
is Raynaud. 
\item $\eta'$ is a full level-$N$ $\mathcal{O}_{E}$-structure.
\end{itemize}

\subsubsection{\label{S_0(p)}The moduli problem $(S_{0}(p))$}

The Shimura variety $S_{0}(p)$ has an integral model $\mathscr{S}_{0}(p)$
over $\mathcal{O}_{p}.$ It is a fine moduli scheme for the moduli
problem which assigns to each $\mathcal{O}_{p}$-algebra $R$ isomorphism
classes of tuples $(\underline{A},H)=(A,\phi,\iota,\eta,H),$ where
\begin{itemize}
\item $\underline{A}$ is as in $(S)$
\item $H\subset A[p]$ is a Raynaud $\mathcal{O}_{E}$-subgroup scheme of
rank $p^{2},$ which is isotropic for the Weil pairing $e_{p}$ (the
Mumford pairing $e_{p\phi}$ attached to the polarization $p\phi).$
\end{itemize}

\subsubsection{\label{maps-1}The maps between the integral models}

There are projection maps
\[
\pi:\mathscr{S}_{0}(p)\to\mathscr{S},\,\,\,\,\,\,\,\widetilde{\pi}:\mathscr{S}_{0}(p)\to\widetilde{\mathscr{S}}
\]
extending the maps of Proposition \ref{maps}. The map $\pi$ is neither
finite, nor flat anymore. On the moduli problem, it is simply ``forget
$H$''.

The second map $\widetilde{\pi}$ is defined as follows. Pick $(\underline{A},H)\in\mathscr{S}_{0}(p)(R).$
Let $A'=A/H.$ Since $H$ is isotropic for $e_{p\phi},$ its annihilator
in this pairing is a finite flat subgroup scheme $H\subset H^{\perp}\subset A[p]$
and $A[p]/H^{\perp}\simeq H^{D}.$ We claim that $H^{\perp}/H$ is
Raynaud. We may assume that $R=k$ is an algebraically closed field
of characteristic $p$. As both $H$ and $H^{D}$ are Raynaud, $M(H)$
and $M(A[p]/H^{\perp})$ are balanced. It follows that $M(H^{\perp}/H)$
is also balanced, so $H^{\perp}/H$ is Raynaud. The polarization $p\phi$
descends canonically to a polarization $\psi:A'\to(A')^{t}$ whose
kernel is $\ker(\psi)=H^{\perp}/H.$ Its degree is $p^{2}.$ Finally
$\iota'$ and $\eta'$ are defined naturally from $\iota$ and $\eta.$
To check that we obtained a point of $\widetilde{\mathscr{S}}$, we
need only check one non-trivial\footnote{In characteristic 0, or if $H$ is étale, this is obvious, because
the Lie algebra is not changed, but in characteristic $p$ the type
of the Lie algebra may well change under an isogeny.} point, that $\Lie(A')$ is indeed of type $(2,1).$ This can be seen,
using the Raynaud condition, as follows. We may assume again that
$R=k$ is an algebraically closed field containing $\kappa.$ The
exact sequence
\[
0\to H\to A\to A'\to0
\]
yields, in covariant Dieudonné theory, exact sequences\footnote{\emph{A guide for the perplexed:} the covariant Dieudonné modules
of a finite flat group scheme (resp. $p$-divisible group) is defined
as the \emph{contravariant} Dieudonné module of its Cartier (resp.
Serre) dual. From the exact sequence $0\to H^{D}\to A'^{t}\to A^{t}\to0$
we get the top row of the diagram.} and a commutative diagram
\[
\begin{array}{ccccccccc}
0 & \to & M(A) & \to & M(A') & \to & M(H) & \to & 0\\
 &  & \downarrow V &  & \downarrow V &  & \downarrow V\\
0 & \to & M(A)^{(p)} & \to & M(A')^{(p)} & \to & M(H)^{(p)} & \to & 0
\end{array}
\]
where we have abbreviated $M(A)=M(A[p^{\infty}])$ etc. The snake
lemma yields
\[
\begin{array}{cccccc}
0 & \to & M(H)[V] & \to & M(A)^{(p)}/VM(A) & \to\\
 &  & \Vert &  & \Vert\\
0 & \to & \Lie(H) & \to & \Lie(A) & \to
\end{array}
\]

\[
\begin{array}{cccccc}
\to & M(A')^{(p)}/VM(A') & \to & M(H)^{(p)}/VM(H) & \to & 0\\
 & \Vert &  & \Vert\\
\to & \Lie(A') & \to & M(H)^{(p)}/VM(H) & \to & 0
\end{array}
\]
Thus the type of $\Lie(A')$ is also $(2,1)$ if and only if $M(H)[V]$
and $M(H)^{(p)}/VM(H)$ have the same type. But from the exact sequence
\[
0\to M(H)[V]\to M(H)\overset{V}{\to}M(H)^{(p)}\to M(H)^{(p)}/VM(H)\to0
\]
we see that this is the case if and only if $M(H)$ is balanced. We
conclude that $H$ being Raynaud is in fact a \emph{necessary and
sufficient} condition for $A'=A/H$ to be of type $(2,1)$ as well.

We shall see later that in contrast to $\pi,$ the map $\widetilde{\pi}$
\emph{is} finite flat of degree $p+1$.

If we denote by $f:A\to A'$ the canonical homomorphism with kernel
$H,$ and identify $A'^{t}$ with $A/H^{\perp},$ then $f'^{t}:A'^{t}\to A^{t}$
has kernel $A[p]/H^{\perp}$ and
\[
p\phi=f^{t}\circ\psi\circ f.
\]

\subsubsection{\label{Stilde0(p)}The moduli problem $(\widetilde{S}_{0}(p))$}

There is a fourth moduli problem that one can define. It turns out
to be equivalent to $(S_{0}(p))$, yet useful for later calculations
and for the study of the moduli problem $\mathscr{T}$ mentioned in
the introduction.

The moduli problem $(\widetilde{S}_{0}(p))$ assigns to every $\mathcal{O}_{p}$-algebra
$R$ isomorphism classes of tuples $(\underline{A}',J)$ where
\begin{itemize}
\item $\underline{A}'$ is as in $(\widetilde{S}$)
\item $J\subset A'[p]$ is a finite flat $\mathcal{O}_{E}$-subgroup scheme
of rank $p^{4},$ containing $\ker(\psi),$ such that $J/\ker(\psi)$
is Raynaud, and which is maximal isotropic for the Mumford pairing
$e_{p\psi}$.
\end{itemize}
Note that $\deg(p\psi)=p^{8}.$ 
\begin{prop}
\label{J}The moduli problems $(S_{0}(p))$ and $(\widetilde{S}_{0}(p))$
are equivalent, hence $(\widetilde{S}_{0}(p))$ is also represented
by $\mathscr{S}_{0}(p).$ 
\end{prop}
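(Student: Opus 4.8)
The plan is to construct a pair of mutually inverse natural transformations between the two functors and to check that each carries the full PEL datum of one problem to that of the other. All of the conditions involved (type $(2,1)$, the Raynaud property, and isotropy of finite subgroups) are fibral, and by the remarks in \S\ref{Raynaud} may be tested after base change to a geometric point of characteristic $p$, where I may use the Dieudonn\'e description. The two isogenies that appear, and the group schemes built from them, commute with base change, so the constructions are automatically functorial in $R$.

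From $(S_{0}(p))$ to $(\widetilde{S}_{0}(p))$: given $(\underline{A},H)$ I reuse the construction of $\widetilde{\pi}$ in \S\ref{maps-1}, producing $A'=A/H$, the quotient isogeny $f\colon A\to A'$, and the point $(\underline{A}',\psi,\iota',\eta')$ of $\widetilde{S}$, for which $\ker\psi=H^{\perp}/H$ and $p\phi=f^{t}\circ\psi\circ f$. I then set $J:=f(A[p])=A[p]/H$. It has rank $p^{4}$; it lies in $A'[p]$ because $p\cdot f(x)=f(px)=0$; it contains $\ker\psi=f(H^{\perp})$; and $J/\ker\psi=A[p]/H^{\perp}\cong H^{D}$ is Raynaud by the Corollary above (that $H$ is Raynaud iff $H^{D}$ is). That $J\subseteq\ker(p\psi)$ is clear since $J\subseteq A'[p]$. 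For isotropy I use the complementary isogeny $\hat{f}\colon A'\to A$, with $\hat{f}\circ f=[p]_{A}$, $f\circ\hat{f}=[p]_{A'}$ and $\ker\hat{f}=f(A[p])=J$. Feeding $p\phi=f^{t}\psi f$ through these identities (and the symmetry of $\phi,\psi$) gives $\hat{f}^{t}\circ\phi\circ\hat{f}=p\psi$, so $e_{p\psi}$ is the pull-back of $e_{\phi}$ along $\hat{f}$; hence for $x,y\in J=\ker\hat{f}$ one has $e_{p\psi}(x,y)=e_{\phi}(\hat{f}x,\hat{f}y)=1$, i.e. $J$ is isotropic. Since $|J|=p^{4}$ equals the square root of $|\ker(p\psi)|=p^{8}$, the isotropic $J$ is automatically maximal. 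Thus $(\underline{A}',J)$ is a point of $(\widetilde{S}_{0}(p))$.

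From $(\widetilde{S}_{0}(p))$ to $(S_{0}(p))$: given $(\underline{A}',J)$ set $A:=A'/J$ with quotient $g\colon A'\to A$, let $\hat{g}\colon A\to A'$ be complementary (so $\ker\hat{g}=g(A'[p])=A'[p]/J$ has rank $p^{2}$), and put $f:=\hat{g}$, $H:=\ker f=A'[p]/J$. The maximal isotropy of $J$ is exactly the condition allowing $p\psi$ to descend along $g$ to a principal polarization $\phi$ on $A$ with $g^{t}\phi g=p\psi$, and reversing the computation above yields $p\phi=f^{t}\psi f$. The action $\iota$ and the level structure $\eta$ transport through the $\mathcal{O}_{E}$-equivariant isogenies, the prime-to-$p$ torsion being canonically preserved. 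It remains to verify the $(S_{0}(p))$ conditions on $H$ and the type of $A$. Isotropy of $H$ for $e_{p\phi}$ is the same pull-back computation as before, now with $p\phi=f^{t}\psi f$: for $x,y\in H=\ker f$, $e_{p\phi}(x,y)=e_{\psi}(fx,fy)=1$. For the Raynaud property I identify $H=A'[p]/J$ with the Cartier dual of $J/\ker\psi$ via the perfect pairing $e_{p\psi}$ (the maximal isotropic $J$ being its own annihilator), so that $H^{D}\cong J/\ker\psi$ is Raynaud and hence, by the Corollary, so is $H$. Finally the snake-lemma computation of \S\ref{maps-1}, applied verbatim to $0\to H\to A\xrightarrow{f}A'\to 0$, shows that $\Lie(A)$ and $\Lie(A')$ have the same type precisely when $M(H)$ is balanced, i.e. when $H$ is Raynaud; since $A'$ has type $(2,1)$, so does $A$.

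That the two constructions are mutually inverse follows from the complementary-isogeny formalism: $\hat{f}$ undoes $f$, so starting from $(\underline{A},H)$ one recovers $A=A'/J$ with $H=A'[p]/J$, and the polarization identities $p\phi=f^{t}\psi f$ and $\hat{f}^{t}\phi\hat{f}=p\psi$ are interchanged under $f\leftrightarrow\hat{f}$; the same applies in the other order. This gives the desired natural isomorphism of moduli problems, whence $(\widetilde{S}_{0}(p))$ is represented by $\mathscr{S}_{0}(p)$. I expect the main obstacle to be the pairing bookkeeping concentrated in the Raynaud step of the inverse functor, namely the Cartier duality $H\cong(J/\ker\psi)^{D}$ under $e_{p\psi}$ (equivalently, pinning down the annihilator of $A'[p]$ inside $\ker(p\psi)$); once the descent identity $\hat{f}^{t}\phi\hat{f}=p\psi$ is established, everything else reduces to the standard compatibility of Mumford's commutator pairing with isogenies together with a rank count. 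These are the routine-but-delicate computations of the sort relegated to the appendix.
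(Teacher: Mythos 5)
Your proposal is correct and follows exactly the paper's route: the same two constructions $(\underline{A},H)\mapsto(\underline{A}'=\underline{A}/H,\,J=A[p]/H)$ and $(\underline{A}',J)\mapsto(A=A'/J,\,H=A'[p]/J)$, with isotropy/maximality obtained from degree counts and the descent formalism, and the Raynaud condition via $J/\ker(\psi)\cong H^{D}$. The paper explicitly leaves the remaining verifications to the reader, and your complementary-isogeny bookkeeping (the identities $p\phi=f^{t}\psi f$, $\hat{f}^{t}\phi\hat{f}=p\psi$, the duality $H\cong(J/\ker\psi)^{D}$ under $e_{p\psi}$, and the snake-lemma type computation from \S\ref{maps-1}) supplies those details correctly.
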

\begin{proof}
To pass from $(\underline{A},H)$ to $(\underline{A}',J)$ define
\[
\underline{A}'=\underline{A}/H,\,\,\,J=A[p]/H,
\]
and observe that $J/\ker(\psi)=A[p]/H^{\perp}$ is Raynaud, and that
$J$ is isotropic (hence, from degree considerations, maximal isotropic)
for $e_{p\psi}.$ To pass from $(\underline{A}',J)$ to $(\underline{A},H)$
define $A=A'/J$, descend $p\psi$ to obtain a principal polarization
$\phi$ on $A,$ and let $H=A'[p]/J.$ We leave to the reader the
verification that we obtain a point of $(\widetilde{S}_{0}(p))$,
as well as that these two constructions are inverse to each other.
\end{proof}
In terms of this new interpretation of $\mathscr{S}_{0}(p)$ the map
$\widetilde{\pi}$ is simply ``forget $J$''.
\begin{prop}
\label{flat and proper}The schemes $\mathscr{S},\mathscr{S}_{0}$(p)
and $\mathscr{\widetilde{S}}$ are regular. They are flat over $\mathcal{O}_{p}$
and their special fibers are reduced. The maps $\pi$ and $\widetilde{\pi}$
are surjective and proper.
\end{prop}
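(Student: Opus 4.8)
The plan is to treat the four assertions separately: I would reduce flatness, reducedness of the special fibres, and regularity to inputs that are either already recorded or computed later, and then establish surjectivity and properness by a direct elementary argument. Flatness over $\mathcal{O}_p$ and reducedness of the special fibres have in effect already been noted above: by G\"ortz's theorem \cite{Goertz}, or equivalently by the explicit local model computations of \ref{local rings}, each of $\mathscr{S}$, $\mathscr{S}_0(p)$, $\mathscr{\widetilde{S}}$ is flat over $\mathcal{O}_p$ with reduced special fibre, so in the proof I would simply invoke this. Regularity of $\mathscr{S}$ is then immediate, since it is smooth over the regular ring $\mathcal{O}_p$ and smooth-over-regular is regular.

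For $\mathscr{S}_0(p)$ and $\mathscr{\widetilde{S}}$ regularity is the genuinely hard point, and I expect it to be the main obstacle: it cannot be read off from the special fibre, which is singular (its components cross transversally). I would instead argue at each closed point $x$ of the special fibre via the local model diagram of \ref{local rings}, which links $\widehat{\mathcal{O}}_{\mathscr{S}_0(p),x}$ (and its analogue for $\mathscr{\widetilde{S}}$) to the completed local ring of the corresponding local model through a pair of formally smooth morphisms; regularity of the scheme at $x$ is thereby equivalent to regularity of the local model. One then checks on the explicit rings that, although the uniformiser $p$ lies in the square of the maximal ideal, the total space remains regular of dimension $3$---exactly the phenomenon already visible in the introduction's baby case, where $\mathcal{O}_p[[x,y]]/(xy-p)$ is regular even though its special fibre $k[[x,y]]/(xy)$ is nodal. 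This computation is precisely what is deferred to \ref{local rings}.

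For properness I would use the valuative criterion. Both $\pi$ and $\widetilde{\pi}$ are of finite type between Noetherian $\mathcal{O}_p$-schemes and are separated (their sources are separated over $\mathcal{O}_p$), so it suffices to check unique lifting along a discrete valuation ring $R$ with fraction field $K$. For $\pi$ (``forget $H$''), given $\underline{A}\in\mathscr{S}(R)$ and a subgroup $H_K\subset A_K[p]$ defining a $K$-point of $\mathscr{S}_0(p)$, I take $H=\overline{H_K}$, the scheme-theoretic closure in the finite flat group scheme $A[p]$. Over a DVR this closure is $R$-flat, hence finite flat of the same rank $p^2$, and is the unique flat extension of $H_K$; it is $\mathcal{O}_E$-stable and isotropic because these are closed conditions holding generically. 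The one delicate point is the Raynaud condition, but by the properties recorded in \ref{Raynaud} it is enough to verify it at a single geometric point of the connected base $\Spec R$, and at the generic geometric point $H$ restricts to the Raynaud group $H_K$; hence $H$ is Raynaud over all of $\Spec R$ and gives the unique lift. The argument for $\widetilde{\pi}$ is identical in the interpretation $(\widetilde{S}_0(p))$ of Proposition \ref{J}: one extends $J_K$ by closure, where $\ker\psi\subset J$ persists because a finite flat group scheme over $R$ equals the closure of its generic fibre, $J/\ker\psi$ stays Raynaud by the same one-point check, and maximal isotropy is forced by the degree once the rank $p^4$ and isotropy are known.

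Surjectivity then follows formally. On generic fibres $\pi$ and $\widetilde{\pi}$ are finite \'etale, hence surjective, by Proposition \ref{maps}. Being proper, they have closed images; and since $\mathscr{S}$ and $\mathscr{\widetilde{S}}$ are flat over the DVR $\mathcal{O}_p$, no irreducible component lies in the special fibre, so the generic fibre is dense. A closed set containing a dense subset is everything, whence both maps are surjective.
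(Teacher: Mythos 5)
Your proposal is correct, and on flatness, reducedness, regularity and surjectivity it is essentially the paper's own argument: the paper likewise invokes G\"ortz \cite{Goertz} together with the \'etale-local identification with the local models for ``flat'' and ``reduced'', derives regularity from the completed local rings (citing \cite{Bel} III.3.4.8 rather than recomputing, but the content is exactly your observation that rings such as $W[[a,b,c]]/(bc+p)$ and $W[[a,b,r]]/(abr+p)$ are regular of dimension $3$ because $p$ is absorbed into the relation, so the maximal ideal needs only three generators), and proves surjectivity by the identical ``dense generic fiber plus closed image'' argument. The one genuine divergence is in properness. The paper delegates $\pi$ to \cite{Bel} III.3.2.3 and, for $\widetilde{\pi}$, follows de Jong \cite{dJ2}: it introduces the auxiliary relaxed moduli problem $(\widetilde{S}_{0}(p)')$ in which the Raynaud condition on $J/\ker(\psi)$ is dropped, proves that relaxed problem proper by the valuative criterion, and then recovers properness of $\widetilde{\pi}$ because the Raynaud condition is a \emph{closed} condition. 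You instead run the valuative criterion directly on $\widetilde{\pi}$ itself and dispose of the Raynaud condition via the one-geometric-point criterion over the connected base $\mathrm{Spec}(R)$ --- a fact the paper records in \S\ref{Raynaud} (the $I^{(i)}$ are projective direct summands, so their ranks are locally constant), so your shortcut is legitimate and arguably cleaner, avoiding the auxiliary problem altogether; note it actually shows the Raynaud locus is \emph{clopen} in a flat family, which is stronger than the closedness the paper uses. What the paper's detour buys is a statement about the relaxed problem that is useful for the representability discussion, and a formulation that does not depend on connectedness of the test base. Two small points you gloss over, both standard: the schematic closure of $H_K$ (resp. $J_K$) in $A[p]$ over the DVR must be checked to be a \emph{subgroup} scheme, not merely a flat closed subscheme (this follows since the closure of $H_K\times H_K$ in $H\times H$ is all of $H\times H$ by flatness, so the group law maps $H\times H$ into the closure of $H_K$); and in the valuative criterion one uses rigidity of the level-$N$ structure ($N\ge 3$) to identify the generic-fiber data with the restriction of the $R$-point of $\mathscr{S}$ without ambiguity, which is what makes the lift unique.
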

\begin{proof}
The ``flat'' and ``reduced'' assertions follow from the Main Result
of \cite{Goertz}, and from the fact that locally for the étale topology,
a neighborhood of a point in the special fiber of $\mathscr{S}_{0}(p)$
or $\mathscr{\widetilde{S}}$ is isomorphic to an open neighborhood
in the local model. Similarly, regularity follows from the determination
of the completed local rings of the three schemes in \cite{Bel} III.3.4.8.
Although Bellaïche does not use the formalism of \cite{Ra-Zi}, he
builds upon the earlier work of de Jong \cite{dJ2}, which except
for the notation, yields identical results for the completed local
rings as what one would get from the more general theory developed
by Rapoport and Zink.

Properness and surjectivity of $\pi$ and $\widetilde{\pi}$ are usually
proved along with the proof of the representability of $\mathscr{S}_{0}(p).$
For the map $\pi$ it is done in \cite{Bel} III.3.2.3. For the map
$\widetilde{\pi}$ the proof is similar, and we only sketch it. It
is best described with the new interpretation of $\mathscr{S}_{0}(p)$
as representing the moduli problem $(\widetilde{S}_{0}(p)).$ Consider
first a larger moduli problem $(\widetilde{S}_{0}(p)')$ obtained
from $(\widetilde{S}_{0}(p))$ by\emph{ relaxing} the Raynaud condition
on $J/\ker(\psi).$ One proves, following de Jong, that this modified
moduli problem is proper and surjective over $(\widetilde{S}).$ Properness
follows from the valuative criterion. The Raynaud condition is a closed
condition, a fact which secures the properness of $\widetilde{\pi}.$
Surjectivity clearly holds in the generic fiber. By \cite{Goertz}, the
generic fiber of $\mathscr{\widetilde{S}}$ is dense. Since $\widetilde{\pi}$
is already known to be proper, its image must be closed, hence is
everything.
\end{proof}

\subsubsection{Diamond operators}

If $a\in(\mathcal{O}_{E}/N\mathcal{O}_{E})^{\times}$ we denote by
$\left\langle a\right\rangle $ the automorphism of $\mathscr{S}$,
defined on the moduli problem by
\[
\left\langle a\right\rangle (A,\phi,\iota,\eta)=(A,\phi,\iota,\eta\circ a)=(A,\phi,\iota,\iota(a)\circ\eta).
\]
The same notation will be applied to the other moduli schemes.

\subsection{\label{Relation to =00005BRa-Zi=00005D}Translation into the language
of Rapoport and Zink}

The moduli problems that we defined in the preceding sections are
examples of the moduli problems defined in chapter 6 of \cite{Ra-Zi},
although the Raynaud condition is implicit there, as we shall now
explain. It follows (from general results of Kottwitz) that, as has
been claimed above, they are indeed representable by fine moduli schemes
when $N\ge3$. We remark that \cite{Bel} gives an independent proof
of the representability of $(S_{0}(p))$ by proving that it is \emph{relatively
representable} over $(S).$

Using the notation of \cite{Ra-Zi} we take $B=E,\,\mathcal{O}_{B}=\mathcal{O}_{E},\,V=E^{3}$
as before and $b^{*}=\overline{b}.$ Let $\mathcal{L},\mathcal{\widetilde{L}}$
and $\mathcal{L}_{0}(p)$ be the following \emph{self-dual lattice
chains }in $V_{p}$ (see (\ref{eq:Lambda_0})):
\[
\mathcal{L}=\{\cdots\subset p\Lambda_{0}\subset\Lambda_{0}\subset p^{-1}\Lambda_{0}\subset\cdots\},
\]

\[
\mathcal{\widetilde{L}}=\{\cdots\subset p\Lambda_{1}\subset\Lambda_{2}\subset\Lambda_{1}\subset p^{-1}\Lambda_{2}\subset\cdots\},
\]

\[
\mathcal{L}_{0}(p)=\{\cdots\subset p\Lambda_{0}\subset\Lambda_{2}\subset\Lambda_{1}\subset\Lambda_{0}\subset p^{-1}\Lambda_{2}\subset\cdots\}.
\]
View the three lattice chains as categories, inclusions as morphisms.
The moduli problem of type $(\mathcal{L})$, as defined in \cite{Ra-Zi}
Definition 6.9, is clearly our $(S);$ just set $A=A_{\Lambda_{0}}.$ 

The moduli problem of type $(\mathcal{\widetilde{L}})$ is our $(\widetilde{S}).$
Recall the definition of a ``principally polarized $\mathcal{\widetilde{L}}$-set
of abelian varieties of type $(2,1)$'' over a base ring $R$ as
above (\cite{Ra-Zi}, Definition 6.6). First, one is given the $\mathcal{\widetilde{L}}$-set
of abelian schemes $A_{\Lambda_{\bullet}}$ of type $(2,1)$. Then
one gives the ``principal polarization''\footnote{We apologize for the unintentional double meaning attributed to tilde.
We chose to denote the moduli problem $(\widetilde{S})$ with a tilde,
hence it made sense to denote the corresponding lattice chain also
$\mathcal{\widetilde{L}}$. In \cite{Ra-Zi}, passing to the \emph{dual}
$\mathcal{\widetilde{L}}$-set is also denoted by a tilde, hence the
tilde on $\widetilde{A}_{\Lambda_{\bullet}}$.} $\lambda:$$A_{\Lambda_{\bullet}}\simeq\widetilde{A}_{\Lambda_{\bullet}}$.
Note that the $\mathcal{\widetilde{L}}$-set $\widetilde{A}_{\Lambda_{\bullet}}$
is of type $(1,2)$ because $\lambda$ induces the Rosati involution
on the endomorphism ring, hence switches types. We set 
\[
A'=A_{\Lambda_{2}},\,\,\,\,A'^{t}=A_{\Lambda_{2}}^{t}\simeq A_{p^{-1}\Lambda_{2}}^{t}=\widetilde{A}_{\Lambda_{1}},\,\,\,\psi=\lambda\circ\rho_{\Lambda_{1},\Lambda_{2}}.
\]
Then $\psi$ is a polarization in the ordinary sense, of degree $p^{2}=[\Lambda_{1}:\Lambda_{2}]$.
If $R=k$ is an algebraically closed field in characteristic $p,$
\[
M(\ker(\psi))=M(A_{\Lambda_{1}})/M(A_{\Lambda_{2}})=\Lambda_{1}/\Lambda_{2}\otimes k
\]
(\cite{Ra-Zi} 6.10) is balanced, so $\ker(\psi)$ is Raynaud. Conversely,
if we are given data as in $(S)$, thanks to the fact that $\ker(\psi)$
is Raynaud the signature of $A''=A'/\ker(\psi)$ (with $\mathcal{O}_{E}$-action
induced by $\iota'$) is $(2,1)$ (as explained at the end of \ref{S_0(p)}),
so we can define
\[
A_{\Lambda_{2}}=A',\,\,\,A_{\Lambda_{1}}=A'',\,\,\,\rho_{\Lambda_{1},\Lambda_{2}}=\text{the canonical homomorphism},
\]
and ``polarize'' the resulting $\mathcal{\widetilde{L}}$-set by
letting $\lambda$ be the unique type-reversing isomorphism of $\mathcal{\widetilde{L}}$-sets
satisfying $\psi=\lambda\circ\rho_{\Lambda_{1},\Lambda_{2}}.$

The proof that the moduli problem of type $(\mathcal{L}_{0}(p))$
is our $(S_{0}(p))$ is in principle identical, and we only sketch
it. Once again, given the data $(S_{0}(p))$ we construct an $\mathcal{L}_{0}(p)$-set
of abelian varieties by interlacing the previous two constructions.
First, letting $A=A_{\Lambda_{0}}\simeq A_{p\Lambda_{0}}$ we use
the Raynaud condition on $H$ to ensure that $A'=A/H=A_{\Lambda_{2}}$
is of type $(2,1).$ Then we continue and define $A_{\Lambda_{1}}=A/H^{\perp}$
and the polarization $\lambda$ as before.

\subsection{\label{modular curve}Modular curves on the Picard modular surface}

\subsubsection{Embedding the modular curve}

Maps between Shimura data induce maps between Shimura varieties. Here
we have unitary groups of signature $(1,1)$ at infinity mapping (in
many ways) to our $\boldsymbol{G}.$ These group homomorphisms give
rise to morphisms of modular curves and Shimura curves to our Picard
modular surface. Rather than go through the familiar yoga of Shimura
data, we jump straight ahead to the moduli interpretation, thereby
giving the morphism \emph{on the level of integral structures}. We
give only one example, which will be explored in connection with the
geometry of the special fiber at $p$ later on.

Let $B_{0}$ be a fixed elliptic curve defined over $\mathcal{O}_{p}$
with complex multiplication by $\mathcal{O}_{E}$ and CM type $\Sigma$.
Such a curve exists because $(p)$ splits completely in the Hilbert
class field $H$ of $E$, and if $\mathfrak{P}$ is a prime divisor
of $(p)$ in $H,$ $B_{0}$ may be defined over $\mathcal{O}_{H,\mathfrak{P}}=\mathcal{O}_{p}$.
The reduction of $B_{0}$ modulo $p$ is a supersingular elliptic
curve defined over $\kappa$. Let $\phi_{0}:B_{0}\simeq B_{0}^{t}$
be the canonical principal polarization of $B_{0}$, and $\iota_{0}:\mathcal{O}_{E}\simeq\End(B_{0})$.

Recall that $p\nmid N\ge3.$ Let $-D$ be the discriminant of $E$
and $\delta=\sqrt{-D}$ a fixed square root of it in $E$. Assume
for simplicity that $D$ is odd and $(N,D)=1$ (otherwise the construction
below has to be modified slightly). Let $\mathscr{Z}_{0}$ be the
scheme parametrizing $\mathcal{O}_{E}$-isomorphisms $\eta_{0}:\mathcal{O}_{E}/N\mathcal{O}_{E}\simeq B_{0}[N]$.
It is étale of relative dimension $0$ over $\mathcal{O}_{p}$ and
comes with a ``multiplier morphism'' $\nu_{N}$ to $\underline{\text{Isom}}(\mathbb{Z}/N\mathbb{Z},\mu_{N})$.
Write
\[
\underline{B}_{0}=(B_{0},\phi_{0},\iota_{0},\eta_{0})\in\mathscr{Z}_{0}(R)
\]
for an $R$-valued point of $\mathscr{Z}_{0}$.

Let $\mathscr{X}=X_{0}(D;N)$ be the modular curve parametrizing elliptic
curves $B$ with a full level $N$ structure $\nu:(\mathbb{Z}/N\mathbb{Z})^{2}\simeq B[N]$
and a cyclic subgroup scheme $M$ of order $D.$ We view $\mathscr{X}$
as a scheme over $\mathcal{O}_{p}.$ It too comes equipped with a
``multiplier morpism'' $\nu_{N}$ to $\underline{\text{Isom}}(\mathbb{Z}/N\mathbb{Z},\mu_{N}).$
If we identify $\det(B[N])$ with $\mu_{N}$ via the Weil pairing,
then $\nu_{N}=\det\nu$. We remark that $\mathscr{X}$ is neither
complete (the cusps are missing) nor connected ($\det\nu$ is not
fixed), and that every subgroup scheme $M$ as above is étale, since
$D$ is invertible.

Let $R$ be an $\mathcal{O}_{p}$-algebra and $\underline{B}=(B,\nu,M)\in\mathscr{X}(R).$
Let $A_{1}(\underline{B})$ be the abelian surface $\mathcal{O}_{E}\otimes B/(\delta\otimes M)$.
As $D$ is odd, hence square-free, every class in $\mathcal{O}_{E}/\delta\mathcal{O}_{E}$
is represented by a rational integer. As $\delta$ kills $\delta\otimes M$,
this subgroup is $\mathcal{O}_{E}$-stable. It is also maximal isotropic
for the Mumford pairing induced by the canonical degree $D^{2}$ polarization
\[
\phi_{1}':\mathcal{O}_{E}\otimes B\to\delta^{-1}\mathcal{O}_{E}\otimes B=(\mathcal{O}_{E}\otimes B)^{t}.
\]
The identification $\delta^{-1}\mathcal{O}_{E}\otimes B=(\mathcal{O}_{E}\otimes B)^{t}$
is such that the resulting Weil $e_{n}$-pairing between $\mathcal{O}_{E}\otimes B[n]$
and $\delta^{-1}\mathcal{O}_{E}\otimes B[n]$ is
\[
e_{n}(\alpha\otimes u,\beta\otimes v)=e_{n}^{B}(u,v)^{Tr_{E/\mathbb{Q}}(\alpha\overline{\beta})},
\]
where $e_{n}^{B}$ is Weil's $e_{n}$-pairing on $B[n].$ We may therefore
descend $\phi_{1}'$ to obtain a principal polarization $\phi_{1}$
of $A_{1}(\underline{B})$. We let $\iota_{1}$ be the natural action
of $\mathcal{O}_{E}$ as endomorphisms of $A_{1}(\underline{B})$.
It is of type $(\Sigma,\overline{\Sigma}).$ Let 
\[
\eta_{1}=\text{id}\otimes\nu:(\mathcal{O}_{E}/N\mathcal{O}_{E})^{2}\simeq A_{1}[N],
\]
a full level-$N$ $\mathcal{O}_{E}$-structure.

Let $\underline{B}_{0}\in\mathscr{Z}_{0}(R)$ and $\underline{B}\in\mathscr{X}(R)$
be such that $\nu_{N}(\underline{B}_{0})=\nu_{N}(\underline{B}).$
Define 
\[
A(\underline{B}_{0},\underline{B})=B_{0}\times A_{1},\,\,\,\phi=\phi_{0}\times\phi_{1},\,\,\,\iota=\iota_{0}\times\iota_{1},\,\,\,\eta=\eta_{0}\times\eta_{1}.
\]
The structure $\underline{A}(\underline{B}_{0},\underline{B})=(A,\phi,\iota,\eta)\in\mathscr{S}(R).$
Indeed, the assumption $\nu_{N}(\underline{B}_{0})=\nu_{N}(\underline{B})$
allows us to define a multiplier for $\eta$ so that it becomes compatible
with $\phi$, and the rest is obvious. This construction depends functorially
on the input. In this way we have defined a morphism
\[
\mathscr{Z}_{0}\times_{\underline{\text{Isom}}(\mathbb{Z}/N\mathbb{Z},\mu_{N})}\mathscr{X}\to\mathscr{S}.
\]
A minor modification of this construction yields a morphism
\[
\mathscr{Z}_{0}\times_{\underline{\text{Isom}}(\mathbb{Z}/N\mathbb{Z},\mu_{N})}\mathscr{X}_{0}(p)\to\mathscr{S}_{0}(p),
\]
when we add a cyclic subgroup of order $p$ to the level.

\subsubsection{\label{CM points}Endomorphism rings of $\mathbb{\overline{F}}_{p}$
points of $\mathscr{S}$}

Let $D$ be an indefinite quaternion algebra over $\mathbb{Q}$ equipped
with a positive involution $\dagger$ and assume that $E$ embeds
in $D$ as a $\dagger$-stable subfield. Then
\[
D=E\oplus E\xi
\]
where $\xi^{2}>0$ is rational, $\xi a\xi^{-1}=\overline{a}$ for
$a\in E$, $a^{\dagger}=\overline{a}$ and $\xi^{\dagger}=\xi.$ Furthermore
$E$ is the unique quadratic imaginary $\dagger$-stable subfield
of $D.$ Let $\mathcal{O}_{D}$ be a maximal order in $D$ such that
$\mathcal{O}_{D}\cap E=\mathcal{O}_{E}.$ In this situation we may
define the Shimura curve $\mathscr{X}_{D}$ parametrizing abelian
surfaces $A_{1}$ with endomorphisms by $\mathcal{O}_{D},$ a principal
polarization inducing $\dagger$ as the Rosati involution on $D$,
and a full level $N$ structure. Precisely as for the modular curve,
we get a morphism from $\mathscr{Z}_{0}\times_{\underline{\text{Isom}}(\mathbb{Z}/N\mathbb{Z},\mu_{N})}\mathscr{X}_{D}$
to $\mathscr{S}$. Its image in $\mathscr{S}$ is called \emph{an
embedded Shimura curve}.

The points of $\mathscr{S}(\mathbb{\overline{F}}_{p})$ lying on the
embedded Shimura curves all represent non-simple abelian varieties.
There are, however, points $\underline{A}\in\mathscr{S}(\mathbb{\overline{F}}_{p})$
for which $A$ is simple. We use the Honda-Tate theorem to construct
them. More precisely, we construct $A$'s with $\End^{0}(A)=M$ a
CM field of degree 6. 

Let $L$ be a totally real non-Galois cubic field, in which $p$ decomposes
as $\mathfrak{p}\mathfrak{q},$ where $f(\mathfrak{p}/p)=2$ and $f(\mathfrak{q}/p)=1.$
Then $M=LE$ is a degree $6$ CM field and $\mathfrak{p}=P\overline{P}$
splits in $M$, while $\mathfrak{q}=Q$ remains inert. Let $\pi$
be an element of $M$ such that $(\pi)=P^{2h}Q^{h}$, where $h$ kills
the class of $P^{2}Q$ in the class group of $M$. Then $\pi\overline{\pi}=\epsilon p^{2h}$
for a unit $\epsilon$ of $L$. Replacing $\pi$ with $\epsilon^{-1}\pi^{2}$
and $h$ with $2h$ we may assume that $\epsilon=1.$ 

Let $q=p^{2h}.$ Then $\pi$ is a Weil $q$-number, and the Honda-Tate
theorem implies that there exists a simple $3$-dimensional abelian
variety over $\mathbb{F}_{q}$ with $\End(A)$ equal to an order of
$M$, and whose Frobenius of degree $q$ is $\pi$. It is easily seen
that $A$ is absolutely simple. Changing $A$ by an isogeny if necessary
we may assume that $\End(A)\mathcal{\supset O}_{E}$, and that $A$
carries a principal polarization. Of course, $End^{0}(A)=M.$

Since $\End^{0}(A)$, for any $\underline{A}\in\mathscr{S}(\mathbb{\overline{F}}_{p})$,
must contain a $6$-dimensional semi-simple $\mathbb{Q}$-algebra,
we see that the ``most general'' $\overline{\mathbb{F}}_{p}$-point
of $\mathscr{S}$ carries an abelian variety with CM by a field of
degree 6. Generic points of the special fiber of $\mathscr{S}$, by
contrast, have no endomorphisms except for $\iota(\mathcal{O}_{E}).$

\section{The structure of the special fiber of $\mathscr{S}$}

\subsection{Stratification}

Let $k$ be a fixed algebraic closure of $\kappa.$ Since we shall
have no use for the generic fibers of our integral models any more,
\emph{we denote from now on} \emph{by} $S,$ $\widetilde{S}$ \emph{and}
$S_{0}(p)$ \emph{their geometric special fibers, which are schemes
defined over} $k.$ We denote by $\mathcal{A}$ the universal abelian
scheme over $\mathscr{S}$, and by $\mathcal{A}_{x}$ its fiber over
a geometric point $x\in S(k).$

Let $\mathfrak{G}$ be the unique (up to isomorphism) connected 1-dimensional
$p$-divisible group over $k$ of height $2$. It is self-dual of
slope $1/2,$ and isomorphic to the $p$-divisible group of any supersingular
elliptic curve over $k$. Fix an embedding $\lambda:\mathcal{O}_{p}\hookrightarrow\End_{k}(\mathfrak{G})$
in which $a\in\mathcal{O}_{p}$ acts on $\Lie(\mathfrak{G})$ via
the natural homomorphism $\Sigma:\mathcal{O}_{p}\twoheadrightarrow\kappa\hookrightarrow k,$
and denote the pair $(\mathfrak{G},\lambda)$ by $\mathfrak{G}_{\Sigma}.$
Let $\mathfrak{G}_{\overline{\Sigma}}$ be the same $p$-divisible
group with the embedding $\lambda\circ\sigma$, under which the action
of $a\in\mathcal{O}_{p}$ on $\Lie(\mathfrak{G})$ is via $\overline{\Sigma}=\Sigma\circ\sigma.$ 

The following theorem is due to Vollaard \cite{Vo}, in particular
§6. See also \cite{dS-G1} Theorem 2.1.
\begin{thm}
(i) The special fiber $S$ of $\mathscr{\mathscr{S}}$ is the union
of 3 locally closed strata defined over $\kappa$. The $\mu$\emph{-ordinary}
stratum $S_{\mu}$ is open and dense, and $x\in S_{\mu}(k)$ if and
only if
\[
\mathcal{A}_{x}[p^{\infty}]\simeq(\mathcal{O}_{E}\otimes\mu_{p^{\infty}})\times\mathfrak{G}_{\Sigma}\times(\mathcal{O}_{E}\otimes\mathbb{Q}_{p}/\mathbb{Z}_{p})
\]
as $p$-divisible groups with $\mathcal{O}_{E}$-action. Its complement,
$S-S_{\mu}=S_{ss}$ is called the supersingular locus. It is a reduced
(but reducible) complete curve, and if $x\in S_{ss}(k)$ then $\mathcal{A}_{x}[p^{\infty}]$
is supersingular, i.e. its Newton polygon is of constant slope $1/2.$
The \emph{superspecial} locus $S_{ssp}\subset S_{ss}$ is $0$-dimensional
and a point $x\in S_{ssp}(k)$ if and only if 
\[
\mathcal{A}_{x}[p^{\infty}]\simeq\mathfrak{G}_{\Sigma}^{2}\times\mathfrak{G}_{\overline{\Sigma}}
\]
as $p$-divisible groups with $\mathcal{O}_{E}$-action. We let $S_{gss}=S_{ss}-S_{ssp}$
and call it the \emph{general supersingular} locus.

Oort's $a$-number 
\[
a(\mathcal{A}_{x})=\dim_{k}\text{\ensuremath{\Hom}\ensuremath{(\ensuremath{\alpha_{p}},\ensuremath{\mathcal{A}_{x}}[p])}}
\]
is $1$ if $x\in S_{\mu}(k)$ or $x\in S_{gss}(k)$ and $3$ if $x\in S_{ssp}(k).$
Let $\alpha_{p}(\mathcal{A}_{x})$ be the maximal $\alpha_{p}$-subgroup
of $\mathcal{A}_{x}[p].$ The action of $\kappa$ on\textup{ $\Lie(\alpha_{p}(\mathcal{A}_{x}))$}
has signature $\Sigma$ in the first two cases, and $(\Sigma,\Sigma,\overline{\Sigma})$
in the third case.

(ii) If $S'$ is a connected component of $S$ then $S'\cap S_{ss}$
is a connected component of $S_{ss}$. The non-singular locus of $S_{ss}$
is precisely $S_{gss}.$ The irreducible components of $S_{ss}$ are
Fermat curves, whose normalizations are isomorphic to the curve
\[
\mathscr{C}:\,\,x^{p+1}+y^{p+1}+z^{p+1}=0.
\]

(iii) If $N\ge N_{0}(p)$ (an integer depending on $p$) the following
also holds. The irreducible components of $S_{ss}$ are already non-singular,
and isomorphic to $\mathscr{C}.$ Any two of them intersect at most
at one point, and if they intersect, this point belongs to $S_{ssp}(k)$
and the intersection is transversal. There are $p^{3}+1$ superspecial
points on each irreducible component of $S_{ss}$, and there are $p+1$
irreducible components of $S_{ss}$ intersecting transversally at
each $x\in S_{ssp}(k).$
\end{thm}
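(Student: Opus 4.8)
The plan is to pass entirely to Dieudonné modules with $\mathcal{O}_p$-action, extract the three strata in (i) from the admissible Newton polygons, and then obtain the finer geometry of $S_{ss}$ in (ii) and (iii) from the Rapoport--Zink uniformization of the supersingular locus; since this is Vollaard's theorem, I would ultimately cite \cite{Vo} for the execution and use it only to feed the later sections.

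For part (i) I would fix $x\in S(k)$ and write $M=M(\mathcal{A}_x[p^\infty])$ for the covariant Dieudonné module, a free $W(k)$-module of rank $6$ with $F,V$ satisfying $FV=VF=p$ and a perfect alternating form coming from $\phi$. Because $p$ is inert, $\mathcal{O}_p\otimes_{\mathbb{Z}_p}W(k)$ splits under $\Sigma$ and $\overline{\Sigma}$, so $M=M(\Sigma)\oplus M(\overline{\Sigma})$ with each summand free of rank $3$, interchanged by $F$ and $V$, and the signature $(2,1)$ condition pins down $\dim_k\Lie$ on each piece. A slope computation compatible with this $\mathcal{O}_p$-grading shows that only two Newton polygons are admissible: the $\mu$-ordinary one, with slopes $0,0,\tfrac12,\tfrac12,1,1$, whose isoclinic pieces force the product decomposition displayed in (i), and the constant-slope-$\tfrac12$ supersingular one. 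Openness of Newton strata then yields $S_\mu$ open dense and $S_{ss}$ closed, both defined over $\kappa$; uniqueness of the $\mu$-ordinary isomorphism type follows from the rigidity of a completely slope-decomposed $p$-divisible group with $\mathcal{O}_E$-action. Finally the $a$-number $\dim_k\Hom(\alpha_p,\mathcal{A}_x[p])$ and the $\kappa$-signature of $\Lie(\alpha_p(\mathcal{A}_x))$ are computed as the space of $M/pM$ annihilated by both $F$ and $V$ together with its $\kappa$-action, separating $S_{gss}$ (where $a=1$, signature $\Sigma$) from $S_{ssp}$ (where $a=3$, signature $(\Sigma,\Sigma,\overline{\Sigma})$); this is precisely where the Dieudonné computations of the appendix enter.

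For parts (ii) and (iii) the essential input is the $p$-adic uniformization of $S_{ss}$ by a Rapoport--Zink space, which replaces the global geometry by a single formal scheme acted on by the inner form $J$ of $\boldsymbol{G}$ whose building is the biregular tree of §1. Following Vollaard (and Vollaard--Wedhorn in general signature), the reduced subscheme of the $U(2,1)$ Rapoport--Zink space is a union of smooth projective curves indexed by the vertices of one type in that tree, each isomorphic to the Deligne--Lusztig curve attached to a Coxeter element of the finite unitary group $U_3(\mathbb{F}_{p^2})$. The classical identification of this Deligne--Lusztig curve with the Hermitian (Fermat) curve $\mathscr{C}:x^{p+1}+y^{p+1}+z^{p+1}=0$ gives the normalization statement in (ii); its $\mathbb{F}_{p^2}$-rational points, of which there are exactly $p^3+1$ because $\mathscr{C}$ is maximal, are precisely the superspecial points lying on that component, and these are the crossing points with the neighbouring curves. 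Transversality is local and follows from the normal-crossing structure of the stratification near a point-vertex, while the two incidence numbers are simply the two valencies $p^3+1$ and $p+1$ of the tree: a curve-vertex carries $p^3+1$ superspecial points and a point-vertex lies on $p+1$ curves.

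The hardest and most delicate step is part (iii), where one must descend from the uniformized picture, which a priori only describes $S_{ss}$ as a quotient of the Rapoport--Zink space by $J(\mathbb{Q})$ together with the prime-to-$p$ level, to honest statements about the reduced scheme $S_{ss}$ embedded in $S$. The hypothesis $N\ge N_0(p)$ is exactly what guarantees that the relevant arithmetic subgroups act without fixed points on the building, so that distinct vertices map to distinct components and no spurious self-intersections or extra singularities appear; for small $N$ two branches of $S_{ss}$ may be glued, which is why (ii) only asserts a statement about \emph{normalizations}. I would therefore spend most of the effort controlling these stabilizers and checking that, for $N$ large, the components become disjointly embedded copies of $\mathscr{C}$ meeting pairwise in at most one superspecial point, after which the numerical assertions follow by transporting the local incidence data of the tree through the uniformization.
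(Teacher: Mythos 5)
The paper offers no proof of this theorem: it is quoted verbatim as Vollaard's result (\cite{Vo}, see also \cite{dS-G1}, Theorem 2.1), and your proposal — the Dieudonn\'e/Newton-polygon dichotomy and $a$-number computation for (i), Rapoport--Zink uniformization with the special/hyperspecial lattice--tree combinatorics, the Deligne--Lusztig/Hermitian-curve identification of the components, and a level condition making the uniformized components embed for (ii)--(iii) — is precisely an outline of Vollaard's own argument, which you in any case cite for the execution. So your approach coincides with the paper's; the few steps you leave implicit (density of $S_{\mu}$, connectedness of $S'\cap S_{ss}$, transversality at superspecial points) are exactly the ones supplied in \cite{Vo}.
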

Let $X$ be the geometric special fiber of the modular curve $\mathscr{X}$
which was constructed\footnote{We abuse notation and call the curve $\mathscr{Z}_{0}\times_{\underline{\text{Isom}}(\mathbb{Z}/N\mathbb{Z},\mu_{N})}\mathscr{X}$
simply $\mathscr{X}.$} in §\ref{modular curve}. It is a non-singular curve in $S.$ The
following corollary is clear from the description of the strata of
$S$.
\begin{cor}
The curve $X$ does not intersect $S_{gss}$. If $\underline{B}\in X(k)$
is such that $\underline{A}(\underline{B})\in S_{ssp}(k)$ then $B$
is supersingular, and vice versa.
\end{cor}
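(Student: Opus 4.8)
The plan is to compute the $p$-divisible group of $A=A(\underline{B}_0,\underline{B})=B_0\times A_1(\underline{B})$ and to read off the stratum from its Newton polygon and Oort's $a$-number, invoking the characterizations in the theorem above. The key structural point is that $A_1(\underline{B})=\mathcal{O}_E\otimes B/(\delta\otimes M)$ is isogenous to $\mathcal{O}_E\otimes B\cong B\times B$ by an isogeny of degree prime to $p$. Indeed $M$ has order $D$, and since $p$ is inert, hence unramified, in $E$ we have $p\nmid D$; thus the subgroup $\delta\otimes M$ has order prime to $p$, and it induces an isomorphism $A_1[p^\infty]\cong(\mathcal{O}_E\otimes B)[p^\infty]\cong B[p^\infty]^2$. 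Since $B_0$ reduces to a supersingular elliptic curve, $B_0[p^\infty]\cong\mathfrak{G}$, and therefore $A[p^\infty]\cong\mathfrak{G}\times B[p^\infty]^2$.

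I would then split according to the ordinary/supersingular dichotomy for the elliptic curve $B$. If $B$ is ordinary, $B[p^\infty]^2$ contributes slopes $0$ and $1$, so the Newton polygon of $A[p^\infty]$ is not of constant slope $1/2$; by part (i) of the theorem $A\notin S_{ss}=S_{gss}\cup S_{ssp}$, and hence $A\in S_\mu$. If $B$ is supersingular then $B[p^\infty]\cong\mathfrak{G}$, so $A[p^\infty]\cong\mathfrak{G}^3$ is superspecial, whence $a(A)=3$ and $A\in S_{ssp}$ by the theorem. As every elliptic curve over $k$ is ordinary or supersingular, the image of $X$ lies in $S_\mu\cup S_{ssp}$; since $S_{gss}=S_{ss}\setminus S_{ssp}$ meets neither, the first assertion $X\cap S_{gss}=\emptyset$ follows.

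For the equivalence I would use the $a$-number, which the theorem identifies as the invariant separating $S_{ssp}$ (where $a=3$) from the other strata (where $a=1$). Because $a(\mathcal{A}_x)=\dim_k\Hom(\alpha_p,\mathcal{A}_x[p])$ is additive in products and unchanged by prime-to-$p$ isogeny, one gets $a(A)=a(B_0)+a(A_1)=1+2\,a(B)$, using $a(B_0)=1$ and $a(A_1)=a(\mathcal{O}_E\otimes B)=2\,a(B)$. This equals $3$ precisely when $a(B)=1$, i.e. when $B$ is supersingular, which yields both $\underline{A}(\underline{B})\in S_{ssp}(k)\Rightarrow B$ supersingular and its converse simultaneously.

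The one place demanding care is the bookkeeping of the $\mathcal{O}_E$-action, since the stratum-membership criteria of the theorem are phrased for $p$-divisible groups \emph{with} $\mathcal{O}_E$-action. Working through the Newton polygon and the $a$-number sidesteps this, as both are intrinsic to the underlying $p$-divisible group and require no tracking of the two embeddings $\Sigma,\overline{\Sigma}$. Should one instead wish to exhibit the honest isomorphism $A[p^\infty]\cong\mathfrak{G}_\Sigma^2\times\mathfrak{G}_{\overline{\Sigma}}$ in the supersingular case, one checks that $\mathcal{O}_p\otimes_{\mathbb{Z}_p}\mathfrak{G}$ decomposes, as a $p$-divisible group with $\mathcal{O}_p$-action, into $\mathfrak{G}_\Sigma\times\mathfrak{G}_{\overline{\Sigma}}$, by splitting the covariant Dieudonn\'e module along the idempotents of $\mathcal{O}_p\otimes_{\mathbb{Z}_p}W(k)\cong W(k)\times W(k)$ and observing that $F$ and $V$ interchange the $\Sigma$- and $\overline{\Sigma}$-isotypic parts.
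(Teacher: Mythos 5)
Your proof is correct and is essentially the argument the paper leaves implicit: the paper dismisses the corollary as ``clear from the description of the strata of $S$'', and the intended reasoning is precisely yours --- $A[p^\infty]\cong\mathfrak{G}\times B[p^\infty]^2$ because $\delta\otimes M$ has order prime to $p$ (as $p$ is unramified in $E$), after which the Newton-polygon criterion for $S_{ss}$ and the $a$-number criterion for $S_{ssp}$ place $\underline{A}(\underline{B})$ in $S_\mu$ or $S_{ssp}$ according to whether $B$ is ordinary or supersingular. Your remark that both invariants are insensitive to the $\mathcal{O}_E$-action correctly isolates and disposes of the only delicate point, so the optional final paragraph (whose phrase ``splitting along the idempotents'' compresses the fact that one must choose $F,V$-stable rank-two sublattices straddling the two eigenspaces, which $F$ and $V$ interchange) is not needed for the argument.
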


\subsection{\label{tangent bundle}The tangent bundle of $S$}

\subsubsection{The special line sub-bundle $TS^{+}$}

Outside $S_{ssp},$ one may define a natural line sub-bundle $TS^{+}$
of the tangent bundle $TS$ of $S.$ For this recall the following
facts from \cite{dS-G1}. Let $\Omega_{\mathcal{A}/S}$ be the sheaf
of relative differentials of the universal abelian variety $\mathcal{A}$,
and $\omega_{\mathcal{A}}=f_{*}\Omega_{\mathcal{A}/S}$ where $f:\mathcal{A}\to S$
is the structure morphism. Then $\omega_{\mathcal{A}}$ is a rank
3 vector bundle on $S,$ can be identified with the cotangent space
of $\mathcal{A}$ at the origin, and admits a decomposition
\[
\omega_{\mathcal{A}}=\mathcal{P}\oplus\mathcal{L}
\]
into a plane bundle $\mathcal{P}$ on which $\mathcal{O}_{E}$ acts
via $\Sigma$ and a line bundle $\mathcal{L}$ on which it acts via
$\overline{\Sigma}.$ Let $\Phi\colon S\to S$ be the absolute Frobenius
morphism of degree $p,$ and $\mathcal{A}^{(p)}=S\times_{\Phi,S}\mathcal{A}$
the base change of $\mathcal{A}.$ Similar notation will be employed
for the base change of the vector bundles $\mathcal{P}$ or $\mathcal{L}.$
The Verschiebung homomorphism $\Ver_{\mathcal{A}/S}\colon$$\mathcal{A}^{(p)}\to\mathcal{A}$
induces maps
\[
V_{\mathcal{P}}\colon\mathcal{P}\to\mathcal{L}^{(p)},\,\,\,V_{\mathcal{L}}\colon\mathcal{L}\to\mathcal{P}^{(p)},
\]
which, outside $S_{ssp}$, are both of rank 1. At the superspecial
points these maps vanish. Let
\[
\mathcal{P}_{0}=\ker(V_{\mathcal{P}}).
\]
Outside the superspecial points, $\mathcal{P}_{0}$ is a line sub-bundle
of $\mathcal{P}$. Outside $S_{ss}$, the lines $\mathcal{P}_{0}^{(p)}$
and $V_{\mathcal{L}}(\mathcal{L})$ are distinct, but along $S_{gss}$
they coincide. In fact,
\[
V_{\mathcal{P}}^{(p)}\circ V_{\mathcal{L}},
\]
which is a global section of $\mathcal{L}^{p^{2}-1}$, is the Hasse invariant (cf. \cite[Appendix B]{GN}; one of the main contributions of \cite{GN} is the construction of the Hasse invariant for unitary Shimura varieties over totally real fields, which is substantially more difficult),
and $V_{\mathcal{P}}^{(p)}\circ V_{\mathcal{L}}=0$ is the equation
defining $S_{ss}$ as a subscheme of $S.$

The Kodaira-Spencer isomorphism is an isomorphism
\[
\KS:\mathcal{P}\otimes\mathcal{L}\simeq\Omega_{S/k}=TS^{\vee}.
\]

\begin{defn*}
Outside $S_{ssp},$ we define $TS^{+}$ to be the annihilator of the
line bundle $\KS(\mathcal{P}_{0}\otimes\mathcal{L}).$ We call $TS^{+}$
the \emph{special sub-bundle} of $TS.$ By an \emph{integral curve}
of $TS^{+}$ we mean a nonsingular curve $C\subset S-S_{ssp}$ for
which $TS^{+}|_{C}=TC$, i.e. $TS^{+}$ is tangent to $C$. 
\end{defn*}
\begin{thm}
\label{IntCurves}(i) $S_{gss}$ is an integral curve of $TS^{+}.$

(ii) The modular curve $X_{ord}=X\cap S_{\mu}$ is an integral curve
of $TS^{+}.$
\end{thm}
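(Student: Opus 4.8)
The plan is to verify the defining condition $TS^{+}|_{C}=TC$ for each of the two curves by computing, at a geometric point of the curve, both the tangent line $T_xC$ (via the Kodaira--Spencer isomorphism) and the line $TS^{+}|_x$ (the annihilator of $\KS(\mathcal{P}_0\otimes\mathcal{L})$), and checking they agree. Since $\KS\colon\mathcal{P}\otimes\mathcal{L}\simeq\Omega_{S/k}$ is an isomorphism, identifying $TS^{+}|_x$ amounts to identifying $\mathcal{P}_0|_x=\ker(V_{\mathcal{P}})|_x\subset\mathcal{P}|_x$, so the whole problem reduces to a computation inside the two-dimensional cotangent bundle $\omega_{\mathcal{A}}=\mathcal{P}\oplus\mathcal{L}$ together with its Verschiebung maps $V_{\mathcal{P}},V_{\mathcal{L}}$. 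The key input is therefore an explicit description of $V_{\mathcal{P}}$, $V_{\mathcal{L}}$, and hence of $\mathcal{P}_0$, along each curve, which I expect to extract from the Dieudonné modules of the relevant $p$-divisible groups.

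For part (i), the curve is $C=S_{gss}$, the nonsingular locus of $S_{ss}$. I would first invoke the remark recorded just before the definition of $TS^{+}$: outside $S_{ss}$ the lines $\mathcal{P}_0^{(p)}$ and $V_{\mathcal{L}}(\mathcal{L})$ are distinct, \emph{but along $S_{gss}$ they coincide}, and $S_{ss}$ is cut out by the Hasse invariant $V_{\mathcal{P}}^{(p)}\circ V_{\mathcal{L}}=0$. The plan is to use this coincidence: along $S_{gss}$ one has a nontrivial relation among $\mathcal{P}_0$, $\mathcal{L}$ and their Frobenius twists that pins down the conormal direction of the curve. Concretely, I would differentiate the defining equation of $S_{ss}$ (the vanishing of the Hasse invariant, a section of $\mathcal{L}^{p^2-1}$) to obtain the conormal bundle $N^\vee_{S_{ss}/S}$ as a sub-line-bundle of $\Omega_{S/k}$, and then show via the Kodaira--Spencer identification that this conormal line is exactly $\KS(\mathcal{P}_0\otimes\mathcal{L})$. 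Equivalently, since $T_xS_{ss}$ is the annihilator of the conormal line, this says $T_xS_{ss}=TS^{+}|_x$, which is the assertion. The main subtlety here is keeping careful track of the $p$-power twists, because the Hasse invariant is built from $V_{\mathcal{P}}^{(p)}\circ V_{\mathcal{L}}$ and its differential naturally lands in a Frobenius-twisted bundle; the coincidence $\mathcal{P}_0^{(p)}=V_{\mathcal{L}}(\mathcal{L})$ on $S_{gss}$ is precisely what untwists this and matches it with $\mathcal{P}_0$.

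For part (ii), the curve is $X_{ord}=X\cap S_{\mu}$, the ordinary part of the embedded modular curve. Here I would exploit the explicit product structure of the universal object along $X$ constructed in \S\ref{modular curve}: over a point $\underline B\in X(k)$ the abelian threefold is $A=B_0\times A_1(\underline B)$ with $A_1=\mathcal{O}_E\otimes B/(\delta\otimes M)$, where $B_0$ is the fixed supersingular CM elliptic curve (of type $\Sigma$) and $B$ is the varying elliptic curve. The tangent line $T_xX$ is governed, through Kodaira--Spencer, by the deformations of $B$ alone, so $\KS^{-1}(\Omega_{S/k})$ restricted to the conormal of $X$ is a definite one of the two factors $\mathcal{P}$ or $\mathcal{L}$. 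The plan is to compute $\mathcal{P}_0=\ker(V_{\mathcal{P}})$ for this product and to check that the line $\KS(\mathcal{P}_0\otimes\mathcal{L})$ annihilates $T_xX$; since on $X_{ord}$ the curve $B$ is ordinary while $B_0$ is supersingular, the Verschiebung $V_{\mathcal{P}}$ is governed by the supersingular factor $B_0$ and its kernel $\mathcal{P}_0$ picks out the line coming from $B_0$, which is transverse to the deformation direction of $B$ that spans $T_xX$. I expect the main obstacle to be the bookkeeping in the Kodaira--Spencer decomposition of $\omega_{\mathcal{A}}=\mathcal{P}\oplus\mathcal{L}$ for the product $B_0\times A_1$: one must sort out which summand of $\mathcal{P}$ comes from $B_0$ and which from $A_1$, and confirm that the $\Sigma$-eigenspace structure forces $\mathcal{P}_0$ into the $B_0$-factor. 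Once this identification is made, the equality $TS^{+}|_{X_{ord}}=TX_{ord}$ follows from the fact that $X$ is cut out (inside the ball quotient $S$) by fixing the $A_1$-direction, matching the annihilator of $\KS(\mathcal{P}_0\otimes\mathcal{L})$.
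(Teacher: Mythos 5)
Your part (ii) is, in both strategy and substance, the proof the paper gives: along $X_{ord}$ one has $\mathcal{A}_{x}=B_{0}\times\mathcal{A}_{1,x}$, ordinarity of $\mathcal{A}_{1,x}$ makes $V$ injective on $\omega_{\mathcal{A}_{1,x}}(\Sigma)$ while supersingularity of $B_{0}$ gives $\mathcal{P}_{0}|_{x}=\omega_{B_{0}}$, and the constancy of $B_{0}$ along $X$ forces $\KS(\mathcal{P}_{0}\otimes\mathcal{L})|_{x}$ to annihilate $T_{x}X$, whence $T_{x}X=TS^{+}|_{x}$ since both are lines. One correction of wording: what does the work is that $B_{0}$ does not deform along $X$ (so the Kodaira--Spencer pairing of $\omega_{B_{0}}$ against $T_{x}X$ vanishes), not that "$X$ is cut out by fixing the $A_{1}$-direction" or any transversality of $\mathcal{P}_{0}$ to the $B$-deformation; state it as the annihilation, not as transversality.

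For part (i) the paper gives no argument at all: it cites \cite{dS-G2}, Proposition 3.11. Your route---differentiate the Hasse invariant $h=V_{\mathcal{P}}^{(p)}\circ V_{\mathcal{L}}$ and identify the conormal line of $S_{ss}$ with $\KS(\mathcal{P}_{0}\otimes\mathcal{L})$---is therefore genuinely independent, and it can be made to work; but as written it stops exactly at the step carrying all the content, and the input you propose cannot supply that step. The gap: $dh$ at a point of $S_{gss}$ measures first-order variation in the direction \emph{normal} to the curve, and the data you list as "key input" (the restrictions of $V_{\mathcal{P}},V_{\mathcal{L}},\mathcal{P}_{0}$ to the curve, read off from the Dieudonn\'e modules of the fibers) is constant information along $S_{gss}$; no fiberwise Dieudonn\'e theory determines a derivative. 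The missing ingredient is deformation-theoretic: $F$ and $V$ on $H_{dR}^{1}(\mathcal{A}/S)$ are induced by the isogenies $\Fr_{\mathcal{A}/S}$ and $\Ver_{\mathcal{A}/S}$, hence are \emph{horizontal} for the Gauss--Manin connection $\nabla$. Granting this, for $\ell$ a local generator of $\mathcal{L}$ and $t$ a tangent vector at a point of $S_{ss}$, one gets the well-defined formula $dh(t)(\ell)=V^{(p)}V(\nabla_{t}\ell)$ (well-defined because $V^{(p)}V(\mathcal{L})=h=0$ on $S_{ss}$, so only the class of $\nabla_{t}\ell$ modulo $\mathcal{L}$, i.e.\ the Kodaira--Spencer class, matters; note also that the twisted factor $V_{\mathcal{P}}^{(p)}$ differentiates to zero, which resolves your "$p$-power bookkeeping" worry).

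With that formula the rest is linear algebra, and here your quoted coincidence enters exactly as you expect. Put $U=\mathrm{Im}(F)=\ker(V)\subset H_{dR}^{1}$, a maximal isotropic subbundle. On $S_{gss}$ one has $\mathcal{P}_{0}=\omega\cap U$, hence $\mathcal{P}_{0}^{\perp}=\omega+U$ (both $\omega$ and $U$ are maximal isotropic and the polarization pairing is flat for $\nabla$). Then $dh(t)=0$ iff $V(\nabla_{t}\ell)\in U^{(p)}\cap\omega^{(p)}=\mathcal{P}_{0}^{(p)}$; using $V_{\mathcal{L}}(\mathcal{L})=\mathcal{P}_{0}^{(p)}$ and $\dim\ker\bigl(V|_{H_{dR}^{1}(\overline{\Sigma})}\bigr)=1$, this holds iff $\nabla_{t}\ell\in(\omega+U)(\overline{\Sigma})=\mathcal{P}_{0}^{\perp}(\overline{\Sigma})$, i.e.\ iff $\left\langle p_{0},\nabla_{t}\ell\right\rangle =0$, i.e.\ (flatness of the pairing again) iff $\left\langle \nabla_{t}p_{0},\ell\right\rangle =0$, which is precisely $t\in TS^{+}|_{x}$. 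Since $S_{ss}$ is reduced and smooth along $S_{gss}$, $dh\neq0$ there, so $\ker(dh)=TS_{gss}$ and hence $TS_{gss}=TS^{+}|_{S_{gss}}$. In short: your outline is correct and more self-contained than the paper's treatment of (i), but it only becomes a proof after you add the Gauss--Manin horizontality of $F$ and $V$; without it the central identification is an assertion, not a computation.
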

\begin{proof}
Part (i), although not stated there in this form, was proved in \cite{dS-G2}
Proposition 3.11. For (ii) observe that if $x\in X_{ord}(k)\subset S_{\mu}(k)$
then we have the decomposition $\mathcal{A}_{x}=B_{0}\times\mathcal{A}_{1,x}$
where $\mathcal{A}_{1}$ is the abelian surface constructed along
$X$ from the universal elliptic curve $\mathcal{B}$ (and the universal
cyclic subgroup of rank $D)$ as in §\ref{modular curve}. For the
cotangent space we have accordingly
\[
\omega_{\mathcal{A}}|_{x}=\omega_{\mathcal{A}_{x}}=\omega_{B_{0}}\oplus\omega_{\mathcal{A}_{1,x}},
\]
where the first summand is of type $\Sigma$ and the second of type
$(\Sigma,\overline{\Sigma}).$ Thus
\[
\mathcal{P}|_{x}=\omega_{B_{0}}\oplus\omega_{\mathcal{A}_{1,x}}(\Sigma).
\]
As $\mathcal{A}_{1,x}$ is ordinary, $V$ is injective on $\omega_{\mathcal{A}_{1,x}}(\Sigma)$
and 
\[
\mathcal{P}_{0}|_{x}=\ker(V:\mathcal{P}|_{x}\to\mathcal{L}^{(p)}|_{x})=\omega_{B_{0}}.
\]
As $B_{0}$ is constant along $X$, $\KS(\mathcal{P}_{0}\otimes\mathcal{L}|_{x})\subset\Omega_{S/k}|_{x}$
annihilates the line $T_{x}X\subset T_{x}S.$ This proves that $T_{x}X=TS^{+}|_{x}$
as claimed.
\end{proof}
There are many modular curves and Shimura curves like $X$ on $S$,
and by similar arguments they are all integral curves of the special
sub-bundle. It would be interesting to know if these are the only
integral curves of $TS^{+}$ in $S_{\mu}.$ This is an ``André-Oort
type'' question. It would imply, in particular, that there are no
integral curves passing through the CM points constructed in §\ref{CM points}.
Note that in characteristic $p$ there could be many integral curves
tangent to a perfectly nice vector field. The curves $x-c+\lambda y^{p}=0$,
for varying $c$ and $\lambda$, are all tangent to the vector field
$\partial/\partial y$ in $\mathbb{A}^{2}$, and infinitely many of
them pass through any given point. The correct formulation of the
problem should probably ask for curves annihilated by a larger class
of differential operators. Such a class should contain, besides the
differential operators generated by $TS^{+}$, also ``divided powers''.

\subsubsection{A characterization in terms of generalized Serre-Tate coordinates}

We shall now give a \emph{second} characterization of $TS^{+}$, which
relates it to Moonen's work on generalized Serre-Tate coordinates
in $S_{\mu}$. For the following proposition see \cite{Mo}, Example
3.3.2 and 3.3.3(d) (case AU, $r=3,$ $m=1$).
\begin{prop}
\label{Moonen}Let $x\in S_{\mu}.$ Let $\mathfrak{\widehat{G}}$
be the formal group over $k$ associated with the $p$-divisible group
$\mathfrak{G}$ and let $\mathbb{\widehat{G}}_{m}$ be the formal
multiplicative group over $k.$ Then the formal neighborhood $Spf(\widehat{\mathcal{O}}_{S,x})$
of $x$ has a natural structure of a $\mathbb{\widehat{G}}_{m}$-torsor
over $\mathfrak{\widehat{G}}$. In particular, it contains a canonical
copy of $\mathbb{\widehat{G}}_{m}$ sitting over the origin of $\mathfrak{\widehat{G}}$.
\end{prop}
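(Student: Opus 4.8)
The goal is to prove that the formal neighborhood $\mathrm{Spf}(\widehat{\mathcal{O}}_{S,x})$ of a $\mu$-ordinary point $x\in S_\mu$ carries a natural structure of a $\widehat{\mathbb{G}}_m$-torsor over $\widehat{\mathfrak{G}}$, the formal group of the height-$2$ slope-$1/2$ $p$-divisible group. This is essentially an unwinding of Moonen's theory of generalized Serre--Tate coordinates applied to our specific PEL datum, so I will not reprove Moonen's general theorem but rather verify that our case falls under it and that the resulting formal group is the one claimed.

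The plan is to proceed as follows. First I would recall, from Theorem above, the structure of the $p$-divisible group at $x\in S_\mu(k)$:
\[
\mathcal{A}_x[p^\infty]\simeq(\mathcal{O}_E\otimes\mu_{p^\infty})\times\mathfrak{G}_\Sigma\times(\mathcal{O}_E\otimes\mathbb{Q}_p/\mathbb{Z}_p).
\]
This displays the $p$-divisible group as built from a multiplicative part $\mathcal{O}_E\otimes\mu_{p^\infty}$, an étale part $\mathcal{O}_E\otimes\mathbb{Q}_p/\mathbb{Z}_p$, and a ``biconnected'' middle piece $\mathfrak{G}_\Sigma$ of slope $1/2$. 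The deformation theory of such a $p$-divisible group, with its $\mathcal{O}_E$-action, governs the local structure of $S$ at $x$ by the Serre--Tate theorem (the deformation functor of $\underline{A}$ is isomorphic to that of $\mathcal{A}_x[p^\infty]$ with its extra structure). The second step is to invoke Moonen's result \cite{Mo} in the stated case (type AU, $r=3$, $m=1$): the universal deformation space acquires a ``cascade'' structure whose bottom layer is the formal group $\widehat{\mathfrak{G}}$ attached to the biconnected part, with a $\widehat{\mathbb{G}}_m$-torsor sitting above it. Concretely, I would identify the two basic extension problems: deformations involve extensions of the étale part by the biconnected part (parametrized by $\widehat{\mathfrak{G}}$, since $\mathrm{Hom}$ of the relevant Tate/Dieudonné modules over $\mathcal{O}_E$ yields one copy of the formal group of $\mathfrak{G}$ after accounting for the $\mathcal{O}_E$-action forcing the $\Sigma$-isotypic choice), and extensions of the biconnected part by the multiplicative part (parametrized by $\widehat{\mathbb{G}}_m$, the classical Serre--Tate coordinate between a $\mu_{p^\infty}$-type and a biconnected slope-$1/2$ piece). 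The torsor structure then expresses that the full deformation assembles these two layers.

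The dimension bookkeeping is the reassuring check: $S$ is a surface, so $\widehat{\mathcal{O}}_{S,x}$ is $2$-dimensional, matching $\dim\widehat{\mathfrak{G}}+\dim\widehat{\mathbb{G}}_m=1+1=2$, since $\mathfrak{G}$ is $1$-dimensional. The $\mathcal{O}_E$-action is what cuts the naively larger deformation space down to these one-dimensional layers: without it one would see several copies of $\mathbb{G}_m$ and of the biconnected formal group, but the requirement that all maps be $\mathcal{O}_E$-linear, together with the signature $(2,1)$ condition isolating the $\Sigma$ versus $\overline{\Sigma}$ isotypic components, selects exactly one coordinate of each type. The final step is to read off the canonical copy of $\widehat{\mathbb{G}}_m$ as the fiber of the torsor over the origin (identity section) of $\widehat{\mathfrak{G}}$; this is the locus where the étale-by-biconnected extension is trivial, i.e. where the biconnected part does not deform away from its canonical (split) position, leaving only the multiplicative Serre--Tate coordinate alive.

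The main obstacle I anticipate is not any single hard estimate but the careful identification of which isotypic pieces of the various $\mathrm{Ext}$-groups survive after imposing the $\mathcal{O}_E$-linearity and the polarization compatibility. One must check that the biconnected piece $\mathfrak{G}_\Sigma$ contributes its formal group exactly once (the $\overline{\Sigma}$-component being absorbed into duality with the étale/multiplicative parts via the principal polarization $\phi$), so that the base of the cascade is genuinely $\widehat{\mathfrak{G}}$ and not $\widehat{\mathfrak{G}}\times\widehat{\mathfrak{G}}$ or a quotient. This is precisely the content encoded in Moonen's parameters $(r=3,m=1)$, and verifying that our PEL datum matches those parameters---rather than re-deriving the cascade from scratch---is the cleanest route; I would therefore present the proof as a translation of our setup into Moonen's framework, with the signature and self-duality conditions from the definition of the moduli problem $(S)$ doing the essential work.
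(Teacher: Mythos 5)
Your overall route is the same as the paper's: the paper gives no proof of this proposition at all, but simply refers to \cite{Mo}, Example 3.3.2 and 3.3.3(d) (case AU, $r=3$, $m=1$), so ``translate the PEL datum into Moonen's framework and invoke the cascade'' is exactly the intended argument. Your setup is also right: Serre--Tate reduces deformations of $\underline{A}$ to deformations of $\mathcal{A}_x[p^{\infty}]$ with its $\mathcal{O}_E$-action and polarization, and the slope filtration has graded pieces $\mathcal{O}_E\otimes\mu_{p^{\infty}}$ (slope $1$), $\mathfrak{G}_{\Sigma}$ (slope $1/2$), $\mathcal{O}_E\otimes\mathbb{Q}_p/\mathbb{Z}_p$ (slope $0$), with the adjacent-slope extension $\mathrm{Ext}(\mathcal{O}_E\otimes\mathbb{Q}_p/\mathbb{Z}_p,\,\mathfrak{G}_{\Sigma})$ correctly identified, via $\Hom_{\mathcal{O}_E}(\mathcal{O}_p,\widehat{\mathfrak{G}})$, with one copy of $\widehat{\mathfrak{G}}$.

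There is, however, a concrete error in your identification of the torsor direction. You assert that the fiber $\widehat{\mathbb{G}}_m$ parametrizes ``extensions of the biconnected part by the multiplicative part'' and call this ``the classical Serre--Tate coordinate between a $\mu_{p^{\infty}}$-type and a biconnected slope-$1/2$ piece.'' Both claims fail: in the cascade, the $\mathrm{Ext}$ between graded pieces of slopes $a<b$ is a formal group of slope $b-a$, so $\mathrm{Ext}(\mathfrak{G}_{\Sigma},\,\mathcal{O}_E\otimes\mu_{p^{\infty}})$ is a slope-$1/2$ object --- another copy of $\widehat{\mathfrak{G}}$ (using that $\mathfrak{G}$ is self-dual) --- and not $\widehat{\mathbb{G}}_m$. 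Moreover this parameter is not an independent direction: the principal polarization exchanges it with the \'{e}tale-by-biconnected parameter, which is exactly why the base of the cascade is a \emph{single} copy of $\widehat{\mathfrak{G}}$ rather than $\widehat{\mathfrak{G}}\times\widehat{\mathfrak{G}}$. The genuine $\widehat{\mathbb{G}}_m$-torsor direction is the $\mathrm{Ext}$ between the two \emph{extreme} pieces, the slope-$0$ \'{e}tale part and the slope-$1$ multiplicative part --- precisely the classical Serre--Tate situation, in which no slope-$1/2$ piece occurs at all; $\mathcal{O}_E$-linearity gives $\mathcal{O}_p\otimes\widehat{\mathbb{G}}_m$ and the symmetry forced by the polarization cuts this down to one copy of $\widehat{\mathbb{G}}_m$. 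As written, your verification would compute the wrong fiber group (you would find $\widehat{\mathfrak{G}}$), double-count the adjacent-slope parameter, and omit the extreme-slope one; the bookkeeping $1+1=2$ masks the mismatch. Once the layers are relabelled in this way, your final step --- the canonical copy of $\widehat{\mathbb{G}}_m$ over the origin being the locus where the adjacent extensions split, with the split deformation giving the base point that trivializes the torsor there --- is correct.
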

\begin{thm}
Let $x\in S_{\mu}$. Then the line $TS^{+}|_{x}$ is tangent to the
canonical copy of $\mathbb{\widehat{G}}_{m}$ in $Spf(\widehat{\mathcal{O}}_{S,x})$.
\end{thm}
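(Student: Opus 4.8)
The plan is to reduce the statement to a first-order computation and then to match two descriptions of the same line in $T_{x}S$. Recall that, through the Gauss--Manin connection, the Kodaira--Spencer map sends a tangent vector $v\in T_{x}S$ to the homomorphism $\KS(v)\colon\omega_{\mathcal{A}_{x}}\to\Lie(\mathcal{A}_{x})$ measuring the first-order motion of the Hodge filtration in the direction $v$; under $\KS\colon\mathcal{P}\otimes\mathcal{L}\simeq\Omega_{S/k}$ the defining condition of $TS^{+}$ translates into
\[
TS^{+}|_{x}=\{v\in T_{x}S\mid \KS(v)(\mathcal{P}_{0})=0\},
\]
i.e. the directions along which the sub-line $\mathcal{P}_{0}\subset\omega_{\mathcal{A}_{x}}$ is, to first order, preserved by the variation of the Hodge filtration. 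So I must (a) identify $\mathcal{P}_{0}|_{x}$ explicitly, and (b) show that the tangent direction of the canonical $\widehat{\mathbb{G}}_{m}$ preserves it.

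For step (a), write the $\mu$-ordinary $p$-divisible group at $x$ as $G_{1}\times G_{2}\times G_{3}$ with $G_{1}=\mathcal{O}_{E}\otimes\mu_{p^{\infty}}$ (multiplicative), $G_{2}=\mathfrak{G}_{\Sigma}$ (supersingular), $G_{3}=\mathcal{O}_{E}\otimes\mathbb{Q}_{p}/\mathbb{Z}_{p}$ (\'etale). Since $G_{3}$ is \'etale and $\omega_{G_{2}}$ is of pure type $\Sigma$, the cotangent space splits as $\mathcal{P}|_{x}=\omega_{G_{1}}(\Sigma)\oplus\omega_{G_{2}}$ and $\mathcal{L}|_{x}=\omega_{G_{1}}(\overline{\Sigma})$. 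Now $V_{\mathcal{P}}$ is the restriction of $\Ver^{*}$ to $\omega(\Sigma)=\mathcal{P}$; it is block-diagonal for the product, and on $G_{1}$ it is an isomorphism (Verschiebung is invertible on a multiplicative group). On $G_{2}$ it vanishes: being $\mathcal{O}_{E}$-linear, $\Ver^{*}$ carries $\omega_{G_{2}}\subset\omega(\Sigma)$ into $\omega^{(p)}(\Sigma)$, while being factor-preserving it carries $\omega_{G_{2}}$ into $(\omega_{G_{2}})^{(p)}$, a space of type $\overline{\Sigma}$; these intersect trivially, so $V_{\mathcal{P}}|_{\omega_{G_{2}}}=0$ (equivalently, $G_{2}$ is supersingular, so its Hasse-type map is zero). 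Therefore
\[
\mathcal{P}_{0}|_{x}=\ker V_{\mathcal{P}}|_{x}=\omega_{G_{2}}=\omega_{\mathfrak{G}_{\Sigma}},
\]
generalizing the identity $\mathcal{P}_{0}|_{x}=\omega_{B_{0}}$ from the proof of Theorem \ref{IntCurves}(ii). Thus $\mathcal{P}_{0}|_{x}$ is the supersingular cotangent line, and $TS^{+}|_{x}$ is exactly the line of deformations that do not move the $\mathfrak{G}$-part of the Hodge filtration.

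For step (b) I use Proposition \ref{Moonen}. In the cascade the base $\widehat{\mathfrak{G}}$ records the deformation coupling the middle slope $\mathfrak{G}$ to the two extreme slopes, while the fibre $\widehat{\mathbb{G}}_{m}$ records the extension of the \'etale $G_{3}$ by the multiplicative $G_{1}$, i.e. the classical Serre--Tate parameter (here $\mathrm{Ext}^{1}(G_{3},G_{1})$, cut down to one dimension by the $\mathcal{O}_{E}$-action and the polarization). Hence over the origin of $\widehat{\mathfrak{G}}$ the coupling of $\mathfrak{G}$ is trivial: along the canonical $\widehat{\mathbb{G}}_{m}$ the universal $p$-divisible group is $\mathcal{E}\times\mathfrak{G}_{\Sigma}$, with $\mathfrak{G}_{\Sigma}$ a \emph{constant} direct factor and $\mathcal{E}$ the Serre--Tate deformation of $G_{1}\times G_{3}$. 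Consequently the de Rham realization of the $\mathfrak{G}$-factor, together with its Hodge filtration $\omega_{\mathfrak{G}}=\mathcal{P}_{0}$, is constant along the curve, the Gauss--Manin connection is trivial on this summand, and so $\KS(v_{0})(\mathcal{P}_{0})=0$ for $v_{0}$ the tangent to the canonical $\widehat{\mathbb{G}}_{m}$. By step (a) this gives $v_{0}\in TS^{+}|_{x}$; since both lines are one-dimensional they coincide, proving the theorem. As a consistency check, when $x$ lies on an embedded modular curve $X_{ord}$ one has $\mathcal{A}=B_{0}\times\mathcal{A}_{1}$ with $B_{0}=\mathfrak{G}_{\Sigma}$ constant, so the formal completion of $X_{ord}$ is precisely this canonical $\widehat{\mathbb{G}}_{m}$, recovering Theorem \ref{IntCurves}(ii).

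The main obstacle is the bridge in step (b) between Moonen's description --- which produces the cascade, and in particular the canonical $\widehat{\mathbb{G}}_{m}$, from the Dieudonn\'e/display theory of the universal deformation --- and the definition of $TS^{+}$, which lives at the level of the Hodge filtration and Verschiebung. What must be checked with care is that triviality of the $\widehat{\mathfrak{G}}$-coordinate really forces $\mathfrak{G}_{\Sigma}$ to split off as a constant factor of the \emph{de Rham realization} (not merely abstractly of the $p$-divisible group), so that its contribution to the Gauss--Manin connection, and hence to $\KS(v_{0})$, vanishes. Equivalently, one verifies that the first-order Serre--Tate class carried by $v_{0}$ lies in $\mathrm{Ext}^{1}(G_{3},G_{1})$ and that under $\KS$ it factors through $\mathcal{P}/\mathcal{P}_{0}=\omega_{G_{1}}(\Sigma)$, thereby annihilating $\mathcal{P}_{0}$. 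This compatibility is naturally settled by the explicit Dieudonn\'e-module computations in the style of the appendix.
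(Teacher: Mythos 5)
Your proposal is correct in substance and follows the same route as the paper: reduce to showing that the Kodaira-Spencer map of $i^{*}\mathcal{A}$ over $\mathfrak{S}=\widehat{\mathbb{G}}_{m}$ kills $\mathcal{P}_{0}$, identify $\mathcal{P}_{0}$ with $\omega_{\mathfrak{G}}$ (your step (a) is a clean fiberwise version of this), and use Moonen's cascade to get horizontality of this line under Gauss-Manin. The only real divergence is the step you yourself flag. You pass through the stronger claim that the whole $p$-divisible group splits as $\mathcal{E}\times\mathfrak{G}_{\Sigma}$ along $\mathfrak{S}$; this is true but needs an argument (inside $\mathcal{A}[p^{\infty}]$ take the constant copy of $\mathfrak{G}_{\Sigma}$ in the constant $Fil^{1}$ and the preimage $\mathcal{E}$ of the \'etale factor of the constant $\mathcal{A}[p^{\infty}]/Fil^{2}$; they intersect trivially and their heights add up). The paper avoids this entirely: it uses only what Moonen gives directly, namely constancy of $Fil^{1}\mathcal{A}[p^{\infty}]$ and of $G=\mathcal{A}[p^{\infty}]/Fil^{2}$ over $\mathfrak{S}$, together with the inclusion $\mathcal{P}_{0}\subset Fil^{2}H_{dR}^{1}(\mathcal{A}/\mathfrak{S})=D(G)$ (valid because $Fil^{2}\mathcal{A}[p^{\infty}]$ is multiplicative, so $\ker V$ lies in $Fil^{2}H_{dR}^{1}$), and then argues inside the quotient crystal $D(G)=D(\mathfrak{G})\oplus D(\mathcal{O}_{E}\otimes\mathbb{Q}_{p}/\mathbb{Z}_{p})$. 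Finally, the ``bridge'' you defer to explicit Dieudonn\'e-module computations in the style of the appendix is not settled that way, but by exactly the crystalline functoriality the paper invokes from Grothendieck: $D(-)_{\mathfrak{S}}$ with its connection is functorial in the $p$-divisible group, $D(\mathcal{A}[p^{\infty}])$ is $H_{dR}^{1}(\mathcal{A}/\mathfrak{S})$ with $\nabla$ the Gauss-Manin connection, and a constant group has constant crystal with constant Hodge filtration; this converts constancy of the $\mathfrak{G}$-part (in either your form or the paper's) into $\nabla(\mathcal{P}_{0})\subset\mathcal{P}_{0}\otimes\Omega_{\mathfrak{S}}^{1}$, after which $\KS(\mathcal{P}_{0})=0$ because the projection $H_{dR}^{1}\to H^{1}(\mathcal{A},\mathcal{O})$ annihilates $\omega_{\mathcal{A}/\mathfrak{S}}\supset\mathcal{P}_{0}$.
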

At a point $x$ lying on a modular curve $X$ as above, the canonical
copy of $\mathbb{\widehat{G}}_{m}$ is identified with the classical
Serre-Tate coordinate on $X$, i.e. the formal completion of $X$
at $x$ coincides with $i(\mathbb{\widehat{G}}_{m})$ as a closed
formal subscheme of $Spf(\widehat{\mathcal{O}}_{S,x}).$ In this case
the theorem is a consequence of Theorem \ref{IntCurves}(ii). Our
claim can therefore be viewed as an extension of Theorem \ref{IntCurves}(ii)
to a general $\mu$-ordinary point, at which the formal curve $i(\mathbb{\widehat{G}}_{m})$
may no longer be ``integrated''.
\begin{proof}
Write $\mathbb{\widehat{G}}_{m}=Spf(k[[T-1]])$ with comultiplication
$T\mapsto T\otimes T$, and let $i:\mathbb{\widehat{G}}_{m}\hookrightarrow Spf(\widehat{\mathcal{O}}_{S,x})$
be the embedding of formal schemes given by Proposition \ref{Moonen}.
It sends the closed point $1$ of $\mathbb{\widehat{G}}_{m}$ to $x.$
Let $i_{*}$ be the induced map on tangent spaces
\[
i_{*}:T\mathbb{\widehat{G}}_{m}|_{1}\hookrightarrow TS|_{x}.
\]
We have to show that $i_{*}(\partial/\partial T)$ annihilates $\KS(\mathcal{P}_{0}\otimes\mathcal{L})|_{x}.$
This is equivalent to saying that when we consider the pull back $i^{*}\mathcal{A}$
of the universal abelian scheme to $\mathbb{\widehat{G}}_{m}$, \emph{its}
Kodaira-Spencer map kills $\mathcal{P}_{0}\otimes\mathcal{L}|_{1}$.
For this recall the definition of $\KS=\KS(\Sigma)$ from \cite{dS-G1},
§1.4.2.

Let $\mathfrak{S}=\mathbb{\widehat{G}}_{m}$ and write for simplicity
$\mathcal{A}$ for $i^{*}\mathcal{A}$. We then have the following
commutative diagram
\begin{equation}
\begin{array}{ccc}
\mathcal{P}=\omega_{\mathcal{A}/\mathfrak{S}}(\Sigma) & \hookrightarrow & H_{dR}^{1}(\mathcal{A}/\mathfrak{S})(\Sigma)\\
\downarrow\KS &  & \downarrow\nabla\\
\mathcal{L}^{\vee}\otimes\Omega_{\mathfrak{S}}^{1}\simeq\omega_{\mathcal{A}^{t}/\mathfrak{S}}^{\vee}(\Sigma)\otimes\Omega_{\mathfrak{S}}^{1} & \longleftarrow & H_{dR}^{1}(\mathcal{A}/\mathfrak{S})(\Sigma)\otimes\Omega_{\mathfrak{S}}^{1}
\end{array}\label{eq:KS}
\end{equation}
in which we identified $H^{1}(\mathcal{A},\mathcal{O})$ with $H^{0}(\mathcal{A}^{t},\Omega^{1})^{\vee}$
and used the polarization to identify the latter with $\omega_{\mathcal{A}/\mathfrak{S}}^{\vee}$,
reversing types. Here $\nabla$ is the Gauss-Manin connection, and
the tensor product is over $\mathcal{\widehat{O}_{\mathfrak{S}}}=k[[T-1]].$
Although $\nabla$ is a derivation, $\KS$ is a homomorphism of vector
bundles over $\mathcal{\widehat{O}_{\mathfrak{S}}}$. We shall show
that $\KS(\mathcal{P}_{0})=0,$ where $\mathcal{P}_{0}=\ker(V:\omega_{\mathcal{A}/\mathfrak{S}}\to\omega_{\mathcal{A}/\mathfrak{S}}^{(p)})\cap\mathcal{P}$.

At this point recall the filtration
\[
0\subset Fil^{2}=\mathcal{A}[p^{\infty}]^{m}\subset Fil^{1}=\mathcal{A}[p^{\infty}]^{0}\subset Fil^{0}=\mathcal{A}[p^{\infty}]
\]
of the $p$-divisible group of $\mathcal{A}$ over $\mathfrak{S}.$
The graded pieces are of height 2 and $\mathcal{O}_{E}$-stable. They
are rigid (do not admit non-trivial deformations as $p$-divisible
groups with $\mathcal{O}_{E}$ action) and given by
\[
gr^{2}=\mathcal{O}_{E}\otimes\mu_{p^{\infty}},\,\,\,gr^{1}=\mathfrak{G},\,\,\,gr^{0}=\mathcal{\mathcal{O}}_{E}\otimes\mathbb{Q}_{p}/\mathbb{Z}_{p}.
\]

For any $p$-divisible group $G$ over $\mathfrak{S}$ denote by $\mathbb{D}(G)$
the Dieudonné crystal associated to $G$, and let $D(G)=\mathbb{D}(G)_{\mathfrak{S}},$
\emph{cf.} \cite{Gro}. The $\mathcal{\widehat{O}_{\mathfrak{S}}}$-module
$D(G)$ is endowed with an integrable connection $\nabla$ and the
pair $(D(G),\nabla)$ determines $\mathbb{D}(G).$

In our case, we can identify $D(\mathcal{A}[p^{\infty}])$ with $H_{dR}^{1}(\mathcal{A}/\mathfrak{S})$,
and the connection with the Gauss-Manin connection. The above filtration
on $\mathcal{A}[p^{\infty}]$ induces therefore a filtration $Fil^{\bullet}$
on $H_{dR}^{1}(\mathcal{A}/\mathfrak{S})$ which is preserved by $\nabla$.
Since the functor $\mathbb{D}$ is contravariant, we write the filtration
as
\[
0\subset Fil^{1}H_{dR}^{1}(\mathcal{A}/\mathfrak{S})\subset Fil^{2}H_{dR}^{1}(\mathcal{A}/\mathfrak{S})\subset Fil^{3}=H_{dR}^{1}(\mathcal{A}/\mathfrak{S})
\]
where 
\[
Fil^{i}H_{dR}^{1}(\mathcal{A}/\mathfrak{S})=D(\mathcal{A}[p^{\infty}]/Fil^{i}\mathcal{A}[p^{\infty}]).
\]
For example, $Fil^{1}H_{dR}^{1}(\mathcal{A}/\mathfrak{S})$ is sometimes
referred to as the ``unit root subspace''. As $Fil^{2}\mathcal{A}[p^{\infty}]$
is of multiplicative type, $\ker(V:H_{dR}^{1}(\mathcal{A}/\mathfrak{S})\to H_{dR}^{1}(\mathcal{A}/\mathfrak{S})^{(p)})$
is contained in $Fil^{2}H_{dR}^{1}(\mathcal{A}/\mathfrak{S})$. In
particular,
\[
\mathcal{P}_{0}\subset Fil^{2}H_{dR}^{1}(\mathcal{A}/\mathfrak{S}).
\]
Let $G=\mathcal{A}[p^{\infty}]/\mathcal{A}[p^{\infty}]^{m}$, so that
$Fil^{2}H_{dR}^{1}(\mathcal{A}/\mathfrak{S})=D(G).$ It follows that
in computing $\KS$ on $\mathcal{P}_{0}$ we may use the following
diagram instead of (\ref{eq:KS}):

\begin{equation}
\begin{array}{ccc}
\mathcal{P}_{0} & \hookrightarrow & D(G)(\Sigma)\\
\downarrow\KS &  & \downarrow\nabla\\
\mathcal{L}^{\vee}\otimes\Omega_{\mathfrak{S}}^{1} & \longleftarrow & D(G)(\Sigma)\otimes\Omega_{\mathfrak{S}}^{1}
\end{array}\label{eq:KS2}
\end{equation}

Finally, we have to use the description of the formal neighborhood
of $x$ as given in \cite{Mo}. Since we are considering the pull-back
of $\mathcal{A}$ to $\mathfrak{S}$ only, and not the full deformation
over $Spf(\widehat{\mathcal{O}}_{S,x})$, the $p$-divisible groups
$Fil^{1}\mathcal{A}[p^{\infty}]$, and dually $G=\mathcal{A}[p^{\infty}]/Fil^{2}$,
are \emph{constant} over $\mathfrak{S}$. Thus over $\mathfrak{S}$
\[
G\simeq\mathfrak{G}\times(\mathcal{O}_{E}\otimes\mathbb{Q}_{p}/\mathbb{Z}_{p}),
\]
and $\nabla$ maps $D(\mathfrak{G})$ to $D(\mathfrak{G})\otimes\Omega_{\mathfrak{S}}^{1}$.
Since
\[
\mathcal{P}_{0}=\omega_{\mathfrak{G}}=D(\mathfrak{G})(\Sigma)
\]
as subspaces of $H_{dR}^{1}(\mathcal{A}/\mathfrak{S}),$ 
\[
\nabla(\mathcal{P}_{0})\subset\mathcal{P}_{0}\otimes\Omega_{\mathfrak{S}}^{1}.
\]
The bottom arrow in (\ref{eq:KS2}) comes from the homomorphism
\[
D(G)(\Sigma)\hookrightarrow H_{dR}^{1}(\mathcal{A}/\mathfrak{S})(\Sigma)\overset{pr}{\to}H^{1}(\mathcal{A},\mathcal{O})(\Sigma)\overset{\phi}{\simeq}H^{1}(\mathcal{A}^{t},\mathcal{O})(\overline{\Sigma})=\mathcal{L}^{\vee}.
\]
But the projection $pr$ kills $\mathcal{P}_{0}\subset\omega_{\mathcal{A}/\mathfrak{S}}.$
This concludes the proof.
\end{proof}
We shall later show that the line sub-bundle $TS^{+}$ has a \emph{third}
characterization, in connection with the ramification in the covering
$\pi\colon S_{0}(p)\to S.$ The definitions and the discussion of
this section have obvious generalizations to higher dimensional unitary
Shimura varieties. We intend to address them in a future work.

\subsection{The blow up of $S$ at the superspecial points\label{blow-up}}

We denote by $S^{\#}$ the surface over $k$ which is obtained by
blowing up the superspecial points on $S$. The fiber of $S^{\#}\to S$
above a superspecial point $x$ is a projective line which we denote
by $E_{x}$. It is canonically identified with $\mathbb{P}(T_{x}S).$

Since $S$ has a canonical model over $\kappa$ and the stratum $S_{ssp}$
is defined over $\kappa,$ $S^{\#}$ too has a canonical model over
$\kappa.$ In fact, it is the fine moduli space of a moduli problem
$(S^{\#})$ which is unique to characteristic $p.$ For any $\kappa$-algebra
$R$, $S^{\#}(R)$ classifies isomorphism classes of pairs $(\underline{A},\mathcal{P}_{0})$
where
\begin{itemize}
\item $\underline{A}\in S(R)$
\item $\mathcal{P}_{0}\subset\ker(V:\omega_{A/R}(\Sigma)\to\omega_{A/R}^{(p)}(\Sigma))$
is a line sub-bundle of $\mathcal{P}=\omega_{A/R}(\Sigma$) which
is annihilated by $V$.
\end{itemize}
If no geometric fiber of $A/R$ is superspecial then $\mathcal{P}_{0}$
is unique. At superspecial points, however, $V$ kills $\mathcal{P},$
so the additional data amounts to a choice of a line in the plane
$\mathcal{P}.$

If $N=1$ then $S$ is a stack defined over $\kappa$ and the superspecial
points are $\kappa$-rational. It follows that $\mathcal{P}$ is defined
over $\kappa$ too and we can equip each 
\[
E_{x}\simeq\mathbb{P}(\mathcal{P}|_{x})=\mathbb{P}(\mathcal{P}\otimes\mathcal{L}|_{x})\simeq\mathbb{P}(T_{x}S)
\]
with a canonical $\kappa$-rational structure. If $N>1$ then level
structure at $N$ forces superspecial points to be defined over larger
finite fields, but since $\mathcal{P}$ is independent of this extra
level structure, the tangent space and the exceptional divisor $E_{x}$
still carry a canonical $\kappa$ structure.

In practice we use a coordinate $\zeta$ on $E_{x}$ which is derived
from a particular choice of a basis for the Dieudonné module of $A_{x}$
at $x\in S_{ssp}.$ This will be explained in Theorem \ref{S_0(p)-ssp}
below.

\section{Local structure of the three integral models}

\subsection{\label{Raynaud classification}Raynaud's classification}

Recall that $k$ is our fixed algebraically closed field containing
$\kappa.$ In \cite{Ray} Raynaud classifies the finite flat group
schemes of rank $p^{2}$ over $k,$ which admit an action of $\kappa$
and satisfy the Raynaud condition discussed in \ref{Raynaud}. See
also \cite{Bel}, III.2.3. They are given in the following table.

\bigskip{}

\begin{center}%
\begin{tabular}{|c|c|c|c|c|c|c|c|}
\hline 
$H$ & $(a_{0},b_{0};a_{1},b_{1})$ & $\Lie(H)$ & $\Lie(\alpha_{p}(H))$ & $\alpha$ & $\beta$ & $\gamma$ & strata\tabularnewline
\hline 
$\kappa\otimes\mathbb{Z}/p\mathbb{Z}$ & $(0,1;0,1)$ & $\emptyset$ & $\emptyset$ & $0$ & $2$ & $1$ & $\mu$\tabularnewline
$\kappa\otimes\mu_{p}$ & $(1,0;1,0)$ & $\Sigma,\overline{\Sigma}$ & $\emptyset$ & $2$ & $0$ & $1$ & $\mu$\tabularnewline
$\kappa\otimes$$\alpha_{p}$ & $(0,0;0,0)$ & $\Sigma,\overline{\Sigma}$ & $\Sigma,\overline{\Sigma}$ & $2$ & $2$ & $1,2$ & $ssp$\tabularnewline
$\mathfrak{G}[p]_{\Sigma}$ & $(0,1;1,0)$ & $\Sigma$ & $\Sigma$ & $1$ & $1$ & $1$ & $gss/ssp$\tabularnewline
$\mathfrak{G}[p]_{\overline{\Sigma}}$ & $(1,0;0,1)$ & $\overline{\Sigma}$ & $\overline{\Sigma}$ & $1$ & $1$ & - & -\tabularnewline
$\alpha_{p^{2},\Sigma}$ & $(0,1;0,0)$ & $\Sigma$ & $\Sigma$ & $1$ & $2$ & $2$ & $gss$\tabularnewline
$\alpha_{p^{2},\overline{\Sigma}}$ & $(0,0;0,1)$ & $\overline{\Sigma}$ & $\overline{\Sigma}$ & $1$ & $2$ & - & -\tabularnewline
$\alpha_{p^{2},\Sigma}^{*}$ & $(0,0;1,0)$ & $\Sigma,\overline{\Sigma}$ & $\Sigma$ & $2$ & $1$ & $2$ & $gss$\tabularnewline
$\alpha_{p^{2},\overline{\Sigma}}^{*}$ & $(1,0;0,0)$ & $\Sigma,\overline{\Sigma}$ & $\overline{\Sigma}$ & $2$ & $1$ & - & -\tabularnewline
\hline 
\end{tabular}\end{center}

\bigskip{}

\bigskip{}

\textbf{Explanations}
\begin{itemize}
\item Each group scheme is designated by a vector $(a_{0},b_{0};a_{1},b_{1})$
with entries from $\{0,1\}$ where $a_{0}b_{0}=a_{1}b_{1}=0.$ There
are 9 possibilities. As a scheme $H=Spec(A)$ where $A=k[X,Y]/(X^{p}-b_{0}Y,Y^{p}-b_{1}X).$
The group structure (Hopf algebra structure on $A$) involves the
$a_{i}.$ It is completely determined by the condition that the Cartier
dual $H^{D}$ is obtained by interchanging $a_{0}$ with $b_{0}$,
$a_{1}$ with $b_{1}.$ The twist $\kappa\otimes_{\sigma,\kappa}H$
of $H$ is obtained by interchanging $a_{0}$ with $a_{1},$ and likewise
$b_{0}$ with $b_{1}.$
\item The column $\Lie(H)$ gives the signature of $\kappa$ on $\Lie(H),$
with multiplicities.
\item The column $\Lie(\alpha_{p}(H))$ gives the signature of $\kappa$,
with multiplicities, on the Lie algebra of the maximal $\alpha_{p}$-subgroup
of $H$ (whose dimension is Oort's $a$-number).
\item The invariants $\alpha,\beta$ are defined by 
\[
\alpha=\dim_{k}\Lie(H),\,\,\beta=\dim_{k}\Lie(H^{D}).
\]
 They satisfy
\[
\alpha=2-b_{0}-b_{1},\,\,\beta=2-a_{0}-a_{1}.
\]
The third invariant, $\gamma,$ is not an intrinsic invariant of $H,$
but rather of the way it sits as an isotropic subgroup of $A[p].$
Recall that if $(\underline{A},H)$ is a point of $S_{0}(p)(k),$
we have a filtration
\[
0\subset H\subset H^{\perp}\subset A[p]
\]
with graded pieces $A[p]/H^{\perp}\simeq H^{D}$ and $H^{\perp}/H\simeq\ker\psi$
(see §\ref{maps-1}). We then set $\gamma=\dim_{k}\Lie(H^{\perp}/H)$.
\item Finally, the last column indicates over which of the strata of $S$
such points $(\underline{A},H)$ lie. A hyphen indicates that an $H$
of the given type does not occur as an isotropic subgroup of $A[p]$
for $\underline{A}$ as in $(S).$ This is the contents of the next
lemma.
\end{itemize}
\begin{lem}
The subgroups $\mathfrak{G}[p]_{\overline{\Sigma}},$ $\alpha_{p^{2},\overline{\Sigma}}$
and $\alpha_{p^{2},\overline{\Sigma}}^{*}$ do not occur as isotropic
subgroups of $A[p]$ for any $\underline{A}$ as in $(S).$
\end{lem}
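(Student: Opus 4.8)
The plan is to translate the statement into covariant Dieudonné theory over $k$ and to exploit the asymmetry of the signature $(2,1)$. Fix $\underline{A}$ as in $(S)$, let $x$ be the corresponding geometric point, and suppose $H\subset A[p]$ is an $\mathcal{O}_{E}$-stable, isotropic, Raynaud subgroup of one of the three listed types. Writing $N=M(A[p])$, the principal polarization endows $N$ with a perfect alternating pairing $\langle\,,\,\rangle$ under which $F$ and $V$ are adjoint and which pairs $N(\Sigma)$ with $N(\overline{\Sigma})$. The inclusion $H\subset A[p]$ corresponds to an $F,V$-stable $k$-subspace $M=M(H)\subseteq N$ of dimension $2$, isotropic for $\langle\,,\,\rangle$ and (by the Raynaud lemma above) balanced, i.e. $M=M(\Sigma)\oplus M(\overline{\Sigma})$ with each summand a line; its isomorphism type is recorded by $\Lie(H)=\ker(V|_{M})$ together with the action of $F$ and $V$.

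First I would reduce to the superspecial locus. Each of the three candidates has $\Lie(\alpha_{p}(H))$ of type $\overline{\Sigma}$ in the classification table. Since the maximal $\alpha_{p}$-subgroup of $H$ is contained in that of $A[p]$, the inclusion $\alpha_{p}(H)\subseteq\alpha_{p}(\mathcal{A}_{x})$ induces a monomorphism of $\kappa$-modules $\Lie(\alpha_{p}(H))\hookrightarrow\Lie(\alpha_{p}(\mathcal{A}_{x}))$. By Vollaard's theorem above, $\Lie(\alpha_{p}(\mathcal{A}_{x}))$ has type $\Sigma$ on $S_{\mu}\cup S_{gss}$ and type $(\Sigma,\Sigma,\overline{\Sigma})$ on $S_{ssp}$; in particular the character $\overline{\Sigma}$ occurs only at superspecial points. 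Hence no such $H$ can occur unless $x\in S_{ssp}$, and we are reduced to that case.

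The heart of the argument is a single structural fact about the superspecial fibre. At $x\in S_{ssp}$ one has $A[p^{\infty}]\simeq\mathfrak{G}_{\Sigma}^{2}\times\mathfrak{G}_{\overline{\Sigma}}$, so on $N$ one has $\ker F=\ker V=\operatorname{im}F=\operatorname{im}V=:I_{0}$, and $I_{0}=\ker V=\Lie(A)$ is a Lagrangian subspace of $N$ of signature $(2,1)$. (That $I_{0}$ is totally isotropic follows from $\langle Fa,Fb\rangle=\langle a,VFb\rangle^{(p)}=0$, using $VF=0$ on $A[p]$, together with $\dim I_{0}=3=\tfrac{1}{2}\dim N$; this part in fact holds at every point of $S$.) I would then dispose of the three types in turn. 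For $\alpha_{p^{2},\overline{\Sigma}}$ one needs $F|_{M}=0$ but $V|_{M}\neq0$, and for $\alpha_{p^{2},\overline{\Sigma}}^{*}$ one needs $V|_{M}=0$ but $F|_{M}\neq0$; both are impossible, because $\ker F=\ker V$ gives $F|_{M}=0\iff M\subseteq I_{0}\iff V|_{M}=0$. Thus these two do not even embed as subgroup schemes.

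The remaining and genuinely harder case is $\mathfrak{G}[p]_{\overline{\Sigma}}$, which \emph{does} embed as an $\mathcal{O}_{E}$-subgroup scheme, so the polarization must be used. Here $\Lie(H)=\ker(V|_{M})=M(\overline{\Sigma})$ is a line and $V|_{M(\Sigma)}$ is injective, so $M(\Sigma)\cap I_{0}=0$ while $M(\overline{\Sigma})=M\cap I_{0}=I_{0}(\overline{\Sigma})$. Since $M(\Sigma)$ and $M(\overline{\Sigma})$ are each self-isotropic for type reasons, isotropy of $M$ amounts to $\langle M(\Sigma),M(\overline{\Sigma})\rangle=0$. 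But $M(\overline{\Sigma})=I_{0}(\overline{\Sigma})$ lies in the Lagrangian $I_{0}$, hence pairs trivially with $I_{0}$ and perfectly with the one-dimensional space $(N/I_{0})(\Sigma)$, into which $M(\Sigma)\not\subseteq I_{0}$ has nonzero image. Therefore $\langle M(\Sigma),M(\overline{\Sigma})\rangle\neq0$, contradicting isotropy. I expect this last step — extracting the obstruction from the interaction of the Lagrangian $\ker V$ with the isotropy and the $\Sigma/\overline{\Sigma}$ decomposition — to be the main difficulty, whereas the two $\alpha_{p^{2}}$-cases and the reduction to $S_{ssp}$ are comparatively formal. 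Careful bookkeeping of the Frobenius twists in the adjunction $\langle Fa,b\rangle=\langle a,Vb\rangle^{(p)}$ and of the conventions relating $M(H)$ to its type will be needed to make this rigorous, and is presumably where the Dieudonné-module computations of the appendix enter.
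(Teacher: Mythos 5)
Your proof is correct, but it organizes the argument quite differently from the paper, so let me compare the two (I adopt your notation $M=M(H)\subset N=M(A[p])$; the paper's proof uses the mirrored letters). The paper disposes of $\mathfrak{G}[p]_{\overline{\Sigma}}$ and $\alpha_{p^{2},\overline{\Sigma}}$ by one pairing argument valid at \emph{every} point of $S$, with no stratification: for both types $\Lie(H)=M[V]=M(\overline{\Sigma})$ and $V$ is injective on $M(\Sigma)$, so $M(\Sigma)\cap\Lie(A)(\Sigma)=0$; since $\Lie(A)(\Sigma)$ is $2$-dimensional inside the $3$-dimensional $N(\Sigma)$ (this is exactly the signature condition), one gets $M(\Sigma)+\Lie(A)(\Sigma)=N(\Sigma)$, and isotropy of $M$ together with isotropy of the Lagrangian $\Lie(A)=N[V]$ forces the line $M(\overline{\Sigma})$ to be orthogonal to all of $N(\Sigma)$, contradicting nondegeneracy. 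Only for $\alpha_{p^{2},\overline{\Sigma}}^{*}$ does the paper invoke the two ingredients you use, namely the $\kappa$-type of $\Lie(\alpha_{p}(\mathcal{A}_{x}))$ from Vollaard's theorem (to exclude $S_{\mu}\cup S_{gss}$) and the coincidence of the kernels of $\Fr$ and $\Ver$ at superspecial points. You promote exactly these two ingredients to handle all three types: the reduction of all three to $S_{ssp}$ via $\Lie(\alpha_{p}(H))$ being of type $\overline{\Sigma}$ is legitimate (Lie is injective on closed immersions of finite group schemes, and both inputs are stated earlier in the paper); at a superspecial point $\ker F=\ker V=I_{0}$ kills $\alpha_{p^{2},\overline{\Sigma}}$ and $\alpha_{p^{2},\overline{\Sigma}}^{*}$ outright, isotropy unused; and for $\mathfrak{G}[p]_{\overline{\Sigma}}$ --- the one type that genuinely embeds $\mathcal{O}_{E}$-equivariantly at superspecial points, so isotropy is essential --- your Lagrangian argument is essentially the paper's mechanism specialized to $I_{0}=\ker V=\Lie(A)$ of signature $(2,1)$. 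I checked the delicate steps there: $M\cap I_{0}=M[V]=\Lie(H)=M(\overline{\Sigma})=I_{0}(\overline{\Sigma})$; the perfect pairing between $I_{0}$ and $N/I_{0}$ induced by the Lagrangian property decomposes into perfect pairings of eigenspace blocks, so the line $I_{0}(\overline{\Sigma})$ pairs perfectly with the line $(N/I_{0})(\Sigma)$; and $M(\Sigma)$ has nonzero image in $(N/I_{0})(\Sigma)$ since $M(\Sigma)\cap I_{0}=0$. All of this is sound, and contrary to your last sentence the appendix computations are not needed. What your route buys: everything happens in a single explicit superspecial Dieudonn\'e module, and you obtain the stronger byproducts that none of the three types embeds at all (isotropic or not) over $S_{\mu}\cup S_{gss}$, and that the two $\alpha_{p^{2}}$-types never embed anywhere. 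What the paper's route buys: the treatment of the first two types is stratification-free, self-contained linear algebra on $M(A[p])$, using nothing beyond the signature condition and nondegeneracy, with the superspecial-specific facts confined to the single type $\alpha_{p^{2},\overline{\Sigma}}^{*}$.
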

\begin{proof}
We do the first example first. Let $M=M(A[p])$ be the covariant Dieudonné
module of $A[p].$ It is a $6$-dimensional vector space over $k,$
with a $\kappa$ action of signature $(3,3),$ and maps $F:M^{(p)}\to M$
and $V:M\to M^{(p)}.$ The principal polarization $\phi$ induces
a non-degenerate alternating bilinear pairing $\left\langle ,\right\rangle =\left\langle ,\right\rangle _{M}:M\times M\to k$
satisfying, for $a\in\kappa,\,\,x\in M^{(p)},\,\,y,u,v\in M$
\[
\left\langle \iota(a)u,v\right\rangle =\left\langle u,\iota(\overline{a})v\right\rangle 
\]
\[
\left\langle Fx,y\right\rangle _{M}=\left\langle x,Vy\right\rangle _{M^{(p)}}.
\]
By $\left\langle ,\right\rangle _{M^{(p)}}$ we denote the base change
of $\left\langle ,\right\rangle _{M}$ to $M^{(p)}=k\otimes_{\sigma,k}M$.
The first property shows that $M_{0}$ and $M_{1},$ the $\Sigma$-
and $\overline{\Sigma}$-eigenspaces of $\kappa,$ are maximal isotropic
spaces for the pairing. The second property shows that
\[
\Lie(A)=\Lie(A[p])=M[V]=F(M^{(p)})
\]
is another maximal isotropic subspace, which, according to our assumption
on the signature of $A,$ intersects $M_{0}$ in a $2$-dimensional
space, and $M_{1}$ in a line. 

Now let $N=M(H)\subset M$ where $H$ is assumed to be of type $\mathfrak{G}[p]_{\overline{\Sigma}}$
and isotropic. Decompose $N=N_{0}\oplus N_{1}$ according to $\kappa$-type.
Then $\Lie(H)=N_{1}$ is orthogonal to $N_{0}$ (because $N$ is isotropic)
but also to $\Lie(A)_{0}=M_{0}[V]$ (because $\Lie(H)\subset\Lie(A)$
and $\Lie(A)$ is isotropic). Since $N_{0}$ is a line lying outside
the two-dimensional $\Lie(A)_{0},$ we deduce that $N_{1}$ is orthogonal
to all of $M_{0}$, contradicting the non-degeneracy of the pairing.

The argument for $H\simeq\alpha_{p^{2},\overline{\Sigma}}$ is the
same. To rule out $H\simeq\alpha_{p^{2},\overline{\Sigma}}^{*}$ we
need another argument, on the $\alpha_{p}$-subgroup. $\Lie(H)$ alone
does not distinguish it from $\alpha_{p^{2},\Sigma}^{*},$ which,
as we shall see later, does occur as a possible isotropic subgroup.
If $A$ is either $\mu$-ordinary or general supersingular, then the
$\alpha_{p}$-subgroup of $A$ is of rank $p$ and type $\Sigma,$
while the $\alpha_{p}$-subgroup of $\alpha_{p^{2},\overline{\Sigma}}^{*}$
is of rank $p$ and type $\overline{\Sigma}.$ Hence, $\alpha_{p^{2},\overline{\Sigma}}^{*}$
is not isomorphic to a subgroup scheme of $A[p].$ If $A$ is superspecial,
then its $p$-divisible group is $\mathfrak{G}^{3},$ and does not
admit a subgroup scheme of type $\alpha_{p^{2}}^{*}$ at all, because
the kernels of Verschiebung and Frobenius on $A^{(p)}$ coincide,
while $\alpha_{p^{2}}^{*}$ is killed by Frobenius but not by Verschiebung.
\end{proof}

\subsection{\label{local rings}The completed local rings}

\subsubsection{Generalities on local models}

The method of ``local models'' was introduced by de Jong \cite{dJ2}
and Deligne and Pappas \cite{De-Pa}, and developed further by Rapoport
and Zink in \cite{Ra-Zi}. See also \cite{P-R-S} and \cite{C-N}.
For a point $x$ in the special fiber of a given Shimura variety these
authors construct a generalized flag variety, and a point $x'$ on
it, so that suitable étale neighborhoods of $x$ and $x'$ become
isomorphic. This allows them to compute the isomorphism type of the
completed local rings of the original Shimura variety in terms of
linear-algebra data. For the arithmetic schemes $\mathscr{S},$ $\mathscr{S}_{0}(p)$
and $\mathscr{\widetilde{S}}$ these computations were done in \cite{Bel}
III.4.3, and in this section we shall quote results from there, adhering
as much as possible to the notation used by Bellaïche.

The method of local models is \emph{flawed} when it comes to functoriality
with respect to change of level at $p$. This is because Grothendieck's
theory of the Dieudonné crystal, on which it is based, is functorial
in divided power neighborhoods, but not beyond. This flaw appears
already in the case of the modular curve $X_{0}(p)$ mapping to the
$j$-line $X$. At a supersingular point $y\in X_{0}(p)(k)$ mapping
to $x\in X(k)$ we get, for the relation between local models in characteristic
$p$
\[
k[[u]]\hookrightarrow k[[u,v]]/(uv),
\]
while the correct model for the pair $\mathcal{\widehat{O}}_{x}\hookrightarrow\mathcal{\widehat{O}}_{y}$
is known to be, ever since Kronecker,
\[
k[[u]]\hookrightarrow k[[u,v]]/((u^{p}-v)(v^{p}-u)).
\]
Observe that modulo $p$th powers of the maximal ideal (where there
is a canonical divided power structure) the two models are isomorphic,
but over the whole formal neighborhood they are not. The second homomorphism
is finite flat of degree $p+1$ while the first is neither finite
nor flat.

Despite this flaw, relations between local models of Shimura varieties
of PEL type with parahoric level structure suffice to tell us the
relations between cotangent spaces, as well as the relations between
the infinitesimal deformation theories when we vary the level.

\subsubsection{The standard model}

Fix $y=[\underline{A},H]\in\mathscr{S}_{0}(p)(k)$. Let $x=\pi(y)\in\mathscr{S}(k)$
and $\widetilde{x}=\widetilde{\pi}(y)\in\widetilde{\mathscr{S}}(k)$.
Then $x$ is represented by the tuple $\underline{A}=(A,\phi,\iota,\eta)$
and $\widetilde{x}$ by $\underline{A}'=(A',\psi,\iota',\eta')$ where
$A'=A/H$ and $\psi$ is descended from $p\phi,$ i.e. if $h:A\to A'$
is the canonical isogeny with $\ker(h)=H$ then
\[
p\phi=h^{t}\circ\psi\circ h.
\]
Similarly
\[
\iota'(a)\circ h=h\circ\iota(a),\,\,\,\,\,\,\eta'=h\circ\eta.
\]

Associated with the data $(A,\phi,\iota,A',\psi,\iota',h)$ is the
following linear-algebra data. Let
\[
M_{1}=\mathbb{D}(A)_{W(k)},\,\,\,M_{2}=\mathbb{D}(A')_{W(k)}
\]
be the crystalline Dieudonné modules of the two abelian varieties.
Here $\mathbb{D}(A)$ is the (contravariant) Dieudonné crystal associated
to $A,$ \emph{cf. }\cite{Gro}. In this section we use crystalline
deformation theory as in \cite{Bel}. The translation to \emph{covariant}
Cartier-Dieudonné theory, which will be employed in later sections,
is standard (if painful), see the appendix to \cite{C-C-O}.

The modules $M_{i}$ are free $W(k)$-modules of rank 6, and decompose
under the action of $\mathcal{O}_{E}$ as a direct sum of two rank-3
submodules, denoted $M_{i}(\Sigma)$ and $M_{i}(\overline{\Sigma}).$
The isogeny $h$ induces an injective homomorphism
\[
D(h):M_{2}\to M_{1}
\]
respecting the $\mathcal{O}_{E}$-action, whose cokernel is a two-dimensional
vector space over $k$ of type $(1,1)$, as $H$ is Raynaud. The polarizations
result in type-reversing homomorphism
\[
B:M_{1}^{*}\simeq M_{1},\,\,\,B':M_{2}^{*}\to M_{2}
\]
where we have used the canonical identifications of $M_{i}^{*}=\Hom(M_{i},W(k))$
with the crystalline Dieudonné modules of the dual abelian varieties.
Clearly
\[
D(h)\circ B'\circ D(h)^{*}=pB.
\]

Denote by $\mathcal{M}_{1}$ the coherent sheaf on $\mathscr{S}$
which associates to a Zariski open $U$ the module
\[
\mathcal{M}_{1}(U)=\mathbb{D}(\mathcal{A})_{U}
\]
($\mathcal{A}$ being the universal abelian variety over $\mathscr{S})$
and define $\mathcal{M}_{2}$ similarly on $\mathscr{\widetilde{S}}.$
Denote by the same letters their pull-backs to $\mathscr{S}_{0}(p).$
Then the same sort of linear-algebra structure is induced on the sheaves
$\mathcal{M}_{i},$ the map $D(h)$ resulting from the canonical isogeny
\[
h:\mathcal{A}\to\mathcal{A}/\mathcal{H}
\]
where $\mathcal{H}$ is the universal subgroup scheme of $\mathcal{A}$
over $\mathscr{\mathcal{\mathscr{S}}}_{0}(p).$ The following is Théorème
III.4.2.5.3 of \cite{Bel}.
\begin{thm}
(i) There exist $W(k)$-bases $\{e_{1},\ldots,e_{6}\}$ of $M_{1}$
and $\{f_{1},\ldots,f_{6}\}$ of $M_{2}$ such that, if we denote
by $\{e_{i}^{*}\}$ and $\{f_{i}^{*}\}$ the dual bases, the following
properties hold.

(a) $M_{1}(\Sigma)$ is spanned by $\{e_{1},e_{2},e_{3}\},$ $M_{1}(\overline{\Sigma})$
is spanned by $\{e_{4},e_{5},e_{6}\},$ and similarly for $M_{2}.$

(b) The matrices of the homomorphisms $B,B'$ in these bases are given
by
\[
B=\left(\begin{array}{cccccc}
 &  &  &  &  & 1\\
 &  &  &  & 1\\
 &  &  & 1\\
 &  & -1\\
 & -1\\
-1
\end{array}\right),\,\,\,\,\,B'=\left(\begin{array}{cccccc}
 &  &  &  &  & p\\
 &  &  &  & 1\\
 &  &  & 1\\
 &  & -1\\
 & -1\\
-p
\end{array}\right),
\]
i.e. $B(e_{1}^{*})=-e_{6},$ $B'(f_{1}^{*})=-pf_{6}$ etc.

(c) The matrix of $D(h)$ is given by
\[
D(h)=\left(\begin{array}{cccccc}
1\\
 & p\\
 &  & 1\\
 &  &  & p\\
 &  &  &  & 1\\
 &  &  &  &  & 1
\end{array}\right),
\]
i.e. $D(h)(f_{1})=e_{1},\,\,D(h)(f_{2})=pe_{2}$ etc.

(ii) The structure $(\mathcal{M}_{1},\mathcal{M}_{2},B,B',D(h))$
is locally Zariski isomorphic to 
\[
(M_{1},M_{2},B,B',D(h))\otimes_{W(k)}\mathcal{O}.
\]
\end{thm}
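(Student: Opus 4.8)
The plan is to prove (i) by translating the whole datum $(A,\phi,\iota,A',\psi,\iota',h)$ into a chain of $\mathcal{O}_{E}$-stable $W(k)$-lattices inside a single symplectic space, and then putting that chain into normal form by linear algebra over the discrete valuation ring $W(k)$. Concretely, I would identify $M_{2}$ with its image $D(h)(M_{2})\subseteq M_{1}$, so that $M_{1}/M_{2}=\mathbb{D}(H)$ is killed by $p$ and of type $(1,1)$ because $H$ is Raynaud (balanced), as recorded in the theorem preamble and the Lemma of \S\ref{Raynaud}. The principal polarization makes $B$ a perfect, alternating, type-reversing pairing $b$ on $M_{1}$; writing $M_{2}^{\vee}=\{x\in M_{1}\otimes\mathbb{Q}:b(x,M_{2})\subseteq W(k)\}$ for the dual lattice, I would check that the relation $D(h)\circ B'\circ D(h)^{*}=pB$ is exactly the lattice identity $\mathbb{D}(A/H^{\perp})=p\,M_{2}^{\vee}=:M_{2}'$. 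Since $H^{\perp}/H=\ker\psi$ is Raynaud (shown in \S\ref{maps-1}), the quotient $M_{2}/M_{2}'$ is again of type $(1,1)$. Thus the datum is encoded in the chain $pM_{1}\subseteq M_{2}'\subseteq M_{2}\subseteq M_{1}$ of $\mathcal{O}_{E}$-stable lattices, self-dual in the sense $M_{2}'=pM_{2}^{\vee}$ and $M_{2}=p(M_{2}')^{\vee}$, with all three successive quotients balanced.

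Next I would exploit the grading to cut the problem in half. Since $b$ is type-reversing and perfect, it identifies $M_{1}(\overline{\Sigma})$ with the $W(k)$-linear dual of $M_{1}(\Sigma)$; hence the entire chain is determined by its $\Sigma$-part, and the $\overline{\Sigma}$-part is forced by duality. On the rank-$3$ module $M_{1}(\Sigma)$ the three steps each drop the length by one, so reducing $pM_{1}(\Sigma)\subseteq M_{2}'(\Sigma)\subseteq M_{2}(\Sigma)\subseteq M_{1}(\Sigma)$ modulo $pM_{1}(\Sigma)$ gives a complete flag in the $3$-dimensional $k$-space $M_{1}(\Sigma)/pM_{1}(\Sigma)$. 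I would then choose a $W(k)$-basis $e_{1},e_{2},e_{3}$ of $M_{1}(\Sigma)$ adapted to this flag (a routine elementary-divisor argument over the DVR), arranged so that $M_{2}(\Sigma)=\langle e_{1},pe_{2},e_{3}\rangle$ and $M_{2}'(\Sigma)=\langle pe_{1},pe_{2},e_{3}\rangle$. Taking $e_{4},e_{5},e_{6}$ to be the $b$-dual basis of $e_{3},e_{2},e_{1}$ puts $B$ in the stated anti-diagonal form automatically (the alternating property supplying the $\pm1$ sign pattern), and the duality $M_{2}=p(M_{2}')^{\vee}$ then forces $M_{2}(\overline{\Sigma})=\langle pe_{4},e_{5},e_{6}\rangle$ with no further choice. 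Setting $f_{i}\in M_{2}$ to be the preimages under $D(h)$ of $e_{1},pe_{2},e_{3},pe_{4},e_{5},e_{6}$ yields the diagonal shape of $D(h)$, and the matrix of $B'$ is then completely determined by $B'=p\,D(h)^{-1}B(D(h)^{*})^{-1}$, which one computes to be exactly the asserted matrix (in particular it lands back in $W(k)$).

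For (ii) I would run the same construction in families. The coherent sheaves $\mathcal{M}_{1},\mathcal{M}_{2}$ on $\mathscr{S}_{0}(p)$, together with $B,B',D(h)$, form a family of the linear-algebra structures just analyzed; because the relevant cokernels are of constant type $(1,1)$, the submodules $\mathcal{M}_{2}(\Sigma)\subseteq\mathcal{M}_{1}(\Sigma)$ and $\mathcal{M}_{2}'(\Sigma)$ are sub-bundles with locally free quotients, and hence define a flag of sub-bundles of $\mathcal{M}_{1}(\Sigma)$. Over a small Zariski neighborhood this flag and the line bundles occurring as its graded pieces are trivial, so the adapted basis $e_{1},e_{2},e_{3}$ can be chosen as sections; dualizing and pulling back under $D(h)$ as in (i) then produces sections $e_{i},f_{i}$ trivializing the whole structure, i.e.\ a Zariski-local isomorphism with the constant model $\otimes_{W(k)}\mathcal{O}$. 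Equivalently, the structure is a torsor under the smooth automorphism group of the standard model, built from $\mathrm{GL}$- and unipotent-type factors, whose torsors are Zariski-locally trivial.

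The hard part will not be the basis adaptation, which is routine DVR linear algebra, but setting up the dictionary in the first paragraph precisely: proving that the polarization relation $D(h)B'D(h)^{*}=pB$ is equivalent to the clean lattice identity $M_{2}'=pM_{2}^{\vee}$, and keeping track of which graded quotients are balanced. This is exactly where the Raynaud hypotheses on $H$ and on $H^{\perp}/H$ are indispensable — without balancedness the flag would fail to be complete and no uniform normal form would exist — and where the signs and the placement of the two factors of $p$ (one in $M_{2}(\Sigma)$, one in $M_{2}(\overline{\Sigma})$) must be pinned down to match the prescribed matrices rather than merely their $\mathrm{GL}_{6}(W(k))$-equivalence class.
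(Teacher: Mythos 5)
The paper does not actually prove this theorem: it is quoted verbatim as Th\'eor\`eme III.4.2.5.3 of Bella\"{i}che's thesis \cite{Bel}, so there is no in-paper argument to compare against. Your reconstruction is correct, and it is essentially the standard lattice-chain normal-form argument that underlies the local-model theory of \cite{Ra-Zi}, \cite{dJ2} and \cite{Bel} itself. The key steps all check out: the identity $M_{2}'=pM_{2}^{\vee}$ does follow from $D(h)\circ B'\circ D(h)^{*}=pB$ (one inclusion by the computation $b(D(h)B'\xi,D(h)m)=p\,\xi(m)$, the reverse by comparing indices, both lattices having index $p^{4}$ in $M_{1}$); the three balanced graded quotients $\mathbb{D}(H)$, $\mathbb{D}(H^{\perp}/H)$, $\mathbb{D}(H^{D})$ give a complete flag on the rank-$3$ module $M_{1}(\Sigma)$; the elementary-divisor choice of $e_{1},e_{2},e_{3}$, the $b$-dual basis $e_{4},e_{5},e_{6}$, and the duality $M_{2}=p(M_{2}')^{\vee}$ then force $M_{2}(\overline{\Sigma})=\left\langle pe_{4},e_{5},e_{6}\right\rangle$, after which $B'=p\,D(h)^{-1}B(D(h)^{*})^{-1}$ lands in $W(k)$ with exactly the asserted matrix. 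Two places where you should write out more than you have: first, your phrase ``is exactly the lattice identity'' overstates the equivalence -- the map identity gives only one inclusion directly, and the index count is genuinely needed; second, in part (ii) the assertion that $\mathcal{M}_{2}(\Sigma)$ and $\mathcal{M}_{2}'(\Sigma)$ are sub-bundles requires the family version of the statement ``Raynaud implies the $\Sigma$- and $\overline{\Sigma}$-eigensheaves of the cokernels are line bundles,'' whereas the paper's lemma in \S\ref{Raynaud} proves the balancedness only over a perfect field; this family statement is standard (it is what the cited sources establish via crystalline Dieudonn\'e theory for finite flat group schemes), but it is an input, not a formality. With those two points supplied, your proof is complete and matches the cited source in spirit.
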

\medskip{}

\subsubsection{The Hodge filtration}

Fix $y=[\underline{A},H]\in\mathscr{S}_{0}(p)(k)$ as above. The canonical
isomorphism
\[
M_{1}\otimes_{W(k)}k=\mathbb{D}(A)_{k}\simeq H_{dR}^{1}(A/k)
\]
defines a $3$-dimensional subspace 
\[
\omega_{0}\subset M_{1}\otimes_{W(k)}k
\]
which maps isomorphically to $\omega_{A/k},$ and similarly a $3$-dimensional
subspace $\omega_{0}'\subset M_{2}\otimes_{W(k)}k$ which maps to
$\omega_{A'/k}.$ These subspaces are $\mathcal{O}_{E}$-invariant
of type $(2,1).$ Furthermore, they are isotropic in the sense that
if we denote by $\omega_{0}^{\perp}$ the annihilator of $\omega_{0}$
in $M_{1}^{*}\otimes_{W(k)}k$, and similarly for $\omega_{0}',$
then
\[
B(\omega_{0}^{\perp})=\omega_{0},\,\,\,B'(\omega_{0}'^{\perp})\subset\omega_{0}'.
\]
Equality (rather than inclusion) holds with $B$ because $\phi,$
unlike $\psi,$ is principal. Finally, the map $D(h)$ maps $\omega_{0}'$
to $\omega_{0}.$
\begin{lem}
\label{invariants}(i) The invariants $(\alpha,\beta,\gamma)$ at
the point $y$ are given by the formulae
\[
\alpha=\dim_{k}\omega_{0}/D(h)(\omega_{0}')
\]
\[
\beta=\dim_{k}M_{1}\otimes_{W(k)}k/\left(\omega_{0}+D(h)(M_{2}\otimes_{W(k)}k)\right)
\]
\[
\gamma=\dim_{k}\omega_{0}'/B'(\omega_{0}'^{\perp}).
\]

(ii) $(\alpha,\beta,\gamma)$ form a complete set of invariants of
the structure 
\[
(M_{1}\otimes_{W(k)}k,M_{2}\otimes_{W(k)}k,B,B',D(h),\omega_{0},\omega'_{0}).
\]
 Namely, any two structures (over $k$) of this form having the same
set of invariants $(\alpha,\beta,\gamma)$ are isomorphic.
\end{lem}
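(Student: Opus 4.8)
The plan is to treat the two parts separately but with a common linear-algebra framework. For part (i), I would verify each of the three formulae directly from the definitions of $\alpha,\beta,\gamma$ given in the explanations to Raynaud's table, translating each Lie-algebra dimension into a statement about the Hodge filtrations $\omega_0,\omega_0'$ and the map $D(h)$. Recall that $\alpha=\dim_k\Lie(H)$, $\beta=\dim_k\Lie(H^D)$ and $\gamma=\dim_k\Lie(H^\perp/H)$. First I would identify $\omega_{A/k}$ with $\omega_0$ and use the snake-lemma computation already carried out in \S\ref{S_0(p)} (applied to the isogeny $h:A\to A'$ with kernel $H$): since $\Lie$ is recovered from the de Rham realization via the Hodge filtration, the cokernel $\omega_0/D(h)(\omega_0')$ measures exactly the part of $\Lie(A)$ coming from $\Lie(H)$, giving $\alpha$. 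For $\beta$, I would pass to the Cartier dual, using $A[p]/H^\perp\simeq H^D$ and the principality of $\phi$; the quantity $M_1\otimes k/(\omega_0+D(h)(M_2\otimes k))$ is the de Rham incarnation of $\Lie((A/H)^t$-contribution$)$, i.e. of $\Lie(H^D)$. Finally, for $\gamma$, I would use $\ker\psi=H^\perp/H$ together with the relation $B'(\omega_0'^\perp)\subset\omega_0'$ (inclusion, not equality, precisely because $\psi$ has degree $p^2$): the failure of $B'$ to be surjective onto $\omega_0'$ records $\ker\psi$, and its type-$(1,1)$ Lie algebra has the asserted dimension $\gamma$.

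For part (ii), the strategy is a normal-form argument. I would start from the standard bases of Theorem III.4.2.5.3 (the theorem just quoted), in which $B,B',D(h)$ have fixed matrices over $W(k)$, and reduce mod $p$. The remaining freedom is the choice of the two isotropic, $\mathcal{O}_E$-stable, type-$(2,1)$ subspaces $\omega_0\subset M_1\otimes k$ and $\omega_0'\subset M_2\otimes k$, subject to the compatibility $D(h)(\omega_0')\subseteq\omega_0$ and the polarization constraints $B(\omega_0^\perp)=\omega_0$, $B'(\omega_0'^\perp)\subseteq\omega_0'$. The plan is to show that the automorphism group of the fixed data $(M_1\otimes k,M_2\otimes k,B,B',D(h))$ — i.e. the $\mathcal{O}_E$-linear, polarization-and-$D(h)$-preserving automorphisms — acts transitively on the set of pairs $(\omega_0,\omega_0')$ having a given triple $(\alpha,\beta,\gamma)$. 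Concretely, I would decompose everything into $\Sigma$- and $\overline\Sigma$-eigenspaces, where $D(h)$ becomes a pair of maps whose elementary divisors are forced (by the matrix in (c)) to be $(1,p)$ in one type and $(1,1)$ in the other; modulo $p$ this makes the positions of $\omega_0,\omega_0'$ relative to $\ker(D(h)\bmod p)$ and $\operatorname{im}(D(h)\bmod p)$ the only discrete invariants, and these are exactly captured by $(\alpha,\beta,\gamma)$. Choosing, for each admissible value of the triple, explicit basis vectors spanning $\omega_0,\omega_0'$ and then exhibiting a change of basis preserving $B,B',D(h)$ that carries one choice to another completes the transitivity.

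The main obstacle I expect is the bookkeeping in part (ii): one must simultaneously respect the $\mathcal{O}_E$-grading, the isotropy conditions with respect to two \emph{different} pairings ($B$ exact, $B'$ only sub-isotropic), and the containment $D(h)(\omega_0')\subseteq\omega_0$, and then check that the stabilizer group is still large enough to realize every allowed transition. The subtlety is that $(\alpha,\beta,\gamma)$ are not independent — they are constrained by the Raynaud classification (the table shows only a short list of admissible triples) — so the transitivity need only be proved case by case over that finite list, and I would organize the argument by first enumerating which triples actually occur and then producing a normal form for each. The isotropy conditions, being linear and compatible with the grading, should not obstruct transitivity once the elementary-divisor structure of $D(h)$ is fixed; the delicate point is verifying that the polarization $B$ does not rigidify $\omega_0$ beyond what $(\alpha,\beta,\gamma)$ already encode, which I would confirm using $B(\omega_0^\perp)=\omega_0$ to express $\omega_0$'s $\overline\Sigma$-part in terms of its $\Sigma$-part.
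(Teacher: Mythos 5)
Your part (i) is essentially the paper's own proof. The paper establishes the three formulae by comparing the Hodge exact sequences of $A'$ and $A$ under the isogeny $h$: the map $h^{*}$ on de Rham cohomology identifies $\omega_{0}/D(h)(\omega_{0}')$ with $\omega_{H}$, the dual isogeny gives $\beta=\dim_{k}\omega_{H^{D}}=\dim_{k}\text{coker}\,(h^{t})^{*\vee}$, which a diagram chase turns into $\dim_{k}M_{1}\otimes k/(\omega_{0}+D(h)(M_{2}\otimes k))$, and $\gamma$ comes from $\omega_{\ker\psi}=\omega_{A'}/B'(\omega_{A'^{t}})$ together with the identification $\omega_{A'^{t}}=\omega_{0}'^{\perp}$ inside $M_{2}^{*}\otimes k$. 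Your translations are these same identifications; two small phrasing corrections: say $\alpha=\dim_{k}\omega_{H}$ (cotangent space of $H$) rather than ``the part of $\Lie(A)$ coming from $\Lie(H)$'' (over a field only the equality of dimensions is needed), and note that $\Lie(\ker\psi)$ is \emph{not} in general of type $(1,1)$ --- the Dieudonn\'e module of $\ker\psi$ is balanced, but its Lie algebra can have type $\Sigma$, $\overline{\Sigma}$ or $(\Sigma,\overline{\Sigma})$, with only its dimension $\gamma\in\{1,2\}$ entering the lemma. For part (ii), which the paper leaves as an exercise with exactly the hints you reproduce (relative positions of $\omega_{0}'$ vs.\ $\ker D(h)$, of $\omega_{0}$ vs.\ $\text{Im}\,D(h)$, and of $\omega_{0}'^{\perp}$ vs.\ $\ker B'$), your transitivity/normal-form strategy is a legitimate way to carry it out.

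There is, however, one concrete error in your part (ii) that would wreck the case enumeration if used literally: you assert that $D(h)$ has elementary divisors ``$(1,p)$ in one type and $(1,1)$ in the other.'' From the matrix in (c), $D(h)$ restricted to each eigenspace $M_{2}(\Sigma)\to M_{1}(\Sigma)$ and $M_{2}(\overline{\Sigma})\to M_{1}(\overline{\Sigma})$ (each of rank $3$) has elementary divisors $(1,1,p)$: one divisor $p$ in \emph{each} type, so that $\text{coker}\,D(h)$ is $2$-dimensional of type $(1,1)$ --- this is precisely the Raynaud condition on $H$ and is stated explicitly in the paper. With your claimed divisors, $\ker(D(h)\bmod p)$ and $\text{coker}(D(h)\bmod p)$ would each be only $1$-dimensional, forcing $\alpha=\dim_{k}\omega_{0}/D(h)(\omega_{0}')\le 1$ and $\beta\le 1$; this contradicts the table in \S\ref{Raynaud classification}, where $H\simeq\kappa\otimes\mu_{p}$ has $\alpha=2$ and $H\simeq\kappa\otimes\mathbb{Z}/p\mathbb{Z}$ has $\beta=2$, so several admissible cases would be missing from your enumeration. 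Once corrected to $(1,1,p)$ in each type --- mod $p$ kernel of dimension $1$ in each type, image of codimension $1$ in each type --- your relative-position analysis, run case by case over the admissible triples of the table, does complete the proof.
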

\begin{proof}
Part (ii) is an exercise in linear algebra which we leave out to the
reader. In checking it observe that $\alpha$ determines the relative
position of $\omega_{0}'$ and $\ker D(h),$ $\beta$ determines the
relative position of $\omega_{0}$ and $\text{Im}D(h)$, while $\gamma$
is responsible for the relative position of $\ker B'$ and $\omega_{0}'^{\perp}.$
To prove (i) consider the diagram
\[
\begin{array}{ccccccccc}
 &  &  &  &  &  & \omega_{H^{D}}^{\lor}\\
 &  &  &  &  &  & \cap\\
\text{0} & \to & \omega_{A'} & \to & H_{dR}^{1}(A'/k) & \to & \omega_{A'^{t}}^{\lor} & \to & 0\\
 & {\scriptstyle h^{*}} & \downarrow &  & \downarrow &  & \downarrow & {\scriptstyle (h^{t})^{*\vee}}\\
0 & \to & \omega_{A} & \to & H_{dR}^{1}(A/k) & \to & \omega_{A^{t}}^{\lor} & \to & 0\\
 &  & \downarrow\\
 &  & \omega_{H}\\
 &  & \downarrow\\
 &  & 0
\end{array}
\]
This gives the formulae for $\alpha=\dim_{k}\omega_{H}$ and $\beta=\dim_{k}\omega_{H^{D}}=\dim_{k}\text{coker}(h^{t})^{*\lor}$.
The formula for $\gamma$ comes from the fact that if $K=H^{\perp}/H=\ker\psi$
then $\omega_{K}=\omega_{A'}/B'(\omega_{A'^{t}}).$
\end{proof}

\subsubsection{Deformations}

The following is a consequence of the main theorem of \cite{Gro},
characterizing deformations of an abelian variety $A$ (with extra
structure) by means of linear-algebra data. See also \cite{dJ2} and
\cite{Bel}, Proposition III.4.3.6.

Let $\mathcal{C}_{k}$ be the category of local Artinian rings $(R,\mathfrak{m}_{R})$
of residue field isomorphic to $k,$ equipped with an isomorphism
$R/\mathfrak{m}_{R}\simeq k$. Observe that every object of $\mathcal{C}_{k}$
comes with a canonical homomorphism $W(k)\to R.$ 

The local deformation problem $\mathcal{D}$ of the structure $(A,\phi,\iota,A',\psi,\iota',h)_{/k}$
associates to $R\in\mathcal{C}_{k}$ the set $\mathcal{D}(R)$ of
isomorphism classes of similar structures over $R$, equipped with
an isomorphism between their reduction modulo $\mathfrak{m}_{R}$
and the given structure over $k$. It is represented by the formal
scheme $\text{Spf}(\mathcal{\widehat{O}}_{\mathscr{S}_{0}(p),y}).$
The \emph{local model theorem} is the following.
\begin{thm}
\label{local model theorem}The local deformation problem $\mathcal{D}$
is equivalent to the deformation problem $\mathcal{\widetilde{D}}$
which associates to every $(R,\mathfrak{m}_{R})$ as above the set
of structures
\[
(\omega\subset M_{1}\otimes_{W(k)}R,\,\,\omega'\subset M_{2}\otimes_{W(k)}R)
\]
satisfying

(a) $\omega$ and $\omega'$ are rank-3 direct summands, $\mathcal{O}_{E}$-invariant
of type $(2,1),$ reducing modulo $\mathfrak{m}_{R}$ to $\omega_{0}$
and $\omega_{0}'$.

(b) $B(\omega^{\perp})=\omega,\,\,\,B'(\omega'^{\perp})\subset\omega'$.

(c) $D(h)(\omega')\subset\omega.$
\end{thm}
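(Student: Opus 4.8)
The plan is to deduce the statement from crystalline deformation theory, following the main theorem of \cite{Gro} as adapted to PEL data in \cite{dJ2} and \cite{Bel}. The first step would be to invoke the Serre--Tate theorem in order to replace the deformation of the abelian varieties $A$ and $A'$ (with their polarizations, $\mathcal{O}_{E}$-actions and the isogeny $h$) by the deformation of their $p$-divisible groups carrying the same extra structure: over an Artinian base with nilpotent maximal ideal the polarizations, endomorphisms and the isogeny extend uniquely once the $p$-divisible groups are deformed, so the two deformation functors coincide. This reduces everything to a question about $p$-divisible groups with $\mathcal{O}_{E}$-action, polarization, and the isogeny $h\colon A[p^{\infty}]\to A'[p^{\infty}]$.

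Second, I would apply Grothendieck--Messing theory. Since $\mathbb{D}(A)$ is a crystal, its value on the thickening $\Spec k\hookrightarrow\Spec R$ is canonically identified with the base change $M_{1}\otimes_{W(k)}R$ (and likewise $\mathbb{D}(A')_{R}=M_{2}\otimes_{W(k)}R$), with the connection inherited from $W(k)$. The Grothendieck--Messing theorem then asserts that deformations of $A[p^{\infty}]$ over $R$ are in natural bijection with liftings of the Hodge filtration $\omega_{0}$ to a rank-3 direct summand $\omega\subset M_{1}\otimes_{W(k)}R$, and similarly for $A'$, producing $\omega'$. This already matches the bare requirement in $\widetilde{\mathcal{D}}$ that $\omega,\omega'$ be rank-3 direct summands reducing to $\omega_{0},\omega_{0}'$.

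Third, I would translate each piece of extra structure into one of the conditions (a)--(c) by functoriality of the Dieudonn\'e crystal. The $\mathcal{O}_{E}$-action forces $\omega$ and $\omega'$ to be $\mathcal{O}_{E}$-stable, and imposing that $\Lie$ remain of type $(2,1)$ is condition (a). The polarizations give the type-reversing maps $B$ and $B'$, and a deformation carries a compatible polarization exactly when the lifted filtration stays isotropic, i.e. $B(\omega^{\perp})=\omega$ and $B'(\omega'^{\perp})\subset\omega'$; the distinction between equality and inclusion reflects that $\phi$ is principal while $\psi$ has degree $p^{2}$, and this is condition (b). Finally, the isogeny $h$ deforms to an isogeny of the deformed $p$-divisible groups if and only if the induced map $D(h)\colon M_{2}\otimes R\to M_{1}\otimes R$ carries $\omega'$ into $\omega$, which is condition (c). Collecting these yields a natural transformation $\mathcal{D}\to\widetilde{\mathcal{D}}$, and running the construction backwards (reconstructing the deformed $p$-divisible groups with structure from a lifted filtration package) gives the inverse.

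The main obstacle, and the point needing the most care, is the divided-power hypothesis underlying Grothendieck--Messing: in characteristic $p$ an arbitrary Artinian thickening $R\to k$ need not admit a PD-structure, so the clean evaluation of the crystal and the functoriality of $D(h)$ are not immediately available over all of $\mathcal{C}_{k}$. The remedy I would use is d\'evissage to square-zero extensions $R'\twoheadrightarrow R$, whose kernel carries the trivial nilpotent PD-structure; on such extensions both deformation functors are governed by the crystal, and since a formal deformation functor is determined by its values on square-zero extensions, the natural transformation built step by step extends to an equivalence on all of $\mathcal{C}_{k}$. The genuinely delicate issue --- that the crystalline description of $h$, and hence of the map $\pi$, is only functorial for PD-morphisms and therefore faithful merely modulo $p$th powers of the maximal ideal --- is precisely the \emph{flaw} flagged in the generalities above; it does not affect the present equivalence, which concerns the completed local ring itself, but it is exactly why the induced description of the morphisms between local models will later have to be corrected.
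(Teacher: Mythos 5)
Your proposal is correct and takes essentially the same route as the paper, which proves this theorem simply by citing the main theorem of \cite{Gro} together with \cite{dJ2} and \cite{Bel} (Proposition III.4.3.6); your argument is the standard unwinding of those references, namely Serre--Tate reduction to $p$-divisible groups, Grothendieck--Messing classification of deformations by lifts of the Hodge filtration, translation of the PEL data into conditions (a)--(c), and d\'evissage through square-zero extensions to supply the divided powers. Your closing remark also correctly isolates the PD-functoriality caveat as affecting only the maps between local models and not the equivalence itself, which is exactly the ``flaw'' the paper flags in its generalities on local models.
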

Similar results hold for the moduli problems represented by $\text{{Spf}}(\mathcal{\widehat{O}}_{\mathscr{S},x})$
and $\text{{Spf}}(\mathcal{\widehat{O}}_{\widetilde{\mathscr{S}},\widetilde{x}})$,
obtained by forgetting part of the data.

The theorem allows us to compute, quite easily, the complete local
rings $\mathbf{L}_{y},\,\text{}$$\mathbf{L}_{x}$ and $\mathbf{L}_{\widetilde{x}}$
representing the deformation problem $\widetilde{\mathcal{D}}$, and
deduce isomorphisms
\[
\mathcal{\widehat{O}}_{\mathscr{S}_{0}(p),y}\simeq\mathbf{L}_{y},\,\,\,\mathcal{\widehat{O}}_{\mathscr{S},x}\simeq\mathbf{L}_{x},\,\,\,\mathcal{\widehat{O}}_{\widetilde{\mathscr{S}},\widetilde{x}}\simeq\mathbf{L}_{\widetilde{x}}.
\]
Since the local deformation problems $\widetilde{\mathcal{D}}$ at
$x$ and $\widetilde{x}$ are obtained from the same problem at $y$
by forgetting part of the data, we get canonical homomorphisms
\begin{equation}
\mathbf{L}_{\widetilde{x}}\to\mathbf{L}_{y}\leftarrow\mathbf{L}_{x}\label{eq:diagram}
\end{equation}
between the local models. However, as remarked above, this diagram
is \emph{not }isomorphic to the corresponding diagram of homomorphisms
between the completed local rings of the Picard modular schemes. The
best one can get from the general theory is the following.
\begin{thm}
In the above situation the diagrams $\mathbf{L}_{\widetilde{x}}\to\mathbf{L}_{y}\leftarrow\mathbf{L}_{x}$
and $\mathcal{\widehat{O}}_{\widetilde{\mathscr{S}},\widetilde{x}}\to\mathcal{\widehat{O}}_{\mathscr{S}_{0}(p),y}\leftarrow\mathcal{\widehat{O}}_{\mathscr{S},x}$
become canonically isomorphic after one divides all the local rings
by the $p$th powers of their maximal ideals. In particular, they
induce isomorphic diagrams on cotangent spaces.
\end{thm}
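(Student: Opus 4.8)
The plan is to deduce the statement from the one setting in which Grothendieck's crystalline deformation theory is genuinely functorial in the level, namely the subcategory of divided-power thickenings. Recall that all six rings pro-represent deformation functors on $\mathcal{C}_k$: the rings $\widehat{\mathcal{O}}_{\mathscr{S}_0(p),y}$, $\widehat{\mathcal{O}}_{\mathscr{S},x}$, $\widehat{\mathcal{O}}_{\widetilde{\mathscr{S}},\widetilde x}$ represent the genuine PEL deformation problem $\mathcal{D}$ and its two forgetful variants, the forgetful natural transformations being exactly the maps induced by $\pi$ and $\widetilde\pi$; while $\mathbf{L}_y,\mathbf{L}_x,\mathbf{L}_{\widetilde x}$ represent the linear-algebra problem $\widetilde{\mathcal{D}}$ of Theorem \ref{local model theorem} and its forgetful variants, giving the arrows in \eqref{eq:diagram}. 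The abstract isomorphisms $\widehat{\mathcal{O}}_{\mathscr{S}_0(p),y}\simeq\mathbf{L}_y$ and its analogues at $x,\widetilde x$ are already in hand, so the entire content is the compatibility of the horizontal arrows; as the Kronecker example $k[[u]]\hookrightarrow k[[u,v]]/((u^p-v)(v^p-u))$ versus $k[[u]]\hookrightarrow k[[u,v]]/(uv)$ shows, these arrows genuinely differ over the full rings and can agree only after a truncation.

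The mechanism is as follows. For a base $R\in\mathcal{C}_k$ whose maximal ideal carries divided powers compatible with the canonical ones on $k$, the main theorem of \cite{Gro} (together with Grothendieck--Messing) produces a natural bijection between $\mathcal{D}(R)$ and $\widetilde{\mathcal{D}}(R)$: a deformation is the same as a lift of the Hodge filtration inside the Dieudonn\'e crystal evaluated on $R$. The point is that this comparison is functorial in the isogeny $h\colon A\to A'=A/H$, since the Dieudonn\'e crystal is functorial in isogenies and produces the very map $D(h)$ of Theorem \ref{local model theorem}; consequently it intertwines the operation ``forget $H$'' on the $\mathcal{D}$-side with ``forget $\omega'$'' on the $\widetilde{\mathcal{D}}$-side, and likewise for $\widetilde\pi$ and passage to $A'$. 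Therefore, restricted to the subcategory $\mathcal{C}_k^{\mathrm{PD}}$ of PD-thickenings, the two spans of deformation functors are isomorphic \emph{as diagrams}, not merely objectwise.

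It remains to translate ``restriction to $\mathcal{C}_k^{\mathrm{PD}}$'' into the ideal $\mathfrak{m}^p$. First I would observe that for any of our complete local rings $B$ one has $x\in\mathfrak{m}_B\Rightarrow x^p\in\mathfrak{m}_B^p$, so in the truncation $B/\mathfrak{m}_B^p$ every element of the maximal ideal has vanishing $p$th power; this is exactly the condition making the canonical divided-power structure available, so $B/\mathfrak{m}_B^p$ lies in $\mathcal{C}_k^{\mathrm{PD}}$, and these truncations are the universal objects through which the restricted functors factor to order below $p$. Feeding the compatible universal deformations over $\widehat{\mathcal{O}}_{\mathscr{S}_0(p),y}/\mathfrak{m}^p$, $\widehat{\mathcal{O}}_{\mathscr{S},x}/\mathfrak{m}^p$ and $\widehat{\mathcal{O}}_{\widetilde{\mathscr{S}},\widetilde x}/\mathfrak{m}^p$ into the natural isomorphism of the previous paragraph yields the asserted canonical isomorphism of the two spans of $\mathfrak{m}^p$-truncations. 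The cotangent statement is then immediate, since $\mathfrak{m}/\mathfrak{m}^2$ is a further quotient and first-order deformations (where $\mathfrak{m}^2=0$) are trivially PD.

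The main obstacle I expect is pinning the exponent to exactly $p$: one must verify both that the canonical PD structure is available precisely on the $\mathfrak{m}^p$-truncation (and fails beyond it), and that the crystalline comparison is a natural \emph{isomorphism}, not merely a natural transformation, on all of $\mathcal{C}_k^{\mathrm{PD}}$. I would pin this down by a direct check against the baby case, where $(u^p-v)(v^p-u)\equiv uv\pmod{\mathfrak{m}^p}$ confirms that the two models coincide modulo $\mathfrak{m}^p$ but not beyond, exhibiting $p$ as the correct truncation order.
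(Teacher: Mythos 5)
Your proposal is, in architecture, exactly the argument the paper intends: the theorem is stated there without a self-contained proof, as the output of the general theory of \cite{Gro}, \cite{dJ2} and \cite{Bel}, and the rationale the paper records in Section \ref{local rings} for the Kronecker example (``modulo $p$th powers of the maximal ideal (where there is a canonical divided power structure) the two models are isomorphic'') is precisely what you flesh out: restrict both spans of deformation functors to divided-power objects of $\mathcal{C}_k$, observe that the crystalline comparison $\mathcal{D}(R)\simeq\widetilde{\mathcal{D}}(R)$ is natural in the isogeny $h$ because the Dieudonn\'e crystal is functorial in isogenies (this is what produces $D(h)$), and evaluate the resulting isomorphism of spans on the universal truncations.

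The one soft spot is the step you yourself flagged, and your proposed fix (a check in the baby case) does not repair it. In mixed characteristic, ``$x^{p}=0$ for all $x\in\mathfrak{m}_{R}$'' is \emph{not} ``exactly the condition'' making the relevant divided powers available. The truncated structure $\gamma_{n}(x)=x^{n}/n!$ for $n<p$, $\gamma_{n}(x)=0$ for $n\geq p$ is indeed a nilpotent PD structure on $\mathfrak{m}_{R}$ when $\mathfrak{m}_{R}^{p}=0$, but it is \emph{incompatible} with the canonical divided powers on $pW(k)$ (note: compatibility must be with $W(k)$, not with $k$, which carries none): in $R=W(k)[[r,s]]/\mathfrak{m}^{p}$ one has $\gamma_{can,p}(p)=p^{p}/p!=p^{p-1}\cdot(\mathrm{unit})\neq0$, whereas the truncated structure forces $\gamma_{p}(p)=0$. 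This compatibility is not optional for your argument: the identification $\mathbb{D}(A)(R)\simeq M_{1}\otimes_{W(k)}R$ on which Theorem \ref{local model theorem} rests requires $W(k)\to R$ to be a PD morphism. The standard repair, which must be stated, is to use instead the divided powers on $\mathfrak{m}_{R}$ extending the canonical ones on $pR$ (these descend from $pB$ because $\gamma_{can,n}$ carries $pB\cap\mathfrak{m}_{B}^{p}$ into $\mathfrak{m}_{B}^{p}$) by the truncated ones on the remaining part; this is the structure implicit in \cite{Bel} III.4.3 and \cite{dJ2}. Finally, the ``exactly $p$'' worry is resolved more cleanly than by the Kronecker computation: on $\mathfrak{m}/\mathfrak{m}^{p+1}$ no PD structure exists at all, since $p!\,\gamma_{p}(r)=r^{p}$ has no solution there ($r^{p}\notin p\mathfrak{m}+\mathfrak{m}^{p+1}$, as one sees in the associated graded ring) — though this optimality is not needed for the theorem as stated, which only asserts the isomorphism modulo $\mathfrak{m}^{p}$.
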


\subsection{Computations}

\subsubsection{Local model diagrams}

Let $W=W(k)$ be the ring of Witt vectors of $k$. The scheme $\mathscr{S}$
is smooth over $W,$ so all its completed local rings are isomorphic
to $\mathbf{L}_{x}=W[[r,s]].$ In the following table we catalog the
diagrams (\ref{eq:diagram}) giving the local models at $x,\widetilde{x}$
and $y,$ and the maps between them.
\begin{prop}
\label{local rings-1}For a suitable choice of local parameters the
local model diagram is given by the following table (where $\mathbf{L}_{x}=W[[r,s]]$)

\medskip{}

\begin{center}%
\begin{tabular}{|c|c|c|c|c|}
\hline 
{\footnotesize{}$H$ }\emph{\footnotesize{}at}{\footnotesize{} $y=[\underline{A},H]$} & {\footnotesize{}$\mathbf{L}_{y}$} & {\footnotesize{}$\mathbf{L}_{\widetilde{x}}$} & \emph{\footnotesize{}maps} & \emph{\footnotesize{}in {[}Bel{]}}\tabularnewline
\hline 
{\footnotesize{}\uline{$\mu$}}\emph{\footnotesize{}\uline{-ord}}{\footnotesize{}:} &  &  &  & \tabularnewline
{\footnotesize{}$\kappa\otimes\mu_{p}$} & {\footnotesize{}$W[[r,s]]$} & {\footnotesize{}$W[[a,b]]$} & {\footnotesize{}$a\mapsto r,\,b\mapsto ps$} & \emph{\footnotesize{}II.1.c}\tabularnewline
{\footnotesize{}$\kappa\otimes\mathbb{Z}/p\mathbb{Z}$} & {\footnotesize{}$W[[a,b]]$} & {\footnotesize{}$W[[a,b]]$} & {\footnotesize{}$r\mapsto pa,\,s\mapsto pb$} & \emph{\footnotesize{}II.3}\tabularnewline
\hline 
\emph{\footnotesize{}\uline{gss}}{\footnotesize{}:} &  &  &  & \tabularnewline
{\footnotesize{}$\mathfrak{G}[p]$} & {\footnotesize{}$W[[a,c]]$} & {\footnotesize{}$W[[a,c]]$} & {\footnotesize{}$r\mapsto a,\,s\mapsto pc$} & \emph{\footnotesize{}II.2}\tabularnewline
{\footnotesize{}$\alpha_{p^{2}}^{*}$} & {\footnotesize{}$W[[r,s,c]]/(cs+p)$} & {\footnotesize{}$W[[a,b,c]]/(bc+p)$} & {\footnotesize{}$a\mapsto cr,\,b\mapsto s$} & \emph{\footnotesize{}I.1.b}\tabularnewline
{\footnotesize{}$\alpha_{p^{2}}$} & {\footnotesize{}$W[[a,b,c]]/(bc+p)$} & {\footnotesize{}$W[[a,b,c]]/(bc+p)$} & {\footnotesize{}$\,r\mapsto pa,\,s\mapsto b$} & \emph{\footnotesize{}I.2}\tabularnewline
\hline 
\emph{\footnotesize{}\uline{ssp}}\emph{\footnotesize{}:} &  &  &  & \tabularnewline
{\footnotesize{}$\mathfrak{G}[p]$} & {\footnotesize{}$W[[a,c]]$} & {\footnotesize{}$W[[a,c]]$} & {\footnotesize{}$r\mapsto a,\,s\mapsto pc$} & \emph{\footnotesize{}II.2}\tabularnewline
{\footnotesize{}$\kappa\otimes\alpha_{p}\,\text{(generic)}$} & {\footnotesize{}$W[[a,b,r]]/(ar+p)$} & {\footnotesize{}$W[[a,b]]$} & {\footnotesize{}$s\mapsto br$} & \emph{\footnotesize{}II.1.a}\tabularnewline
{\footnotesize{}$\kappa\otimes\alpha_{p}\,\,(\sqrt[p+1]{-1})$} & {\footnotesize{}$W[[a,b,r]]/(abr+p)$} & {\footnotesize{}$W[[a,b,c]]/(bc+p)$} & {\footnotesize{}$s\mapsto br,\,c\mapsto ar$} & \emph{\footnotesize{}I.1.a}\tabularnewline
\hline 
\end{tabular}\end{center}

\medskip{}
\end{prop}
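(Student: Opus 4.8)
The plan is to read the entire table off the local model theorem (Theorem \ref{local model theorem}), which converts each of the three completed local rings, together with the two forgetful maps between them, into an explicit problem in semilinear algebra over an arbitrary Artinian ring $R\in\mathcal{C}_{k}$. By that theorem $\mathbf{L}_{y}$ represents the functor of pairs $(\omega,\omega')$ of $\mathcal{O}_{E}$-stable rank-$3$ direct summands of type $(2,1)$ in $M_{1}\otimes_{W(k)}R$ and $M_{2}\otimes_{W(k)}R$, reducing to $(\omega_{0},\omega_{0}')$ and satisfying (a) the type condition, (b) $B(\omega^{\perp})=\omega$ and $B'(\omega'^{\perp})\subset\omega'$, and (c) $D(h)(\omega')\subset\omega$; while $\mathbf{L}_{x}$ (resp. $\mathbf{L}_{\widetilde{x}}$) represents the coarser functor of deformations of $\omega$ alone (resp. $\omega'$ alone). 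The two forgetful natural transformations induce the ring maps $\mathbf{L}_{\widetilde{x}}\to\mathbf{L}_{y}\leftarrow\mathbf{L}_{x}$ of the local model diagram (\ref{eq:diagram}). Since $\mathscr{S}$ is smooth, $\mathbf{L}_{x}=W[[r,s]]$ is known in advance, and the real work is to describe $\mathbf{L}_{y}$, $\mathbf{L}_{\widetilde{x}}$ and the maps in each row.

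First I would make $B,B',D(h)$ completely explicit in the bases $\{e_{i}\},\{f_{i}\}$ of \cite{Bel}, Thm. III.4.2.5.3, so that conditions (a)--(c) become matrix equations over $R$. Next I would fix, in each row, explicit representatives for the initial filtrations $\omega_{0}\subset M_{1}\otimes k$ and $\omega_{0}'\subset M_{2}\otimes k$. Here Lemma \ref{invariants} does the decisive work: the triple $(\alpha,\beta,\gamma)$ for the given type of $H$ is read directly off the Raynaud classification table, and by part (ii) of that lemma it determines the isomorphism class of the full structure $(M_{1}\otimes k,M_{2}\otimes k,B,B',D(h),\omega_{0},\omega_{0}')$; hence a single explicit choice of $\omega_{0},\omega_{0}'$ in terms of the $e_{i},f_{i}$ realizing these invariants is legitimate. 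Working $\Sigma$- and $\overline{\Sigma}$-isotypically, and using that the principality of $\phi$ makes (b) an \emph{equality}, one sees that $\omega$ is already cut out by its rank-$2$ $\Sigma$-part, which accounts for the two parameters $r,s$ on $\mathbf{L}_{x}$.

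The third step is the parametrization itself: write $\omega$ and $\omega'$ as graphs of $R$-linear maps deforming $\omega_{0},\omega_{0}'$, introduce one deformation parameter per free entry, impose (a)--(c), and solve. Condition (c), which couples $\omega'$ to $\omega$ through $D(h)$, is what produces the relations cutting out $\mathbf{L}_{y}$; the factor $p$ appearing in $D(h)$ (on $f_{2},f_{4}$) and in $B'$ (reflecting the degree $p^{2}$ of $\psi$) is exactly what forces the non-formally-smooth relations such as $cs+p=0$ and $abr+p=0$ in the supersingular rows. Reading off the subring generated by the $\omega$-parameters, respectively the $\omega'$-parameters, recovers $\mathbf{L}_{x}\to\mathbf{L}_{y}$ and $\mathbf{L}_{\widetilde{x}}\to\mathbf{L}_{y}$, and expressing the $\mathbf{L}_{x}$- and $\mathbf{L}_{\widetilde{x}}$-parameters in the coordinates of $\mathbf{L}_{y}$ gives the ``maps'' column. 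These computations are the case analysis carried out in \cite{Bel}, III.4.3, in the paragraphs listed in the final column, and I would reproduce one representative calculation---say $\kappa\otimes\alpha_{p}$ at a superspecial point, the source of $abr+p=0$---in full to fix conventions.

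The main obstacle is bookkeeping rather than any isolated difficulty. The parameters $r,s,a,b,c$ must be chosen \emph{simultaneously compatibly} on all three local models, so that the forgetful maps restrict cleanly; this is precisely the content of the phrase ``for a suitable choice of local parameters'' in the statement. In the supersingular cases one must also identify correctly which \emph{products} of deformation parameters are forced to equal $-p$, and distinguish superficially similar maps (for instance $a\mapsto r$ versus $a\mapsto cr$) by tracking carefully how $D(h)$, with its $p$-entries, acts on the deformed filtration $\omega'$. A convenient consistency check in each row is that, modulo $p$th powers of the maximal ideals, the resulting diagram must agree with the abstract local model diagram, as guaranteed by the theorem preceding this proposition.
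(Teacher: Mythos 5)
Your proposal is correct and follows essentially the same route as the paper: invoke the standard-basis form of $B,B',D(h)$ from \cite{Bel} III.4.2.5.3, use Lemma \ref{invariants}(ii) to justify an explicit choice of $(\omega_{0},\omega_{0}')$ realizing the invariants $(\alpha,\beta,\gamma)$ of each Raynaud type, parametrize the deformations subject to (a)--(c) of Theorem \ref{local model theorem} to extract the relations and the maps, and defer the remaining rows to the case analysis of \cite{Bel} III.4.3.8 while working one representative case in full. The paper does exactly this, presenting two sample computations (the gss case $\alpha_{p^{2},\Sigma}$ and the ssp case $\kappa\otimes\alpha_{p}$ with $\zeta^{p+1}=-1$, the latter being the very case you single out).
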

\textbf{Explanations}
\begin{itemize}
\item The first column indicates the stratum to which $x$ belongs and the
possible Raynaud types of the subgroup $H$ in the fiber of $\pi$
above $x.$ The parentheses distinguishing the two cases where $H\simeq\kappa\otimes\alpha_{p}$
refer to the value of the coordinate $\zeta$ on the projective line
$F_{x}\subset\pi^{-1}(x)$. This line maps isomorphically to $E_{x}\subset S^{\#}$
and we endow it with the coordinate $\zeta$ as in Section \ref{blow-up}
and Theorem \ref{S_0(p)-ssp} below. The last entry in the table refers
to points where $\zeta^{p+1}=-1,$ ``generic'' refers to all the
rest.
\item The last column refers to the enumeration of the various cases in
Bellaïche's thesis \cite{Bel} III.4.3.8 (\emph{cas.sous-cas.sous-sous-cas}).
\end{itemize}
The table implies that the special fiber $S_{0}(p)$ of $\mathscr{\mathscr{S}}_{0}(p)$
is equidimensional of dimension 2. As we shall see in Theorems \ref{S_0(p)-1}
and \ref{S_0(p)-gss}, it is the union of three smooth surfaces intersecting
transversally. These surfaces are the closures of the strata denoted
below by $Y_{m},Y_{et}$ and $Y_{gss}.$ The first two are irreducible,
but the third has several connected components. The non-singular points
of $S_{0}(p)$, lying on only \emph{one} of these surfaces, support
an $H$ of type $\kappa\otimes\mu_{p},$ $\kappa\otimes\mathbb{Z}/p\mathbb{Z}$
or $\mathfrak{G}[p]$. The points lying on the intersection of two
of them support an $H$ of type $\alpha_{p^{2}}^{*},\alpha_{p^{2}}$
or $\kappa\otimes\alpha_{p}$ (generic). The remaining points, represented
by the last row in the table, are those where all three surfaces meet.

The special fiber $\widetilde{S}$ of $\widetilde{\mathscr{S}}$ is
the union of two smooth surfaces intersecting transversally. One of
them, which is the closure of $\widetilde{\pi}(Y_{m})=\widetilde{\pi}(Y_{et})$,
is irreducible. The other one, which is the closure of $\widetilde{\pi}(Y_{gss})$,
has several connected components. A point $\widetilde{x}=\widetilde{\pi}(y)$
lies on the intersection of these two surfaces if and only if $y$
supports an $H$ of type $\kappa\otimes\alpha_{p}\,\,(\sqrt[p+1]{-1})$,
$\alpha_{p^{2}}^{*}$ or $\alpha_{p^{2}}.$

In the next subsections we work out two sample cases from the table,
explaining how one arrives at the given description of the local model
diagram.

\subsubsection{First example\label{example}}

Assume that $x=\pi(y)$ is a gss point and $y\in S_{0}(p)(k)$ is
such that $H\simeq\alpha_{p^{2},\Sigma}$ (case I.2 in \cite{Bel}).
Here the invariants $(\alpha,\beta,\gamma)=(1,2,2)$. Using Lemma
\ref{invariants} one deduces that we may take, without loss of generality,
\[
\omega_{0}=\left\langle e_{1},e_{3},e_{5}\right\rangle _{k},\,\,\,\omega'_{0}=\left\langle f_{2},f_{3},f_{5}\right\rangle _{k}.
\]
A little computation yields that the most general deformation satisfying
(a) (b) and (c) of Theorem \ref{local model theorem} is given by
\[
\omega=\left\langle e_{1}-se_{2},e_{3}-re_{2},e_{5}+re_{4}+se_{6}\right\rangle _{R}
\]

\[
\omega'=\left\langle f_{2}+cf_{1},f_{3}+acf_{1},f_{5}+af_{4}+bf_{6}\right\rangle _{R},
\]
where $r,s,a,b,c\in\mathfrak{m}_{R}$ satisfy the relations
\[
bc+p=0,\,\,\,\,b=s,\,\,\,\,pa=r.
\]
It follows that
\[
\mathbf{L}_{\widetilde{x}}=W(k)[[a,b,c]]/(bc+p)=\mathbf{L}_{y}\supset\mathbf{L}_{x}=W(k)[[r,s]].
\]
In the special fiber we get
\[
\mathbf{L}_{\widetilde{x}}\otimes_{W(k)}k=k[[a,b,c]]/(bc)=\mathbf{L}_{y}\otimes_{W(k)}k\leftarrow\mathbf{L}_{x}\otimes_{W(k)}k=k[[r,s]]
\]
where $s\mapsto b$ and $r\mapsto0.$
\begin{cor}
The map $\mathcal{\widehat{O}}_{\widetilde{S},\widetilde{x}}\to\mathcal{\widehat{O}}_{S_{0}(p),y}$
is an isomorphism. Identify $\mathcal{\widehat{O}}_{S_{0}(p),y}$
with $\mathbf{L}_{y}\otimes_{W(k)}k.$ There are two analytic branches
of $S_{0}(p)$ through $y,$ given by $c=0$ and $b=0$, namely the
closed embeddings of formal schemes
\[
\mathfrak{W}=\text{Spf}(k[[a,b]])\hookrightarrow\mathfrak{Y}=\text{Spf (}\mathcal{\widehat{O}}_{S_{0}(p),y})\hookleftarrow\text{Spf}(k[[a,c]])=\mathfrak{Z}.
\]
The map $\Omega_{S/k}|_{x}\to\Omega_{\mathfrak{W}/k}|_{y}$ maps $ds\mapsto db,\,\,\,dr\mapsto0.$
The map $\Omega_{S/k}|_{x}\to\Omega_{\mathfrak{Z}/k}|_{y}$ is identically
0.
\end{cor}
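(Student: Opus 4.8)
The plan is to extract everything from the special-fibre reduction of the local model diagram already computed in this subsection, using the comparison theorem above (that the local model diagram agrees with the true diagram of completed local rings modulo the $p$th powers of the maximal ideals) as the only non-elementary input.

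For the isomorphism $\widehat{\mathcal{O}}_{\widetilde{S},\widetilde{x}}\to\widehat{\mathcal{O}}_{S_0(p),y}$, I would first note that here the local model map $\mathbf{L}_{\widetilde{x}}\to\mathbf{L}_y$ is the identity of $W(k)[[a,b,c]]/(bc+p)$, so its reduction is the identity of $k[[a,b,c]]/(bc)$. By the comparison theorem, the genuine map $\widetilde{\pi}^{*}$ coincides with this modulo the $p$th power of the maximal ideal; since $\mathfrak{m}^{p}\subseteq\mathfrak{m}^{2}$, it in particular induces an isomorphism on cotangent spaces. As $\widetilde{\pi}^{*}$ is a local homomorphism of complete Noetherian local $k$-algebras that is onto on cotangent spaces, the complete version of Nakayama's lemma makes it surjective. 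Finally, the local model theorem (Theorem \ref{local model theorem}) gives an abstract isomorphism $\widehat{\mathcal{O}}_{\widetilde{S},\widetilde{x}}\cong\widehat{\mathcal{O}}_{S_0(p),y}\cong k[[a,b,c]]/(bc)$; composing $\widetilde{\pi}^{*}$ with such an isomorphism produces a surjective endomorphism of a Noetherian ring, which is automatically injective (the ascending chain $\ker\phi\subseteq\ker\phi^{2}\subseteq\cdots$ stabilises). Hence $\widetilde{\pi}^{*}$ is an isomorphism. I expect this upgrade from ``isomorphism modulo $p$th powers'' to a genuine isomorphism to be the only real obstacle, since the local model theory as stated controls the map only up to $p$th powers.

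The description of the two analytic branches is then immediate from the identification $\widehat{\mathcal{O}}_{S_0(p),y}=\mathbf{L}_y\otimes_{W(k)}k=k[[a,b,c]]/(bc)$: this ring is reduced with exactly two minimal primes $(c)$ and $(b)$, and the corresponding quotients $k[[a,b,c]]/(bc,c)=k[[a,b]]$ and $k[[a,b,c]]/(bc,b)=k[[a,c]]$ are exactly $\mathfrak{W}$ and $\mathfrak{Z}$, with defining equations $c=0$ and $b=0$.

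For the cotangent maps I would use that the special-fibre local model map $\mathbf{L}_x\otimes k\to\mathbf{L}_y\otimes k$ sends $r\mapsto 0$ and $s\mapsto b$ (the reduction modulo $p$ of $r=pa$, $s=b$), which by the comparison theorem equals $\pi^{*}$ on cotangent spaces since $\mathfrak{m}^{p}\subseteq\mathfrak{m}^{2}$. Composing with the projection onto the branch $\mathfrak{W}$ (i.e.\ setting $c=0$) gives $dr\mapsto 0$ and $ds\mapsto db$; composing instead with the projection onto $\mathfrak{Z}$ (setting $b=0$) annihilates the image $b$ of $s$, so $dr\mapsto 0$ and $ds\mapsto 0$, i.e.\ the map $\Omega_{S/k}|_x\to\Omega_{\mathfrak{Z}/k}|_y$ is identically zero.
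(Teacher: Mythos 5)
Your proposal is correct, and its skeleton is the paper's: everything is read off the special-fibre local model diagram, with the comparison theorem (the genuine diagram agrees with the local model diagram modulo $p$th powers of the maximal ideals, hence on cotangent spaces, since $\mathfrak{m}^{p}\subseteq\mathfrak{m}^{2}$) as the only nontrivial input; your branch description and both cotangent-space computations are exactly those of the paper. Where you genuinely diverge is the isomorphism step. The paper proves the stronger, mixed-characteristic statement that $\widehat{\mathcal{O}}_{\widetilde{\mathscr{S}},\widetilde{x}}\to\widehat{\mathcal{O}}_{\mathscr{S}_{0}(p),y}$ is an isomorphism \emph{before} reduction modulo $p$: both rings are $3$-dimensional complete \emph{regular} local rings (note $W(k)[[a,b,c]]/(bc+p)$ is regular, as $p\in(b,c)$ forces $\mathfrak{m}=(a,b,c)$; regularity of the schemes was recorded in Proposition \ref{flat and proper}), the cotangent map is an isomorphism by the comparison with the local models, Nakayama gives surjectivity, and a surjection between domains of equal dimension is injective; the special-fibre isomorphism then follows by reducing modulo $p$. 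You instead stay entirely in the special fibre, where $k[[a,b,c]]/(bc)$ is neither regular nor a domain, so the dimension/domain argument is unavailable; your substitute --- compose $\widetilde{\pi}^{*}$ with the abstract isomorphism $\widehat{\mathcal{O}}_{S_{0}(p),y}\simeq\mathbf{L}_{y}\otimes_{W(k)}k\simeq\widehat{\mathcal{O}}_{\widetilde{S},\widetilde{x}}$ supplied by the local model theorem and invoke the fact that a surjective endomorphism of a Noetherian ring is injective (via the stabilising chain $\ker\phi\subseteq\ker\phi^{2}\subseteq\cdots$) --- is a valid workaround, and you correctly identified the upgrade from ``isomorphism mod $\mathfrak{m}^{p}$'' to a genuine isomorphism as the one real obstacle. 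What each approach buys: the paper's route yields the isomorphism of the arithmetic completed local rings for free, whereas yours uses only the abstract isomorphism type of the two rings, so it would survive at points where the local rings fail to be regular or irreducible, at the cost of proving only the special-fibre assertion --- which, given the paper's conventions from Chapter 2 onward, is all the corollary literally claims.
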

\begin{proof}
The map $\mathcal{\widehat{O}}_{\widetilde{\mathscr{S}},\widetilde{x}}\to\mathcal{\widehat{O}}_{\mathscr{S}_{0}(p),y}$
is an isomorphism even before we reduce these rings modulo $p.$ Indeed,
both are $3$-dimensional complete regular local rings, and the map
between them induces an isomorphism on the cotangent spaces $\mathfrak{m}/\mathfrak{m}^{2}$,
hence is an isomorphism. Here we use the fact that the map between
\emph{cotangent spaces} coincides with the corresponding map on the
local models, which happens to be an isomorphism.

The two branches of $\text{Spf}(\mathcal{\widehat{O}}_{S_{0}(p),y})$
can be read off the reduction modulo $p$ of the local model $\mathbf{L}_{y}.$
As both branches are smooth over $k$, and so is the base $S$ at
$x,$ the maps on cotangent spaces are easily calculated from the
local models.
\end{proof}

\subsubsection{Second example\label{example2}}

For our second example assume that $x$ is an ssp point and $y$ is
such that $H\simeq\kappa\otimes\alpha_{p}$ and $\zeta^{p+1}=-1$
(case I.1.a in \cite{Bel}). In this case $(\alpha,\beta,\gamma)=(2,2,2)$
and we may assume that
\[
\omega_{0}=\left\langle e_{1},e_{3},e_{5}\right\rangle _{k},\,\,\,\omega'_{0}=\left\langle f_{2},f_{3},f_{4}\right\rangle _{k}.
\]
The most general deformation satisfying (a) (b) and (c) of Theorem
\ref{local model theorem} is given by
\[
\omega=\left\langle e_{1}-re_{2},e_{3}-se_{2},e_{5}+se_{4}+re_{6}\right\rangle _{R}
\]

\[
\omega'=\left\langle f_{2}+abf_{1},f_{3}+bf_{1},f_{4}+af_{5}+cf_{6}\right\rangle _{R},
\]
where $r,s,a,b,c\in\mathfrak{m}_{R}$ satisfy the relations
\[
bc+p=0,\,\,\,\,s=-rb,\,\,\,\,c=ra.
\]
The local models are therefore
\[
\mathbf{L}_{\widetilde{x}}=W(k)[[a,b,c]]/(bc+p)\to\mathbf{L}_{y}=W(k)[[r,a,b]]/(rab+p)\leftarrow\mathbf{L}_{x}=W(k)[[r,s]]
\]
and the maps between them are given by $c\mapsto ra$, $s\mapsto-rb.$
Modulo $p$th powers of the maximal ideals these are also the maps
between the completed local rings of the Picard modular surfaces at
the corresponding points.

\section{The global structure of $S_{0}(p)$}

As before, fix an algebraic closure $k$ of $\kappa.$ In this section
we concentrate on the structure of the geometric special fiber $S_{0}(p)$
over $k.$

\subsection{The $\mu$-ordinary strata}

\subsubsection{Lots of Frobenii}

Let $Y=S_{0}(p),$ and let
\[
Y^{\sigma}=\Phi_{k}^{*}Y
\]
be its base change under the Frobenius of $k.$ This is a fine moduli
space for tuples $(\underline{A}_{1},H_{1})$ as in the moduli problem
$(S_{0}(p))$ except that the signature of the $\mathcal{O}_{E}$-action
on the Lie algebra of $A_{1}$ is now $(1,2)$ rather than $(2,1).$ 

This $Y^{\sigma}$ carries the universal abelian variety $\mathcal{A}_{1}=\mathcal{A}^{\sigma}=\Phi_{k}^{*}\mathcal{A}.$
It should be distinguished from $\mathcal{A}^{(p)}=\Phi_{Y}^{*}\mathcal{A},$
which lies over $Y$. The same remark and notation applies to the
universal subgroup scheme $H.$ The following diagram illustrates
the situation.

\[
\begin{array}{ccccccc}
\mathcal{A} & \overset{Fr_{\mathcal{A}/Y}}{\longrightarrow} & \mathcal{A}^{(p)} & \longrightarrow & \mathcal{A}^{\sigma} & \longrightarrow & \mathcal{A}\\
 & \searrow & \downarrow & \oblong & \downarrow & \oblong & \downarrow\\
 &  & Y & \overset{Fr_{Y/k}}{\longrightarrow} & Y^{\sigma} & \longrightarrow & Y\\
 &  &  & \searrow & \downarrow & \oblong & \downarrow\\
 &  &  &  & Spec(k) & \overset{\Phi_{k}}{\longrightarrow} & Spec(k)\\
 &  &  &  &  & \searrow & \downarrow\\
 &  &  &  &  &  & Spec(\mathbb{F}_{p})
\end{array}
\]
The three squares are Cartesian. The composition of the arrows in
the three top rows are the maps $\Phi_{\mathcal{A}}$, $\Phi_{Y}$
and $\Phi_{k}.$

Consider now an $R$-valued point $\xi:Spec(R)\to Y$ and let $A=\xi^{*}\mathcal{A}$
be the abelian scheme over $Spec(R)$ represented by $\xi$ (we suppress
the role of $H$ and the PEL structure). Consider
\[
Fr_{Y/k}(\xi)=Fr_{Y/k}\circ\xi:Spec(R)\to Y^{\sigma}.
\]
Then 
\[
\text{\ensuremath{A_{1}=}}Fr_{Y/k}(\xi)^{*}\mathcal{A}_{1}=\xi^{*}Fr_{Y/k}^{*}\Phi_{k}^{*}\mathcal{A}=\xi^{*}\Phi_{Y}^{*}\mathcal{A}=\Phi_{R}^{*}A=A^{(p)}.
\]

In the moduli-problem language this means that for $(\underline{A},H)\in Y(R)$
\[
Fr_{Y/k}((\underline{A},H))=(\underline{A}^{(p)},H^{(p)}).
\]
The Frobenius $Fr_{A/R}$ is an isogeny $Fr_{A/R}:A\to A^{(p)}$.
All of the above holds (forgetting the group $H$) also for $S$ instead
of $S_{0}(p).$

\subsubsection{The $\mu$-ordinary strata}

We study the part of $S_{0}(p)$ lying over $S_{\mu}$, together with
the map $\pi.$ Recall that we work over the algebraically closed
field $k$. We are motivated by the familiar diagram of maps of modular
curves (which takes advantage of the fact that $X_{0}(p)$ is defined
over $\mathbb{F}_{p})$

\begin{center}%
\begin{tabular}{ccc}
$X_{0}(p)_{et}$ & $\overset{Fr_{X/k}}{\to}$ & $X_{0}(p)_{et}$\tabularnewline
$\pi\downarrow$ & $\rho\nearrow$ & $\wr\downarrow\overline{\pi}$\tabularnewline
$X_{0}(1)$ & $=$ & $X_{0}(1)$\tabularnewline
\end{tabular}\end{center}where $\pi(A,H)=A,\,\,\,\,\overline{\pi}(A_{1},H_{1})=A_{1}/H_{1}$
and $\rho(A)=(A^{(p)},A^{(p)}[\Ver]$).
\begin{figure}
\caption{\label{Figure 1}The structure of $S_{0}(p)$}

\bigskip{}

\includegraphics[scale=0.5]{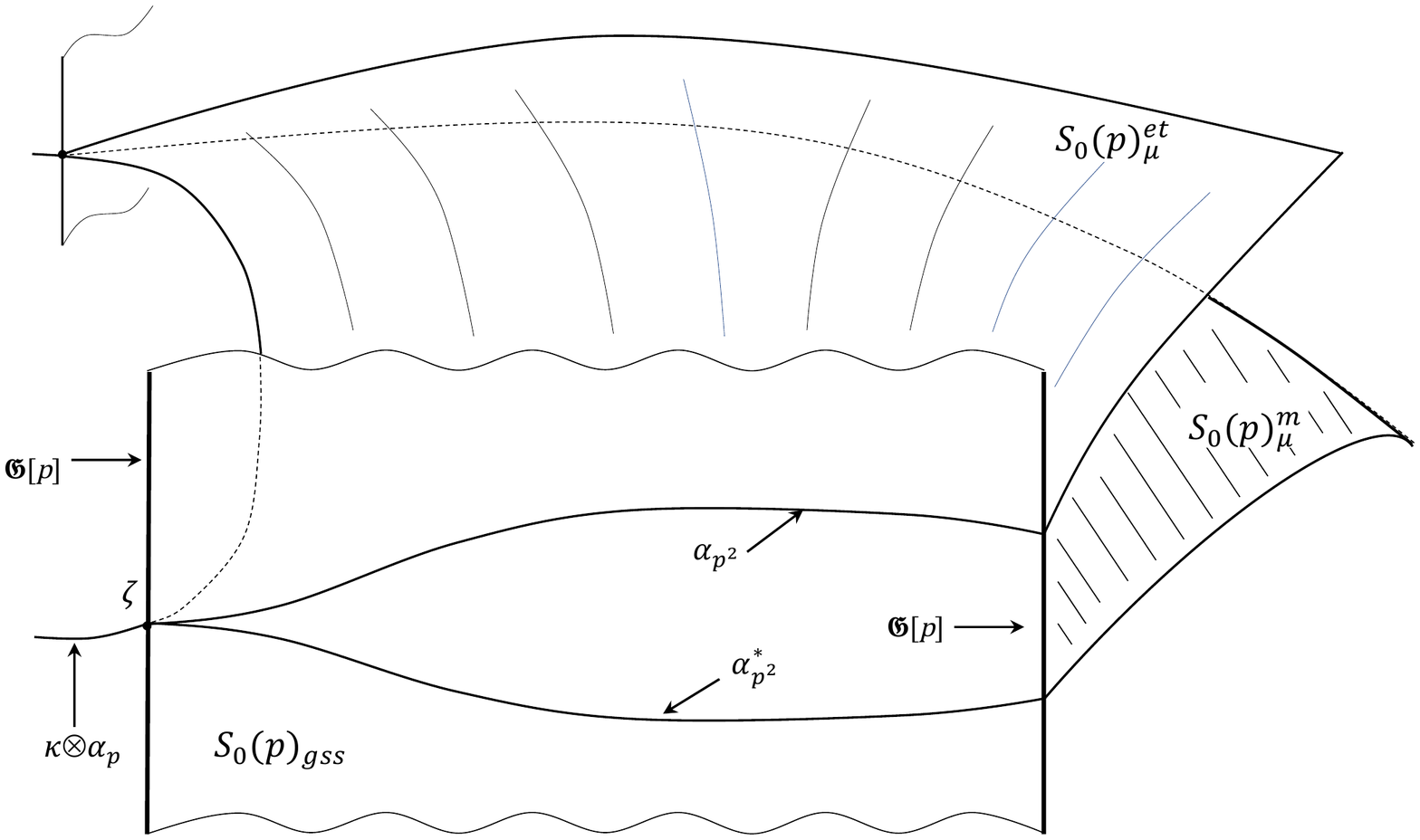}
\end{figure}

\begin{thm}
\label{S_0(p)-1}\emph{(i)} Let $Y_{\mu}=\pi^{-1}(S_{\mu})\subset S_{0}(p).$
Then $Y_{\mu}$ is the disjoint union of two open sets $Y_{m}$ and
$Y_{et}.$ A point $(\underline{A},H)\in S_{0}(p)(k)$ lies on $Y_{m}$
if and only if $H\simeq\kappa\otimes\mu_{p}$, and on $Y_{et}$ if
and only if $H\simeq\kappa\otimes\mathbb{Z}/p\mathbb{Z}$.\smallskip{}

\emph{(ii) }The map $\pi\colon Y_{\mu}\to S_{\mu}$ is finite flat
of degree $p^{3}+1$. Restricted to $Y_{m}$ it yields an isomorphism
\[
\pi_{m}\colon Y_{m}\simeq S_{\mu}.
\]
Its inverse is the section
\[
\sigma_{m}:S_{\mu}\to Y_{m},\,\,\,\,\sigma_{m}(\underline{A})=(\underline{A},A[p]^{m}),
\]
cf. the proof below for the notation.

\smallskip{}

\emph{(iii) }Consider next $Y_{et}$ and its base change $Y_{et}^{\sigma}$
under the Frobenius of $k$. Let $(\underline{A}_{1},H_{1})\in$$Y_{et}^{\sigma}(R)$
for some $k$-algebra $R$. Then there exists a point $\underline{A}\in S_{\mu}(R)$
such that $\underline{A}_{1}\simeq\underline{A}^{(p)}=\Phi_{R}^{*}\underline{A}$.
In fact, let
\[
K_{1}=H_{1}+H_{1}^{\perp}[\Fr],
\]
where $H_{1}^{\perp}$ is the annihilator of $H_{1}$ under the pairing
$e_{p\phi_{1}}$ on $A_{1}[p]$. Then $K_{1}$ is a finite flat, maximal
isotropic, $\mathcal{O}_{E}$-stable subgroup scheme of $A_{1}[p]$.
Let $B=A_{1}/K_{1},$ and descend the polarization, endomorphisms,
and level-$N$ structure from $A_{1}$ to $B.$ Then
\[
\underline{B}^{(p)}\simeq\left\langle p\right\rangle \underline{A}_{1}
\]
 so we may take $\underline{A}=\left\langle p\right\rangle ^{-1}\underline{B}$.
Moreover, under the isomorphism $\underline{A}_{1}\simeq\underline{A}^{(p)}$
\[
K_{1}\simeq A^{(p)}[\Ver].
\]

\smallskip{}

\emph{(iv) }Restricted to $Y_{et},$ $\pi$ yields a map $\pi_{et}$,
which is of degree $p^{3}$ and totally ramified, i.e. $1-1$ on $k$-points.
It factors as
\[
\pi_{et}=\overline{\pi}_{et}\circ Fr_{Y/k}
\]
where $Fr_{Y/k}\colon Y_{et}\to Y_{et}^{\sigma}$ is the relative
Frobenius morphism, and $\overline{\pi}_{et}\colon Y_{et}^{\sigma}\to S_{\mu}$
is totally ramified of degree $p.$

In fact, identify $Y_{et}^{\sigma}$ with the moduli space for tuples
$(\underline{A}_{1},H_{1})$ as before. Let $K_{1}$ and $\underline{A}$
be as in part (iii). Then the following holds:
\begin{equation}
\overline{\pi}_{et}((\underline{A}_{1},H_{1}))=\left\langle p\right\rangle ^{-1}(\underline{A}_{1}/K_{1})=\underline{A}.\label{eq:pi_bar_etale}
\end{equation}
In addition, if $(\underline{A}_{1},H_{1})=Fr_{Y/k}((\underline{A},H))=(\underline{A}^{(p)},H^{(p)})$
for some $(\underline{A},H)\in Y_{\mu}(R)$, then $K_{1}=A^{(p)}[\Ver].$

\smallskip{}

\emph{(v) }For any $R$-valued point $\underline{A}$ of $S_{\mu},$
$H=\Fr(A^{(p)}[\Ver])$ is a finite flat, rank $p^{2},$ isotropic,
Raynaud subgroup scheme of $A^{(p^{2})}[p]$. Furthermore, it is étale.
Define a map
\[
\rho_{et}:S_{\mu}\to Y_{et}^{\sigma^{2}}=Y_{et}
\]
by setting
\[
\rho_{et}(\underline{A})=(\underline{A}^{(p^{2})},\Fr(A^{(p)}[\Ver])).
\]
Then $\rho_{et}$ is finite flat and totally ramified of degree $p$.
We have
\[
\rho_{et}\circ\pi_{et}=Fr_{Y/k}^{2}:Y_{et}\to Y_{et}^{\sigma^{2}}=Y_{et},\,\,\,\,\rho_{et}\circ\overline{\pi}_{et}=Fr_{Y^{\sigma}/k}.
\]

\smallskip{}
\end{thm}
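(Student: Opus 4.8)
The plan is to reduce everything to the product decomposition of the $p$-divisible group over $S_{\mu}$ furnished by Vollaard's theorem, $\mathcal{A}_{x}[p^{\infty}]\simeq(\mathcal{O}_{E}\otimes\mu_{p^{\infty}})\times\mathfrak{G}_{\Sigma}\times(\mathcal{O}_{E}\otimes\mathbb{Q}_{p}/\mathbb{Z}_{p})$, together with Raynaud's classification of rank-$p^{2}$ subgroups in Section \ref{Raynaud classification}. For (i), at a geometric point $x\in S_{\mu}(k)$ the three factors are the multiplicative, local--local (supersingular) and \'etale parts $M,S,E$ of $A[p]$, each of rank $p^{2}$. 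An $\mathcal{O}_{E}$-stable Raynaud $H\subset A[p]$ of rank $p^{2}$ is, by the classification, multiplicative, \'etale, or local--local; in the last case $\Hom(H,\mu_{p})=0$ forces $H\subseteq S$, hence $H=S=\mathfrak{G}[p]_{\Sigma}$. But $e_{p\phi}$ is perfect and already non-degenerate on the hyperbolic piece $M\oplus E$, so it is non-degenerate on $S=(M\oplus E)^{\perp}$; thus $S$ is not isotropic. Therefore an isotropic $H$ must be the multiplicative part $A[p]^{m}\cong\kappa\otimes\mu_{p}$ or the \'etale part $\cong\kappa\otimes\mathbb{Z}/p\mathbb{Z}$ (the $\overline{\Sigma}$-types being excluded by the Lemma following Raynaud's table). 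Defining $Y_{m}$, $Y_{et}$ as the loci where $H$ is of multiplicative, resp.\ \'etale, type --- both open, and disjoint --- gives the decomposition.

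For (ii), the maximal multiplicative subgroup $A[p]^{m}$ exists as a finite flat rank-$p^{2}$ Raynaud subgroup scheme in any family, so $\sigma_{m}(\underline{A})=(\underline{A},A[p]^{m})$ is a morphism; it is a section of $\pi_{m}$ and, since a multiplicative $H$ is unique, a set-theoretic inverse. That it is an isomorphism of schemes I would read off Proposition \ref{local rings-1}: at a $\kappa\otimes\mu_{p}$-point the map $\mathbf{L}_{x}\to\mathbf{L}_{y}$ is the identity $W[[r,s]]=W[[r,s]]$, so $\pi$ is \'etale there. Finiteness and flatness of $\pi$ over $S_{\mu}$ follow from finiteness of $\pi_{m},\pi_{et}$ together with smoothness of $Y_{\mu}$ (again from the $\mu$-ordinary rows of Proposition \ref{local rings-1}) and miracle flatness onto the smooth $S_{\mu}$; the degree $p^{3}+1$ is the generic degree of Proposition \ref{maps}, preserved by flatness, and equals $\deg\pi_{m}+\deg\pi_{et}=1+p^{3}$.

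The heart of the argument is (iii), which I would prove by identifying $K_{1}$ with a Verschiebung kernel. Over $k$, with $\underline{A}_{1}$ of signature $(1,2)$, write $A_{1}[p]=M\oplus S\oplus E$. Since the \'etale $H_{1}=E$ pairs non-degenerately only with $M$, one finds $H_{1}^{\perp}=S\oplus E$, whence $H_{1}^{\perp}[\Fr]=S[\Fr]$ and $K_{1}=E\oplus S[\Fr]$ is maximal isotropic of rank $p^{3}$. The key point is the identity $K_{1}=A_{1}[\Ver]$: indeed $\Ver$ is an isomorphism on $M$, vanishes on $E$, and has kernel $S[\Ver]=S[\Fr]$ on the supersingular factor, so $\ker(\Ver)=E\oplus S[\Fr]=K_{1}$. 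Consequently $B=A_{1}/K_{1}=A_{1}/A_{1}[\Ver]$ is canonically the Frobenius descent of $A_{1}$, giving $B^{(p)}\simeq A_{1}$; tracking the descent of $p\phi_{1}$ and of the level structure along this isogeny produces the twist $\underline{B}^{(p)}\simeq\langle p\rangle\underline{A}_{1}$, so $\underline{A}=\langle p\rangle^{-1}\underline{B}$ satisfies $\underline{A}^{(p)}\simeq\underline{A}_{1}$ and $K_{1}\simeq A^{(p)}[\Ver]$. I expect the main obstacle to be the representability bookkeeping: promoting these point-wise Dieudonn\'e identities to an isomorphism of $R$-valued families and pinning down exactly the $\langle p\rangle$-twist coming from the polarization and level data. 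The Dieudonn\'e computation itself is short, but making it functorial and correctly normalized is where the care lies.

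Finally, (iv) and (v) are formal consequences of (iii) and the Frobenius--Verschiebung yoga. For (iv), part (iii) lets us set $\overline{\pi}_{et}(\underline{A}_{1},H_{1})=\langle p\rangle^{-1}(A_{1}/K_{1})$; applied to $Fr_{Y/k}(\underline{A},H)=(\underline{A}^{(p)},H^{(p)})$ this recovers $\underline{A}$, so $\pi_{et}=\overline{\pi}_{et}\circ Fr_{Y/k}$. Since $\overline{\pi}_{et}$ is bijective on $k$-points --- the \'etale part is canonical over the perfect field $k$ but not as a subgroup scheme in families, which is exactly why $\pi_{et}$ is purely inseparable --- and $\deg Fr_{Y/k}=p^{2}$, we get $\deg\overline{\pi}_{et}=p$ from $\deg\pi_{et}=p^{3}$. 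For (v), writing $A^{(p)}[\Ver]=E'\oplus S'[\Fr]$ as above, the map $\Fr\colon A^{(p)}\to A^{(p^{2})}$ kills $S'[\Fr]$ and carries $E'$ isomorphically onto the \'etale part $E''$ of $A^{(p^{2})}[p]$; thus $\Fr(A^{(p)}[\Ver])=E''$ is \'etale, isotropic and Raynaud of rank $p^{2}$, and $\rho_{et}(\underline{A})=(\underline{A}^{(p^{2})},E'')$ lands in $Y_{et}$. The composition identities then reduce to $E''=H^{(p^{2})}$ and the matching of Frobenius twists: $\rho_{et}\circ\pi_{et}(\underline{A},H)=(\underline{A}^{(p^{2})},H^{(p^{2})})=Fr_{Y/k}^{2}(\underline{A},H)$, and likewise $\rho_{et}\circ\overline{\pi}_{et}=Fr_{Y^{\sigma}/k}$, with finiteness, flatness and the degree $p$ following as before.
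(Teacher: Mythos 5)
Your route is the paper's route: the same three-factor splitting of $A[p]$ over a $\mu$-ordinary point, the same identification of $K_{1}$ with a Verschiebung kernel, the same mechanism producing the diamond twist, and the same degree bookkeeping $p^{3}=p\cdot p^{2}$ for the factorization $\pi_{et}=\overline{\pi}_{et}\circ Fr_{Y/k}$. Your fibrewise computations in (i), (iv), (v) are correct (in (i) the paper gets openness of $Y_{m},Y_{et}$ from rigidity of the graded pieces of the canonical filtration over Artinian neighborhoods rather than from openness of the \'etale/multiplicative loci, but both work), and in (ii) just note that to import the degree $p^{3}+1$ from characteristic $0$ one must apply the miracle-flatness argument to the \emph{regular arithmetic} schemes over $\mathscr{S}-S_{ss}$, as the paper does; flatness of the special-fibre map alone does not propagate the generic degree.

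The genuine gap is the one you yourself flag in (iii): your argument is carried out only at $k$-points, whereas the statement concerns arbitrary $R$-points, and the missing ``bookkeeping'' is where the paper's actual content lies. Three specific devices close it. First, flatness of $K_{1}=H_{1}+H_{1}^{\perp}[\Fr]$ over the family: since $Y_{et}^{\sigma}$ is reduced one may work with the universal object and assume $R$ reduced; then Mumford's criterion (finite morphism, reduced base, constant fibre rank implies flat) applied to $H_{1}^{\perp}[\Fr]$, together with $H_{1}\cap H_{1}^{\perp}[\Fr]=H_{1}[\Fr]=0$ so that $K_{1}\simeq H_{1}\times H_{1}^{\perp}[\Fr]$, gives finite flatness of rank $p^{3}$ (the same trick underlies flatness of $\mathcal{A}^{(p)}[\Fr]\cap\mathcal{A}^{(p)}[\Ver]$ in (v)). Second, your key identity ``$K_{1}=A_{1}[\Ver]$'' is not even well-typed until $A_{1}$ has been exhibited as a Frobenius twist: $\Ver$ is a map $A^{(p)}\to A$, and over a non-perfect (let alone non-reduced) $R$ there is no functorial inverse twist, so the ``Frobenius descent of $A_{1}$'' is precisely what must be constructed, not invoked. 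The paper's maneuver is to apply the fibrewise identity to the \emph{twisted} family, obtaining $H_{1}^{(p)}+H_{1}^{(p)\perp}[\Fr]=A_{1}^{(p)}[\Ver]$, whence $B^{(p)}=A_{1}^{(p)}/K_{1}^{(p)}\simeq A_{1}$ via $\Ver_{A_{1}}$; only then do $\underline{B}^{(p)}\simeq\left\langle p\right\rangle \underline{A}_{1}$, $\underline{A}=\left\langle p\right\rangle ^{-1}\underline{B}$ and $K_{1}=B^{(p)}[\Ver]$ make sense, the twist being pinned down by $\Ver\circ\eta^{(p)}=\left\langle p\right\rangle \circ\eta$ (the paper also warns that over non-reduced $R$ one can have $A^{(p)}\simeq B^{(p)}$ with $A\not\simeq B$ and with the two Verschiebung kernels not matching, so the label on $\Ver$ matters). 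Third, your (iv) quietly uses that agreement of $\pi_{et}$ and $\overline{\pi}_{et}\circ Fr_{Y/k}$ on $k$-points suffices; this is legitimate exactly because $Y_{et}$ is reduced, and is worth saying.
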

The following diagram summarizes what was said about the maps $\pi_{et}$,$\overline{\pi}_{et},\rho_{et}$.

\bigskip{}

\begin{center}%
\begin{tabular}{ccccccc}
$Y_{et}=Y_{et}$ & {\small{}$\stackrel{Fr_{Y/k}}{\longrightarrow}$} & $\,\,\,Y_{et}^{\sigma}$  & {\small{}$\stackrel{Fr_{Y^{\sigma}/k}}{\longrightarrow}$} & $Y_{et}$ &  & \tabularnewline
 & {\small{}$\pi_{et}\searrow\,\,\,$} & {\small{}$\,\,\,\,\downarrow\overline{\pi}_{et}$} & {\small{}$\,\,\,\,\,\,\nearrow\rho_{et}$} & {\small{}$\,\,\,\,\,\,\downarrow\overline{\pi}_{et}^{\sigma}$} & {\small{}$\,\,\searrow\pi_{et}$} & \tabularnewline
 &  & $S_{\mu}$ & {\small{}$\stackrel{Fr_{S/k}}{\longrightarrow}$} & $S_{\mu}^{\sigma}$ & {\small{}$\stackrel{Fr_{S^{\sigma}/k}}{\longrightarrow}\,\,\,\,\,$} & $S_{\mu}$.\tabularnewline
\end{tabular}\end{center}\bigskip{}

\begin{proof}
(i) Let $Y_{\mu}=\pi^{-1}(S_{\mu}).$ This is an open subset of $S_{0}(p).$
If $R$ is any $k$-algebra and $\underline{A}\in S_{\mu}(R),$ then
the group scheme $A[p]_{/R}$ admits a canonical filtration by finite
flat $\mathcal{O}_{E}$-subgroup schemes
\[
Fil^{3}A[p]=0\subset Fil^{2}A[p]=A[p]^{m}\subset Fil^{1}A[p]=A[p]^{0}\subset Fil^{0}A[p]=A[p].
\]
Here $Fil^{1}$ is the maximal connected subgroup-scheme and is of
rank $p^{4},$ while $Fil^{2}$ is the maximal subgroup scheme of
multiplicative type (connected, with étale Cartier dual), and is of
rank $p^{2}.$ It is also equal to the annihilator of $Fil^{1}$ under
the pairing $e_{p\phi}.$ Moreover, the graded pieces are rigid in
formal neighborhoods. This means that over any Artinian neighborhood
$Spec(R)$ of a point, we have isomorphisms ($gr^{i}=Fil^{i}/Fil^{i+1})$
\[
gr^{2}A[p]\simeq\kappa\otimes\mu_{p},\,\,\,gr^{1}A[p]\simeq\mathfrak{G}[p]_{\Sigma},\,\,\,gr^{0}A[p]\simeq\kappa\otimes\mathbb{Z}/p\mathbb{Z},
\]
as $R$-group schemes with $\mathcal{O}_{E}$-action. We remark that
the filtration and the rigidity of its graded pieces hold for the
whole $p$-divisible group. If $R=k$ (or any other perfect field),
$A[p]$ splits canonically as the product of the three graded pieces.
As these are pairwise non-isomorphic, the only rank-$p^{2}$ $\mathcal{O}_{E}$-subgroup
schemes of $A[p]$ are then the unique copies of $\kappa\otimes\mu_{p},$
$\mathfrak{G}[p]_{\Sigma}$ or $\kappa\otimes\mathbb{Z}/p\mathbb{Z}$
in it. They are all Raynaud. Only the first and the last are isotropic
for the Weil pairing. Thus, if $x\in S_{\mu}(k),$ there are only
two points of $Y_{\mu}(k)$ above $x.$ We call $Y_{m}$ the component
of $Y_{\mu}$ containing the $k$-points $(\underline{A},H)$ where
$H\simeq\kappa\otimes\mu_{p},$ and $Y_{et}$ the component containing
the $k$-points where $H\simeq\kappa\otimes\mathbb{Z}/p\mathbb{Z}.$
That these are indeed connected components follows from the above
mentioned rigidity.

(ii) Let
\[
\sigma_{m}:S_{\mu}\to Y_{m}
\]
be the morphism defined on $R$-points $(R$ any $k$-algebra) by
$\underline{A}\mapsto(\underline{A},A[p]^{m}).$ It is a section of
the map $\pi,$ both $\pi\circ\sigma_{m}$ and $\sigma_{m}\circ\pi$
are the identity maps, hence $\pi$ induces an isomorphism on $Y_{m}$. 

This is not the case on $Y_{et},$ as we can not split the filtration
of $A[p]$ functorially over arbitrary $k$-algebra, only over perfect
fields. Let us prove that $\pi_{et}:Y_{et}\to S_{\mu}$ is finite
flat and totally ramified of degree $p^{3}.$ It follows from the
computations of the completed local rings in §\ref{local rings} that
$Y_{et}$ is non-singular. The map $\pi_{et}$ is quasi-finite and
proper (see Proposition \ref{flat and proper}), hence finite. Any
finite surjective morphism between non-singular varieties is automatically
flat (\cite{Eis} 18.17). In fact, the same argument, using regularity
of the arithmetic schemes, proves that on the scheme $\mathscr{S}_{0}(p)'$
obtained by removing $Y_{ss}=\pi^{-1}(S_{ss})$ from the special fiber
of $\mathscr{S}_{0}(p)$, the map $\pi$ is finite flat to $\mathscr{S}'=\mathscr{S}-S_{ss}.$
Since the degree in the generic fiber is $p^{3}+1$, so must be the
degree in the special fiber. Since $\pi$ was shown to be an isomorphism
on $Y_{m}$, on $Y_{et}$ it is finite flat of degree $p^{3},$ and
of course, totally ramified ($1-1$ on geometric points). For another
proof see \cite{Bel} III.3.5.12.

(iii,iv) Since $Y_{et}^{\sigma}$ is reduced, every $R$-point of
$Y_{et}^{\sigma}$ is a base-change of an $R'$-point under a homomorphism
$R'\to R,$ where $R'$ is reduced. We may therefore assume in the
proof of (iii) and (iv) that $R$ is \emph{reduced}. 

We begin by showing that if $(\underline{A}_{1},H_{1})$ is an $R$-point
of $Y_{et}^{\sigma},$ then $K_{1}=H_{1}+H_{1}^{\perp}[\Fr]$ is a
finite flat subgroup-scheme of rank $p^{3}$ contained in $A_{1}[p].$
It is enough to prove this for the universal abelian scheme $\mathcal{A}_{1}$
over $Y_{et}^{\sigma}$, and its universal subgroup $H_{1}.$ We use
the criterion for flatness, saying that if $f:X'\to X$ is a finite
morphism of schemes, $X$ is reduced, and all the fibers of $f$ have
the same rank, then $f$ is also flat (\cite{Mu}, p.432). By the
open-ness of the flat locus of a morphism, if $X$ is a variety over
a field $k,$ it is enough to check the constancy of the fiber rank
at \emph{closed} points of $X$. We shall use this criterion here
for group schemes over $Y_{et}^{\sigma}$, noting that the base is
a non-singular variety. First, $H_{1}^{\perp}$ is clearly finite
flat of rank $p^{4}$ over $Y_{et}^{\sigma}$ and $H_{1}^{\perp}[\Fr]=H_{1}^{\perp}\cap\mathcal{A}_{1}[\Fr]$
is a closed, hence finite, subgroup scheme. Its fiber rank (over the
closed points of $Y_{et}^{\sigma}$!) is constantly $p$, so it is
also flat. Next, $H_{1}\cap H_{1}^{\perp}[\Fr]=H_{1}[\Fr]=0$. Thus,
as a subgroup functor of $\mathcal{A}_{1}[p],$
\[
H_{1}+H_{1}^{\perp}[\Fr]\simeq(H_{1}\times H_{1}^{\perp}[\Fr])/(H_{1}\cap H_{1}^{\perp}[\Fr])\simeq H_{1}\times H_{1}^{\perp}[\Fr]
\]
is a finite flat group scheme of rank $p^{3}$.

Define $\overline{\pi}_{et}$ to be the morphism sending $(\underline{A}_{1},H_{1})\in Y_{et}^{\sigma}(R)$
to $\left\langle p\right\rangle ^{-1}\underline{B}$ where $B=A_{1}/K_{1}.$
The type of $\Lie(B)$ will now be $(2,1)$, as can be easily checked.
Since $K_{1}$ is a maximal isotropic subgroup scheme for the Weil
pairing on $A_{1}[p],$ the polarization $p\phi_{1}$ on $A_{1}$
descends to a principal polarization of $B.$ The tame level-$N$
structure on $A_{1}$ gives rise to a tame level-$N$ structure on
$B$. This completes the definition of $\overline{\pi}_{et}.$ 

If $(\underline{A}_{1},H_{1})=(\underline{A}^{(p)},H^{(p)})$ for
$(\underline{A},H)\in Y_{et}(R)$, and $R$ is \emph{reduced}, then
$K_{1}$ is of rank $p^{3}$ and killed by $\Ver$, as can be checked
fiber-by-fiber. This shows that
\[
K_{1}=A^{(p)}[\Ver],
\]
hence $A_{1}/K_{1}\simeq A$ via $\Ver:A^{(p)}\to A.$ The polarization
$p\phi_{1}$ descends back to $\phi$ because $\phi_{1}=\phi^{(p)}.$
Finally, if 
\[
\eta:\Lambda/N\Lambda\simeq A[N]
\]
is the level-$N$ structure on $A$ and $\eta_{1}=\eta^{(p)},$ then
\[
\Ver\circ\eta^{(p)}=\left\langle p\right\rangle \circ\eta,
\]
concluding the proof that $\overline{\pi}_{et}(\underline{A}_{1},H_{1})=\underline{A}.$
This holds in particular when $R=k,$ which is enough to prove 
\[
\pi_{et}=\overline{\pi}_{et}\circ Fr_{Y/k}.
\]
We remark that for a reduced $R$, to conclude that $K_{1}=A^{(p)}[\Ver]$
we did not have to know that $H_{1}$ was of the form $H^{(p)},$
only that $A_{1}=A^{(p)}.$ Caution must be exercised when $R$ is
non-reduced though, because it is then possible to have $A^{(p)}\simeq B^{(p)}$
without $A\simeq B$. The isogeny $\Ver$ should be labeled by $A$
or $B$, and the given isomorphism between $A^{(p)}$ and $B^{(p)}$
may not carry $\ker(\Ver_{A})$ to $\ker(\Ver_{B})$. 

In general, applying the same argument to $(A_{1}^{(p)},H_{1}^{(p)})$
implies that
\[
H_{1}^{(p)}+H_{1}^{(p)\perp}[\Fr]=A_{1}^{(p)}[\Ver]
\]
so
\[
B^{(p)}=A_{1}^{(p)}/(H_{1}^{(p)}+H_{1}^{(p)\perp}[\Fr])=A_{1}^{(p)}/A_{1}^{(p)}[\Ver]\simeq A_{1}.
\]
By the remark above, $K_{1}=B^{(p)}[\Ver].$ We emphasize, however,
that the group $H_{1}$ need not be a Frobenius base change of a similar
subgroup of $B.$ To guarantee that the level-$N$ structures also
match we have to twist $\underline{B}$ by the diamond operator $\left\langle p\right\rangle ^{-1}$
and set $\underline{A}=\left\langle p\right\rangle ^{-1}\underline{B}.$
Then $\underline{A}_{1}\simeq\underline{A}^{(p)}.$

(v) The finite subgroup scheme $\mathcal{A}^{(p)}[\Fr]\cap\mathcal{A}^{(p)}[\Ver]$
is flat over $S_{\mu}$, as it has constant fiber rank $p$ and the
base is reduced. The image
\[
\Fr(\mathcal{A}^{(p)}[\Ver])\subset\mathcal{A}^{(p^{2})}[p],
\]
is isomorphic to the quotient of $\mathcal{A}^{(p)}[\Ver]$ by $\mathcal{A}^{(p)}[\Fr]\cap\mathcal{A}^{(p)}[\Ver],$
hence is also finite and flat of rank $p^{2}.$ It is isotropic, $\mathcal{O}_{E}$-stable
and Raynaud. By base change from the universal case, for any $R$-valued
point $\underline{A}$ of $S_{\mu},$ $H=\Fr(A^{(p)}[\Ver])$ is a
finite flat, rank $p^{2},$ isotropic, Raynaud subgroup scheme of
$A^{(p^{2})}[p]$. It is easily seen to be étale. Since $\rho_{et}$
is defined functorially in terms of the moduli problem, it is a well-defined
morphism.

It is enough to verify the equality $\rho_{et}\circ\pi_{et}=Fr_{Y/k}^{2}$
on $k$-valued points $(\underline{A},H)\in Y_{et}(k),$ namely that
\[
\Fr(A^{(p)}[\Ver])=H^{(p^{2})},
\]
but if $A$ is $\mu$-ordinary this is clear. The relation $\rho_{et}\circ\overline{\pi}_{et}=Fr_{Y^{\sigma}/k}$
follows from $\rho_{et}\circ\pi_{et}=Fr_{Y/k}^{2}$ since $\pi_{et}=\overline{\pi}_{et}\circ Fr_{Y/k}$
and $Fr_{Y/k}$ is faithfully flat. The remaining assertions on $\rho_{et}$
also follow from this relation.
\end{proof}
\begin{cor}
Over $Y_{et}$ the universal abelian scheme $\mathcal{A}\simeq\mathcal{A}_{1}^{(p)}=Y_{et}\times_{\Phi_{Y},Y_{et}}\mathcal{A}_{1}$
for another abelian scheme $\mathcal{A}_{1}$ of type $(1,2).$
\end{cor}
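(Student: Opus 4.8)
The plan is to read this off directly from part (iv) of Theorem~\ref{S_0(p)-1}, which factors $\pi_{et}$ through the relative Frobenius. Write $\mathcal{A}$ also for the universal abelian scheme over $S$; since the modular meaning of $\pi$ is ``forget $H$'', its pullback to $S_0(p)$ is the universal object there, so over $Y_{et}$ one has $\mathcal{A}|_{Y_{et}}=\pi_{et}^{*}\mathcal{A}$ with the inner $\mathcal{A}$ the universal type-$(2,1)$ object over $S_{\mu}$. By part (iv) of the theorem $\pi_{et}=\overline{\pi}_{et}\circ Fr_{Y/k}$, and hence
\[
\mathcal{A}|_{Y_{et}}=Fr_{Y/k}^{*}\bigl(\overline{\pi}_{et}^{*}\mathcal{A}\bigr),
\]
where $\overline{\pi}_{et}^{*}\mathcal{A}$ is an abelian scheme of type $(2,1)$ over $Y_{et}^{\sigma}$.

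Next I would manufacture the desired $\mathcal{A}_{1}$ over $Y_{et}$ by descending $\overline{\pi}_{et}^{*}\mathcal{A}$ along the Frobenius $\sigma$ of the base field. Because $k$ is algebraically closed, $\sigma\colon\Spec(k)\to\Spec(k)$ is an isomorphism of schemes, so the base-change functor $(-)^{\sigma}=\Phi_{k}^{*}$ from abelian schemes over $Y_{et}$ to abelian schemes over $Y_{et}^{\sigma}$ is an equivalence; I let $\mathcal{A}_{1}$ be the (essentially unique) abelian scheme over $Y_{et}$ with $\mathcal{A}_{1}^{\sigma}\simeq\overline{\pi}_{et}^{*}\mathcal{A}$. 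Conjugation by $\sigma$ interchanges $\Sigma$ and $\overline{\Sigma}$, so this descent reverses the signature of the $\mathcal{O}_{E}$-action on the Lie algebra: $\mathcal{A}_{1}$ comes out of type $(1,2)$, exactly as needed.

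Finally I would verify $\mathcal{A}_{1}^{(p)/Y_{et}}\simeq\mathcal{A}|_{Y_{et}}$. For any abelian scheme $\mathcal{C}$ over $Y_{et}$ there is the standard identification $\mathcal{C}^{(p)/Y_{et}}=Y_{et}\times_{\Phi_{Y_{et}},Y_{et}}\mathcal{C}=Fr_{Y/k}^{*}(\mathcal{C}^{\sigma})$, which results from unwinding the fibre products using $(\Phi_{k}\times 1)\circ Fr_{Y/k}=\Phi_{Y_{et}}$. Taking $\mathcal{C}=\mathcal{A}_{1}$ gives
\[
\mathcal{A}_{1}^{(p)/Y_{et}}=Fr_{Y/k}^{*}\bigl(\mathcal{A}_{1}^{\sigma}\bigr)=Fr_{Y/k}^{*}\bigl(\overline{\pi}_{et}^{*}\mathcal{A}\bigr)=\mathcal{A}|_{Y_{et}},
\]
which is the claim. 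I expect the one genuinely delicate point to be the bookkeeping of the several Frobenii that intervene---keeping the relative Frobenius $Fr_{Y/k}$, the absolute Frobenius $\Phi_{Y_{et}}$, and the field base change $(-)^{\sigma}$ apart---together with confirming that the $\sigma$-descent lands in type $(1,2)$; the existence of that descent is free, from perfectness of $k$. As a cross-check one can instead obtain the Frobenius-twist relation over $Y_{et}^{\sigma}$ from the isomorphism $\underline{A}_{1}\simeq\underline{A}^{(p)}$ of part (iii), and then transport it down to $Y_{et}$ by the same descent.
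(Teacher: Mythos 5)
Your proposal is correct and takes essentially the same route as the paper: the paper's proof deduces the corollary from part (iii) of Theorem \ref{S_0(p)-1} ``by base-changing back to $Y_{et}$,'' which is exactly your $\sigma$-descent of $\mathcal{B}=\overline{\pi}_{et}^{*}\mathcal{A}$ (available since $k$ is perfect), with the signature flipping to $(1,2)$ for the reason you give. Your explicit verification of the identity $\mathcal{C}^{(p)/Y_{et}}=Fr_{Y/k}^{*}(\mathcal{C}^{\sigma})$ via $(\Phi_{k}\times1)\circ Fr_{Y/k}=\Phi_{Y_{et}}$ merely spells out the Frobenius bookkeeping the paper leaves implicit, and is accurate.
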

\begin{proof}
In part (iii) of the theorem we showed the same for the universal
abelian variety $\mathcal{A}_{1}$ over $Y_{et}^{\sigma}.$ The corollary
follows by base-changing back to $Y_{et}$, or by repeating the arguments
throughout with type $(1,2)$ replacing type $(2,1).$
\end{proof}

\subsubsection{A lemma on ramification}

Before we continue our study of $Y_{\mu}$ we need the following result.
\begin{lem}
\label{Liedtke}Let $\pi:Y\to X$ be a finite flat totally ramified
morphism of degree $p$ between non-singular surfaces over $k$, an
algebraically closed field of characteristic $p$. Let $\pi(y)=x.$
Then there exist local parameters $u,v$ at $y\in Y$ so that $\pi^{*}:\mathcal{\widehat{O}}_{X,x}\hookrightarrow\mathcal{\widehat{O}}_{Y,y}$
is
\[
k[[u^{p},v]]\hookrightarrow k[[u,v]].
\]
The class of $u^{p}$ modulo $\mathfrak{\widehat{m}}_{X,x}^{2}$ spans
$\ker(\pi^{*}:\Omega_{X/k}|_{x}\to\Omega_{Y/k}|_{y}),$ and is therefore
independent of any choice.
\end{lem}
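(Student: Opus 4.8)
The plan is to pass to completed local rings and recognize $\pi$ locally as the quotient of a smooth surface by a one-dimensional foliation, then read off the normal form from the Rudakov--Shafarevich classification \cite{Ru-Sh}. Write $R=\widehat{\mathcal{O}}_{X,x}=k[[r,s]]$ (as $X$ is smooth) and $\mathcal{O}=\widehat{\mathcal{O}}_{Y,y}$, a two-dimensional regular complete local ring; since $\pi$ is finite flat of degree $p$, the map $\pi^{*}\colon R\hookrightarrow\mathcal{O}$ is a local injection making $\mathcal{O}$ free of rank $p$ over $R$. Because $\pi$ is $1$--$1$ on $k$-points it is purely inseparable, so the extension of fraction fields $L=\mathrm{Frac}(\mathcal{O})\supseteq K=\mathrm{Frac}(R)$ is purely inseparable of degree $p$; hence $L^{p}\subseteq K$, and since $R$ is normal this gives $\mathcal{O}^{p}\subseteq R$. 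First I would produce the foliation: because $[L:K]=p$, the space $\mathrm{Der}_{K}(L)$ is one-dimensional over $L$, spanned by a derivation which, after clearing denominators, we may take to be $\delta=a\,\partial_{u_{0}}+b\,\partial_{v_{0}}\in\mathrm{Der}_{k}(\mathcal{O})$ with $a,b\in\mathcal{O}$ coprime (so $\delta$ carries no divisorial zeros). Since $\ker(\delta|_{L})$ is a field lying between $K$ and $L$ of degree $p$ over nothing larger than $K$, it equals $K$; and as $R$ is integrally closed, $\mathcal{O}^{\delta}:=\ker(\delta|_{\mathcal{O}})=\mathcal{O}\cap K=R$. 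Moreover $\delta$ is $p$-closed, i.e.\ $\delta^{p}=f\delta$, because $\mathrm{Der}_{K}(L)$ is stable under $p$-th powers.

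Next I would normalize $\delta$. Its zero locus is $V(a,b)$, which, $a,b$ being coprime in the regular local ring $\mathcal{O}$, is either empty or the single isolated point $\{y\}$. Here is the key point, and the step I expect to be the main obstacle: the second alternative is incompatible with the smoothness of $X=\Spec R=Y/\delta$. This is exactly the geometric content of \cite{Ru-Sh}: the quotient of a smooth surface by a $p$-closed vector field without divisorial part is smooth away from the images of the isolated zeros, and acquires a genuine singularity at each such image. Since $X$ is smooth by hypothesis, $\delta$ must be nonvanishing at $y$. After rescaling by a unit and applying the Rudakov--Shafarevich normal form for a $p$-closed vector field without singularities, there are regular parameters $u,v$ of $\mathcal{O}$ in which either $\delta=\partial_{u}$ (additive type, $\delta^{p}=0$) or $\delta=u\,\partial_{u}$ (multiplicative type, $\delta^{p}=\delta$). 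In the additive case $\mathcal{O}^{\delta}=k[[u^{p},v]]$ immediately; in the multiplicative case $\delta(u^{i}v^{j})=i\,u^{i}v^{j}$, so $u^{i}v^{j}$ is killed iff $p\mid i$, giving again $\mathcal{O}^{\delta}=k[[u^{p},v]]$. Either way $R=k[[u^{p},v]]\hookrightarrow k[[u,v]]=\mathcal{O}$, which is the asserted description (with $r=u^{p}$, $s=v$).

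Finally, for the statement on differentials I would argue on cotangent spaces. The map $\pi^{*}\colon\mathfrak{m}_{x}/\mathfrak{m}_{x}^{2}\to\mathfrak{m}_{y}/\mathfrak{m}_{y}^{2}$ sends the class of $s=v$ to the nonzero class of $v$, and the class of $r=u^{p}$ to the class of $u^{p}$, which vanishes because $u^{p}\in\mathfrak{m}_{y}^{2}$ (here $p\ge 2$); equivalently $\pi^{*}(dr)=d(u^{p})=p\,u^{p-1}\,du=0$ in $\Omega_{Y/k}|_{y}$ while $\pi^{*}(ds)=dv\ne 0$. Hence $\ker(\pi^{*}\colon\Omega_{X/k}|_{x}\to\Omega_{Y/k}|_{y})$ is one-dimensional, spanned by the class of $u^{p}$ modulo $\widehat{\mathfrak{m}}_{X,x}^{2}$. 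As this kernel is defined intrinsically in terms of $\pi$, it is independent of the choice of local parameters, completing the proof.
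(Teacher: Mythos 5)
Your proposal is correct in substance, and it is worth knowing that the paper does not actually write out a proof of this lemma: it simply cites \cite{Ru-Sh}, Theorem 4 and the Corollary at the bottom of p.~1215, which contain precisely the two facts you invoke, namely that the quotient of a smooth surface by a $p$-closed vector field with no divisorial zeros is singular exactly at the images of the isolated zeros, and that at a point where the field is nonvanishing the ring of invariants is $k[[u^{p},v]]$ in suitable coordinates. What your write-up adds is the (correct) reduction of the lemma to those facts: total ramification makes $L/K$ purely inseparable of degree $p$, hence governed by a one-dimensional foliation; coprimality of $a,b$ in the UFD $\widehat{\mathcal{O}}_{Y,y}$ removes the divisorial part; normality of $R$ gives $\mathcal{O}^{\delta}=\mathcal{O}\cap K=R$; and smoothness of $X$ then rules out an isolated zero of $\delta$ at $y$. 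Two routine points you elide but should be aware of: the rescaling of $\delta$ by a unit preserves $p$-closedness by Jacobson's formula $(c\delta)^{p}=c^{p}\delta^{p}+\left((c\delta)^{p-1}c\right)\delta$, and one needs the factor $f$ in $\delta^{p}=f\delta$ to lie in $\mathcal{O}$ rather than merely in $L$, which again follows from coprimality of $a,b$.

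One genuine misstatement, though not a fatal one: for a field that is nonvanishing at $y$, the multiplicative normal form cannot be $u\,\partial_{u}$ with $u$ a regular parameter --- that field vanishes along the divisor $u=0$, so it is excluded both by your coprimality normalization and by the nonvanishing you have just established. The nonvanishing multiplicative model is rather $(1+u)\,\partial_{u}=w\,\partial_{w}$ with $w=1+u$ a unit. This does not affect your conclusion: since $(1+u)^{p}=1+u^{p}$, one has $\ker\left((1+u)\partial_{u}\right)=k[[(1+u)^{p},v]]=k[[u^{p},v]]$, and more simply, multiplying $\delta$ by a unit never changes its kernel, so every nonvanishing case yields $\mathcal{O}^{\delta}=k[[u^{p},v]]$ as claimed. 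Your final cotangent-space computation, showing that $\pi^{*}(dr)=d(u^{p})=0$ while $\pi^{*}(ds)=dv\neq0$, so that the class of $u^{p}$ spans the one-dimensional kernel intrinsically attached to $\pi$, is exactly right.
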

\begin{proof}
See \cite{Ru-Sh} Theorem 4, and the Corollary at the bottom of p.
1215 there.
\end{proof}
\begin{defn*}
We call the line in $T_{x}X$ which is the annihilator of $\ker(\pi^{*}:\Omega_{X/k}|_{x}\to\Omega_{Y/k}|_{y})$
the \emph{unramified direction }at\emph{ $x$,} and denote it by $T_{x}X^{ur}.$\emph{
}Then $TX^{ur}$ is a line sub-bundle of $TX.$ 
\end{defn*}
If $C\subset X$ is a non-singular curve such that for every $x\in C$
\[
T_{x}C=T_{x}X^{ur}\subset T_{x}X
\]
(an \emph{integral curve} for $TX^{ur}$), then $\pi:\pi^{-1}(C)^{red}\to C$
is indeed unramified, hence an isomorphism, because $\pi^{*}$ is
injective on
\[
\Omega_{C/k}=\Omega_{X/k}/TC^{\perp}=\Omega_{X/k}/\ker(\pi^{*}).
\]

\subsubsection{The unramified direction of $\overline{\pi}_{et}$}

The morphism $\pi_{et}$ is ``too ramified'', and we study it via
the factorization $\pi_{et}=\overline{\pi}_{et}\circ Fr_{Y/k}.$ Since
$\overline{\pi}_{et}$ is of degree $p,$ it admits, as we have just
seen, an ``unramified direction''. In §\ref{tangent bundle} we
have defined the special sub-bundle $TS^{+}$ in $TS$ outside the
superspecial locus. We shall now show that over $S_{\mu}$ it coincides
with the sub-bundle of unramified directions for $\overline{\pi}_{et}$.
Thus the latter can be defined intrinsically in terms of the automorphic
vector bundles on $S,$ without any reference to the covering $\pi.$
\begin{thm}
\label{unramified direction}Let $x=\pi_{et}(y)=\overline{\pi}_{et}(y^{(p)})\in S_{\mu}.$
The unramified direction at $x$ for the map $\overline{\pi}_{et}$
is $T_{x}S^{+}.$ Equivalently, under the Kodaira-Spencer isomorphism
\[
\ker(\Omega_{S_{\mu}/k}\to\Omega_{Y_{et}^{\sigma}/k})=\KS(\mathcal{P}_{0}\otimes\mathcal{L}).
\]
\end{thm}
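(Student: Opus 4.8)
The plan is to reduce the statement to the computation of a single cotangent map at $x$ and then to identify its kernel by crystalline deformation theory, in the spirit of the proof of Proposition~\ref{Moonen} and the theorem following it. By Theorem~\ref{S_0(p)-1}(iv) the map $\overline{\pi}_{et}\colon Y_{et}^{\sigma}\to S_{\mu}$ is finite flat and totally ramified of degree $p$ between nonsingular surfaces, so Lemma~\ref{Liedtke} applies at $x=\overline{\pi}_{et}(y^{(p)})$: the unramified direction $T_{x}S^{ur}$ is by definition the annihilator of the line $\ker(\overline{\pi}_{et}^{*}\colon\Omega_{S_{\mu}/k}|_{x}\to\Omega_{Y_{et}^{\sigma}/k}|_{y^{(p)}})$, while $T_{x}S^{+}$ is by definition the annihilator of $\KS(\mathcal{P}_{0}\otimes\mathcal{L})|_{x}$. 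Passing to annihilators, the two formulations coincide, and it suffices to prove the equality of lines
\[
\ker(\overline{\pi}_{et}^{*})=\KS(\mathcal{P}_{0}\otimes\mathcal{L})|_{x}\subset\Omega_{S_{\mu}/k}|_{x}.
\]

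\emph{Geometric input.} Over $S_{\mu}$ one has $\mathcal{P}_{0}=\ker(V_{\mathcal{P}})=\omega_{\mathfrak{G}}$, the cotangent line of the slope-$1/2$ factor, exactly as computed in the proof of Theorem~\ref{IntCurves}(ii). On the other side, Theorem~\ref{S_0(p)-1}(iii)--(iv) exhibits $\overline{\pi}_{et}$, over reduced bases, as the Verschiebung quotient: with $A_{1}=A^{(p)}$ and $K_{1}=A^{(p)}[\Ver]$ one has $A_{1}/K_{1}\simeq A$ via $\Ver\colon A^{(p)}\to A$, and the twist $\langle p\rangle^{-1}$ is an automorphism of $S$ preserving $\mathcal{P},\mathcal{L},\mathcal{P}_{0}$ and commuting with $\KS$, hence harmless for the cotangent computation. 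Thus the isogeny underlying $\overline{\pi}_{et}$ induces on the cotangent spaces of the abelian varieties precisely $\Ver^{*}=V\colon\omega_{A}\to\omega_{A}^{(p)}=\omega_{A_{1}}$, whose $\Sigma$-component is $V_{\mathcal{P}}\colon\mathcal{P}\to\mathcal{L}^{(p)}$, with kernel $\mathcal{P}_{0}$.

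\emph{Crystalline computation.} I would then run the deformation computation of §\ref{tangent bundle} over an infinitesimal neighbourhood of $x$. By the local model theorem (Theorem~\ref{local model theorem}, cf. \cite{Gro}) the tangent spaces of $S_{\mu}$ at $x$ and of $Y_{et}^{\sigma}$ at $y^{(p)}$ are spaces of admissible lifts of the Hodge filtrations inside the Dieudonné crystals $\mathbb{D}(A)$ and $\mathbb{D}(A^{(p)})=\mathbb{D}(A)^{(p)}$, carrying the Gauss--Manin connection $\nabla$, and $\overline{\pi}_{et}^{*}$ is computed from $\mathbb{D}(\Ver)$. Feeding the relation between $\KS$ and $\nabla$ of diagram (\ref{eq:KS}) into this, the covector $\KS(\omega_{0}\otimes\ell)$ attached to $\omega_{0}\in\mathcal{P}_{0}=\ker V$ should annihilate the image of $d\overline{\pi}_{et}$: the first-order variation of the Hodge filtration it records in the $\omega_{0}$-direction is killed by $\Ver^{*}=V$, hence gives a trivial deformation of the quotient $A_{1}/K_{1}$. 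This yields the inclusion $\KS(\mathcal{P}_{0}\otimes\mathcal{L})|_{x}\subseteq\ker(\overline{\pi}_{et}^{*})$, and since both are lines in the plane $\Omega_{S_{\mu}/k}|_{x}$, equality follows by a dimension count.

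\emph{Main obstacle.} The delicate point — the reason this does not follow from §\ref{local rings}, where $\pi_{et}^{*}$ vanishes identically on cotangent spaces — is that over the non-reduced base $k[\epsilon]$ the relevant subgroup is $\tilde{K}_{1}=\tilde{H}_{1}+\tilde{H}_{1}^{\perp}[\Fr]$, which is \emph{not} $A^{(p)}[\Ver]$, precisely the caveat in the proof of Theorem~\ref{S_0(p)-1}. Hence $\overline{\pi}_{et}$ cannot literally be treated as $\Ver$: the subgroup $\tilde{K}_{1}$ itself moves with the deformation, and it is this motion that produces the genuine Kronecker-type ramification. The technical heart of the argument, presumably the Dieudonné-module computation deferred to the appendix, is therefore to expand $\tilde{H}_{1}^{\perp}[\Fr]$ to first order in $\epsilon$ and to verify that the induced deformation of $A_{1}/\tilde{K}_{1}$ is trivial exactly in the $\mathcal{P}_{0}$-direction. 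As a cross-check one may instead use $\rho_{et}\circ\overline{\pi}_{et}=Fr_{Y^{\sigma}/k}$, which gives $\ker(\overline{\pi}_{et}^{*})=\mathrm{im}(\rho_{et}^{*})$, or appeal to Proposition~\ref{Moonen}: the claim then reduces to showing that $\overline{\pi}_{et}$ is unramified along the canonical $\mathbb{\widehat{G}}_{m}$-direction and Frobenius-like along $\mathfrak{\widehat{G}}$, which by the theorem following Proposition~\ref{Moonen} is exactly the assertion $T_{x}S^{ur}=T_{x}S^{+}$.
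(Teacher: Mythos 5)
Your setup is correct: reducing the statement to the equality of lines $\ker(\overline{\pi}_{et}^{*})=\KS(\mathcal{P}_{0}\otimes\mathcal{L})$, invoking Lemma \ref{Liedtke}, and noting that an inclusion between two line sub-bundles of $\overline{\pi}_{et}^{*}\Omega_{S_{\mu}}$ forces equality, all match the paper. You have also put your finger on exactly the right difficulty: the moduli-theoretic description $\overline{\pi}_{et}(\underline{A}_{1},H_{1})=\left\langle p\right\rangle ^{-1}(\underline{A}_{1}/K_{1})$ with $K_{1}=H_{1}+H_{1}^{\perp}[\Fr]$ is only established over \emph{reduced} bases. But this is where your proposal has a genuine gap rather than a proof. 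The ``technical heart'' you describe --- expanding $\tilde{H}_{1}^{\perp}[\Fr]$ to first order in $\epsilon$ and checking that the induced deformation of $\tilde{A}_{1}/\tilde{K}_{1}$ dies in the $\mathcal{P}_{0}$-direction --- is never carried out, and it is not ``the Dieudonn\'e-module computation deferred to the appendix'': the appendix computes $a$-types of $A/H$ for gss $A$ (used in Theorem \ref{Stildestratification}) and has nothing to do with this. Worse, the computation as you pose it is not even well-defined: over $k[\epsilon]$ the sum $\tilde{H}_{1}+\tilde{H}_{1}^{\perp}[\Fr]$ need not be a finite flat group scheme (the paper's footnote on sums of subgroup schemes makes this point), so the formula for $\overline{\pi}_{et}$ you want to differentiate is simply unavailable on the Artinian points where a tangent computation lives. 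Your two ``cross-checks'' do not close this hole either: $\rho_{et}\circ\overline{\pi}_{et}=Fr_{Y^{\sigma}/k}$ only gives the inclusion $\mathrm{im}(\rho_{et}^{*})\subseteq\ker(\overline{\pi}_{et}^{*})$ and leaves you with the analogous unproved identification of $\mathrm{im}(\rho_{et}^{*})$; and the reduction via Proposition \ref{Moonen} replaces the theorem by the equally unproved claim that $\overline{\pi}_{et}$ is unramified along the canonical $\mathbb{\widehat{G}}_{m}$.

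The paper's proof avoids Artinian bases altogether, and this is the idea your proposal is missing. Everything is done with the universal families over the reduced scheme $Y_{et}^{\sigma}$, where the Verschiebung description is valid: with $\mathcal{B}=\overline{\pi}_{et}^{*}\mathcal{A}$ and $\mathcal{A}_{1}=\mathcal{B}^{(p)}$, the isogeny $\Ver\colon\mathcal{A}_{1}\to\mathcal{B}$ factors over $Y_{et}^{\sigma}$ (and only there) as $\varphi\circ\psi$ through $\mathcal{C}=\mathcal{A}_{1}/\mathcal{H}_{1}$. Since $\mathcal{B}$ is a pullback, $\KS_{\mathcal{B}}$ equals the isomorphism $\overline{\pi}_{et}^{*}(\KS)$ followed by the sheaf map $\overline{\pi}_{et}^{*}\Omega_{S_{\mu}}\to\Omega_{Y_{et}^{\sigma}}$ whose kernel one wants; so the ramification question becomes a question about $\KS_{\mathcal{B}}$, computed entirely over the reduced base. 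Functoriality of Kodaira--Spencer under the isogeny $\varphi$ gives a commutative square in which the left vertical map $\varphi^{*}$ kills exactly $\overline{\pi}_{et}^{*}(\mathcal{P}_{0})$, and --- this is the second ingredient absent from your proposal --- the right vertical map $1\otimes(\varphi^{t})^{*\vee}$ is \emph{injective} by a signature argument ($\mathcal{C}$ has type $(1,2)$, $\mathcal{B}$ type $(2,1)$, so $(\varphi^{t})^{*}$ is surjective on the $\Sigma$-parts of cotangent spaces). Together these force $\KS_{\mathcal{B}}(\overline{\pi}_{et}^{*}\mathcal{P}_{0})=0$, giving the inclusion, and your line-bundle argument then finishes. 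Without the passage to the sheaf-level statement over $Y_{et}^{\sigma}$ and the injectivity of the right-hand vertical map, the inclusion you assert remains a heuristic.
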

\begin{proof}
More precisely, we need to prove that over $Y_{et}^{\sigma}$
\[
\ker(\overline{\pi}_{et}^{*}\Omega_{S_{\mu}/k}\to\Omega_{Y_{et}^{\sigma}/k})=\overline{\pi}_{et}^{*}(\KS(\mathcal{P}_{0}\otimes\mathcal{L})).
\]
In parts (iii) and (iv) of Theorem \ref{S_0(p)-1} we have seen that
if we denote by $\mathcal{A}_{1}$ the universal abelian scheme over
$Y_{et}^{\sigma}$ then $\mathcal{A}_{1}=\mathcal{B}^{(p)}$, where
$\mathcal{B}=\overline{\pi}_{et}^{*}\mathcal{A},$ and the morphism
$\overline{\pi}_{et}$ is induced from $\Ver:\mathcal{A}_{1}\to\mathcal{B}$,
followed by $\left\langle p\right\rangle ^{-1}$ on the level-$N$
structure.

Consider the abelian scheme $\mathcal{C}=\mathcal{A}_{1}/\mathcal{H}_{1}$
(over $Y_{et}^{\sigma}$) where $\mathcal{H}_{1}$ is the universal
étale subgroup scheme of $\mathcal{A}_{1}.$ The isogeny\textbf{ $\Ver:\mathcal{A}_{1}\to\mathcal{B}$}
factors as
\[
\Ver:\mathcal{A}_{1}\overset{\psi}{\to}\mathcal{C}\overset{\varphi}{\to}\mathcal{B}
\]
where $\psi$ is the isogeny with kernel $\mathcal{H}_{1}$ and $\varphi$
the isogeny with kernel $\mathcal{A}_{1}[\Ver]/\mathcal{H}_{1}.$
Notice that although $\Ver:\mathcal{A}_{1}\to\mathcal{B}$ is pulled
back from a similar isogeny over $S_{\mu},$ only over $Y_{et}^{\sigma}$
does it factor through $\mathcal{C}$ because $\mathcal{H}_{1}$ is
\emph{not }the pull-back of a group scheme on $S_{\mu}.$ Consider
now the diagram
\[
\begin{array}{ccc}
\overline{\pi}_{et}^{*}\mathcal{P}=\omega_{\mathcal{B}}(\Sigma) & \overset{\KS_{\mathcal{B}}}{\to} & \Omega_{Y_{et}^{\sigma}}\otimes\omega_{\mathcal{B}^{t}}^{\vee}(\Sigma)\\
\downarrow\varphi^{*} &  & \downarrow1\otimes(\varphi^{t})^{*\vee}\\
\omega_{\mathcal{C}}(\Sigma) & \overset{\KS_{\mathcal{C}}}{\to} & \Omega_{Y_{et}^{\sigma}}\otimes\omega_{\mathcal{C}^{t}}^{\vee}(\Sigma)
\end{array}
\]
resulting from the functoriality of the Kodaira-Spencer maps with
regard to the isogeny $\varphi.$ Here $\KS_{\mathcal{B}}$ is the
Kodaira-Spencer map for the family $\mathcal{B}\to Y_{et}^{\sigma}$
and likewise for $\mathcal{C}$. Note that as $\mathcal{B}=\overline{\pi}_{et}^{*}\mathcal{A}$,
$\KS_{\mathcal{B}}$ is the composition of the isomorphism
\[
\overline{\pi}_{et}^{*}(\KS):\overline{\pi}_{et}^{*}(\mathcal{P})\overset{\sim}{\to}\overline{\pi}_{et}^{*}(\Omega_{S_{\mu}})\otimes\overline{\pi}_{et}^{*}(\mathcal{L})^{\vee}
\]
(we identify $\mathcal{L=\omega_{\mathcal{A}}}(\overline{\Sigma})$
with $\omega_{\mathcal{A}^{t}}(\Sigma)$ via the polarization as usual)
and the map induced by
\[
\overline{\pi}_{et}^{*}:\overline{\pi}_{et}^{*}(\Omega_{S_{\mu}})\to\Omega_{Y_{et}^{\sigma}}.
\]

The kernel of the left vertical arrow $\varphi^{*}$ is precisely
$\overline{\pi}_{et}^{*}(\mathcal{P}_{0}).$ On the right hand side,
however, $1\otimes(\varphi^{t})^{*\vee}$ is injective. This stems
from the fact that the type of $\mathcal{C}$ (an étale quotient of
$\mathcal{A}_{1}$) is $(1,2)$ while the type of $\mathcal{B}$ is
$(2,1)$. Thus the type of $\mathcal{C}^{t}$ is $(2,1)$ and that
of $\mathcal{B}^{t}$ (1,2). The map $(\varphi^{t})^{*}$ being surjective
on the $\Sigma$-part of the cotangent spaces, its dual is injective.

We conclude that $\KS_{\mathcal{B}}(\overline{\pi}_{et}^{*}(\mathcal{P}_{0}))=0,$
hence
\[
\overline{\pi}_{et}^{*}(\KS(\mathcal{P}_{0}\otimes\mathcal{L}))\subset\ker(\overline{\pi}_{et}^{*}:\overline{\pi}_{et}^{*}\Omega_{S_{\mu}}\to\Omega_{Y_{et}^{\sigma}}).
\]
As both sides are line bundles which are direct summands of the locally
free rank 2 sheaf $\overline{\pi}_{et}^{*}\Omega_{S_{\mu}},$ the
inclusion is an equality between line sub-bundles, as desired. Their
annihilators in $TS$ are the ``special sub-bundle'' $TS^{+}$ and
the ``line-bundle of unramified directions'' $TS^{ur}$, hence these
two are also equal.
\end{proof}
In the next section we shall see that the theorem extends to the gss
locus. In fact, the same proof applies, once we extend the morphism
$\overline{\pi}_{et}$ and the factorization $\pi_{et}=\overline{\pi}_{et}\circ Fr_{Y/k}.$
See the proof of Theorem \ref{S_0(p)-gss} (iii).

\subsection{The gss strata}

Recall that the supersingular locus $S_{ss}\subset S$ is the union
of Fermat curves crossing transversally at the superspecial locus
$S_{ssp}.$ The complement of these crossing points was denoted $S_{gss}$
and is therefore a disjoint union of open Fermat curves. In this section
we study its pre-image under the morphism $S_{0}(p)\to S$ and show
that it is a $\mathbb{P}^{1}$-bundle, intersecting transversally
with the horizontal components of $S_{0}(p).$ Understanding the pre-image
of $S_{ssp}$ will be taken up in the next section.

\subsubsection{The $\mathbb{P}^{1}$-bundles}
\begin{thm}
\emph{\label{S_0(p)-gss}(i) }Let $Y_{gss}=\pi^{-1}(S_{gss})^{red}.$
Then $Y_{gss}$ has the structure of a $\mathbb{P}^{1}$-bundle over
the non-singular curve $S_{gss},$ with two distinguished non-intersecting
non-singular curves
\[
Z_{et}\text{ and }Z_{m}.
\]
A point $y=(\underline{A},H)\in S_{0}(p)(k)$ lies on $Z_{et}$ if
and only if $H\simeq\alpha_{p^{2},\Sigma}$ and on $Z_{m}$ if and
only if $H\simeq\alpha_{p^{2},\Sigma}^{*}.$ The fiber $\pi^{-1}(x)$
$(x\in S_{gss}(k))$ intersects each of the curves $Z_{et}$ or $Z_{m}$
at a unique point. At all other $k$-points $(\underline{A},H)$ of
$Y_{gss},$ the group $H\simeq\mathfrak{G}[p]_{\Sigma}$. 

\emph{(ii) }The closure $\overline{Y}_{m}$ of $Y_{m}$ intersects
$Y_{gss}$ transversally in $Z_{m}$. Let $Y_{m}^{\dagger}=Y_{m}\cup Z_{m}$,
a locally closed subscheme of $S_{0}(p)$, and $S_{\mu}^{\dagger}=S_{\mu}\cup S_{gss}.$
Then $Y_{m}^{\dagger}$ is a non-singular surface. The map $\pi_{m}\colon Y_{m}^{\dagger}\overset{\sim}{\to}S_{\mu}^{\dagger}$
is an isomorphism, and the section $\sigma_{m}:S_{\mu}\to Y_{m}$
extends to a section of $\pi_{m}$ over $S_{\mu}^{\dagger}.$

\emph{(iii)} The closure $\overline{Y}_{et}$ of $Y_{et}$ intersects
$Y_{gss}$ transversally in $Z_{et}$. Let $Y_{et}^{\dagger}=Y_{et}\cup Z_{et},$
a locally closed subscheme of $S_{0}(p).$ Then $Y_{et}^{\dagger}$
is a non-singular surface. The morphism $\overline{\pi}_{et}$ of
Theorem \ref{S_0(p)-1} extends to a morphism
\[
\overline{\pi}_{et}:Y_{et}^{\dagger}{}^{\sigma}\to S_{\mu}^{\dagger},
\]
which is finite flat totally ramified of degree $p.$ The factorization
$\pi_{et}=\overline{\pi}_{et}\circ Fr_{Y/k}$ extends to $Y_{et}^{\dagger}.$

Restricted to $Z_{et}$ the map $\pi_{et}$ is totally ramified of
degree $p$ and $\overline{\pi}_{Z}=\overline{\pi}_{et}|_{Z_{et}^{\sigma}}$
is an isomorphism from $Z_{et}^{\sigma}$ onto $S_{gss}.$

\emph{(iv) Setting }
\[
\rho_{et}(\underline{A})=(\underline{A}^{(p^{2})},\Fr(A^{(p)}[\Ver]))
\]
extends the map $\rho_{et}$ to a finite flat totally ramified map
of degree $p$ from $S_{\mu}^{\dagger}$ to $Y_{et}^{\dagger}.$ We
have
\[
\rho_{et}\circ\pi_{et}=Fr_{Y/k}^{2}:Y_{et}^{\dagger}\to Y_{et}^{\dagger\sigma^{2}}=Y_{et}^{\dagger},\,\,\,\,\rho_{et}\circ\overline{\pi}_{et}=Fr_{Y^{\sigma}/k}.
\]
\end{thm}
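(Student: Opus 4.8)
The plan is to establish part (i) first, as the geometric heart of the theorem, and then to read off parts (ii)--(iv) from it together with the local models of Proposition \ref{local rings-1} and the results of Theorem \ref{S_0(p)-1}. For (i), fix $x\in S_{gss}(k)$ with abelian variety $A$; the fibre $\pi^{-1}(x)(k)$ is exactly the set of isotropic Raynaud $\mathcal{O}_{E}$-subgroups $H\subset A[p]$ of rank $p^{2}$, and I would classify these through their covariant Dieudonn\'e modules. Write $M=M(A[p])=M_{0}\oplus M_{1}$ for the $\Sigma$- and $\overline{\Sigma}$-parts (each $3$-dimensional), and fix a normal form for $(M,F,V)$ and the polarization pairing (which pairs $M_{0}$ with $M_{1}$) adapted to the gss condition $a=1$, as computed in the appendix. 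A rank-$p^{2}$ Raynaud $H$ corresponds to a $2$-dimensional $F,V$-stable submodule $N=N_{0}\oplus N_{1}$ with $\dim N_{i}=1$, and isotropy forces $N_{1}\subset N_{0}^{\perp}\cap M_{1}$. The computation should show that $N_{0}$ is pinned down while $N_{1}$ sweeps out a $\mathbb{P}^{1}$; computing the invariants $(\alpha,\beta,\gamma)$ of Lemma \ref{invariants} for each $N$ then identifies the generic locus where $H\simeq\mathfrak{G}[p]_{\Sigma}$ and the two distinguished points where $H\simeq\alpha_{p^{2},\Sigma}$ (giving $Z_{et}$) and $H\simeq\alpha_{p^{2},\Sigma}^{*}$ (giving $Z_{m}$). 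To globalize, I would observe that this construction is functorial in the universal Dieudonn\'e crystal over $S_{gss}$, so that $Y_{gss}\cong\mathbb{P}(\mathcal{E})$ for a rank-$2$ bundle $\mathcal{E}$ cut out by the isotropy condition, with $Z_{et},Z_{m}$ the disjoint smooth sections given by the two distinguished subgroup types. I expect this Dieudonn\'e computation---pinning down the normal form, proving the fibre is reduced and exactly $\mathbb{P}^{1}$, and checking that the bundle structure is Zariski-local---to be the main obstacle; everything else is comparatively formal.

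For (ii) and (iii), smoothness and transversality I would read directly off Proposition \ref{local rings-1}: at an $\alpha_{p^{2}}^{*}$ (resp.\ $\alpha_{p^{2}}$) point the special fibre of $\mathbf{L}_{y}$ is $k[[r,s,c]]/(cs)$ (resp.\ $k[[a,b,c]]/(bc)$), two smooth branches meeting transversally, one of which is $Y_{gss}$ and the other $Y_{m}^{\dagger}$ (resp.\ $Y_{et}^{\dagger}$). This exhibits $Y_{m}^{\dagger}$ and $Y_{et}^{\dagger}$ as single smooth branches crossing $Y_{gss}$ transversally along $Z_{m}$ and $Z_{et}$. The isomorphism $\pi_{m}\colon Y_{m}^{\dagger}\xrightarrow{\sim}S_{\mu}^{\dagger}$ then follows because $\pi_{m}$ is finite (proper and quasi-finite) and bijective on $k$-points---over $S_{\mu}$ by Theorem \ref{S_0(p)-1}(ii), over $S_{gss}$ because each fibre meets $Z_{m}$ exactly once by (i)---with smooth, hence normal, target; a finite birational morphism onto a normal variety is an isomorphism, and $\sigma_{m}$ is its inverse. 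For $\overline{\pi}_{et}$ I would verify that the formula $(\underline{A}_{1},H_{1})\mapsto\langle p\rangle^{-1}(\underline{A}_{1}/K_{1})$ with $K_{1}=H_{1}+H_{1}^{\perp}[\Fr]$ still yields a type-$(2,1)$ point over $Z_{et}^{\sigma}$, and re-run the constant-fibre-rank flatness argument of Theorem \ref{S_0(p)-1}(iii)--(iv) over the smooth base $Y_{et}^{\dagger\sigma}$; the factorization $\pi_{et}=\overline{\pi}_{et}\circ Fr_{Y/k}$ then extends to $Y_{et}^{\dagger}$ by density of $Y_{et}$.

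That $\overline{\pi}_{Z}=\overline{\pi}_{et}|_{Z_{et}^{\sigma}}$ is an isomorphism onto $S_{gss}$ I would deduce from the tangent-space picture rather than by hand. With $\overline{\pi}_{et}$ now defined over all of $S_{\mu}^{\dagger}$, the proof of Theorem \ref{unramified direction} applies verbatim to give $TS^{ur}=TS^{+}$ over $S_{gss}$ as well; since $S_{gss}$ is an integral curve of $TS^{+}$ by Theorem \ref{IntCurves}(i), the consequence of Lemma \ref{Liedtke} shows $\overline{\pi}_{et}$ is unramified along $S_{gss}$, so its restriction $\overline{\pi}_{Z}$ to the component $Z_{et}^{\sigma}$ lying above $S_{gss}$ is an isomorphism. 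Combined with $\pi_{et}|_{Z_{et}}=\overline{\pi}_{Z}\circ Fr_{Z_{et}/k}$ this also yields that $\pi_{et}$ is totally ramified of degree $p$ on $Z_{et}$.

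Finally, for (iv) I would define $\rho_{et}$ over $S_{\mu}^{\dagger}$ by the stated formula $\rho_{et}(\underline{A})=(\underline{A}^{(p^{2})},\Fr(A^{(p)}[\Ver]))$, noting that $\Fr(A^{(p)}[\Ver])$ remains finite flat of rank $p^{2}$, isotropic, \'etale and Raynaud over the reduced base $S_{\mu}^{\dagger}$ (constant fibre rank), so $\rho_{et}$ is a well-defined finite flat morphism, totally ramified since it is $1$-$1$ on $k$-points, of degree $p$ by the degree bookkeeping $\deg Fr_{Y/k}^{2}=p^{4}=\deg\rho_{et}\cdot\deg\pi_{et}=\deg\rho_{et}\cdot p^{3}$. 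The two relations $\rho_{et}\circ\pi_{et}=Fr_{Y/k}^{2}$ and $\rho_{et}\circ\overline{\pi}_{et}=Fr_{Y^{\sigma}/k}$ hold on the dense open $Y_{\mu}$ by Theorem \ref{S_0(p)-1}(v), and extend by continuity to $Y_{et}^{\dagger}$ since the source schemes are reduced and $Y_{\mu}$ is dense.
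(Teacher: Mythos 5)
Your overall architecture (Dieudonné classification of the fibres, local models for smoothness and transversality, the unramified-direction/$TS^{+}$ argument for $\overline{\pi}_{Z}$, continuity for (iv)) parallels the paper, but there is a genuine failure in your part (iii). You propose to extend $\overline{\pi}_{et}$ over $Z_{et}^{\sigma}$ by the modular formula $K_{1}=H_{1}+H_{1}^{\perp}[\Fr]$ and to "re-run the constant-fibre-rank flatness argument" over $Y_{et}^{\dagger\sigma}$. This is exactly what the paper warns cannot be done. Over $Y_{et}^{\sigma}$ one has $H_{1}[\Fr]=0$, so the sum is the product $H_{1}\times H_{1}^{\perp}[\Fr]$ with $H_{1}^{\perp}[\Fr]$ of rank $p$; but at a point of $Z_{et}^{\sigma}$ the group $H_{1}$ is infinitesimal of type $\alpha_{p^{2}}$, so $H_{1}\cap H_{1}^{\perp}[\Fr]\supseteq H_{1}[\Fr]$ has rank $p$ and $H_{1}^{\perp}[\Fr]$ has rank $p^{2}$ (in the notation of Proposition \ref{Braid-1}, $M(H^{\perp})[V]=\left\langle e_{2},f_{3}\right\rangle $). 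Thus neither $H_{1}^{\perp}[\Fr]$ nor the intersection is flat over $Y_{et}^{\dagger\sigma}$, and the sum does not glue to a finite flat group scheme, so your construction does not define a morphism. Worse, even fibrewise the formula gives the wrong map on $Z_{et}^{\sigma}$: there $M(K_{1})=\left\langle e_{2},f_{1},f_{3}\right\rangle ^{(p)}$, whereas the subgroup one must divide by for the continuous extension (the kernel of $\Ver:A^{(p)}\to A$, since the relation $\mathcal{A}_{1}\simeq\mathcal{B}^{(p)}$ persists over the closure) has module $\left\langle e_{2},e_{3},f_{1}\right\rangle ^{(p)}$; so your extension would violate the factorization $\pi_{et}=\overline{\pi}_{et}\circ Fr_{Y/k}$ on $Z_{et}$. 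The paper instead extends $\overline{\pi}_{et}$ by a purely algebro-geometric normality argument: $k(S)\subset k(Y_{et}^{\dagger})^{p}\subset k(Y_{et}^{\dagger})$, and since $y$ is the unique point above $y^{(p)}$ and $\mathcal{O}_{Y_{et}^{\dagger\sigma},y^{(p)}}$ is normal, one gets $\mathcal{O}_{S,x}\subset k(Y_{et}^{\dagger})^{p}\cap\mathcal{O}_{Y_{et}^{\dagger},y}=\mathcal{O}_{Y_{et}^{\dagger\sigma},y^{(p)}}$; the modular description comes only afterwards, and it is "divide by $\ker\Ver$", not by $K_{1}$.

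Two secondary gaps. First, in (i) your globalization "$Y_{gss}\cong\mathbb{P}(\mathcal{E})$ by functoriality of the Dieudonné crystal" needs crystalline Dieudonné theory over the non-perfect base $S_{gss}$, and even then a pointwise bijection does not yield a scheme isomorphism in characteristic $p$ (Frobenius is bijective on $k$-points). The paper avoids both issues: local models give that $Y_{gss}$ is smooth with reduced fibres; each fibre $Y_{x}$ is then identified with $\mathbb{P}^{1}$ via the relative moduli problem over the \emph{constant} abelian scheme $A_{R}$ (squeezing $\alpha_{p}(A_{R})\subset H\subset\beta_{p}(A_{R})$, checked on $k$-points using reducedness), and Noether--Enriques upgrades "all fibres are $\mathbb{P}^{1}$" to the bundle statement. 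Second, in (ii)--(iii) you read off the local model that the second branch at a point of $Z_{m}$ (resp. $Z_{et}$) belongs to $\overline{Y}_{m}$ (resp. $\overline{Y}_{et}$); the completed local ring cannot see which global horizontal component passes through, so this needs the closed conditions $\Fr(H)=0$ and $\Ver(H^{(p)})=0$ (together with properness of $\pi$ to see that the closures do meet every fibre), or, for $Z_{m}$, the section $\sigma_{m}$ defined over all of $S_{\mu}^{\dagger}$ by $H=\Ver(A^{(p)}[\Fr])$ --- which also yields the isomorphism $\pi_{m}$ at once, more cheaply than your finite-plus-birational-onto-normal argument (which is itself fine once the branch identification and properness of the restriction are in place). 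Finally, a small slip in (iv): over $S_{gss}$ the group $\Fr(A^{(p)}[\Ver])$ is of type $\alpha_{p^{2},\Sigma}$, hence not étale; "étale" should be dropped from your list of properties there.
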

The proof of the theorem will be given in the next subsection. We
caution the reader that the scheme-theoretic pre-image of $S_{gss}$
under $\overline{\pi}_{et}$ is not reduced. It is rather a nilpotent
thickening of degree $p$ of the reduced curve $Z_{et}^{\sigma}$
in $Y_{et}^{\dagger\sigma}.$ Similarly the scheme-theoretic pre-image
$\pi^{-1}(S_{gss})$ is non-reduced along $Z_{et}$, and only there.

We also caution that the formula (\ref{eq:pi_bar_etale}) giving $\overline{\pi}_{et}$
on $Y_{et}^{\sigma}$ is no longer valid for its continuous extension
to $Z_{et}^{\sigma}$. The group functor $H_{1}+H_{1}^{\perp}[\Fr]$
is represented by a finite flat group scheme on each of $Y_{et}^{\sigma}$
and $Z_{et}^{\sigma}$ separately, but even though the ranks of these
group schemes are the same ($p^{3})$, they do not glue to give a
group scheme over the whole of $Y_{et}^{\dagger\sigma}$. Indeed,
at a closed point of $Y_{et}^{\sigma}$ this group is the kernel of
$\Ver$, but this does not hold at closed points of $Z_{et}^{\sigma}.$\footnote{If $H_{1}$ and $H_{2}$ are finite flat subgroup schemes of a finite
flat group scheme $G,$ then $H_{1}\cap H_{2}$ is a finite subgroup
scheme, but is not necessarily flat. If it is flat, then the sum $H_{1}+H_{2},$
being isomorphic as a group functor to $H_{1}\times H_{2}/(H_{1}\cap H_{2}),$
is again represented by a finite flat group scheme. In general, however,
the group-functor-quotient of a finite flat group scheme by a closed
(hence finite) non-flat subgroup scheme, need not be represented by
a group scheme at all, let alone by a finite flat group scheme. Thus
the sum of two subgroup schemes need not be a group scheme!}

\medskip{}

The following diagram summarizes what the extensions of the maps $\pi_{et},\overline{\pi}_{et},\rho_{et}$
to the gss strata look like.

\bigskip{}

\begin{center}%
\begin{tabular}{ccccc}
$Z_{et}$ & $\overset{Fr_{Z/k}}{\longrightarrow}$ & $Z_{et}^{\sigma}$ & $\overset{Fr_{Z^{\sigma}/k}}{\longrightarrow}$ & $Z_{et}^{\sigma^{2}}=Z_{et}$\tabularnewline
 & {\footnotesize{}$\pi_{et}\searrow p$} & $\simeq\downarrow\overline{\pi}_{et}$ & {\footnotesize{}$p\nearrow\rho_{et}$} & \tabularnewline
 &  & $S_{gss}$ &  & \tabularnewline
\end{tabular}\end{center}

\bigskip{}

\begin{cor}
\label{Theta}(i) The maps $\overline{\pi}_{et}$ and $\sigma_{m}$
induce an isomorphism
\[
\sigma_{m}\circ\overline{\pi}_{et}:Z_{et}^{\sigma}\overset{\sim}{\to}Z_{m}.
\]
(ii) Setting $\theta=\rho_{et}\circ\pi_{m}:Y_{m}^{\dagger}\to Y_{et}^{\dagger}$
gives a commutative diagram of totally ramified finite flat morphisms
between surfaces, and similarly between embedded curves (the diagonal
arrows are embeddings):
\[
\begin{array}{ccccccc}
Z_{m} &  & \overset{\theta}{\longrightarrow} &  & Z_{et}\\
| & \searrow &  &  & \vdots & \searrow\\
\simeq &  & Y_{m}^{\dagger} &  & \overset{\theta}{\longrightarrow} &  & Y_{et}^{\dagger}\\
\downarrow &  & | &  & \downarrow &  & |\\
S_{gss} & \cdots & \simeq & \overset{Fr^{2}}{\dashrightarrow} & S_{gss} &  & \pi_{et}\\
 & \searrow\,\,\, & \downarrow &  &  & \searrow & \downarrow\\
 &  & S_{\mu}^{\dagger} &  & \overset{Fr^{2}}{\longrightarrow} &  & S_{\mu}^{\dagger}
\end{array}
\]
The map $\theta$ is of degree $p,$ and so is $\theta|_{Z_{m}}.$
In particular, the latter factors through the Frobenius of the curve
$Z_{m}$ and yields an isomorphism $Z_{m}^{\sigma}\overset{\sim}{\to}Z_{et}.$ 

If $Z'_{m}$ and $Z'_{et}$ are two $\kappa$-components of $Z_{m}$
and $Z_{et}$ (i.e. defined and irreducible over $\kappa$) which
map to the same $\kappa$-component $S'_{gss}$ of $S_{gss}$ then
$\theta(Z_{m}')=Z_{et}'$.
\end{cor}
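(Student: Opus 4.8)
The plan is to reduce the assertion to the single identity $\rho_{et}(S'_{gss})=Z'_{et}$ and then to pin down this image by a dimension-plus-irreducibility argument. First I would record the elementary reduction. Since $\theta=\rho_{et}\circ\pi_m$ and, by Theorem \ref{S_0(p)-gss}(ii), $\pi_m$ restricts to an isomorphism $Z_m\xrightarrow{\sim}S_{gss}$ carrying the $\kappa$-component $Z'_m$ onto $S'_{gss}$, one has $\theta(Z'_m)=\rho_{et}(S'_{gss})$. Thus the whole statement comes down to identifying the closed set $\rho_{et}(S'_{gss})$ with the (unique) $\kappa$-component $Z'_{et}$ of $Z_{et}$ lying over $S'_{gss}$.

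Next I would assemble the two maps needed to locate this image. On the one hand, $\rho_{et}$ sends $S_{gss}$ into $Z_{et}$: this is already displayed in the diagram preceding the corollary, and is also clear from the moduli description $\rho_{et}(\underline{A})=(\underline{A}^{(p^{2})},\Fr(A^{(p)}[\Ver]))$, since for a gss point $\underline{A}$ the point $\underline{A}^{(p^{2})}$ is again gss and the attached subgroup is of type $\alpha_{p^{2},\Sigma}$, which characterizes $Z_{et}$ (Theorem \ref{S_0(p)-gss}(i),(iv)). On the other hand, by Theorem \ref{S_0(p)-gss}(iii) the restriction $\pi_{et}|_{Z_{et}}$ factors as $\overline{\pi}_{Z}\circ Fr_{Z/k}$ with $\overline{\pi}_{Z}\colon Z_{et}^{\sigma}\xrightarrow{\sim}S_{gss}$ an isomorphism; hence $\pi_{et}|_{Z_{et}}\colon Z_{et}\to S_{gss}$ is purely inseparable of degree $p$, in particular a homeomorphism. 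Consequently the $\kappa$-components of $Z_{et}$ correspond bijectively, via $\pi$, to the $\kappa$-components of $S_{gss}$, and $(\pi_{et}|_{Z_{et}})^{-1}(S'_{gss})$ is precisely the single $\kappa$-component $Z'_{et}$.

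The crucial computation is then the composite $\pi\circ\rho_{et}$ on $S_{gss}$. Directly from the moduli description this is $\underline{A}\mapsto\underline{A}^{(p^{2})}$, i.e.\ the self-map $Fr_{S/k}^{2}$ of $S_{gss}$ (compatibly with the identification $S^{\sigma^{2}}=S$ used throughout this section). Being a finite purely inseparable endomorphism defined over $\kappa$, it carries each $\kappa$-component onto itself, so in particular $(\pi\circ\rho_{et})(S'_{gss})=S'_{gss}$. Combined with $\rho_{et}(S_{gss})\subseteq Z_{et}$, this forces $\rho_{et}(S'_{gss})\subseteq\pi^{-1}(S'_{gss})\cap Z_{et}=(\pi_{et}|_{Z_{et}})^{-1}(S'_{gss})=Z'_{et}$. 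Finally, $\rho_{et}(S'_{gss})$ is closed (as $\rho_{et}$ is finite, hence proper), $\kappa$-irreducible (the image of a $\kappa$-irreducible variety under a $\kappa$-morphism) and of dimension $1$ (finiteness preserves dimension); being contained in the $\kappa$-irreducible curve $Z'_{et}$ it must equal it. This yields $\theta(Z'_m)=\rho_{et}(S'_{gss})=Z'_{et}$, as desired.

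I expect the only real subtlety to be the Frobenius-and-$\kappa$-rationality bookkeeping in the crucial step: one must be sure that $\pi\circ\rho_{et}$ really is the $p^{2}$-power Frobenius self-map of $S_{gss}$ and that such a map fixes each $\kappa$-component (rather than merely permuting geometric components within a Galois orbit). Both facts, however, are forced by the relations already proved in Theorem \ref{S_0(p)-gss} --- in particular $\rho_{et}\circ\pi_{et}=Fr_{Y/k}^{2}$ and the factorization $\pi_{et}=\overline{\pi}_{et}\circ Fr_{Y/k}$, together with the purely inseparable nature of $\pi_{et}|_{Z_{et}}$ --- so the argument is a careful assembly of the established parts rather than anything genuinely new.
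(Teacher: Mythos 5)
Your treatment of the final assertion (that $\theta(Z'_{m})=Z'_{et}$ for $\kappa$-components lying over the same $S'_{gss}$) is correct, and it is in substance the paper's own argument: both hinge on the identity $\pi\circ\rho_{et}=Fr_{S/k}^{2}$ on $S_{gss}$ (commutativity of the back square of the diagram) together with the fact that $Fr_{S/k}^{2}$, being the base change of the $\kappa$-Frobenius endomorphism, carries each $\kappa$-component of $S_{gss}$ onto itself. Your extra care --- using that $\pi_{et}|_{Z_{et}}$ is a purely inseparable homeomorphism to get the bijection between $\kappa$-components of $Z_{et}$ and of $S_{gss}$, and the dimension-plus-irreducibility step upgrading the inclusion $\rho_{et}(S'_{gss})\subseteq Z'_{et}$ to an equality --- is a legitimate elaboration of what the paper dispatches in one sentence.

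However, the corollary asserts considerably more, and your opening reduction (``the whole statement comes down to $\rho_{et}(S'_{gss})=Z'_{et}$'') is not accurate. Left unproved are: part (i), the isomorphism $\sigma_{m}\circ\overline{\pi}_{et}:Z_{et}^{\sigma}\overset{\sim}{\to}Z_{m}$ (immediate from Theorem \ref{S_0(p)-gss}(ii),(iii), but it must be stated); the commutativity of the full diagram, in particular the front square $\pi_{et}\circ\theta=Fr^{2}\circ\pi_{m}$ on the surfaces $Y_{m}^{\dagger}\to Y_{et}^{\dagger}$, of which you only verify the curve-level back-square identity $\pi\circ\rho_{et}=Fr^{2}$ on $S_{gss}$; the claim that $\theta$ and $\theta|_{Z_{m}}$ are totally ramified finite flat of degree $p$, which the paper obtains by the degree bookkeeping $p^{3}\times p=p^{4}\times1$ (front square) and $p\times p=p^{2}\times1$ (back square), and which in your setup follows since $\theta=\rho_{et}\circ\pi_{m}$ with $\pi_{m}$ an isomorphism and $\rho_{et}$ finite flat totally ramified of degree $p$; and the consequence that $\theta|_{Z_{m}}$, being purely inseparable of degree $p$ between smooth curves, factors through $Fr_{Z_{m}/k}$ and induces an isomorphism $Z_{m}^{\sigma}\overset{\sim}{\to}Z_{et}$. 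All of these are quick consequences of Theorem \ref{S_0(p)-gss} and Corollary \ref{corollary j}, so nothing in your approach would fail, but as written your proof establishes only the last sentence of the corollary, not the statement as a whole.
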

\begin{proof}
The commutativity is easily checked in terms of the moduli problem.
The degrees are calculated from the fact that $\pi_{m}$ is an isomorphism,
$\pi_{et}$ has degree $p^{3}$ on $Y_{et}^{\dagger}$ and degree
$p$ on $Z_{et}$, while $Fr_{S/K}^{2}$ has degree $p^{4}$ on $S_{\mu}^{\dagger}$
and degree $p^{2}$ on $S_{gss}$. To summarize, in the front square
we have $p^{3}\times p=p^{4}\times1,$ and in the back square we have
$p\times p=p^{2}\times1.$ The assertion about $\kappa$-components
follows from the fact that $Fr_{S/k}^{2}$ preserves these components.
\end{proof}
\begin{rem*}
We believe that if $N=1$ (working with stacks) the geometrically
irreducible components of $S_{gss}$ are already defined over $\kappa$,
hence $\theta$ exchanges the irreducible components of $Z_{m}$ and
$Z_{et}$ within the same irreducible component of $Y_{gss}$. This
is clearly not the case when $N>1.$ Compare with supersingular points
on the modular curve $X(N).$
\end{rem*}

\subsubsection{Proof of Theorem \ref{S_0(p)-gss}}

We first quote \cite{Bu-We}, Proposition 3.6. In the notation used
there, the Dieudonné module of $A[p],$ for $A$ supersingular but
not superspecial, is the ``Dieudonné space'' $\overline{\underline{B}}(3).$
Our Dieudonné module $M$ differs from the one appearing in \cite{Bu-We},
(3.2)(2) by a ``Frobenius twist''. This is because we use covariant
Dieudonné theory, while \cite{Bu-We} employs Cartier theory. See
\cite{C-C-O}, Appendix B.3.10, where the first (used here) is denoted
$M_{*},$ and the second (used in \cite{Bu-We}) is denoted $E_{*}.$
\begin{prop}
\label{Braid-1}Let $\underline{A}\in S_{gss}(k),$ and let $M=M(A[p])$
be the covariant Dieudonné module of $A[p].$ Then $M$ has a basis
over $k$ denoted $\{e_{1},e_{2},e_{3},f_{1},f_{2},f_{3}\}$ such
that

(i) $\mathcal{O}_{E}$ acts on the $e_{i}$ via $\Sigma$ and on the
$f_{i}$ via $\overline{\Sigma}$.

(ii) The antisymmetric pairing induced by the principal polarization
$\phi$ is given by $\left\langle e_{i},f_{j}\right\rangle =(-1)^{j}\delta_{ij},\,\,\left\langle e_{i},e_{j}\right\rangle =\left\langle f_{i},f_{j}\right\rangle =0.$

(iii) $F$ and $V$ are given by the following table:

\medskip{}

\begin{center}%
\begin{tabular}{|c|c|c|c|c|c|c|}
\hline 
 & $e_{1}$ & $e_{2}$ & $e_{3}$ & $f_{1}$ & $f_{2}$ & $f_{3}$\tabularnewline
\hline 
\hline 
$F$ & $-f_{3}$ & $0$ & $0$ & $0$ & $e_{1}$ & $e_{2}$\tabularnewline
$V$ & $0$ & $0$ & $f_{1}$ & $e_{2}$ & $e_{3}$ & $0$\tabularnewline
\hline 
\end{tabular}\end{center} 

\medskip{}

By this we mean that $Fe_{1}^{(p)}=-f_{3},\,\,Ve_{3}=f_{1}^{(p)}$,
etc. In particular, $\Lie(A)=M[V]=\left\langle e_{1},e_{2},f_{3}\right\rangle $.
\end{prop}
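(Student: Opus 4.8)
The plan is to take the classification of the $p$-torsion Dieudonn\'e module at a general supersingular point as the essential input, and then to transport it into our covariant normalization and pin down the pairing and the signs. What makes this feasible is that the gss stratum corresponds to a single isomorphism class of $p$-divisible group with $\mathcal{O}_E$-action, so $M=M(A[p])$ is rigid: it suffices to produce \emph{one} basis with the asserted properties.

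First I would record the abstract constraints any such $M$ must satisfy. Since $A[p]$ is killed by $p$, the covariant module $M$ is a $6$-dimensional $k$-vector space on which $FV=VF=0$, and the two operators $F\colon M^{(p)}\to M$, $V\colon M\to M^{(p)}$ interchange the eigenspaces $M(\Sigma)$ and $M(\overline{\Sigma})$, because $(M^{(p)})(\Sigma)=(M(\overline{\Sigma}))^{(p)}$ and vice versa. The subspace $\Lie(A)=M[V]$ has signature $(2,1)$, and the gss hypothesis is that Oort's $a$-number equals $1$; this smallness of $\ker F\cap\ker V$ is what forces $M$ to be a single cyclic chain rather than splitting off a superspecial summand. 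These structural facts are precisely the classification of \cite{Bu-We}, Proposition 3.6, which identifies this module (their $\overline{\underline{B}}(3)$) and exhibits a cyclic basis for it. The one point requiring genuine care is that \cite{Bu-We} works in Cartier theory, whose Dieudonn\'e space differs from our $M=M_*$ by a Frobenius twist; I would apply the dictionary of \cite{C-C-O}, Appendix B.3.10, to convert their $F$ and $V$ into ours, obtaining a basis realizing the chain displayed in the table up to scaling of the individual vectors.

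Finally I would normalize. The polarization induces the Rosati involution $a\mapsto\overline a$, so the alternating form pairs $M(\Sigma)$ against $M(\overline{\Sigma})$ and makes each of these subspaces Lagrangian; this already forces $\langle e_i,e_j\rangle=\langle f_i,f_j\rangle=0$ and leaves only the cross-pairings to fix. Rescaling the generator of the chain and propagating through $F$ and $V$, I would arrange $\langle e_i,f_j\rangle=(-1)^j\delta_{ij}$ together with the single sign $Fe_1^{(p)}=-f_3$. The remaining signs in the $F,V$-table are then determined, and are automatically consistent, by imposing the adjunction $\langle Fx,y\rangle_M=\langle x,Vy\rangle_{M^{(p)}}$ (a quick check of the sort verified above: e.g. $\langle Ff_2^{(p)},f_1\rangle=\langle e_1,f_1\rangle=-1$ matches $\langle f_2^{(p)},Vf_1\rangle=\langle f_2^{(p)},e_2^{(p)}\rangle=-1$).

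I expect the main obstacle to be exactly this last bookkeeping: reconciling the Frobenius twist between the Cartier and covariant conventions while simultaneously keeping the signs of the pairing and of the $F,V$-table mutually compatible. The paper's own caution that these Dieudonn\'e computations are painful but standard applies here; the saving feature is that, once the adjunction relation is imposed on the cyclic basis, it rigidifies all the remaining signs at once and thereby certifies the stated normalization.
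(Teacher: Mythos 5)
Your proposal is correct and takes essentially the same route as the paper, which likewise just quotes \cite{Bu-We}, Proposition 3.6 (the Dieudonn\'e space $\overline{\underline{B}}(3)$) and passes from Cartier to covariant conventions via the Frobenius twist explained in \cite{C-C-O}, Appendix B.3.10. Your added normalization step---rescaling the cyclic basis and certifying the signs through the adjunction $\left\langle Fx,y\right\rangle _{M}=\left\langle x,Vy\right\rangle _{M^{(p)}}$---is exactly the bookkeeping the paper leaves implicit, and your sample check of it is accurate.
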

Let $\underline{A}\in S_{\mu}^{\dagger}(R),$ where $R$ is an arbitrary
$k$-algebra.
\begin{lem}
\label{alpha_p}The $R$-subgroup scheme $\alpha_{p}(A^{(p)})=A^{(p)}[\Fr]\cap A^{(p)}[\Ver]$
is finite flat of rank $p$, and $\mathcal{O}_{E}$-stable.
\end{lem}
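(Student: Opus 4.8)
The plan is to treat the three assertions in order of difficulty, leaving the flatness and the exact rank for last, since those are the only non-formal points.

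First I would establish $\mathcal{O}_{E}$-stability and finiteness, which are formal. Each endomorphism $\iota(a)$ of $A$ induces an endomorphism $\iota(a)^{(p)}$ of $A^{(p)}$ commuting with both $\Fr$ and $\Ver$; hence each of the kernels $A^{(p)}[\Fr]$ and $A^{(p)}[\Ver]$ is stable under $\iota(a)^{(p)}$, and so is their scheme-theoretic intersection. Since $A^{(p)}[\Fr]=\ker(\Fr\colon A^{(p)}\to A^{(p^{2})})$ is finite flat of rank $p^{3}$ over $R$, the closed subscheme $\alpha_{p}(A^{(p)})$ of it is automatically finite over $R$.

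The real content is the flatness and the rank. Here I would reduce to the universal case: it suffices to prove that $\mathcal{A}^{(p)}[\Fr]\cap\mathcal{A}^{(p)}[\Ver]$ is finite flat of rank $p$ over the base $S_{\mu}^{\dagger}$, after which the general statement follows by base change along $\Spec(R)\to S_{\mu}^{\dagger}$. Now $S_{\mu}^{\dagger}=S-S_{ssp}$ is an open subscheme of the smooth surface $S$, hence nonsingular, in particular reduced. I would therefore invoke the same flatness criterion used earlier (a finite morphism over a reduced base with constant fiber rank is flat, \cite{Mu} p.432), and by openness of the flat locus it is enough to check the fiber rank at closed points. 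Formation of the $\Fr$- and $\Ver$-kernels and of their intersection commutes with base change, so the fiber over $x\in S_{\mu}^{\dagger}(k)$ is $\mathcal{A}^{(p)}_{x}[\Fr]\cap\mathcal{A}^{(p)}_{x}[\Ver]$, that is, the maximal $\alpha_{p}$-subgroup of $\mathcal{A}^{(p)}_{x}$.

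To finish I would compute this rank. A finite group scheme over $k$ killed by both $\Fr$ and $\Ver$ is isomorphic to a product of copies of $\alpha_{p}$, the number of factors being Oort's $a$-number; hence the fiber has rank $p^{a(\mathcal{A}^{(p)}_{x})}$. Since the twist by $\Phi_{k}$ does not change the $a$-number, $a(\mathcal{A}^{(p)}_{x})=a(\mathcal{A}_{x})$, and by the theorem of Vollaard recalled above $a(\mathcal{A}_{x})=1$ for every $x\in S_{\mu}^{\dagger}(k)=(S-S_{ssp})(k)$. Thus the fiber rank is constantly $p$, and the criterion yields the desired flatness of rank $p$. The one delicate point is precisely this constancy of fiber rank, and it is exactly the exclusion of $S_{ssp}$ that makes the argument work: at a superspecial point the $a$-number jumps to $3$ and the rank would jump to $p^{3}$, breaking flatness. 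In other words, $S_{\mu}^{\dagger}$ is singled out as the locus where the $a$-number equals $1$.
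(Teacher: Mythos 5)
Your proposal is correct and follows essentially the same route as the paper: reduce to the universal group scheme over $S_{\mu}^{\dagger}$, note finiteness and $\mathcal{O}_{E}$-stability are formal, verify constant fiber rank $p$ at closed points, pass to non-closed points via openness of the flat locus, and conclude flatness from the criterion for finite morphisms over a reduced base (\cite{Mu}, p.~432). The only cosmetic difference is that you obtain the fiber rank from Oort's $a$-number, which equals $1$ on $S_{\mu}\cup S_{gss}$ by Vollaard's theorem as recalled in the paper, whereas the paper reads $M[F]\cap M[V]$ off the explicit Dieudonn\'e module of Proposition \ref{Braid-1}; the two computations are equivalent.
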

\begin{proof}
We have already encountered the lemma when $A$ was $\mu$-ordinary.
The extension to the gss stratum works the same. It is enough to prove
the lemma for the universal abelian scheme $\mathcal{A}$ over $S_{\mu}^{\dagger}.$
In this case $\alpha_{p}(\mathcal{A}^{(p)})$ is clearly finite and
$\mathcal{O}_{E}$-stable, and its fibers all have the same rank $p$,
as follows from Proposition \ref{Braid-1}. Let us make this point
clear, because the proposition only deals with fibers over closed
points. Let $\xi$ be any point of $S_{\mu}^{\dagger}$ (not necessarily
closed), and $\overline{\{\xi\}}$ its closure (a point, a curve,
or an irreducible surface). By the open-ness of the flat locus there
is a non-empty connected open subset $\xi\in U\subset\overline{\{\xi\}}$
such that $\alpha_{p}(\mathcal{A}^{(p)})|_{U}$ is finite and flat
over $U,$ hence all its fibers, at all the geometric points of $U,$
have the same rank. But $U(k)$ is Zariski dense in $U,$ and at a
$k$-point the proposition tells us that the rank is $p.$ Hence the
rank is $p$ at $\xi$ as well. Since $S_{\mu}^{\dagger}$ is \emph{reduced},
by \cite{Mu}, Corollary on p. 432, $\alpha_{p}(\mathcal{A}^{(p)})$
is also flat.
\end{proof}
\begin{prop}
The finite flat group scheme $A[p]_{/R}$ has a canonical filtration
\[
Fil^{3}A[p]=0\subset Fil^{2}A[p]\subset Fil^{1}A[p]\subset Fil^{0}A[p]=A[p]
\]
by finite flat group schemes, which agrees with the canonical filtration
over $S_{\mu}.$ The graded pieces are $\mathcal{O}_{E}$-stable,
rank $p^{2}$ and Raynaud. Furthermore, $Fil^{1}A[p]=Fil^{2}A[p]^{\perp}$
(with respect to the Weil pairing). Over $S_{gss}$ every geometric
fiber of $gr^{2}A[p]$ is of type $\alpha_{p^{2},\Sigma}^{*},$ $gr^{1}A[p]$
is of type $\kappa\otimes\alpha_{p},$ and $gr^{0}A[p]$ is of type
$\alpha_{p^{2},\Sigma}.$ Let $R=k$ and assume that $\underline{A}\in S_{gss}(k).$
Then, with the notation of Proposition \ref{Braid-1},
\[
Fil^{2}M=\left\langle e_{2},f_{3}\right\rangle ,\,\,\,Fil^{1}M=\left\langle e_{1},e_{2},f_{1},f_{3}\right\rangle .
\]
\end{prop}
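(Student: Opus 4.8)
The plan is to define the filtration uniformly over $S_{\mu}^{\dagger}$ by means of powers of Frobenius and Verschiebung, and to control it fibrewise through the explicit Dieudonn\'e module of Proposition \ref{Braid-1}. Recall (footnote to \ref{Raynaud}) that in the covariant theory the Dieudonn\'e operator $V$ is induced by $\Fr$ and $F$ by $\Ver$; since $FV=VF=p$ acts as $0$ on $A[p]$, the table of Proposition \ref{Braid-1} shows that on $M=M(A[p])$ one has $\mathrm{im}(F^{2})=\langle e_{2},f_{3}\rangle$ and $\ker(V^{2})=\langle e_{1},e_{2},f_{1},f_{3}\rangle$, which are exactly the modules $Fil^{2}M$ and $Fil^{1}M$ asserted in the statement. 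This motivates defining, for the universal $\mathcal{A}$ over $S_{\mu}^{\dagger}$,
\[
Fil^{2}A[p]=\mathrm{im}\bigl(\Ver^{2}\colon A^{(p^{2})}[p]\to A[p]\bigr),\qquad Fil^{1}A[p]=\ker\bigl(\Fr^{2}\colon A[p]\to A^{(p^{2})}[p]\bigr).
\]
First I would record that $[p]=0$ on $A[p]$ forces $\Fr^{2}\circ\Ver^{2}=0$ on $A^{(p^{2})}[p]$, so $Fil^{2}A[p]\subseteq Fil^{1}A[p]$, giving the three-step filtration $0\subset Fil^{2}\subset Fil^{1}\subset A[p]$; these subgroup functors are $\mathcal{O}_{E}$-stable because $\Fr$ and $\Ver$ are equivariant for $\iota$.

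The key technical step is flatness. Both $Fil^{1}A[p]$ and the kernel $\ker(\Ver^{2}\colon A^{(p^{2})}[p]\to A[p])$ are closed, hence finite, subgroup schemes of the finite flat group schemes $A[p]$ and $A^{(p^{2})}[p]$; I would compute their fibre ranks over every closed point of $S_{\mu}^{\dagger}$ --- using Proposition \ref{Braid-1} on $S_{gss}(k)$ and the splitting of the $\mu$-ordinary $A[p]$ into its multiplicative, local--local and \'etale parts on $S_{\mu}(k)$ --- and check that these ranks are constant ($p^{4}$ in both cases). Since $S_{\mu}^{\dagger}=S-S_{ssp}$ is smooth, hence reduced, the openness of the flat locus together with the density of closed points (exactly as in the proof of Lemma \ref{alpha_p}, via \cite{Mu} p.~432) then forces flatness. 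Consequently $Fil^{1}A[p]$ and $\ker(\Ver^{2})$ are finite flat of rank $p^{4}$, and $Fil^{2}A[p]\cong A^{(p^{2})}[p]/\ker(\Ver^{2})$ is finite flat of rank $p^{2}$. The graded pieces $gr^{2}=Fil^{2}$, $gr^{1}=Fil^{1}/Fil^{2}$ and $gr^{0}=A[p]/Fil^{1}$ are therefore finite flat, $\mathcal{O}_{E}$-stable of rank $p^{2}$, and the two fibrewise computations pin down $Fil^{2}M$ and $Fil^{1}M$ as claimed.

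It then remains to identify the graded pieces and verify the self-duality. On $S_{gss}(k)$ I would read off from the basis $\{e_{i},f_{j}\}$ the action of $\kappa$ and of $F,V$ on each $gr^{i}M$ and compute the invariants $\alpha=\dim_{k}\ker V$, $\beta=\dim_{k}\mathrm{coker}\,F$ together with the type of the maximal $\alpha_{p}$-subgroup; matching against the Raynaud table of \ref{Raynaud classification} gives $gr^{2}\simeq\alpha_{p^{2},\Sigma}^{*}$ (with $(\alpha,\beta)=(2,1)$ and $\alpha_{p}$-type $\Sigma$), $gr^{1}\simeq\kappa\otimes\alpha_{p}$ (with $F=V=0$ and type $(\Sigma,\overline{\Sigma})$), and $gr^{0}\simeq\alpha_{p^{2},\Sigma}$ (with $(\alpha,\beta)=(1,2)$ and $\alpha_{p}$-type $\Sigma$). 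Each is balanced at every geometric point, so by the Raynaud$\Leftrightarrow$balanced criterion of \ref{Raynaud} and the base-change properties listed there the graded pieces are Raynaud. For the orthogonality $Fil^{1}A[p]=Fil^{2}A[p]^{\perp}$ I would use the adjunction $\langle Fx,y\rangle=\langle x,Vy\rangle$ of the polarization pairing recorded in \ref{Raynaud classification}: iterating gives $\langle F^{2}x,y\rangle=\langle x,V^{2}y\rangle$, whence by non-degeneracy $\mathrm{im}(F^{2})^{\perp}=\ker(V^{2})$, which is precisely $Fil^{1}=(Fil^{2})^{\perp}$ for $e_{p\phi}$. Agreement with the canonical filtration over $S_{\mu}$ follows from the same fibrewise analysis: over $\mu$-ordinary points $\mathrm{im}(F^{2})$ is the multiplicative part $A[p]^{m}$ and $\ker(V^{2})$ is the connected part $A[p]^{0}$, and two finite flat subgroup schemes of $A[p]$ over the reduced base $S_{\mu}$ agreeing on all closed points are equal.

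The step I expect to be the main obstacle is the flatness in families: the scheme-theoretic image $\mathrm{im}(\Ver^{2})$ and the kernel $\ker(\Fr^{2})$ are not flat for formal reasons, and making the constant-rank criterion apply requires verifying the rank simultaneously on both the $\mu$-ordinary and the general supersingular strata and invoking the reducedness of $S_{\mu}^{\dagger}=S-S_{ssp}$. The footnote's warning that sums and quotients of non-flat finite subgroup schemes misbehave is exactly the phenomenon one must steer around here.
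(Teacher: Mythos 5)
Your proposal is correct — the rank computations at $\mu$-ordinary and gss points, the adjunction argument, and the reducedness gluing all hold up — but it is organized around different definitions than the paper's proof. The paper sets $Fil^{2}A[p]=\Ver(A^{(p)}[\Fr])$, so that flatness is immediate from Lemma \ref{alpha_p}: this image is the quotient of the rank-$p^{3}$ finite flat $A^{(p)}[\Fr]$ by the rank-$p$ finite flat $\alpha_{p}(A^{(p)})=A^{(p)}[\Fr]\cap A^{(p)}[\Ver]$, and no new rank-constancy argument is needed; it then \emph{defines} $Fil^{1}A[p]=Fil^{2}A[p]^{\perp}$, observing that $Fil^{2}$ is isotropic, so flatness of $Fil^{1}$ (rank $p^{4}$) comes for free from Cartier duality. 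You instead take $Fil^{2}=\mathrm{im}(\Ver^{2})$ and $Fil^{1}=\ker(\Fr^{2})$, which obliges you to run the constant-fiber-rank/reducedness criterion twice (once for $\ker(\Ver^{2}|_{A^{(p^{2})}[p]})$ and once for $\ker(\Fr^{2}|_{A[p]})$, both of rank $p^{4}$) and then to prove the orthogonality $\ker(\Fr^{2})=\mathrm{im}(\Ver^{2})^{\perp}$ as a separate fibrewise statement via $\langle F^{2}x,y\rangle=\langle x,V^{2}y\rangle$, whereas for the paper it holds by definition. Your route costs more computation, but it makes both steps of the filtration purely $\Fr$/$\Ver$-theoretic and turns the self-duality into a theorem rather than a convention. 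One point worth making explicit in your write-up: for a general Dieudonn\'e module killed by $p$, the submodules $F^{2}(M^{(p^{2})})$ and $F(M^{(p)}[V])$ (the latter being the paper's $Fil^{2}$) need \emph{not} coincide; they agree here only because of the particular shape of the $\mu$-ordinary and gss modules. So the fibrewise identification combined with reducedness of $S_{\mu}^{\dagger}$ is genuinely what guarantees that your filtration is the same canonical one — in particular that it restricts to $A[p]^{m}\subset A[p]^{0}$ over $S_{\mu}$ — and that step should not be treated as a formality.
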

We remark that unlike $\mu$-ordinary abelian varieties, the above
filtration \emph{does not split}, \emph{even if} $R=k.$ As we shall
see, $A[p]$ does not admit a subgroup scheme of type $\kappa\otimes\alpha_{p}$
at all, and while it does admit a unique subgroup scheme of type $\alpha_{p^{2},\Sigma}$,
this subgroup scheme is contained in $Fil^{1}A[p],$ so does not lift
$gr^{0}A[p].$
\begin{proof}
Define
\[
Fil^{2}A[p]=\Ver(A^{(p)}[\Fr])\simeq A^{(p)}[\Fr]/A^{(p)}[\Fr]\cap A^{(p)}[\Ver].
\]
This image exists because it is a quotient by a finite flat subgroup
scheme. It is a closed subgroup scheme of $A[p].$ Since $A^{(p)}[\Fr]$
is finite flat of rank $p^{3},$ the Lemma implies that $Fil^{2}A[p]$
is finite flat of rank $p^{2}.$ It is furthermore isotropic for the
Weil pairing $e_{p\phi}$ on $A[p]$ associated with the principal
polarization $\phi$. By Cartier duality
\[
Fil^{1}A[p]=Fil^{2}A[p]^{\perp}
\]
is finite flat of rank $p^{4}.$ These group schemes are clearly $\mathcal{O}_{E}$-stable.

The remaining assertions concern the geometric fibers of $A[p],$
so we assume that $R=k.$ Over the $\mu$-ordinary locus this is the
same filtration that we encountered before. Assume that we are over
$S_{gss},$ and use Proposition \ref{Braid-1}. Let $M=M(A[p]).$
Since $F$ is induced by $\Ver$ and $V$ is induced by $\Fr,$ we
have to compute $F(M^{(p)}[V]).$ This turns out to be $\left\langle e_{2},f_{3}\right\rangle .$
A simple check of the table in §\ref{Raynaud classification} reveals
that $\text{\ensuremath{gr^{2}=}}Fil^{2}A[p]$ is of type $\alpha_{p^{2},\Sigma}^{*}.$
Similar computations apply to $gr^{1}$ and $gr^{0}$.
\end{proof}
We can now complete the proof of Part (i) of Theorem \ref{S_0(p)-gss}.
From the analysis of the local models it follows that $Y_{gss}$ is
a non-singular surface, mapping under the map $\pi$ to the non-singular
curve $S_{gss}.$ This is clear at points where $H\simeq\mathfrak{G}[p].$
At a point $y\in Y_{gss}$ where $H\simeq\alpha_{p^{2},\Sigma}$ or
$H\simeq\alpha_{p^{2},\Sigma}^{*}$ the formal neighborhood of $y$
in $S_{0}(p)$ has two non-singular analytic branches which intersect
transversally. Since there are \emph{at least} two irreducible components
of $S_{0}(p)$ passing through $y,$ the vertical component $Y_{gss}$
and (at least) one horizontal component, we conclude that there are
\emph{precisely} two such components, and that they are non-singular
at $y.$ In particular, $Y_{gss}$ is non-singular at $y$ too.

By the Noether-Enriques Theorem (\cite{Bea} Theorem III.4 and Proposition
III.7) it is enough to prove that for any $x\in S_{gss}(k),$ the
scheme-theoretic fiber
\[
Y_{x}\subset Y_{gss}
\]
of the map $\pi:Y_{gss}\to S_{gss}$ is isomorphic to $\mathbb{P}^{1}.$
We rely on the computation of local models at points $y\in Y_{x}$
in \cite{Bel} III.4.3.8. These show that for any $y\in Y_{x}$ the
map
\[
\pi^{*}:\Omega_{S_{gss},x}\to\Omega_{Y_{gss},y}
\]
is injective, and $\pi:Y_{gss}\to S_{gss}$ is smooth at $y.$ We
do not reproduce these computations here, but remark that the most
problematic points turn out to be the $y$ that lie on $Z_{et}$ (where
$H\simeq\alpha_{p^{2},\Sigma}$). At such points the claim follows
from §\ref{example}, as the analytic branch of $S_{0}(p)$ at $y$
determined by $Y_{gss}$ is the one denoted there $\mathfrak{W},$
while $S_{gss}\subset S$ is given infinitesimally by the equation
$r=0.$ $Y_{x}$ is therefore \emph{a reduced} \emph{non-singular}
curve.

Let $M$ be the covariant Dieudonné module of $A[p],$ where $A=\mathcal{A}_{x},$
see Proposition \ref{Braid-1}. The fiber $Y_{x}$ represents the
\emph{relative} moduli problem, sending a $k$-algebra $R$ to the
set of finite flat rank $p^{2}$ isotropic Raynaud $\mathcal{O}_{E}$-subgroup
schemes $H\subset A_{R}[p].$ Note that since $A_{R}$ is a constant
abelian scheme over $Spec(R)$ both $\Fr$ and $\Ver$ are defined
on it, base-changing from $k$ to $R$ the corresponding isogenies
of $A.$ Let
\[
\alpha_{p}(A_{R})=A_{R}[\Fr]\cap A_{R}[\Ver].
\]
This is a constant (finite flat) subgroup scheme of rank $p,$ and
if $R=k$, its Dieudonné submodule is $\left\langle e_{2}\right\rangle .$
Let
\[
\beta_{p}(A_{R})=A_{R}[\Fr^{2}]\cap A_{R}[\Ver^{2}]\cap A_{R}[p],
\]
another constant (finite flat) subgroup scheme, of rank $p^{3}$.
If $R=k,$ its Dieudonné submodule is $\left\langle e_{2},f_{1},f_{3}\right\rangle .$
We claim that
\[
\alpha_{p}(A_{R})\subset H\subset\beta_{p}(A_{R}),
\]
hence classifying $H_{/R}$ is the same as classifying finite flat
rank $p$ subgroups of $\beta_{p}(A_{R})/\alpha_{p}(A_{R}).$ Since
$Y_{x}$ is a reduced non-singular curve, it is enough to check these
inclusions when $R$ is reduced and of finite type over $k.$ Since
the closed points of $Spec(R)$ are then dense, we may assume $R=k.$
But over $k$, $\Ver$ and $\Fr$ are nilpotent on $H,$ which is
of rank $p^{2},$ so both $\Ver^{2}$ and $\Fr^{2}$ must kill it.
On the other hand, $H$ must contain an $\alpha_{p}$-subgroup, because
it is local with a local Cartier dual. 

Now $\beta_{p}(A_{R})/\alpha_{p}(A_{R})$ is nothing but $\alpha_{p}^{2}$
(of type $(\overline{\Sigma},\overline{\Sigma})$) and it is well-known
that the moduli problem of classifying its rank-$p$ subgroups is
represented by $\mathbb{P}_{/k}^{1}.$ One checks that the isotropy
and Raynaud conditions are automatically satisfied for such an $H.$ 

Let $R=k.$ The subgroup scheme $H$ is completely determined by its
Dieudonné submodule
\[
N_{\lambda}=\left\langle e_{2},\lambda_{1}f_{1}+\lambda_{3}f_{3}\right\rangle 
\]
where $\lambda=(\lambda_{1}:\lambda_{3})\in\mathbb{P}^{1}(k).$ Here
$N_{0}=N_{(0:1)}=M(H)$ if $H=Fil^{2}(A[p])\simeq\alpha_{p^{2}}^{*}.$
Similarly, $N_{\infty}=N_{(1:0)}=M(H)$ where $H\simeq\alpha_{p^{2}}$
because $N_{(1:0)}$ is killed by $F$ and $V^{2}$ but not by $V.$
For all other values of $\lambda\neq0,\infty,$ $N_{\lambda}=M(H)$
where $H$ is of type $\mathfrak{G}[p]_{\Sigma},$ because $N_{\lambda}$
is killed by $F^{2}$ and $V^{2}$ but the kernels of $F$ or $V$
are only $1$-dimensional.

\medskip{}

Part (ii): Let us show that the totality of points $(\underline{A},H)\in Y_{gss}(k)$
where $H\simeq\alpha_{p^{2}}^{*}$, makes up a \emph{curve $Z_{m},$
}that $\pi$ induces an isomorphism of this curve onto $S_{gss}$,
and that the closure of $Y_{m}$ intersects $Y_{gss}$ transversally
in this curve. For this purpose, consider the section
\[
\sigma_{m}:S_{\mu}^{\dagger}\to S_{0}(p)
\]
mapping an $R$-valued point $\underline{A}$ to $(\underline{A},H)$,
where $H=Fil^{2}A[p]=\Ver(A^{(p)}[\Fr]).$ The image of the section
is a surface isomorphic to the base, intersecting $Y_{\mu}$ in its
connected component $Y_{m}$ and $Y_{gss}$ in the curve $Z_{m}.$
Finally, the transversality of the intersection of the closure of
$Y_{m}$ and $Y_{gss}$ follows from the calculation of the completed
local ring of $S_{0}(p)$ at a point $y\in Z_{m},$ see §\ref{local rings}.

\medskip{}

Part (iii): We turn our attention to the points $(\underline{A},H)\in Y_{gss}(k)$
where $H\simeq\alpha_{p^{2}}.$ The condition $\Ver(H^{(p)})=0$ is
a \emph{closed} condition on the moduli problem $S_{0}(p)$. It is
satisfied throughout $Y_{et}$ and on $Y_{gss}$ it holds precisely
at the given points where $H\simeq\alpha_{p^{2}}.$ We claim that
this set forms a curve $Z_{et}$, which is the intersection of the
closure of $Y_{et}$ and $Y_{gss}.$ Indeed, $\pi$ being proper,
the closure of $Y_{et}$ must meet \emph{every} fiber $Y_{x}$ for
$x\in S_{gss}(k)$, and such a fiber has a unique point where $H\simeq\alpha_{p^{2}}.$
That the intersection is transversal follows as before from §\ref{local rings}.

Write $Y_{et}^{\dagger}=Y_{et}\cup Z_{et}.$ The computations in §\ref{local rings}
show that $Y_{et}^{\dagger}$ is non-singular. So is $Y_{et}^{\dagger\sigma}$.

We claim that since $\pi:Y_{et}^{\dagger}\to S$ factors through $Fr_{Y/k}:Y_{et}^{\dagger}\to Y_{et}^{\dagger\sigma}$
over the dense open set $Y_{et}$, it factors through $Fr_{Y/k}$
everywhere. Indeed, consider the local ring $\mathcal{O}_{S,x}$ at
$x=\pi(y)\in S$, where $y\in Y_{et}^{\dagger}$ is a closed point.
Let $y^{(p)}=Fr_{Y/k}(y)\in Y_{et}^{\dagger\sigma}.$ For the function
fields we have
\[
k(S)\subset k(Y_{et}^{\dagger\sigma})=k(Y_{et}^{\dagger})^{p}\subset k(Y_{et}^{\dagger}).
\]
Thus $\mathcal{O}_{S,x}\subset k(Y_{et}^{\dagger})^{p}\cap\mathcal{O}_{Y_{et}^{\dagger},y}.$
But the ring on the right is just $\mathcal{O}_{Y_{et}^{\dagger\sigma},y^{(p)}}$,
because $y$ is the \emph{unique }point above $y^{(p)}$ in $Y_{et}^{\dagger}$
and $\mathcal{O}_{Y_{et}^{\dagger\sigma},y^{(p)}}$ is normal. For
every affine subset $U=Spec(R)\subset Y_{et}^{\dagger}$ the ring
$R$ is the intersection of all the $\mathcal{O}_{Y_{et}^{\dagger},y}$
for closed points $y\in U$$,$ and similarly for $Fr_{Y/k}(U)\subset Y_{et}^{\dagger\sigma}.$
This proves the claim.

Thus $\overline{\pi}_{et}$ extends to a morphism from $Y_{et}^{\dagger\sigma}$
to $S_{\mu}^{\dagger}$. It is a finite morphism, because $\pi_{et}:Y_{et}^{\dagger}\to S_{\mu}^{\dagger}$
is finite. Both source and target are non-singular surfaces, so by
\cite{Eis} 18.17 it is also flat, totally ramified of degree $p$.
It therefore defines a line sub-bundle $TS^{ur}$ of unramified directions
in the tangent bundle there, as in Lemma \ref{Liedtke}, now over
all of $S_{\mu}^{\dagger}.$ Recall that the special sub-bundle $TS^{+}$
was defined on the whole of $S_{\mu}^{\dagger}$ as well. The two
line sub-bundles $TS^{+}$ and $TS^{ur}$ coincide over $S_{\mu}$
(Theorem \ref{unramified direction}), hence also over $S_{gss}$,
by continuity. 

As $TS^{+}$ is tangent to $S_{gss}$ along the general supersingular
stratum, we get, from the discussion following Lemma \ref{Liedtke},
that $\overline{\pi}_{Z}\colon Z_{et}^{\sigma}\to S_{gss}$ is unramified.
As it is also totally ramified (bijective on $k$-points), it is an
isomorphism.

In retrospect, we can look at the factorization $\pi_{et}=\overline{\pi}_{et}\circ Fr_{Y/k}$
also from the moduli point of view as follows. Consider the abelian
scheme $\mathcal{B}=\overline{\pi}_{et}^{*}\mathcal{A}$ which is
the pull-back of the universal abelian scheme over $S_{\mu}^{\dagger}$
to $Y_{et}^{\dagger\sigma}.$ Consider also the universal abelian
scheme $\mathcal{A}_{1}$ over $Y_{et}^{\dagger\sigma}$. Over the
dense open subset $Y_{et}^{\sigma}$ $\mathcal{A}_{1}\simeq\mathcal{B}^{(p)},$
as was shown in the proof of Theorem \ref{S_0(p)-1}. It follows that
this relation persists over $Z_{et}^{\sigma}$, and \emph{a-fortiori}
we may define $\overline{\pi}_{et}$ by sending $(\underline{A}_{1},H_{1})\in Y_{et}^{\dagger\sigma}(R)$
to $\left\langle p\right\rangle ^{-1}\Ver(\underline{B}^{(p)})\in S_{\mu}^{\dagger}(R).$

\medskip{}

Part (iv): By Lemma \ref{alpha_p}, and the arguments used before,
$\Fr(A^{(p)}[\Ver])$ is a finite flat rank-$p^{2}$ isotropic Raynaud
subgroup scheme of $A^{(p^{2})}[p],$ for any $A\in S_{\mu}^{\dagger}(R),$
for any $k$-algebra $R.$ Since $\rho_{et}$ is now defined functorially
in terms of the moduli problems, it is a well defined morphism. The
argument is identical to the one used for the proof of Part (v) of
Theorem \ref{S_0(p)-1}. 

Since the equality $\rho_{et}\circ\pi_{et}=Fr_{Y/k}^{2}$ has already
been established on $Y_{et}=Y_{et}(k)$, it extends by continuity
to $Y_{et}^{\dagger}.$ The relation $\rho_{et}\circ\overline{\pi}_{et}=Fr_{Y^{\sigma}/k}$
follows from $\rho_{et}\circ\pi_{et}=Fr_{Y/k}^{2}$ since $\pi_{et}=\overline{\pi}_{et}\circ Fr_{Y/k}$
and $Fr_{Y/k}$ is faithfully flat. The remaining assertions on $\rho_{et}$
also follow from this relation. This concludes the proof of Theorem
\ref{S_0(p)-gss}.

\subsubsection{A closer look at Example \ref{example}}

It is instructive to look again at the diagram
\[
\mathcal{\widehat{O}}_{S,x}\hookrightarrow\mathcal{\widehat{O}}_{S_{0}(p),y}
\]
at a point $y\in Z_{et}(k).$ We have found the local models $\mathcal{\widehat{O}}_{S,x}\simeq k[[r,s]]$
and $\mathcal{\widehat{O}}_{S_{0}(p),y}\simeq k[[a,b,c]]/(bc)$. The
map between the local models is
\[
r\mapsto0,\,\,\,s\mapsto b.
\]
This is far from the correct map between the completed local rings,
which should be injective. Let $\mathcal{\widehat{O}}_{\mathfrak{W}}$
and $\mathcal{\widehat{O}}_{\mathfrak{Z}}$ be the quotients of $\mathcal{\widehat{O}}_{\mathfrak{Y}}=\mathcal{\widehat{O}}_{S_{0}(p),y}$
which were introduced in §\ref{example}. The first is obtained by
modding out $(c)$, and is the analytic branch determined by the inclusion
$Y_{gss}\subset S_{0}(p)$. The second is obtained by modding out
$(b)$, and is the analytic branch determined by the inclusion $Y_{et}^{\dagger}\subset S_{0}(p)$.
\begin{claim}
The diagram $\mathcal{\widehat{O}}_{S,x}\to\mathcal{\widehat{O}}_{\mathfrak{W}}$
is isomorphic to the diagram
\[
k[[r,s]]\to k[[a,b]],\,\,\,s\mapsto b+a^{p},\,\,r\mapsto0,
\]
and the diagram $\mathcal{\widehat{O}}_{S,x}\to\mathcal{\widehat{O}}_{\mathfrak{Z}}$
is isomorphic to the diagram
\[
k[[r,s]]\hookrightarrow k[[a,c]],\,\,\,\,r\mapsto c^{p^{2}},\,\,s\mapsto a^{p}.
\]
\end{claim}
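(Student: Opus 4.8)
The plan is to treat the two analytic branches separately, in each case combining the first-order (cotangent) data already recorded in \ref{example} with the global structural results of Theorems \ref{S_0(p)-1} and \ref{S_0(p)-gss} and the Rudakov--Shafarevich normal form of Lemma \ref{Liedtke}. Throughout, the coordinates $a,b,c$ on $\mathfrak{Y}=\widehat{\mathcal{O}}_{S_{0}(p),y}\cong k[[a,b,c]]/(bc)$ are the ones fixed in \ref{example}, so that $\mathfrak{W}=\{c=0\}$ is the branch carrying $Y_{gss}$, $\mathfrak{Z}=\{b=0\}$ is the branch carrying $Y_{et}^{\dagger}$, and the curve $Z_{et}=Y_{gss}\cap Y_{et}^{\dagger}$ is the common axis $\{b=c=0\}$. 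The key conceptual point is that the asserted normal forms must be realized by coordinate changes preserving this incidence structure --- in particular preserving $Z_{et}=\{b=0\}$ inside $\mathfrak{W}$ --- and it is precisely this constraint that forbids absorbing the cross term $a^{p}$ into $b$.

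For the branch $\mathfrak{Z}$ I would exploit the factorization $\pi_{et}=\overline{\pi}_{et}\circ \Fr_{Y/k}$ of Theorem \ref{S_0(p)-gss}(iii), valid on all of $Y_{et}^{\dagger}$. Since $\overline{\pi}_{et}\colon Y_{et}^{\dagger\sigma}\to S_{\mu}^{\dagger}$ is finite flat and totally ramified of degree $p$ between smooth surfaces, Lemma \ref{Liedtke} furnishes coordinates $(u,v)$ at $y^{(p)}$ with $\overline{\pi}_{et}^{*}\colon r\mapsto u^{p},\ s\mapsto v$; the identification of the \emph{ramified} coordinate with $r$ rather than $s$ is forced because, by Theorem \ref{unramified direction} (extended over $S_{gss}$ in Theorem \ref{S_0(p)-gss}(iii)), the unramified direction is $TS^{+}$, and $TS^{+}$ is tangent to $S_{gss}=\{r=0\}$ by Theorem \ref{IntCurves}(i). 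Because $\overline{\pi}_{et}$ restricts to an isomorphism $Z_{et}^{\sigma}\overset{\sim}{\to}S_{gss}$, the curve $Z_{et}^{\sigma}$ is an integral curve of the unramified direction, hence is cut out by $u=0$; comparing this with $Z_{et}^{\sigma}=\{\widetilde c=0\}$, the Frobenius twist of $\{c=0\}$, I can choose $c$ so that $\Fr_{Y/k}^{*}(u)=c^{p}$ and $\Fr_{Y/k}^{*}(v)=a^{p}$. Applying $\Fr_{Y/k}^{*}$ to $\overline{\pi}_{et}^{*}$ then gives $r\mapsto (c^{p})^{p}=c^{p^{2}}$ and $s\mapsto a^{p}$, and the degree bookkeeping ($p^{2}\cdot p=p^{3}$ off $Z_{et}$, and degree $p$ along $Z_{et}=\{c=0\}$) confirms the count.

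For the branch $\mathfrak{W}$ the relevant map is the $\mathbb{P}^{1}$-bundle projection $\pi\colon Y_{gss}\to S_{gss}$ of Theorem \ref{S_0(p)-gss}(i), whose image is $S_{gss}=\{r=0\}$; this already yields $r\mapsto 0$. Smoothness of this projection together with the cotangent computation $ds\mapsto db$ from \ref{example} shows that $\pi^{*}(s)$ is a local coordinate on $\mathfrak{W}$ with linear part $b$, so $\pi^{*}(s)=b\cdot(\text{unit})+(\text{terms in }a)$. The decisive extra input is the restriction to $Z_{et}=\{b=0\}$: there $\pi$ agrees with $\pi_{et}|_{Z_{et}}$, which is purely inseparable of degree $p$ and factors through $\Fr_{Z/k}$ with $\overline{\pi}_{Z}$ an isomorphism (Theorem \ref{S_0(p)-gss}(iii)), so that $\pi^{*}(s)|_{b=0}=a^{p}$ once $a$ is taken as the coordinate matching $\overline{\pi}_{Z}$. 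Hence $\pi^{*}(s)=a^{p}+b\cdot(\text{unit})$, and a change of $b$ by a unit --- which preserves $\{b=0\}=Z_{et}$ and fixes $a$ --- normalizes it to $a^{p}+b$. It is exactly the inseparable tangency of $Z_{et}$ to the fibers of the bundle that produces the term $a^{p}$ and obstructs the naive form $s\mapsto b$.

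The main obstacle is the simultaneous compatibility of all these coordinate choices: the coordinate $a$ must agree on $\mathfrak{W}$, on $\mathfrak{Z}$, and on $Z_{et}$, while the Rudakov--Shafarevich coordinate $u$ on $Y_{et}^{\dagger\sigma}$ must be arranged to be exactly the Frobenius twist of $c$, not merely a unit multiple of it, so that the clean monomial $c^{p^{2}}$ emerges rather than $c^{p^{2}}\cdot(\text{unit})^{p}$; likewise the unit in $\pi^{*}(s)=a^{p}+b\cdot(\text{unit})$ on $\mathfrak{W}$ must be cleared without disturbing $Z_{et}$. Controlling these units by purely formal means is delicate, since $k[[a,c]]$ need not contain the requisite $p$-th roots. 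I expect the cleanest way to resolve the residual unit ambiguity is to return to the explicit deformation data underlying \ref{example} --- the subspaces $\omega,\omega'$ computed there --- which produce $a,b,c$ together with their relation to $r,s$ directly, thereby pinning down the units and confirming the stated normal forms rather than leaving them up to an unspecified isomorphism.
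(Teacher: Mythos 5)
Your proposal is correct and follows essentially the same route as the paper: for $\mathfrak{W}$ you combine $r\mapsto 0$ (image $=S_{gss}$), the cotangent datum $ds\mapsto db$, and pure inseparability of $Z_{et}\to S_{gss}$ to get $s\mapsto a^{p}+b\cdot(\text{unit})$ and then absorb the unit into $b$; for $\mathfrak{Z}$ you use the factorization $\pi_{et}=\overline{\pi}_{et}\circ Fr_{Y/k}$ together with the Rudakov--Shafarevich normal form and the fact that the unramified direction is $TS^{+}$, tangent to $S_{gss}$ --- exactly the paper's two arguments. Your closing worry about $p$-th roots of units is unnecessary: Lemma \ref{Liedtke} lets you choose coordinates on \emph{both} source and target at once, and since $\overline{\pi}_{et}\colon Z_{et}^{\sigma}\overset{\sim}{\to}S_{gss}$ is an isomorphism, the graph $u^{p}=h(v)$ of $S_{gss}$ in those coordinates must have $h$ a $p$-th power (otherwise the reduced preimage would map with degree $p$), so after replacing $u$ by $u-h^{1/p}(v)$ the exact monomial forms $r\mapsto u^{p}$, hence $r\mapsto c^{p^{2}}$, $s\mapsto a^{p}$, emerge with no root extraction.
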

This is more than could be deduced from the local models alone. 
\begin{proof}
After a change of variable we may assume that $r=0$ is the equation
of $S_{gss}$ in a formal neighborhood of $x$ on $S.$ Therefore
$r$ maps to $0$ in $\mathcal{\widehat{O}}_{\mathfrak{W}}$. The
local parameter $s$ projects (modulo $(r))$ to a local parameter
of the curve $S_{gss}.$ We already know that it should map to $b$
modulo $p$th powers. Since $b=c=0$ is the formal equation of the
curve $Z_{et}$ (the intersection of the two analytic branches) on
$\mathfrak{W},$ and since the map $Z_{et}\to S_{gss}$ is purely
inseparable of degree $p,$ we see that we may choose $a$ so that
$s\mod(b)=a^{p}.$ A last change of variables allows us to assume
that actually $s=b+a^{p}$.

The second diagram is treated similarly. Here the key point is to
recall that the map $\pi_{et}$ from $\mathfrak{Z}$ to $Spf(\widehat{\mathcal{O}}_{S,x})$
factors through $\Fr$. The resulting map $\overline{\pi}_{et}$ on
$\mathfrak{Z}^{\sigma}$ was shown to be of degree $p$ and unramified
in the direction of $S_{gss}$.
\end{proof}
Both diagrams are compatible with $\mathcal{\widehat{O}}_{S,x}\to\mathcal{\widehat{O}}_{\mathfrak{Y}}=\mathcal{\widehat{O}}_{S_{0}(p),y}$
being given by
\[
r\mapsto c^{p^{2}},\,\,\,s\mapsto b+a^{p}.
\]

\subsection{The ssp strata}

\subsubsection{The superspecial combs}

We now turn our attention to the superspecial strata of $S_{0}(p).$
Let $x\in S_{ssp}(k)$ and $Y_{x}=\pi^{-1}(x).$ We shall contend
ourselves with the determination of the \emph{reduced }scheme $Y_{x}^{red},$
of finite type over $k.$ The scheme theoretic pre-image of $x$ will
not be reduced along the component denoted below $F_{x}$, see the
discussion following the theorem.
\begin{thm}
\emph{\label{S_0(p)-ssp}(i) }$Y_{x}^{red}$ is the union of $p+2$
projective lines, arranged as follows. One irreducible component,
which we call $F_{x}$, intersects the remaining $p+1$ projective
lines transversally, each at a different point $\zeta\in F_{x}.$
With a natural choice of a coordinate on $F_{x},$ this $\zeta$ can
be taken to be\footnote{This is a non-trivial statement, as it has consequences for the cross
ratio of the intersection points, which is independent of the chosen
coordinate on the basis of the comb.} a $p+1$ root of $-1.$ These $p+1$ projective lines, which we label
as $G_{x}[\zeta]$, are disjoint from each other. 

A point $(\underline{A},H)\in Y_{x}(k)$ lies on $F_{x}$ if and only
if $H\simeq\kappa\otimes\alpha_{p}$. If this is the case, the invariant
$\gamma(\underline{A},H)=1$ if $(\underline{A},H)$ lies on a non-singular
point of $Y_{x}$, and is equal to $2$ if it lies at the intersection
of $F_{x}$ and some $G_{x}[\zeta]$ (i.e. if it is the point $\zeta$).
Finally, if $(\underline{A},H)$ lies on $G_{x}[\zeta]$ but not on
$F_{x},$ the group $H\simeq\mathfrak{G}[p]_{\Sigma}.$

\emph{(ii)} The \emph{closure $\overline{Y}_{\mu}$} of $Y_{\mu}=Y_{m}\cup Y_{et}$
in $S_{0}(p)$ intersects $Y_{x}^{red}$ in $F_{x}.$ 

\emph{(iii) }Let $W$ be the \emph{closure} of an irreducible component
of $Y_{gss}.$ Then $W$ is a $\mathbb{P}^{1}$-bundle over an irreducible
component $\mathscr{C}=\pi(W)$ of $S_{ss}$. If $x\in S_{ssp}$ and
$W_{x}=W\cap Y{}_{x}^{red}$ then $W_{x}$ is one of the $G_{x}[\zeta].$
Precisely one such $W$ passes through $G_{x}[\zeta]$ for a given
$x$ and $\zeta$. Thus the closures of the irreducible components
of $Y_{gss}$ do not intersect each other. 

\emph{(iv)} The closures of the curves $W\cap Z_{et}$ and $W\cap Z_{m}$
intersect $G_{x}[\zeta]$ at the point $\zeta=G_{x}[\zeta]\cap F_{x}.$
\end{thm}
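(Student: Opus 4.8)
The plan is to localize the entire question at the single point $y=G_{x}[\zeta]\cap F_{x}$ and to read off the answer from the normal-crossings local model computed in \S\ref{example2}, combined with the fact that the two curves in question are sections of the $\mathbb{P}^{1}$-bundle $W$.

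First I would identify the curves geometrically. By Theorem \ref{S_0(p)-gss}(ii),(iii) one has $Z_{m}=\overline{Y}_{m}\cap Y_{gss}$ and $Z_{et}=\overline{Y}_{et}\cap Y_{gss}$, both transverse. Since $W$ is the closure of one irreducible component of $Y_{gss}$, the curves $W\cap Z_{m}$ and $W\cap Z_{et}$ are the loci where $W$ meets $\overline{Y}_{m}$, resp. $\overline{Y}_{et}$, over $\mathscr{C}\cap S_{gss}$. By part (i) of the present theorem together with Theorem \ref{S_0(p)-gss}(i), each of $Z_{m},Z_{et}$ meets every fibre of $\pi$ over $S_{gss}$ in exactly one point, so $W\cap Z_{m}$ and $W\cap Z_{et}$ are sections of the $\mathbb{P}^{1}$-bundle $W\to\mathscr{C}$ over $\mathscr{C}\setminus S_{ssp}$. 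As $\mathscr{C}$ is smooth near $x$ and $W\to\mathscr{C}$ is proper, these sections extend uniquely over $x$; their closures therefore meet the special fibre $G_{x}[\zeta]=W_{x}$ (part (iii)) in exactly one point each. It remains only to show that this point is $\zeta=G_{x}[\zeta]\cap F_{x}$ in both cases.

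Second I would pin down that point using the local model. The point $y=G_{x}[\zeta]\cap F_{x}$ carries $H\simeq\kappa\otimes\alpha_{p}$ with $\zeta^{p+1}=-1$ and $\gamma=2$, which is exactly case I.1.a of \S\ref{example2}: the completed local ring is $\widehat{\mathcal{O}}_{S_{0}(p),y}\cong k[[r,a,b]]/(rab)$, a union of three smooth analytic branches crossing at $y$. By the discussion following Proposition \ref{local rings-1}, $y$ is a point at which all three surfaces $\overline{Y}_{m}$, $\overline{Y}_{et}$ and $W$ meet; these are precisely the three branches, and their pairwise intersections are the three reduced coordinate curves, each passing through the origin $y$. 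In particular the germs of $\overline{Y}_{m}\cap W$ and $\overline{Y}_{et}\cap W$ at $y$ are nonempty curve germs through $y$. Away from $S_{ssp}$ these intersections coincide with $W\cap Z_{m}$ and $W\cap Z_{et}$, which are dense in them; hence $y\in\overline{W\cap Z_{m}}$ and $y\in\overline{W\cap Z_{et}}$.

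Combining the two steps finishes the argument: $\overline{W\cap Z_{m}}$ is a section of $W\to\mathscr{C}$, so it meets the fibre $G_{x}[\zeta]$ in a single point, and since it passes through $y\in G_{x}[\zeta]$ that point is $y=\zeta$; the same applies to $\overline{W\cap Z_{et}}$. The main obstacle is not the final bookkeeping but matching the abstract branches $\{r=0\},\{a=0\},\{b=0\}$ of the local model to the geometric surfaces $\overline{Y}_{m},\overline{Y}_{et},W$ and to the curves $Z_{m},Z_{et}$; once the transversality assertions of Theorem \ref{S_0(p)-gss} and the identification of $y$ as the unique point of $G_{x}[\zeta]$ with $\zeta^{p+1}=-1$ are invoked, this matching is forced. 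Alternatively one could argue by a direct Dieudonn\'e-module degeneration, letting $\underline{A}$ approach $x$ along $\mathscr{C}$ and checking that the submodules $\langle e_{2},f_{1}\rangle$ (type $\alpha_{p^{2},\Sigma}$) and $\langle e_{2},f_{3}\rangle$ (type $\alpha_{p^{2},\Sigma}^{*}$) of Proposition \ref{Braid-1} both specialize to the $\kappa\otimes\alpha_{p}$-module attached to $\zeta$; this is the route that would require the explicit appendix computations, which the local-model approach avoids.
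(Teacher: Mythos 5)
Your proposal proves only part (iv) of the theorem, and it does so by explicitly invoking parts (i) and (iii) as inputs (``by part (i) of the present theorem\dots'', ``the special fibre $G_{x}[\zeta]=W_{x}$ (part (iii))''). The statement to be proved, however, comprises all four parts, and parts (i)--(iii) carry the real content and occupy almost all of the paper's proof: the Dieudonn\'e-module classification of the isotropic Raynaud subgroups $H\subset A[p]$ at a superspecial point (Case I: $H\simeq\kappa\otimes\alpha_{p}$ classified by $\zeta=(\alpha_{1}:\alpha_{2})\in\mathbb{P}^{1}$, with $\gamma=2$ exactly when $\zeta^{p+1}=-1$; Case II: the teeth, where the isotropy condition forces $\zeta^{p+1}=-1$ and $H\simeq\mathfrak{G}[p]_{\Sigma}$); the upgrade of this fiberwise analysis to a scheme-theoretic statement over arbitrary $k$-algebras via $\alpha_{p}(A_{R})\subset H\subset\beta_{p}(A_{R})$ and constancy/separability arguments; part (ii) via the closed conditions $\Fr(H)=0$ and $\Ver(H^{(p)})=0$; and the branch-counting from \S\ref{local rings} that yields part (iii). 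None of this appears in your write-up, so as a proof of the theorem it is circular and essentially incomplete.

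Restricted to part (iv) (granting (i)--(iii)), your route is viable but both heavier and slightly gappier than the paper's. The paper's proof of (iv) is one line of moduli-theoretic reasoning: $\Fr(H)=0$ is a closed condition holding throughout $Z_{m}$, hence on $\overline{Z}_{m}$, while every non-root point of a tooth carries $H\simeq\mathfrak{G}[p]_{\Sigma}$, which is not killed by $\Fr$; dually for $Z_{et}$ with $\Ver(H^{(p)})=0$. No local models or sections are needed. Your version requires two repairs. First, $\mathscr{C}$ need not be smooth at $x$ when $N$ is small (irreducible components of $S_{ss}$ can self-intersect at superspecial points), so ``the sections extend uniquely over $x$'' should be run over the normalization of $\mathscr{C}$, one branch at a time. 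Second, your density claim --- that $W\cap Z_{m}$ is dense in the curve germ of $\overline{Y}_{m}\cap W$ at $y$ --- silently assumes this germ is not \emph{vertical}, i.e.\ not contained in the fiber $Y_{x}$; since $G_{x}[\zeta]$ passes through $y$ inside $W$, one must rule out $G_{x}[\zeta]\subset\overline{Y}_{m}$, and the only apparent way to do so is the observation that non-root points of the tooth have $\Fr(H)\neq0$ --- which is precisely the paper's argument, rendering the local-model detour through \S\ref{example2} superfluous. Your closing alternative (degenerating the modules $\left\langle e_{2},f_{1}\right\rangle$ and $\left\langle e_{2},f_{3}\right\rangle$ of Proposition \ref{Braid-1} toward the superspecial point) is indeed the route the paper's appendix makes precise, where the screen coordinate $(\overline{u}:\overline{v})$ exhibits $Z_{et}$ at $\overline{u}=0$ and $Z_{m}$ at $\overline{v}=0$.
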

\begin{figure}

\caption{\label{figure ssp}The fiber of $S_{0}(p)$ above a superspecial point}

$$ \xymatrix{ & ^{\mathbb{P}^1}\ar@{-}[ddd]^(.3){\mathfrak{G}[p]}&  & ^{\mathbb{P}^1}\ar@{-}[ddd]^(.3){\mathfrak{G}[p]}&^{\mathbb{P}^1}\ar@{-}[ddd]^(.3){\mathfrak{G}[p]} & \\ & &\dots & & & \\ \ar@{-}[rrrrr]^(.35){\kappa\otimes\alpha_p}&\bullet & & \bullet&\bullet & ^{\mathbb{P}^1}\\ & & ^{\zeta}\ar@/^/@{..>}[ru] \ar@/^/@{..>}[rru] \ar@/_/@{..>}[lu] & & &  \\ } $$

\end{figure}

See Figures \ref{Figure 1}, \ref{figure ssp}. We refer to the irreducible
components $W$ of the closure of $Y_{gss}$ as the \emph{supersingular
(ss) screens}. We refer to the $Y_{x}$ for $x$ superspecial as the
\emph{superspecial (ssp) combs. }The component $F_{x},$ which we
draw horizontally, is called the \emph{base} of the comb, and the
vertical components $G_{x}[\zeta]$ are called its \emph{teeth}. The
points $\zeta$ are called the \emph{roots} of the teeth.
\begin{proof}
(i) Let $A=\mathcal{A}_{x}.$ We first analyze what happens on the
level of Dieudonné modules. Fix a model of $\mathfrak{G}_{\Sigma}$
over $k,$ let $\mathfrak{G}_{\overline{\Sigma}}=\mathfrak{G}_{\Sigma}^{\sigma}$
and fix the polarization
\[
\lambda:\mathfrak{G}[p]_{\Sigma}\overset{\sim}{\to}\mathfrak{G}[p]_{\Sigma}^{D}=\mathfrak{G}[p]_{\overline{\Sigma}}
\]
so that the resulting pairing on $\mathfrak{G}[p]_{\Sigma}$, $(x,y)\mapsto\left\langle x,\lambda(y)\right\rangle $
is alternating. The group scheme $A[p]$ is isomorphic to
\[
\mathfrak{G}[p]_{\Sigma}^{2}\times\mathfrak{G}[p]_{\overline{\Sigma}},
\]
so that the polarization induced on it by $\phi_{x}$ is the product
$\lambda^{2}\times\lambda^{\sigma}$ of the polarizations of the three
factors. Consequently \cite{Bu-We}, the polarized Dieudonné module
$\text{\ensuremath{M=}}M(A[p])$ is given by $M=\left\langle e_{1},e_{2},e_{3},f_{1},f_{2},f_{3}\right\rangle _{k}$,
where the endomorphisms act on the $e_{i}$ via $\Sigma$ and on the
$f_{i}$ via $\overline{\Sigma},$ where $\left\langle e_{i},f_{j}\right\rangle =\delta_{ij},$
$\left\langle e_{i},e_{j}\right\rangle =\left\langle f_{i},f_{j}\right\rangle =0$
and where the action of $F$ and $V$ is given by the table

\medskip{}
\begin{center}

\begin{tabular}{|c|c|c|c|c|c|c|}
\hline 
 & $e_{1}$ & $e_{2}$ & $e_{3}$ & $f_{1}$ & $f_{2}$ & $f_{3}$\tabularnewline
\hline 
\hline 
$F$ & $0$ & $0$ & $-f_{3}$ & $e_{1}$ & $e_{2}$ & $0$\tabularnewline
$V$ & $0$ & $0$ & $f_{3}$ & $-e_{1}$ & $-e_{2}$ & $0$\tabularnewline
\hline 
\end{tabular}\end{center} 

\medskip{}
By this we mean $Fe_{3}^{(p)}=-f_{3},\,\,\,Ve_{3}=f_{3}^{(p)}$ etc. 

Let $H\subset A[p]$ be as in $(S_{0}(p))$. Since $M(H)$ is balanced
we may write
\[
M(H)=\left\langle \alpha_{1}e_{1}+\alpha_{2}e_{2}+\alpha_{3}e_{3},\,\,\beta_{1}f_{1}+\beta_{2}f_{2}+\beta_{3}f_{3}\right\rangle .
\]
The conditions that have to be satisfied are $V(M(H))\subset M(H)^{(p)},\,\,\,F(M(H)^{(p)})\subset M(H),$
and the isotropy condition
\[
\alpha_{1}\beta_{1}+\alpha_{2}\beta_{2}+\alpha_{3}\beta_{3}=0.
\]
Observe that $M(H)$ contains $\beta_{1}^{p}e_{1}+\beta_{2}^{p}e_{2}.$
If $\alpha_{3}\neq0$ this forces $\beta_{1}=\beta_{2}=0,$ and then
the isotropy condition gives also $\beta_{3}=0,$ an absurd. Therefore
$\alpha_{3}=0.$ We distinguish two cases.

\emph{Case I} (the base of the comb): $\beta_{1}=\beta_{2}=0.$ This
case is characterized by the fact that $M(H)$ is killed by both $V$
and $F,$ so that $H\simeq\kappa\otimes\alpha_{p}.$ We may take $\beta_{3}=1$
and $H$ is classified by
\[
\zeta=(\alpha_{1}:\alpha_{2})\in\mathbb{P}^{1}(k).
\]
Consider in this case the group $H^{\perp}/H.$ Its Dieudonné module
is given by
\[
M(H^{\perp}/H)=\left\langle e_{1},e_{2},-\alpha_{2}f_{1}+\alpha_{1}f_{2},f_{3}\right\rangle \text{\,\,mod\,\,}\left\langle \alpha_{1}e_{1}+\alpha_{2}e_{2},f_{3}\right\rangle .
\]
An easy check shows that $H^{\perp}/H$ is of type $\mathfrak{G}[p]_{\Sigma}$,
unless $\zeta^{p+1}=-1,$ where it is of type $\kappa\otimes\alpha_{p}.$
The invariant $\gamma(\underline{A},H)=\dim_{k}\Lie(H^{\perp}/H)$
is thus 1 in the former case, and $2$ in the latter.

\emph{Case II} (the teeth of the comb): $(\beta_{1}:\beta_{2})\in\mathbb{P}^{1}(k).$
Then, $\zeta=(\alpha_{1}:\alpha_{2})=(\beta_{1}^{p}:\beta_{2}^{p})$
and the isotropy condition forces
\[
\alpha_{1}^{p+1}+\alpha_{2}^{p+1}=0,
\]
i.e. $\zeta^{p+1}=-1.$ Fix $\zeta,$ hence the point $(\beta_{1}:\beta_{2}).$
The $H$ in question are classified by $\beta_{3}\in\mathbb{A}^{1}(k).$
Their $M(H)$ is killed by $V^{2}$ and $F^{2}$ but neither by $V$
nor by $F,$ so $H$ must be isomorphic to $\mathfrak{G}[p]_{\Sigma}.$
We observe that when $\beta_{3}=\infty,$ i.e. $(\beta_{1}:\beta_{2}:\beta_{3})=(0:0:1)$
we are back in Case I. This is the root of the tooth.

\medskip{}

This analysis strongly \emph{suggests} the picture outlined in Part
(i), but does not quite \emph{prove }it. To give a rigorous proof
we proceed as follows. The fiber $Y_{x}$ represents the relative
moduli problem assigning to any $k$-algebra $R$ the set of subgroup
schemes $H\subset A_{R}[p]$ of type $(S_{0}(p))$. Observe that since
$A_{R}$ is constant, both $\Fr$ and $\Ver$ are defined on it, by
base change from $A.$ We let $\alpha_{p}(A_{R})=A_{R}[\Fr]\cap A_{R}[\Ver]$
and
\[
\alpha_{p}(H)=H\cap\alpha_{p}(A_{R}).
\]

\emph{Case I}. Consider first the closed locus $F_{x}\subset Y{}_{x}^{red}$
defined by
\[
\Fr(H)=0,\,\,\,\,\Ver(H)=0.
\]
Over $F_{x}$ we have $\alpha_{p}(H)=H.$ Indeed, since $F_{x}$ is
a reduced curve it is enough to check the inclusion $H\subset\alpha_{p}(A_{R})$
at geometric points, where it follows from the analysis of their Dieudonné
modules as above. However, $\alpha_{p}(A_{R})=\alpha_{p,\Sigma}^{2}\times\alpha_{p,\overline{\Sigma}},$
so the problem becomes that of classifying $\mathcal{O}_{E}$-subgroup
schemes of type $\kappa\otimes\alpha_{p}=\alpha_{p,\Sigma}\times\alpha_{p,\overline{\Sigma}}$
in it. As the factor of type $\alpha_{p,\overline{\Sigma}}$ is unique,
this is the same as classifying subgroup schemes of rank $p$ in $\alpha_{p,\Sigma}^{2},$
a problem that is represented by $\mathbb{P}_{/k}^{1}.$ This gives
us the base of the comb, whose $k$-points are described in terms
of their Dieudonné submodules as before.

\emph{Case II}. Let $G_{x}$ be the open curve which is the complement
of $F_{x}$ in $Y_{x}^{red}.$ Over $G_{x},$ the group $\alpha_{p}(H)$
is of rank $p.$ Observe that $H\cap\mathfrak{G}[p]_{\Sigma}^{2}$
is non-zero, because otherwise, via projection to the third factor,
$H$ would be of type $\mathfrak{G}[p]_{\overline{\Sigma}},$ which
is forbidden. It follows that $\alpha_{p}(H\cap\mathfrak{G}[p]_{\Sigma}^{2})$
is also non-zero, so must coincide with $\alpha_{p}(H).$ The $\alpha_{p}\subset\mathfrak{G}[p]_{\Sigma}^{2}$
were classified before by $\mathbb{P}_{/k}^{1}.$ Our $\alpha_{p}(H)$
is therefore classified by $\zeta=(\alpha_{1}:\alpha_{2})\in\mathbb{P}^{1}(R).$
The Dieudonné module computation above shows that $\zeta$ restricts,
at every geometric point, to a $p+1$ root of $-1$. However, the
equation $X^{p+1}+1=0$ is separable, so if $R$ is a local ring in
characteristic $p$ and $\zeta\in R$ satisfies this equation modulo
$\mathfrak{m}_{R}$, it satisfies it in $R$. This means that $\alpha_{p}(H)$
is locally constant over $Spec(R)$. There remains the classification
of $H/\alpha_{p}(H),$ which sits in general ``diagonally'' in $(\mathfrak{G}[p]_{\Sigma}^{2}/\alpha_{p}(H))\times\mathfrak{G}[p]_{\overline{\Sigma}}.$
The same argument that was used to show that $\alpha_{p}(H)$ is constant,
shows now that the projection $K$ of $H/\alpha_{p}(H)$ to $(\mathfrak{G}[p]_{\Sigma}^{2}/\alpha_{p}(H))$
is constant, and in fact is given by the point $(\beta_{1}:\beta_{2})=(\alpha_{1}^{p}:\alpha_{2}^{p})\in\mathbb{P}^{1}(R).$
The classification of $H/\alpha_{p}(H)$ is therefore the same as
the classification of all the $R$-morphisms of this fixed $K$ to
$\alpha_{p}(\mathfrak{G}[p]_{\overline{\Sigma}}).$ This moduli problem,
of classifying morphisms from a fixed copy of $\alpha_{p}$ to another,
is represented by $\mathbb{A}_{/k}^{1}.$This gives the tooth of the
comb labeled $G_{x}[\zeta]$.

The two cases (I) and (II) cover $Y_{x}^{red}.$ It remains to remark
that the intersection of the closure of $G_{x}[\zeta]$ with $F_{x}$
is transversal. This follows, as usual, from §\ref{local rings}.

\medskip{}

(ii) The condition $\Fr(H)=0$ is a closed condition and holds throughout
$Y_{m}.$ It therefore holds also in the intersection of its closure
$\overline{Y}_{m}$ with $Y_{x}.$ As this condition is not satisfied
on the teeth of the comb (outside their roots), the closure $\overline{Y}_{m}$
intersects $Y_{x}$ in $F_{x}.$ The same argument, applied to the
condition $\Ver(H^{(p)})=0$ proves that the closure $\overline{Y}_{et}$
of $Y_{et}$ also intersects $Y_{x}$ in $F_{x}.$ As we have previously
shown that $Y_{m}^{\dagger}$ and $Y_{et}^{\dagger}$ are disjoint,
$\overline{Y}_{m}$ and $\overline{Y}_{et}$ intersect only in the
superspecial locus, and their intersection is the union of the $F_{x}$
for $x\in S_{ssp}.$ This intersection is transversal, as follows
from the description of the completed local rings in §\ref{local rings}.

\medskip{}

(iii) The classification of the completed local rings of $S_{0}(p)$
shows that through a point $\zeta\in F_{x}$ which is not a root of
a tooth (i.e. $\zeta^{p+1}\neq-1)$ pass only $2$ analytic branches.
As $\overline{Y}_{et}$ and $\overline{Y}_{m}$ already account for
these two analytic branches, the closure $W$ of a connected component
of $Y_{gss}$ can only meet $Y_{x}$ in one of the lines $G_{x}[\zeta]$$.$
Since the points of $G_{x}[\zeta]$ are generically non-singular on
$S_{0}(p)$, exactly one such $W$ passes through every $G_{x}[\zeta]$.
These $W$ are non-singular surfaces projecting to a component $\mathscr{C}$
of $S_{ss}$ and the fiber above each geometric point (including now
the superspecial points) is $\mathbb{P}^{1}.$ By the Noether-Enriques
theorem quoted before, they are $\mathbb{P}^{1}$-bundles.

\medskip{}

(iv) The condition $\Fr(H)=0$ is a closed condition and holds throughout
$Z_{m}.$ It therefore holds also on its closure. It follows that
this closure intersects a tooth $G_{x}[\zeta]$ at its root, because
points other than the root support an $H$ of type $\mathfrak{G}[p]_{\Sigma}$
which is not killed by $\Fr.$ A similar argument invoking the condition
$\Ver(H^{(p)})=0$ proves that the closure of $Z_{et}$ also meets
the teeth of the combs in their roots. The two curves $Z_{et}$ and
$Z_{m}$, which are disjoint over the gss locus, intersect over every
superspecial point.

This concludes the proof of the theorem.
\end{proof}

\subsubsection{The maps to $S^{\#}$}

Recall the construction of the blow-up $S^{\#}$ of $S$ at the ssp
points, given in §\ref{blow-up}. The exceptional divisor $E_{x}$
at $x=[\underline{A}]\in S_{ssp}(k)$ classifies lines in $\mathcal{P}=\omega_{A/k}(\Sigma).$

The isomorphism $\pi_{m}:Y_{m}^{\dagger}\simeq S_{\mu}^{\dagger}$
extends to an isomorphism
\[
\pi_{m}^{\#}:\overline{Y}_{m}\simeq S^{\#}.
\]
In terms of the moduli problems, it sends $(\underline{A},H)\in\overline{Y}_{m}(R)$
to $(\underline{A},\ker(\omega_{A/R}(\Sigma)\to\omega_{H/R}(\Sigma)).$
If $R=k,$ $A$ is $\mu$-ordinary and $H=A[p]^{m}$ then
\[
\ker(\mathcal{P}=\omega_{A/k}(\Sigma)\to\omega_{H/k}(\Sigma))=\mathcal{P}_{0}=\mathcal{P}[V]
\]
is uniquely determined by $A$. The same holds if $A$ is gss and
$H=Fil^{2}A[p].$ On the other hand if $x=[\underline{A}]$ is ssp
then $\mathcal{P}[V]$ is the whole of $\mathcal{P}$ and $H$ ``selects''
a line in it. This establishes an isomorphism
\[
F_{x}\simeq E_{x}.
\]

From the universal property of blow-ups, the projection $\pi_{et}:\overline{Y}_{et}\to S$
also factors through a map 
\[
\pi_{et}^{\#}:\overline{Y}_{et}\to S^{\#}
\]
mapping $F_{x}$ to $E_{x}.$ This map is now proper and quasi-finite,
hence finite. The two surfaces are non-singular, so the map is also
flat. Its degree is $p^{3}.$ We have seen that on the open dense
$Y_{et}^{\dagger}$ it factors through $Fr_{Y/k},$ i.e.
\[
\pi_{et}=\overline{\pi}_{et}\circ Fr_{Y/k}
\]
and this forces the map $\pi_{et}^{\#}$ to factor in the same way
$\pi_{et}^{\#}=\overline{\pi}_{et}^{\#}\circ Fr_{Y/k}$ over the whole
of $\overline{Y}_{et}.$ The map $\overline{\pi}_{et}^{\#}$ is finite
flat totally ramified of degree $p,$ and it can be shown that it
is ramified of degree $p$ along the lines $F_{x}^{\sigma}.$ Thus
$\pi_{et}^{\#}$ is ramified of degree $p^{2}$ along $F_{x}$ (and
of an extra degree $p$ in a normal direction).

We emphasize that $\pi_{m}^{\#}$ and $\pi_{et}^{\#}$ do not agree
on $F_{x}.$ Instead, the following diagram extends the one from Corollary
\ref{Theta}.

\[
\begin{array}{ccccccc}
F_{x} &  & \stackrel[\sim]{\theta}{\longrightarrow} &  & F_{x}\\
|\wr & \searrow &  &  & \vdots & \searrow\\
{\scriptstyle \pi_{m}^{\#}} &  & \overline{Y}_{m} &  & \overset{\theta}{\longrightarrow} &  & \overline{Y}_{et}\\
\downarrow &  & |\wr &  & \downarrow &  & |\\
E_{x} & \cdots & {\scriptstyle \pi_{m}^{\#}} & \dashrightarrow & E_{x} &  & {\scriptstyle \pi_{et}^{\#}}\\
 & \searrow\,\,\, & \downarrow &  &  & \searrow & \downarrow\\
 &  & S^{\#} &  & \overset{Fr_{S/k}^{2}}{\longrightarrow} &  & S^{\#}
\end{array}
\]
The degrees of the maps in the front square (on surfaces) are $p^{3}\times p=p^{4}\times1.$
In the back square (on projective lines) they are $p^{2}\times1=p^{2}\times1.$

\subsubsection{\label{embeddedcurvesintersection}How embedded modular curves meet
$F_{x}$}

Let $X$ be the special fiber of the modular curve $\mathscr{X}$
which was constructed on $\mathscr{S}$ in §\ref{modular curve}.
Consider the modular curve $\mathscr{X}_{0}(p)$ parametrizing, in
addition to the triple $\underline{B}=(B,\nu,M)$, also a finite flat
subgroup scheme $H_{B}\subset B[p]$ of rank $p.$ Enhance the map
$\mathscr{Z}_{0}\times_{\underline{\text{Isom}}(\mathbb{Z}/N\mathbb{Z},\mu_{N})}\mathscr{X}\to\mathscr{S}$
to a map
\[
\mathscr{Z}_{0}\times_{\underline{\text{Isom}}(\mathbb{Z}/N\mathbb{Z},\mu_{N})}\mathscr{X}_{0}(p)\to\mathscr{S}_{0}(p),\,\,\,\,\,\,\,(\underline{B}_{0},\underline{B},H_{B})\mapsto(\underline{A},H)
\]
by setting $H$ to be the image of $\mathcal{O}_{E}\otimes H_{B}$
in $A(\underline{B}_{0},\underline{B})$. Note that since $H_{B}$
is automatically isotropic, and the polarization on $A$ is induced
from the polarizations of $B$ and $B_{0},$ this $H$ is isotropic.
It is also clearly Raynaud.
\begin{prop}
Let $X_{0}(p)$ be the special fiber of $\mathscr{X}_{0}(p).$ Let
$x\in S_{ssp}(k).$ Then under the above morphism $X_{0}(p)$ meets
the component $F_{x}\subset Y_{x}$ in a point $\zeta$ satisfying
\[
\zeta\in\kappa,\,\,\,\,\,\zeta^{p+1}\neq-1.
\]
\end{prop}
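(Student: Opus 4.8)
The plan is to single out the unique point of $X_{0}(p)$ lying over the superspecial point, to identify the subgroup $H$ it carries, and then to extract both assertions about its coordinate $\zeta$ from the description of the superspecial comb in Theorem \ref{S_0(p)-ssp}. First I would locate the point. By the corollary to the stratification theorem (\S\ref{CM points}), $\underline{A}(\underline{B}_{0},\underline{B})$ is superspecial precisely when $B$ is supersingular; for such $B$ the $a$-number is $1$, so $B[p]$ has a unique order-$p$ subgroup scheme, namely $H_{B}=\ker(\Fr_{B})=B[\Fr]\cong\alpha_{p}$. Hence $X_{0}(p)\to X$ is one-to-one over this point, and the corresponding point of $S_{0}(p)$ carries $H=$ the image of $\mathcal{O}_{E}\otimes H_{B}$ in $A$. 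Since $H_{B}\cong\alpha_{p}$ and the prime-to-$p$ isogeny $\delta\otimes M$ leaves the $p$-part untouched, $H\cong\mathcal{O}_{E}\otimes\alpha_{p}=\kappa\otimes\alpha_{p}$. By the Raynaud classification (\S\ref{Raynaud classification}) this is exactly the type occurring on the base $F_{x}$ (Case I of Theorem \ref{S_0(p)-ssp}(i)), so the image point lies on $F_{x}$, as claimed.

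For the inequality $\zeta^{p+1}\neq-1$ I would compute the invariant $\gamma=\dim_{k}\Lie(H^{\perp}/H)$ and appeal to the dichotomy of Theorem \ref{S_0(p)-ssp}(i): $\gamma=1$ at the nonsingular points of $F_{x}$ (those with $\zeta^{p+1}\neq-1$), and $\gamma=2$ at the roots of the teeth (those with $\zeta^{p+1}=-1$). Here the product structure is decisive. Because $H\subset A_{1}[p]$ lies inside the factor $A_{1}$, and $H=\mathcal{O}_{E}\otimes H_{B}$ is maximal isotropic in $A_{1}[p]$ (it is induced by the Lagrangian $H_{B}\subset B[p]$ under the principal polarization descended to $A_{1}$), one has $H^{\perp_{A_{1}}}=H$. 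Using the product polarization on $A=B_{0}\times A_{1}$ the annihilator of $H=0\times H$ is $B_{0}[p]\times H$, so
\[
H^{\perp}/H\cong B_{0}[p].
\]
As $B_{0}$ is a supersingular elliptic curve with CM of type $\Sigma$, $B_{0}[p]\cong\mathfrak{G}[p]_{\Sigma}$, whence $\gamma=\dim_{k}\Lie(\mathfrak{G}[p]_{\Sigma})=1$ and therefore $\zeta^{p+1}\neq-1$.

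For $\zeta\in\kappa$ I would use the identification $F_{x}\simeq E_{x}=\mathbb{P}(\mathcal{P}|_{x})$, under which the coordinate of the point is the line $\mathcal{P}_{0}=\ker(\omega_{A}(\Sigma)\to\omega_{H}(\Sigma))$. The decomposition $A=B_{0}\times A_{1}$ gives $\mathcal{P}=\omega_{B_{0}}\oplus\omega_{A_{1}}(\Sigma)$; since $H\subset A_{1}$ the summand $\omega_{B_{0}}$ dies in $\omega_{H}(\Sigma)$, while $\omega_{A_{1}}(\Sigma)\to\omega_{H}(\Sigma)$ is an isomorphism of lines, so $\mathcal{P}_{0}=\omega_{B_{0}}$. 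Now $B_{0}$, its $\mathcal{O}_{E}$-action, and the superspecial point $x$ are all defined over $\kappa$, and by \S\ref{blow-up} the exceptional line $E_{x}$ carries a canonical $\kappa$-structure independent of the tame level; hence $\mathcal{P}_{0}=\omega_{B_{0}}$ is a $\kappa$-rational point of $E_{x}\simeq F_{x}$, that is $\zeta\in\kappa$.

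The main obstacle will be the bookkeeping in the last two steps: tracing the principal polarization through the descent $A_{1}=\mathcal{O}_{E}\otimes B/(\delta\otimes M)$ to confirm that $H$ is Lagrangian in $A_{1}[p]$ and that the product polarization yields $H^{\perp}/H\cong B_{0}[p]$, and matching the intrinsic $\zeta$-coordinate of Theorem \ref{S_0(p)-ssp} (defined through a Dieudonn\'e basis $e_{1},e_{2},\dots$) with the cotangent line $\mathcal{P}_{0}=\omega_{B_{0}}$, so that the $\kappa$-rationality statement genuinely concerns that coordinate. One can cross-check the computation on Dieudonn\'e modules: writing $M(A[p])(\Sigma)=\langle e_{1},e_{2},e_{3}\rangle$ with $B_{0}[p]=\langle e_{1},f_{1}\rangle$, the equality $M(H)(\Sigma)=\Lie(A_{1})(\Sigma)$ (using $M(H_{B})=\Lie(B)$ for supersingular $B$) places $\zeta$ at the $\kappa$-rational line complementary to $\Lie(B_{0})(\Sigma)$, consistent with $\mathcal{P}_{0}=\omega_{B_{0}}$.
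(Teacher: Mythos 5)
Your first two steps are correct and reproduce the paper's own argument for the inequality: the paper likewise observes that $H=\mathcal{O}_{E}\otimes H_{B}$ is maximal isotropic in $A_{1}[p]$, so its annihilator in $A[p]=A_{1}[p]\times B_{0}[p]$ is $H\times B_{0}[p]$, whence $H^{\perp}/H\simeq B_{0}[p]\simeq\mathfrak{G}[p]_{\Sigma}$, $\gamma=1$, and therefore $\zeta^{p+1}\neq-1$ by the dichotomy of Theorem \ref{S_0(p)-ssp}(i). The identification of the image point as lying on $F_{x}$ (via $H\simeq\kappa\otimes\alpha_{p}$) is also fine.

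For $\zeta\in\kappa$ you take a genuinely different route from the paper, and it has a gap. The paper characterizes the $\kappa$-rational points of $F_{x}$ intrinsically: they are exactly those where $A'=A/H$ is superspecial (this is the Dieudonn\'e computation in Case I of the proof of Theorem \ref{Stildestratification}: $\alpha_{p}(A')$ has rank $p^{3}$ if and only if $\zeta\in\mathbb{F}_{p^{2}}$), and then notes that for the modular-curve point $A'\cong B_{0}\times(A_{1}/H)$ is superspecial because $B$ is supersingular; this criterion is insensitive to any rational structure. Your route passes instead through $F_{x}\simeq E_{x}$ and the claim that $\mathcal{P}_{0}=\omega_{B_{0}}$ is a $\kappa$-rational line. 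The computation $\mathcal{P}_{0}=\omega_{B_{0}}$ is correct, but the rationality claim is not justified by what you invoke: the point $x$ is in general \emph{not} defined over $\kappa$ (as \S\ref{blow-up} itself notes, level-$N$ structure pushes superspecial points to larger fields), and neither $B$, nor the cyclic subgroup $M$, nor consequently $A_{1}$ and the splitting $A=B_{0}\times A_{1}$, are a priori defined over $\kappa$. The canonical $\kappa$-structure on $E_{x}=\mathbb{P}(\mathcal{P}|_{x})$ comes from a $\kappa$-model of $(A,\phi,\iota)$ alone, and knowing that $B_{0}$ has a $\kappa$-model does not place the line $\omega_{B_{0}}$ at a $\kappa$-point of $E_{x}$: one needs the idempotent of $\End(A)$ cutting out $B_{0}$ to descend to that model. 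This is true, but it requires Tate's theorem \cite{Ta} that all endomorphisms of a superspecial abelian variety are defined over $\mathbb{F}_{p^{2}}$ (for the model with $Fr_{p^{2}}=p$), which is exactly the input the paper uses later in the lemma proving $Fr_{p^{2}}(Z)=\left\langle p\right\rangle (Z)$; one also needs the compatibility of the Dieudonn\'e-basis coordinate $\zeta$ with the canonical $\kappa$-structure, which holds because the superspecial basis $e_{1},\dots,f_{3}$ may be chosen $W(\kappa)$-rationally (as in the appendix). With these two supplements your argument closes; as written, the crucial Frobenius-invariance of the line $\omega_{B_{0}}$ is asserted rather than proved, and your concluding ``cross-check'' relies on the same unproven $\kappa$-rationality of the splitting.
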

Thus both the supersingular screens on $S_{0}(p)$ and the modular
curves cross the superspecial strata $F_{x}$ at $\mathbb{F}_{p^{2}}$-rational
points, but while the supersingular screens cross at a $\zeta$ satisfying
$\zeta^{p+1}=-1,$ the modular curves cross at the remaining ones.
\begin{proof}
As we shall see in the next chapter, the $\kappa$-rational $\zeta\in F_{x}$
are characterized by the fact that $A'=A/H$ is superspecial. At other
points of $F_{x}$ this $A'$ is supersingular of $a$-number 2, but
not superspecial. For the pair $(A,H)$ that is constructed from the
``elliptic curve data'' on $X_{0}(p),$ it is easily seen that $A'$
is either $\mu$-ordinary or superspecial, depending on whether $B$
is ordinary or supersingular.

Among these $\kappa$-rational points the points with $\zeta^{p+1}=-1$
are characterized by $\gamma(\underline{A},H)=2,$ i.e. the group
$H^{\perp}/H=\ker(\psi)$ being isomorphic to $\kappa\otimes\alpha_{p}.$
All the rest have $\gamma=1.$ In our case, $H=\mathcal{O}_{E}\otimes H_{B}$
is maximal isotropic in $A_{1}(\underline{B})[p],$ so its annihilator
in $A[p]=A_{1}[p]\times B_{0}[p]$ is $H\times B_{0}[p].$ it follows
that
\[
H^{\perp}/H\simeq B_{0}[p]\simeq\mathfrak{G}[p]_{\Sigma}
\]
and $\gamma=1.$
\end{proof}

\section{The structure of $\widetilde{S}$}

\subsection{The global structure of $\widetilde{S}$}

The moduli space $\widetilde{\mathscr{S}}$ was defined in Section
\ref{moduliStilde}. Typically, moduli spaces involving parahoric
level structure are ``complicated'', and may involve issues such
as non-reduced components, complicated singularities etc. It is interesting,
and important for our further applications, that $\widetilde{\mathscr{S}}$
turns out to be quite simple. In essence, its special fiber is a collection
of smooth surfaces intersecting transversally at a reduced non-singular
curve.

\subsubsection{Flatness of $\widetilde{\pi}$}

The following proposition stands in sharp contrast to the non-flatness
of $\pi.$ It is also key to understanding the geometry of the surface
\[
\mathscr{T}=\mathscr{S}_{0}(p)\times_{\mathscr{\widetilde{S}}}\mathscr{S}_{0}(p).
\]
This surface, which is generically of degree $(p+1)(p^{3}+1)$ over
the Picard modular surface $\mathscr{S}$, ``is'' the geometrization
of the Hecke operator $T_{p}.$ We intend to study it in a future
work.
\begin{prop}
The morphism $\widetilde{\pi}:\mathscr{S}_{0}(p)\to\mathscr{\widetilde{S}}$
is finite flat of degree $p+1$.
\end{prop}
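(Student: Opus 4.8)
The plan is to establish the proposition in the standard four moves: properness, quasi-finiteness (which together give finiteness), flatness, and the computation of the degree. Properness and surjectivity of $\widetilde{\pi}$ are already in hand from Proposition \ref{flat and proper}, and a proper quasi-finite morphism of Noetherian schemes is finite, so the first genuine task is to show that $\widetilde{\pi}$ has finite fibres. Once finiteness is known, flatness is automatic: both $\mathscr{S}_{0}(p)$ and $\widetilde{\mathscr{S}}$ are regular of dimension $3$ (Proposition \ref{flat and proper}), and a finite surjective morphism between regular schemes of the same dimension is flat by \cite{Eis} 18.17 --- exactly the argument already used for $\pi$ on the ordinary locus in the proof of Theorem \ref{S_0(p)-1}(ii). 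Finally, a finite flat morphism has locally constant fibre degree; since $\widetilde{\mathscr{S}}$ is flat over $\mathcal{O}_{p}$ with dense generic fibre (again via \cite{Goertz}, as in Proposition \ref{flat and proper}) and the degree there is $p+1$ by Proposition \ref{maps}, every connected component meets the generic fibre and so the degree is $p+1$ everywhere.

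The crux is therefore quasi-finiteness along the special fibre, and here I would argue directly on the moduli problem, using the interpretation $(\widetilde{S}_{0}(p))$ of $\mathscr{S}_{0}(p)$ from Proposition \ref{J}, under which $\widetilde{\pi}$ is simply ``forget $J$''. Fix a geometric point $\widetilde{x}=[\underline{A}']$ of $\widetilde{\mathscr{S}}$ over $k$; its fibre classifies the subgroup schemes $J$ with $\ker(\psi)\subset J\subset A'[p]$ of rank $p^{4}$, with $J/\ker(\psi)$ Raynaud, maximal isotropic for $e_{p\psi}$. Passing to covariant Dieudonn\'e modules, set $M=M(A'[p])$ and $\overline{M}=M/M(\ker\psi)$, a $4$-dimensional $k$-space carrying $F,V$ and, since $\ker(\psi)$ is the radical of $e_{p\psi}$ on $A'[p]$, a non-degenerate $\kappa$-compatible alternating pairing. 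The eigenspaces $\overline{M}(\Sigma)$ and $\overline{M}(\overline{\Sigma})$ are each $2$-dimensional, isotropic, and dual to one another. Hence a \emph{balanced} isotropic Lagrangian $\overline{M(J)}=\langle\ell_{\Sigma},\ell_{\overline{\Sigma}}\rangle$ is determined by the line $\ell_{\Sigma}\subset\overline{M}(\Sigma)$, the isotropy condition forcing $\ell_{\overline{\Sigma}}=\ell_{\Sigma}^{\perp}$; ignoring $F,V$ these candidates sweep out a $\mathbb{P}^{1}=\mathbb{P}(\overline{M}(\Sigma))$. The stabilisation conditions $V\,\overline{M(J)}\subset\overline{M(J)}^{(p)}$ and $F\,\overline{M(J)}^{(p)}\subset\overline{M(J)}$, because $F$ and $V$ interchange the two $\kappa$-eigenspaces $\sigma$-semilinearly, translate into a non-trivial closed condition on $\ell_{\Sigma}$ whose solution locus is the (nonempty) finite set of admissible $J$.

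The main obstacle is precisely to verify that this Frobenius condition is \emph{not} identically satisfied, for only then is the fibre finite rather than all of $\mathbb{P}^{1}$; this is exactly where $\widetilde{\pi}$ parts ways with $\pi$, whose $\mathbb{P}^{1}$-bundle fibres over $S_{gss}$ and combs over $S_{ssp}$ (Theorems \ref{S_0(p)-gss}, \ref{S_0(p)-ssp}) show that the analogous ``forget $H$'' count is genuinely one-dimensional. I would carry out the Dieudonn\'e computation stratum by stratum --- $\mu$-ordinary, general supersingular, and superspecial --- using the explicit bases of Proposition \ref{Braid-1} and the superspecial table appearing in the proof of Theorem \ref{S_0(p)-ssp}; the superspecial points form the delicate case, since there $F$ and $V$ are most degenerate, and one must combine the Raynaud condition on $J/\ker(\psi)$ with the type $(2,1)$ signature of $\underline{A}'$ to exclude a one-parameter family of admissible $J$. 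Granting finiteness on each stratum, $\widetilde{\pi}$ is quasi-finite, hence finite, and the flatness and degree conclusions follow as above, giving degree $p+1$.
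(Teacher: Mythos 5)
Your proposal is correct and takes essentially the same route as the paper, whose proof is exactly this skeleton: properness plus a quasi-finiteness check deferred to a fiber-by-fiber analysis gives finiteness, regularity of both arithmetic surfaces together with \cite{Eis} 18.17 gives flatness, and the degree is read off in characteristic $0$. Your Dieudonn\'e-theoretic classification of the subgroups $J$ over each stratum is precisely the alternative the paper itself notes (in the proof of Theorem \ref{Stildestratification}) to the fiber counts it obtains there by analyzing the pairs $(\underline{A},H)$ and the local models.
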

\begin{proof}
Both arithmetic surfaces are regular. The map $\widetilde{\pi}$ is
proper, and, as we shall see below, analyzing its geometric fibers
one-by-one, also quasi-finite. It is therefore finite. By \cite{Eis},
18.17, it is flat. The degree can be read off in characteristic 0.
\end{proof}
From now on we concentrate on the structure of the geometric special
fiber $\widetilde{S}_{k}$ of $\mathscr{\widetilde{S}}$ over $k,$
and omit the subscript $k.$ We study $\widetilde{S}$ together with
the map
\[
\widetilde{\pi}:S_{0}(p)\to\widetilde{S}
\]
and make strong use of the facts that we have already established
for $S_{0}(p).$

\subsubsection{The fibers of $\widetilde{\pi}$}

To study the geometric fibers of $\pi$ we had to study, for a given
$\underline{A},$ the subgroup schemes $H\subset A[p]$ for which
$(\underline{A},H)\in S_{0}(p)(k).$ This was achieved by analyzing
$M(A[p])$ and its $2$-dimensional, isotropic, balanced $\mathcal{O}_{E}$-stable
Dieudonné submodules. To study the geometric fibers of $\widetilde{\pi}$
we have to look, for a given $\underline{A}',$ for all the possible
$(\underline{A},H)$ yielding $\underline{A}'$ upon the process of
dividing by $H$ and descending the polarization. Equivalently, by
Proposition \ref{J}, we have to look for all the subgroup schemes
$J$ such that $(\underline{A}',J)\in\widetilde{S}_{0}(p)(k).$ This
reduces the computation of the fibers of $\widetilde{\pi}$ to Dieudonné-module
computations, as was the case with $\pi.$ However, starting with
one $(\underline{A},H)$ mapping under $\widetilde{\pi}$ to $\underline{A}'$,
finding all the others in the fiber above $\underline{A}'$ requires
in general the knowledge of $M(A[p^{2}])$ and not only of $M(A[p]).$
This makes the following sections technically more complicated than
the previous ones.

\subsubsection{The stratification of $\widetilde{S}$}

We suppress $(\iota',\eta')$ from the notation and refer to $R$-points
of $\widetilde{S}$ ($R$ a $k$-algebra) as $(A',\psi).$ Given $(A',\psi)\in\widetilde{S}(k)$
the subgroup scheme 
\[
\ker(\psi)\subset A'[p]
\]
is of rank $p^{2},$ self-dual (i.e. isomorphic to its Cartier dual),
stable under $\iota'(\mathcal{O}_{E})$ and Raynaud. Its Lie algebra
$\Lie(\ker(\psi))$ is 1 or 2-dimensional\footnote{If it were 0-dimensional, $A'$ would be $\mu$-ordinary and $\ker(\psi)\simeq\kappa\otimes\mathbb{Z}/p\mathbb{Z},$
but this group is not self-dual.}, and carries an action of $\kappa.$ We call its type the \emph{type
(or signature) of} $\ker(\psi)$ and denote it by $\tau(\psi).$ Similarly
the maximal $\alpha_{p}$-subgroup of $A'[p]$ is of rank $p,p^{2}$
or $p^{3}$, and the $\kappa$-type of its Lie algebra is called the
$a$\emph{-type} of $A'$, and denoted $a(A').$ 
\begin{thm}
\emph{\label{Stildestratification}(i)}\textbf{\emph{ }}The surface
$\widetilde{S}$ is the union of 7 disjoint, locally closed, nonsingular
strata $\widetilde{S}_{*}[**]$, as shown in the table. The name of
each stratum indicates the type of $A'_{\widetilde{x}}$ for $\widetilde{x}$
in the stratum ($\mu$-ordinary, gss or ssp), and, in brackets, the
type of $\ker(\psi)$. The last column indicates what types of $(\underline{A},H)$
lie in $\widetilde{\pi}^{-1}(\widetilde{x})$. The first entry in
the last column refers to the stratum of $S$ in which $\underline{A}$
lies. The second refers to the type of $H$ ($\mathfrak{G}$ stands
for $\mathfrak{G}[p]$). If $\underline{A}$ is ssp there is a third
entry, which we now explain.

Recall that the ssp strata of $S_{0}(p)$ are unions of projective
lines admitting a natural coordinate $\zeta$. The third entry refers
to $\zeta$. Depending on whether $\zeta\in\mathbb{F}_{p^{2}}$ or
not, and in the case of the components $F_{x},$ also on whether it
is a $p+1$ root of $-1$, $\widetilde{\pi}(A,H)$ may land in different
strata of $\widetilde{S}.$

\bigskip{}
\begin{tabular}{|c|c|c|c|c|c|c|}
\hline 
 & Stratum of $\widetilde{x}$ & dim. & $\tau(\psi)$ & $a(A')$ & $\#\widetilde{\pi}^{-1}(\widetilde{x})$ & $\widetilde{\pi}^{-1}(\widetilde{x})$\tabularnewline
\hline 
\emph{1} & $\widetilde{S}_{\mu}$ & \emph{2} & $\Sigma$ & $\Sigma$ & $2$ & $\left(\mu,et/m\right)$\tabularnewline
\emph{2} & $\widetilde{S}_{gss}[\overline{\Sigma}]$ & \emph{2} & $\overline{\Sigma}$ & $\Sigma,\overline{\Sigma}$ & $p+1$ & $(gss,\mathfrak{G})/(ssp,$$\mathfrak{G},\lnot\mathbb{F}_{p^{2}})$\tabularnewline
\emph{3} & $\widetilde{S}_{gss}[\Sigma,\overline{\Sigma}]$ & \emph{1} & $\Sigma,\overline{\Sigma}$ & $\Sigma,\overline{\Sigma}$ & $2$ & $(gss,\alpha_{p^{2}}/\alpha_{p^{2}}^{*})$\tabularnewline
\emph{4} & $\widetilde{S}_{gss}[\Sigma]$ & \emph{1} & $\Sigma$ & $\Sigma,\Sigma$ & \emph{$1$} & $(ssp,\kappa\otimes\alpha_{p},\lnot\mathbb{F}_{p^{2}})$\tabularnewline
\emph{5} & $\widetilde{S}_{ssp}[$$\overline{\Sigma}]$ & \emph{0} & $\overline{\Sigma}$ & $\Sigma,\Sigma,\overline{\Sigma}$ & $p+1$ & $(ssp,\mathfrak{G},\mathbb{F}_{p^{2}})$\tabularnewline
\emph{6} & $\widetilde{S}_{ssp}[\Sigma,\overline{\Sigma}]$ & \emph{0} & $\Sigma,\overline{\Sigma}$ & $\Sigma,\Sigma,\overline{\Sigma}$ & \emph{1} & $(ssp,\kappa\otimes\alpha_{p},\sqrt[p+1]{-1})$\tabularnewline
\emph{7} & $\widetilde{S}_{ssp}[\Sigma]$ & \emph{0} & $\Sigma$ & $\Sigma,\Sigma,\overline{\Sigma}$ & \emph{1} & $(ssp,\kappa\otimes\alpha_{p},\mathbb{F}_{p^{2}}\lnot\sqrt[p+1]{-1})$\tabularnewline
\hline 
\end{tabular}\medskip{}

\emph{(ii)} The closure relations between the various strata are described
by the following diagram, where an arrow $X\to Y$ indicates specialization,
i.e. that $Y\subset\overline{X}$.

\medskip{}

\begin{tabular}{ccccccccc}
 &  &  &  & $\widetilde{S}_{\mu}$ &  &  &  & $\widetilde{S}_{gss}[\overline{\Sigma}]$\tabularnewline
 &  &  & $\swarrow$ &  & $\searrow$ &  & $\swarrow$ & $\mid$\tabularnewline
 &  & $\widetilde{S}_{gss}[\Sigma]$ &  &  &  & $\widetilde{S}_{gss}[\Sigma,\overline{\Sigma}]$ &  & $\mid$\tabularnewline
 & $\swarrow$ &  & $\searrow$ &  & $\swarrow$ &  &  & $\downarrow$\tabularnewline
$\widetilde{S}_{ssp}[\Sigma]$ &  &  &  & $\widetilde{S}_{ssp}[\Sigma,\overline{\Sigma}]$ &  &  &  & $\widetilde{S}_{ssp}[$$\overline{\Sigma}]$\tabularnewline
\end{tabular}

\medskip{}

The strata $\widetilde{S}_{gss}[\Sigma,\overline{\Sigma}]$ and $\widetilde{S}_{ssp}[\Sigma,\overline{\Sigma}]$
are singular on $\widetilde{S},$ and the rest are nonsingular.
\end{thm}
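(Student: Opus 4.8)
The plan is to leverage the fact, just established, that $\widetilde{\pi}\colon S_{0}(p)\to\widetilde{S}$ is finite flat of degree $p+1$, and to transport through it the complete description of $S_{0}(p)$ obtained in Theorems \ref{S_0(p)-1}, \ref{S_0(p)-gss} and \ref{S_0(p)-ssp}. First I would define the seven candidate strata as the loci on which the two discrete invariants $\tau(\psi)$ (the $\kappa$-type of $\Lie\ker(\psi)$) and $a(A')$ (the $a$-type) take the prescribed values. Both invariants jump upward only on closed subsets, so each stratum is locally closed; their union is all of $\widetilde{S}$ once one checks that the listed combinations exhaust the possibilities. Here the key observation is that $\ker(\psi)$ is self-dual of rank $p^{2}$, so by Raynaud's classification it must be one of $\kappa\otimes\alpha_{p}$, $\mathfrak{G}[p]_{\Sigma}$ or $\mathfrak{G}[p]_{\overline{\Sigma}}$ (the remaining types of the table are not self-dual as group schemes), whence $\tau(\psi)\in\{\Sigma,\overline{\Sigma},(\Sigma,\overline{\Sigma})\}$.

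The core of the argument is the fibre computation. For a geometric point $\widetilde{x}=(A',\psi)$ I would enumerate $\widetilde{\pi}^{-1}(\widetilde{x})$ using Proposition \ref{J}: a preimage is the same datum as a maximal isotropic $\mathcal{O}_{E}$-subgroup $J\subset A'[p]$ containing $\ker(\psi)$ with $J/\ker(\psi)$ Raynaud. As warned at the start of this section, recovering all preimages from one generally forces work inside $M(A'[p^{2}])$ rather than $M(A'[p])$, so I would organize the computation by the $p$-divisible group type of $A'$ ($\mu$-ordinary, gss, ssp), writing in each case an explicit polarized Dieudonn\'e basis --- the basis of Proposition \ref{Braid-1} in the gss case and its superspecial analogue --- and solving the conditions $V(M(J))\subset M(J)^{(p)}$, $F(M(J)^{(p)})\subset M(J)$ together with isotropy. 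The count of distinct $J$ yields the penultimate column, and the identification of each preimage with a known point of $S_{0}(p)$ (a point of $Y_{m}$, $Y_{et}$, a screen $Y_{gss}$, or a base/tooth of a comb, recorded together with the arithmetic nature of the coordinate $\zeta$) yields the last column. Finite flatness of degree $p+1$ supplies the crucial check: the total fibre length is always $p+1$, so whenever the number of \emph{distinct} preimages is smaller the deficit is ramification, which I would read off from the local model diagrams of Proposition \ref{local rings-1}.

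Having identified the strata, I would establish smoothness, the singular locus, and the closure relations. Smoothness of an individual stratum follows either by exhibiting it as the image of a smooth stratum of $S_{0}(p)$ on which $\widetilde{\pi}$ is unramified (for instance $\widetilde{S}_{\mu}$ from $Y_{et}$, using that $\widetilde{\pi}$ is an isomorphism on the \'etale component), or directly from $\mathbf{L}_{\widetilde{x}}\otimes_{W(k)}k$ in Proposition \ref{local rings-1}. The same local models locate the singular points of $\widetilde{S}$ itself: precisely in the cases $\kappa\otimes\alpha_{p}\ (\sqrt[p+1]{-1})$, $\alpha_{p^{2}}^{*}$ and $\alpha_{p^{2}}$ does one get $\mathbf{L}_{\widetilde{x}}\otimes_{W(k)}k\cong k[[a,b,c]]/(bc)$, the transverse crossing of two smooth sheets, and these are exactly the points of the strata $\widetilde{S}_{gss}[\Sigma,\overline{\Sigma}]$ and $\widetilde{S}_{ssp}[\Sigma,\overline{\Sigma}]$; elsewhere the reduction is regular. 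For the closure diagram I would use that $\widetilde{\pi}$, being finite, is closed, so $\widetilde{\pi}(\overline{Z})=\overline{\widetilde{\pi}(Z)}$; transporting the known incidences among $Y_{m},Y_{et},Y_{gss},Z_{m},Z_{et},F_{x},G_{x}[\zeta]$ through $\widetilde{\pi}$, and cross-checking against the dimensions and fibre types, produces the specialization relations.

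The step I expect to be the genuine obstacle is the fibre computation over the gss and, above all, the ssp points, i.e. the $p^{2}$-level Dieudonn\'e analysis. Over the superspecial locus one must track the behaviour of the coordinate $\zeta$ (whether $\zeta\in\mathbb{F}_{p^{2}}$, and, on the base $F_{x}$, whether $\zeta^{p+1}=-1$), since these dichotomies are exactly what separate strata $5$, $6$ and $7$ and govern whether $A'$ is again superspecial. Keeping careful account of how the three factors of $A[p]\simeq\mathfrak{G}[p]_{\Sigma}^{2}\times\mathfrak{G}[p]_{\overline{\Sigma}}$ recombine after dividing by $H$, and hence which self-dual type $\ker(\psi)$ acquires, is delicate and forms the technical heart of the theorem.
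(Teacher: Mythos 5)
Your scaffolding largely matches the paper's: strata cut out by the invariants $(\tau(\psi),a(A'))$, the observation that $\ker(\psi)$ is self-dual (hence of type $\kappa\otimes\alpha_{p}$, $\mathfrak{G}[p]_{\Sigma}$ or $\mathfrak{G}[p]_{\overline{\Sigma}}$), smoothness and the singular locus read off Proposition \ref{local rings-1}, closure relations transported through the finite map $\widetilde{\pi}$, and fibre counts reconciled with degree $p+1$ via ramification. But your core engine --- enumerating, for a given $\widetilde{x}=(A',\psi)$, all subgroups $J$ as in Proposition \ref{J} --- is the \emph{backward} direction, which the paper sets up as motivation at the start of Chapter 5 but then deliberately avoids (it is mentioned only as an alternative in the $\mu$-ordinary case). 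The paper runs everything \emph{forward}: since $\widetilde{\pi}$ is surjective and the points of $S_{0}(p)(k)$ are completely classified by Theorems \ref{S_0(p)-1}, \ref{S_0(p)-gss} and \ref{S_0(p)-ssp}, it suffices to compute the pair $(\tau(\psi),a(A'))$ for the image of each type of $(\underline{A},H)$; the fibre cardinalities then come for free from finite flatness of degree $p+1$ together with the \'{e}tale/totally-ramified dichotomy at each type of point, with no enumeration of $J$'s at all.

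The genuine gap is in how you propose to carry out the backward enumeration. Classifying the admissible $J\subset A'[p]$ requires the Dieudonn\'{e} module $M(A'[p^{2}])$ \emph{together with the pairing induced by the degree-$p^{2}$ polarization} $\psi$, and you propose to take ``the basis of Proposition \ref{Braid-1} in the gss case and its superspecial analogue''. But Proposition \ref{Braid-1} describes the \emph{principally} polarized module of a gss point of $S$; it says nothing about the pair $(M(A'),e_{p\psi})$. Worse, that pair is not determined by the Newton type of $A'$: the three strata $\widetilde{S}_{gss}[\overline{\Sigma}]$, $\widetilde{S}_{gss}[\Sigma,\overline{\Sigma}]$ and $\widetilde{S}_{gss}[\Sigma]$ all carry the same underlying $p$-divisible group with $\mathcal{O}_{E}$-action but inequivalent polarized structures, so there is no single ``gss basis'' to write down --- classifying these polarized modules is essentially the content of the theorem, and assuming it is circular. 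The only supply of such modules available at this point in the paper is the forward construction: start from a classified $(\underline{A},H)$, realize $M'=M(A'[p^{\infty}])$ inside the isocrystal via $0\to M\to M'\to M(H)\to 0$ (as in Case 4 of the paper's proof and in the appendix), and carry the pairing along from $p\phi$. Once you have done that, the backward enumeration of $J$'s becomes redundant, because flatness plus the local models already pin down the fibres; so either your plan secretly contains the paper's proof as a preliminary step, or it rests on a classification of degree-$p^{2}$-polarized Dieudonn\'{e} modules that you have not supplied.
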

See Figure \ref{Figure 2}

\begin{figure}[h]
\caption{\label{Figure 2}The structure of $\widetilde{S}$}

\includegraphics[scale=0.7]{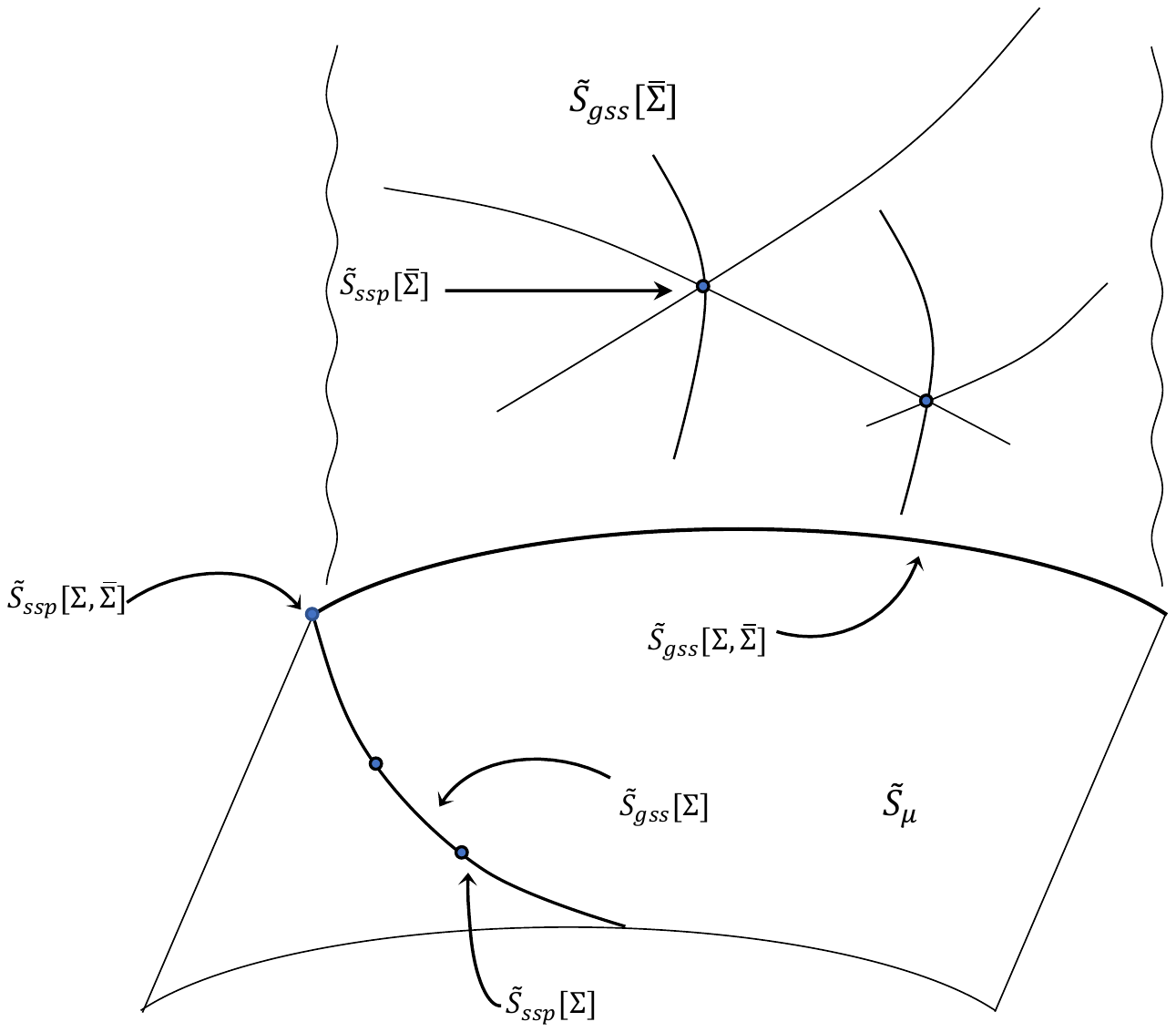}
\end{figure}

\begin{proof}
The invariants $(\tau(\psi),a(A'))$ characterize the stratum in $\widetilde{S}$,
and the seven cases in the last column are mutually exclusive and
exhaustive. It is therefore enough to verify that starting with a
point $(\underline{A},H)\in S_{0}(p)(k)$ in a prescribed stratum
of $S_{0}(p)$, we end up with the right pair of invariants $(\tau(\psi),a(A'))$.
For this we use the covariant Dieudonné module $M(A[p^{\infty}]).$

(1) If $A$ is $\mu$-ordinary, so is $A',$ and vice versa. As in
this case
\[
A[p^{\infty}]\simeq(\mathcal{O}_{E}\otimes\mu_{p^{\infty}})\oplus\mathfrak{G}_{\Sigma}\oplus(\mathcal{O}_{E}\otimes\mathbb{Q}_{p}/\mathbb{Z}_{p})
\]
and $H$ is either $\mathcal{O}_{E}\otimes\mu_{p}$ or $\mathcal{O}_{E}\otimes\mathbb{Z}/p\mathbb{Z}$,
$H^{\perp}/H\simeq\mathfrak{G}[p]_{\text{\ensuremath{\Sigma}}}$ so
$\tau(\psi)=\Sigma$. Since upon dividing by $H$ we get $A'[p^{\infty}]\simeq A[p^{\infty}]$,
$a(A')=\Sigma$. The map $Y_{m}\to\widetilde{S}_{\mu}$ is surjective,
purely inseparable of degree $p,$ while $Y_{et}\to\widetilde{S}_{\mu}$
is an isomorphism. This follows from the following two facts: (a)
$Y_{\mu}\to\widetilde{S}_{\mu}$ is finite flat of degree $p+1,$
(b) If $y\in Y_{et}(k)$ then $\widetilde{\pi}$ is étale at $y,$
while if $y\in Y_{m}(k)$ it is ramified there (see §\ref{local rings}).
We conclude that if $\widetilde{x}\in\widetilde{S}_{\mu}(k)$ the
fiber $\widetilde{\pi}^{-1}(\widetilde{x})$ contains precisely 2
points. Alternatively, we could have used the model $\widetilde{S}_{0}(p)$
(see §\ref{Stilde0(p)}) to show that there are precisely two possibilities
for $J$ to go with an $\underline{A}'\in\widetilde{S}_{\mu}(k).$ 

(2) Assume next that $A$ is gss and $H\simeq\mathfrak{G}[p].$ The
analysis of $H^{\perp}/H$ is easy, since $H^{\perp}\subset A[p],$
so we can use Proposition \ref{Braid-1}. With the notation used there
\[
M(H)=\left\langle e_{2},\alpha_{1}f_{1}+\alpha_{2}f_{3}\right\rangle 
\]
for some $(\alpha_{1}:\alpha_{2})\neq0,\infty.$ It follows that $M(H^{\perp}/H)=\left\langle \alpha_{2}\overline{e}_{1}-\alpha_{1}\overline{e}_{3},\overline{f}_{1}\right\rangle $
where the bar denotes the class modulo $M(H).$ Since this space is
killed by $V^{2}$ and $F^{2}$ but neither by $F$ nor by $V,$ $H^{\perp}/H\simeq\mathfrak{G}[p].$
Since $M(H^{\perp}/H)[V]=\left\langle \overline{f}_{1}\right\rangle $,
$\Lie(H^{\perp}/H)$ is of type $\overline{\Sigma}.$

To analyze the $\alpha_{p}$-subgroup of $A'$ and conclude that it
is of rank $p^{2}$ and type $(\Sigma,\overline{\Sigma})$, we need
to know $M(A[p^{2}])$. This, unlike $M(A[p])$, depends on the particular
$A$, and not only on it being of type gss. The computations needed
to verify this are deferred to the appendix.

(3) Assume that $A$ is gss and $H\simeq\alpha_{p^{2},\Sigma}.$ Using
the notation of Proposition \ref{Braid-1}
\[
M(H)=\left\langle e_{2},f_{1}\right\rangle _{k}
\]
so $M(H^{\perp}/H)=\left\langle \overline{e}_{3},\overline{f}_{3}\right\rangle .$
This module is killed by both $V$ and $F$ so $H^{\perp}/H\simeq\kappa\otimes\alpha_{p},$
and its Lie algebra is of type $(\Sigma,\overline{\Sigma}).$ The
computation of $a(A')$ is again deferred to the appendix. The case
$A$ gss and $H\simeq\alpha_{p^{2},\Sigma}^{*}$ is treated similarly.

(4) Assume that $A$ is ssp. Then the covariant Dieudonné module $M=M(A[p^{\infty}])$
is freely spanned over $W(k)$ by a basis $e_{1},e_{2},e_{3},f_{1},f_{2},f_{3}$
satisfying (i) $\mathcal{O}_{E}$ acts on the $e_{i}$ via $\Sigma$
and on the $f_{i}$ via $\overline{\Sigma}$ (ii) $\left\langle e_{i},f_{j}\right\rangle =\delta_{ij}$,
$\left\langle e_{i},e_{j}\right\rangle =\left\langle f_{i},f_{j}\right\rangle =0$
(iii) the action of $F$ and $V$ is given by the table\medskip{}
\begin{center}

\begin{tabular}{|c|c|c|c|c|c|c|}
\hline 
 & $e_{1}$ & $e_{2}$ & $e_{3}$ & $f_{1}$ & $f_{2}$ & $f_{3}$\tabularnewline
\hline 
\hline 
$F$ & $-pf_{1}$ & $-pf_{2}$ & $-f_{3}$ & $e_{1}$ & $e_{2}$ & $pe_{3}$\tabularnewline
$V$ & $pf_{1}$ & $pf_{2}$ & $f_{3}$ & $-e_{1}$ & $-e_{2}$ & $-pe_{3}$\tabularnewline
\hline 
\end{tabular}.\end{center} 

\medskip{}
See \cite{Bu-We}, Lemma (4.1) and \cite{Vo}, Lemma 4.2. Note that
Vollaard works over $W(\kappa)$ and uses a slightly different normalization,
but over $W(k)$ her model and the one above become isomorphic. Let
$\overline{M}=M/pM=M(A[p])$ (called in \cite{Bu-We} the Dieudonné
\emph{space}) and denote by $\overline{e}_{i}$ and $\overline{f}_{i}$
the images of the basis elements. Using the notation of the proof
of Theorem \ref{S_0(p)-ssp}, we distinguish two cases.

\emph{Case I }(the base of the comb): In this case $H$ is of type
$\kappa\otimes\alpha_{p}$ and
\[
M(H)=\left\langle \alpha_{1}\overline{e}_{1}+\alpha_{2}\overline{e}_{2},\overline{f}_{3}\right\rangle \subset\overline{M}.
\]
As we have seen in the proof of Theorem \ref{S_0(p)-ssp}, $H^{\perp}/H=\ker(\psi)$
is of type $\mathfrak{G}[p]_{\Sigma}$, unless $\zeta=(\alpha_{1}:\alpha_{2})$
satisfies $\zeta^{p+1}=-1,$ where it is of type $\kappa\otimes\alpha_{p}.$
This gives the entries for $\tau(\psi)$ in rows 4,6 and 7 of the
table. We proceed to compute the $a$-number and $a$-type of $A'$.
For this observe that $M'=M(A'[p^{\infty}])$ sits in an exact sequence
\[
0\to M\to M'\to M(H)\to0,
\]
hence inside the isocrystal $M_{\mathbb{Q}}$
\[
M'=\left\langle e_{i},p^{-1}(\widetilde{\alpha}_{1}e_{1}+\widetilde{\alpha}_{2}e_{2}),f_{1},f_{2},p^{-1}f_{3}\right\rangle .
\]
Here we let $\widetilde{\alpha}_{i}$ denote any element of $W(k)$
mapping to $\alpha_{i}$ modulo $p$. To compute the Dieudonné module
of the $\alpha_{p}$-subgroup of $A'$ we must compute
\[
(M'/pM')[V]\cap(M'/pM')[F].
\]
The kernel of $V$ on $M'/pM'$ is spanned over $k$ by the images
of the vectors $\{e_{1},e_{2},e_{3},\widetilde{\alpha}_{1}^{\sigma}f_{1}+\widetilde{\alpha}_{2}^{\sigma}f_{2}\}$
where $\sigma$ is the Frobenius on $W(k).$ Similarly, the kernel
of $F$ is spanned by the images of $\{e_{1},e_{2},e_{3},\widetilde{\alpha}_{1}^{\sigma^{-1}}f_{1}+\widetilde{\alpha}_{2}^{\sigma^{-1}}f_{2}\}$.
The span of $\{e_{1},e_{2},e_{3}\}$ in $M'/pM'$ is two dimensional
and of type $\Sigma,\Sigma.$ We see that if $\zeta=(\alpha_{1}:\alpha_{2})\notin\mathbb{F}_{p^{2}}$
then $\alpha_{p}(A')$ is of rank $p^{2}$, hence $A'$ is gss (supersingular
but not superspecial), and $a(A')=\{\Sigma,\Sigma\}.$ On the other
hand if $\zeta\in\mathbb{F}_{p^{2}}$ then $\alpha_{p}(A')$ is of
rank $p^{3},$ so $A'$ is superspecial, and $a(A')=\{\Sigma,\Sigma,\overline{\Sigma}\}.$
This completes the verification of $\tau(\psi)$ and $a(A')$ in rows
4,6 and 7 of the table.

\emph{Case II }(the teeth of the comb): In this case $H$ is of type
$\mathfrak{G}[p]_{\Sigma},$
\[
M(H)=\left\langle \alpha_{1}\overline{e}_{1}+\alpha_{2}\overline{e}_{2},\beta_{1}\overline{f}_{1}+\beta_{2}\overline{f}_{2}+\beta_{3}\overline{f}_{3}\right\rangle \subset\overline{M}
\]
where $\zeta=(\alpha_{1}:\alpha_{2})=(\beta_{1}^{p}:\beta_{2}^{p})$
satisfies $\zeta^{p+1}=-1$ and $\beta_{3}\in k$ is arbitrary. Now
$M(H^{\perp}/H)$ is spanned by the images of $-\beta_{3}\overline{e}_{1}+\beta_{1}\overline{e}_{3}$
and $\overline{f}_{3}$ modulo $M(H)$, so $H^{\perp}/H=\ker(\psi)$
is seen to be of type $\mathfrak{G}[p]_{\overline{\Sigma}}$. This
confirms the invariant $\tau(\psi)$ in rows 2 and 5 of the table.
Regarding $a(A')$ we compute, as in Case I, $M'=M(A'[p^{\infty}]):$
\[
M'=\left\langle e_{i},p^{-1}(\widetilde{\alpha}_{1}e_{1}+\widetilde{\alpha}_{2}e_{2}),f_{i},p^{-1}(\widetilde{\beta}_{1}f_{1}+\widetilde{\beta}_{2}f_{2}+\widetilde{\beta}_{3}f_{3})\right\rangle .
\]
We find that $M'/pM'[V]$ is spanned over $k$ by the images of 
\[
\{e_{1},e_{2},p^{-1}(\widetilde{\beta}_{1}^{\sigma}e_{1}+\widetilde{\beta}_{2}^{\sigma}e_{2})+\widetilde{\beta}_{3}^{\sigma}e_{3},\widetilde{\alpha}_{1}^{\sigma}f_{1}+\widetilde{\alpha}_{2}^{\sigma}f_{2},f_{3}\}.
\]
Note that $p^{-1}(\widetilde{\beta}_{1}^{\sigma}e_{1}+\widetilde{\beta}_{2}^{\sigma}e_{2})\in M'$
because of the relation $(\alpha_{1}:\alpha_{2})=(\beta_{1}^{p}:\beta_{2}^{p}).$
Likewise $M'/pM'[F]$ is spanned over $k$ by the images of
\[
\{e_{1},e_{2},p^{-1}(\widetilde{\beta}_{1}^{\sigma^{-1}}e_{1}+\widetilde{\beta}_{2}^{\sigma^{-1}}e_{2})+\widetilde{\beta}_{3}^{\sigma^{-1}}e_{3},\widetilde{\alpha}_{1}^{\sigma^{-1}}f_{1}+\widetilde{\alpha}_{2}^{\sigma^{-1}}f_{2},f_{3}\}.
\]
Now $\widetilde{\alpha}_{1}^{\sigma^{-1}}f_{1}+\widetilde{\alpha}_{2}^{\sigma^{-1}}f_{2}$
and $\widetilde{\alpha}_{1}^{\sigma}f_{1}+\widetilde{\alpha}_{2}^{\sigma}f_{2}$
both represent the class of $\widetilde{\beta}_{1}f_{1}+\widetilde{\beta}_{2}f_{2}$
in $M'/pM'$. Similarly $p^{-1}(\widetilde{\beta}_{1}^{\sigma}e_{1}+\widetilde{\beta}_{2}^{\sigma}e_{2})$
and $p^{-1}(\widetilde{\beta}_{1}^{\sigma^{-1}}e_{1}+\widetilde{\beta}_{2}^{\sigma^{-1}}e_{2})$
both represent the class of $p^{-1}(\widetilde{\alpha}_{1}e_{1}+\widetilde{\alpha}_{2}e_{2})$
in $M'/pM'.$ It follows that the span of $f_{3}$ and $\widetilde{\alpha}_{1}^{\sigma}f_{1}+\widetilde{\alpha}_{2}^{\sigma}f_{2}$
in $M'/pM'$ is 1-dimensional and of type $\overline{\Sigma}$. Regarding
the $\Sigma$-component of $M'/pM'[V]\cap M'/pM'[F]$, $e_{1}$ and
$e_{2}$ contribute a 1-dimensional piece there. If $\beta_{3}\in\mathbb{F}_{p^{2}}$
then $p^{-1}(\widetilde{\beta}_{1}^{\sigma}e_{1}+\widetilde{\beta}_{2}^{\sigma}e_{2})+\widetilde{\beta}_{3}^{\sigma}e_{3}$
and $p^{-1}(\widetilde{\beta}_{1}^{\sigma^{-1}}e_{1}+\widetilde{\beta}_{2}^{\sigma^{-1}}e_{2})+\widetilde{\beta}_{3}^{\sigma^{-1}}e_{3}$
contribute another 1-dimensional piece, but otherwise they do not
agree modulo $pM'.$

To sum up, if $\beta_{3}\notin\mathbb{F}_{p^{2}}$ then $A'$ is gss
and $a(A')=\{\Sigma,\overline{\Sigma}\}.$ If $\beta_{3}\in\mathbb{F}_{p^{2}}$
then $A'$ is ssp and $a(A')=\{\Sigma,\Sigma,\overline{\Sigma}\}$.
This completes the verification of $\tau(\psi)$ and $a(A')$ in rows
2 and 5.

Since the morphism $\widetilde{\pi}$ is finite flat of degree $p+1,$
the dimensions of the strata of $\widetilde{S}$ follow from the known
dimensions of the strata of $S_{0}(p).$ Moreover, each geometric
fiber has $p+1$ points if one counts multiplicities. We have already
noted that the map $Y_{m}\to\widetilde{S}_{\mu}$ is surjective, purely
inseparable of degree $p,$ while $Y_{et}\to\widetilde{S}_{\mu}$
is an isomorphism. This proves that for $\widetilde{x}\in\widetilde{S}_{\mu}(k),$
$\#\widetilde{\pi}^{-1}(\widetilde{x})=2,$ but it also proves that
for $\widetilde{x}\in\widetilde{S}_{gss}[\Sigma,\overline{\Sigma}](k)$
we have $\#\widetilde{\pi}^{-1}(\widetilde{x})=2$. Indeed, such a
point must have pre-images both in $Z_{et}$ and in $Z_{m}$ but the
morphism $\widetilde{\pi}:Y_{m}\to\widetilde{S}$ being totally ramified
and 1:1 on geometric points, must extend to a totally ramified morphism
on $Z_{m}$, since the ramification locus is closed. Thus $\widetilde{\pi}$
is 1:1 on $Z_{m}(k).$ It is clearly 1:1 on $Z_{et}(k)$ because it
is an isomorphism on $Z_{et}$.

Similar arguments show that $\widetilde{\pi}$ is totally ramified
of degree $p+1$ on the base of the comb denoted $F_{x}$ in Theorem
\ref{S_0(p)-ssp}, where $A$ is ssp and $H$ of type $\kappa\otimes\alpha_{p}$.
This shows that $\#\widetilde{\pi}^{-1}(\widetilde{x})=1$ in rows
4,6 and 7 of the table.

Finally, at a generic point $y$ lying on a tooth of a comb or on
the gss screens (i.e. where $A$ is ssp or gss but $H$ is of type
$\mathfrak{G}[p]_{\Sigma}$) $\widetilde{\pi}$ induces an isomorphism
on the completed local rings as can be seen from the table in Proposition
\ref{local rings-1}, hence is étale. It follows that the image of
such a point has $p+1$ distinct pre-images.

This concludes the proof of part (i) of the theorem. Part (ii) follows
from the relations between the closures of the pre-images of the seven
strata in $S_{0}(p).$
\end{proof}

\subsection{Analysis of $\widetilde{\pi}$}

\subsubsection{Analysis of $\widetilde{\pi}$ along the $\mu$-ordinary strata}

We denote by $\widetilde{\pi}_{et}$ and $\widetilde{\pi}_{m}$ the
restrictions of $\widetilde{\pi}$ to $Y_{et}$ (or even $Y_{et}^{\dagger}$)
and $Y_{m}$ (or $Y_{m}^{\dagger}$).
\begin{prop}
\emph{(i) }The map $\widetilde{\pi}_{et}\colon Y_{et}\overset{\sim}{\to}\widetilde{S}_{\mu}$
is an isomorphism. Denote by
\[
\widetilde{\sigma}_{et}\colon\widetilde{S}_{\mu}\overset{\sim}{\to}Y_{et}
\]
the section which is its inverse. If $\underline{A}'\in\widetilde{S}_{\mu}(R)$
then $A'[\Fr]+\ker(\psi)$ is a finite flat subgroup $J$ satisfying
the conditions listed in Proposition \ref{J}, $p\psi$ descends to
a principal polarization $\phi$ on $A'/J$ and
\[
\widetilde{\sigma}_{et}(\underline{A}')=(A'/A'[\Fr]+\ker(\psi),\phi,\iota',\left\langle p\right\rangle ^{-1}\circ\eta',A'[p]/A'[\Fr]+\ker(\psi)).
\]

\emph{(ii) The map $\widetilde{\pi}_{m}:Y_{m}\to\widetilde{S}_{\mu}$
is finite flat totally ramified of degree $p.$}
\end{prop}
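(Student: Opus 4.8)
The plan is to handle the two maps separately and to reduce each to facts already assembled in Theorem \ref{Stildestratification} (case (1)) and in the local model table of Proposition \ref{local rings-1}. Throughout I would use that, by Theorem \ref{S_0(p)-1}(i), $\widetilde{\pi}^{-1}(\widetilde{S}_\mu)=Y_\mu=Y_m\sqcup Y_{et}$ is a disjoint union of two clopen pieces, so that $\widetilde{\pi}|_{Y_\mu}$ is finite flat of degree $p+1$ and splits over the two components.

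For part (i), I would first show $\widetilde{\pi}_{et}$ is an isomorphism. The $\kappa\otimes\mathbb{Z}/p\mathbb{Z}$ row of Proposition \ref{local rings-1} exhibits $\widetilde{\pi}$ inducing an isomorphism on cotangent spaces at each closed point of $Y_{et}$; since $\widetilde{\pi}$ is already finite flat and the unramified locus is open, being unramified at the (dense) set of closed points forces $\widetilde{\pi}_{et}$ to be étale. By row 1 of the table every geometric fiber meets $Y_{et}$ in exactly one point, so the étale map $\widetilde{\pi}_{et}$ has all fibers of length $1$, hence is of degree $1$, hence an isomorphism. Next I would construct the inverse on the universal object: exhibit the subgroup $J=A'[\Fr]+\ker(\psi)$ and verify it meets the conditions of $(\widetilde{S}_0(p))$ from Proposition \ref{J}. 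Using the rigid $\mu$-ordinary form $A'[p^\infty]\simeq(\mathcal{O}_E\otimes\mu_{p^\infty})\times\mathfrak{G}_\Sigma\times(\mathcal{O}_E\otimes\mathbb{Q}_p/\mathbb{Z}_p)$ one computes $A'[\Fr]\cap\ker(\psi)=\mathfrak{G}[\Fr]$, of constant rank $p$, whence $J$ has rank $p^4$, contains $\ker\psi$, satisfies $J/\ker\psi\simeq\mathcal{O}_E\otimes\mu_p$ (so is Raynaud), and is maximal isotropic for $e_{p\psi}$; the quotient $H=A'[p]/J\simeq\mathcal{O}_E\otimes\mathbb{Z}/p\mathbb{Z}$ is étale, placing the associated $(\underline{A},H)$ of Proposition \ref{J} in $Y_{et}$. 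This defines $\widetilde{\sigma}_{et}$; the displayed formula records $A=A'/J$ and $H=A'[p]/J$, with the twist $\langle p\rangle^{-1}$ on $\eta'$ forced by the canonical identification $A/H=A'/A'[p]\simeq A'$ induced by $[p]$, exactly as in Theorem \ref{S_0(p)-1}(iii). Finally $\widetilde{\pi}_{et}\circ\widetilde{\sigma}_{et}=\mathrm{id}$ is immediate from the ``forget $J$'' description of $\widetilde{\pi}$, and since $\widetilde{\pi}_{et}$ is an isomorphism this identifies $\widetilde{\sigma}_{et}$ as its inverse.

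For part (ii), $Y_m$ is a clopen piece of $Y_\mu$, so $\widetilde{\pi}_m$ is finite and (row 1) surjective onto $\widetilde{S}_\mu$. Since $Y_m$ and $\widetilde{S}_\mu$ are nonsingular surfaces, a finite surjection between them is flat by \cite{Eis} 18.17. Its degree is $(p+1)-\deg(\widetilde{\pi}_{et})=p$, and as each geometric fiber meets $Y_m$ in a single point it is totally ramified; the $\kappa\otimes\mu_p$ row of Proposition \ref{local rings-1}, where the cotangent map kills $db$, confirms the ramification.

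The one genuinely non-formal step is the finite flatness of $J=A'[\Fr]+\ker(\psi)$: as the footnote to Theorem \ref{S_0(p)-gss} warns, a sum of two finite flat subgroup schemes need not be flat, so I must first check that the intersection $A'[\Fr]\cap\ker\psi$ has constant rank $p$ and invoke the flatness criterion of \cite{Mu} (p.~432) over the reduced base $\widetilde{S}_\mu$, then propagate to an arbitrary $\mathcal{O}_p$-algebra $R$ by base change. Everything else is bookkeeping with ranks, the local model table, and the Proposition \ref{J} dictionary.
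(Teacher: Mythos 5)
Your proposal is correct and follows essentially the same route as the paper: the isomorphism/degree-$p$ statements come from the finite flatness of $\widetilde{\pi}$ of degree $p+1$ combined with the local-model table (\'etale on $Y_{et}$, ramified on $Y_m$) and the two-point fiber count, exactly as in the proof of Theorem \ref{Stildestratification}, and the flatness of $J=A'[\Fr]+\ker(\psi)$ is handled, as in the paper, by reducing to the (reduced) universal base, checking the constant rank $p$ of $A'[\Fr]\cap\ker(\psi)$, invoking the criterion of \cite{Mu}, and verifying the remaining conditions of Proposition \ref{J} fiber-by-fiber. The only cosmetic deviation is that you verify $\widetilde{\pi}_{et}\circ\widetilde{\sigma}_{et}=\mathrm{id}$ while the paper checks the roundtrip starting from $(\underline{A},H)\in Y_{et}(k)$; either suffices once $\widetilde{\pi}_{et}$ is known to be an isomorphism, and your accounting of the $\left\langle p\right\rangle^{-1}$ twist matches the paper's.
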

\begin{proof}
We have already seen that $\widetilde{\pi}_{et}$ is an isomorphism
and that $\widetilde{\pi}_{m}$ is a finite flat totally ramified
map of degree $p.$ It remains to check the assertion about $\widetilde{\sigma}_{et}.$
Let us first check the claims made about $J$. As usual, by reduction
to the universal object, we may assume that $R$ is reduced. Then
$A'[Fr]\cap\ker(\psi)$ is a finite group scheme over $R,$ all of
whose fibers have the same rank $p,$ so is finite flat, and
\[
J=A'[\Fr]+\ker(\psi)\simeq(A'[\Fr]\times\ker(\psi))/(A'[Fr]\cap\ker(\psi))
\]
is finite flat of rank $p^{4}.$ It is also maximal isotropic for
$e_{p\psi},$ $\mathcal{O}_{E}$-stable and $J/\ker(\psi)$ is Raynaud.
All these statements are checked fiber-by-fiber. We may therefore
descend $p\psi$ to a principal polarization of $A'/J$ and form the
tuple $\widetilde{\sigma}_{et}(\underline{A}').$ It is now a simple
matter to check that if $A'=A/H$ where $(\underline{A},H)\in Y_{et}(k)$
then
\[
A'/J=A/A[p]\overset{\times p}{\simeq}A
\]
and $A'[p]/J=p^{-1}H/A[p]$ gets mapped back to $H.$ When we add
level-$N$ structure twisted by the diamond operator $\left\langle p\right\rangle ^{-1}$
to the definition of $\widetilde{\sigma}_{et}(\underline{A}')$ we
ensure that $\widetilde{\sigma}_{et}$ is indeed the inverse of $\widetilde{\pi}_{et}.$ 
\end{proof}
The next corollary follows directly from the definitions of the various
maps and we omit its proof.
\begin{cor}
\emph{\label{corollary j}(i)}\textbf{\emph{ }}On $R$-points of the
moduli problems the maps
\[
\ensuremath{j_{et}}=\pi_{et}\circ\left\langle p\right\rangle \circ\widetilde{\sigma}_{et}\colon\widetilde{S}_{\mu}\to S_{\mu},\,\,\,\,\,j_{m}=\widetilde{\pi}_{m}\circ\sigma_{m}\colon S_{\mu}\to\widetilde{S}_{\mu}
\]
are given by
\[
j_{et}(A',\psi,\iota',\eta')=(A'/A'[\Fr]+\ker(\psi),\phi,\iota',\eta')\,\,\,\,\,j_{m}(A,\phi,\iota,\eta)=(A/A[p]^{m},\psi,\iota,\eta).
\]
Their compositions are the maps $Fr^{2}\colon S_{\mu}\to S_{\mu}^{(p^{2})}=S_{\mu}$
or $Fr^{2}\colon\widetilde{S}_{\mu}\to\widetilde{S}_{\mu}^{(p^{2})}=\widetilde{S}_{\mu}$
(here we use the fact that $S$ and $\widetilde{S}$ are defined over
$\kappa$).

\emph{(ii)}\textbf{\emph{ }}The maps
\[
w_{m}=\left\langle p\right\rangle \circ\widetilde{\sigma}_{et}\circ\widetilde{\pi}_{m}\colon S_{0}(p)^{m}\to S_{0}(p)^{et},\,\,\,\,\,w_{et}=\sigma_{m}\circ\pi_{et}\colon S_{0}(p)^{et}\to S_{0}(p)^{m}
\]
are given by
\[
w_{m}(\underline{A},H)=(\underline{A}^{(p^{2})},\Fr(A^{(p)}[\Ver])),\,\,\,\,w_{et}(\underline{A},H)=(\underline{A},A[p]^{m}).
\]
\end{cor}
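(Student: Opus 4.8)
The plan is to establish all four formulas by unwinding the definitions of the constituent maps, and then to deduce the two composition identities formally from the relations already proved in Theorem~\ref{S_0(p)-1} and Corollary~\ref{Theta}. Two of the formulas are immediate. Since $\pi_{et}$ is ``forget $H$'' and $\sigma_m(\underline{A})=(\underline{A},A[p]^{m})$, the composite $w_{et}=\sigma_m\circ\pi_{et}$ sends $(\underline{A},H)\mapsto\underline{A}\mapsto(\underline{A},A[p]^{m})$; and since $\widetilde{\pi}_m$ is the restriction of $\widetilde{\pi}$ (``divide by $H$ and descend $p\phi$''), the composite $j_m=\widetilde{\pi}_m\circ\sigma_m$ sends $\underline{A}\mapsto(\underline{A},A[p]^m)\mapsto(A/A[p]^m,\psi,\iota,\eta)$. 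For $j_{et}=\pi_{et}\circ\langle p\rangle\circ\widetilde{\sigma}_{et}$ one substitutes the explicit formula for $\widetilde{\sigma}_{et}$ from the preceding Proposition; the level structure it produces is $\langle p\rangle^{-1}\circ\eta'$, the operator $\langle p\rangle$ restores it to $\eta'$, and $\pi_{et}$ then forgets the subgroup, leaving $(A'/(A'[\Fr]+\ker\psi),\phi,\iota',\eta')$. The cancellation of the two diamond twists is the only delicate point here.

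The technical core is the $w_m$ formula, which also furnishes the abelian-variety part of $j_{et}\circ j_m$. Working over $k$ and using the $\mu$-ordinary decomposition $A[p^\infty]\simeq\mathcal{M}\times\mathcal{G}\times\mathcal{E}$ recalled above, with $\mathcal{M}=\mathcal{O}_E\otimes\mu_{p^\infty}$, $\mathcal{G}=\mathfrak{G}_\Sigma$ and $\mathcal{E}=\mathcal{O}_E\otimes\mathbb{Q}_p/\mathbb{Z}_p$, I would compute the successive quotients. Here $H=A[p]^m=\mathcal{M}[p]$, so in $A'=A/H$ one has $\ker\psi=H^\perp/H=A[p]^0/A[p]^m=\mathcal{G}'[p]$ and $A'[\Fr]=\mathcal{M}'[p]\times\mathcal{G}'[\Fr]$; hence $A'[\Fr]+\ker\psi=\mathcal{M}'[p]\times\mathcal{G}'[p]=A'[p]^0$. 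Pulling this subgroup back along $A\to A'$ gives $\mathcal{M}[p^2]\times\mathcal{G}[p]$, which is exactly $A[\Fr^2]$, because $\mathcal{M}[\Fr^2]=\mathcal{M}[p^2]$ (multiplicative part) and $\mathcal{G}[\Fr^2]=\mathcal{G}[p]$ (supersingular part). Therefore the composite isogeny is $\Fr^2$, its target is canonically $A^{(p^2)}$, and the residual subgroup $A'[p]/(A'[\Fr]+\ker\psi)$ is the image of $A'[p]$, i.e. the étale rank-$p^2$ $\mathcal{O}_E$-subgroup of $A^{(p^2)}[p]$, necessarily $\Fr(A^{(p)}[\Ver])$ of Theorem~\ref{S_0(p)-1}(v). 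This identifies $w_m=\rho_{et}\circ\pi_m$ (the map $\theta$ of Corollary~\ref{Theta}). As usual these subgroup-scheme identities, being checked fibrewise, reduce to reduced $R$ and then to geometric points.

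With $w_m=\rho_{et}\circ\pi_m$ in hand, the first composition is formal: $j_{et}\circ j_m=\pi_{et}\circ(\langle p\rangle\circ\widetilde{\sigma}_{et}\circ\widetilde{\pi}_m)\circ\sigma_m=\pi_{et}\circ w_m\circ\sigma_m=\pi_{et}\circ\rho_{et}\circ(\pi_m\circ\sigma_m)=\pi_{et}\circ\rho_{et}$, since $\sigma_m$ is a section of $\pi_m$; and $\pi_{et}\circ\rho_{et}=Fr^2$ directly, as $\rho_{et}(\underline{A})=(\underline{A}^{(p^2)},\dots)$ followed by ``forget $H$'' gives $\underline{A}^{(p^2)}$. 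Because $S$ is defined over $\kappa\subseteq\mathbb{F}_{p^2}$ we have $S_\mu^{(p^2)}=S_\mu$, so this is $Fr^2\colon S_\mu\to S_\mu$. For the reverse composition, $j_m$ is defined over $\kappa$ (it divides by the canonical multiplicative part) and so commutes with Frobenius; from $(j_m\circ j_{et})\circ j_m=j_m\circ(j_{et}\circ j_m)=j_m\circ Fr^2=Fr^2\circ j_m$ and cancellation of the faithfully flat (degree $p$) epimorphism $j_m$ on the right one obtains $j_m\circ j_{et}=Fr^2\colon\widetilde{S}_\mu\to\widetilde{S}_\mu$.

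The step I expect to be the main obstacle is the bookkeeping that pins the compositions to $Fr^2$ exactly, rather than to $\langle a\rangle\circ Fr^2$ for some diamond operator: one must track the level structures through the isogenies via the identity $\Ver\circ\eta^{(p)}=\langle p\rangle\circ\eta$ used in the proof of Theorem~\ref{S_0(p)-1}, and verify that the twist $\langle p\rangle$ built into $\widetilde{\sigma}_{et}$, $w_m$ and $j_{et}$ is precisely what is needed for the two untwists to cancel. The remaining subtlety is the passage from $k$-points to arbitrary, possibly non-reduced, $R$; here the device used repeatedly in Chapter~4 — reduce to reduced $R$, deduce flatness from constancy of fibre ranks over a reduced base, and verify on geometric points — suffices.
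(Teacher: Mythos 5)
Your proposal is correct and matches the paper's intent: the paper omits the proof of Corollary \ref{corollary j} with the remark that it ``follows directly from the definitions of the various maps,'' and your argument is precisely that definitional unwinding, carried out with the paper's own tools (the $\mu$-ordinary filtration, Theorem \ref{S_0(p)-1}(v), and the identification $w_m=\rho_{et}\circ\pi_m=\theta$). The delicate points all check out: the pullback of $J=A'[\Fr]+\ker(\psi)$ along $A\to A'$ is indeed $A[\Fr^{2}]=\mathcal{M}[p^{2}]\times\mathcal{G}[p]$, the residual subgroup is the unique \'etale rank-$p^{2}$ $\mathcal{O}_{E}$-subgroup $\Fr(A^{(p)}[\Ver])$, the built-in $\left\langle p\right\rangle^{-1}$ of $\widetilde{\sigma}_{et}$ cancels against the explicit $\left\langle p\right\rangle$ (note $\Fr\circ\eta=\eta^{(p)}$, so the Frobenius-type isogeny itself introduces no twist, unlike $\Ver$), and the fppf-epimorphism cancellation legitimately yields $j_{m}\circ j_{et}=Fr^{2}$ from $j_{et}\circ j_{m}=Fr^{2}$.
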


\subsubsection{Analysis of $\widetilde{\pi}$ along the curves $Z_{et}$ and $Z_{m}$}
\begin{prop}
\label{pitildeonZ}Let $\widetilde{Z}$ be the stratum $\widetilde{S}_{gss}[\Sigma,\overline{\Sigma}].$
The morphism $\widetilde{\pi}_{et}:Z_{et}\to\widetilde{Z}$ is an
isomorphism. The morphism $\widetilde{\pi}_{m}:Z_{m}\to\widetilde{Z}$
is totally ramified of degree $p.$
\end{prop}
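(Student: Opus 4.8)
The plan is to prove the two assertions separately, in each case exploiting results already established for $\pi$, $\theta$ and the local models, rather than recomputing with Dieudonné modules. For the first assertion I would start from Example \ref{example}, which is exactly the case $H\simeq\alpha_{p^{2},\Sigma}$, i.e. $y\in Z_{et}$: the Corollary following it shows that $\widetilde{\pi}$ induces an isomorphism $\widehat{\mathcal{O}}_{\widetilde{\mathscr{S}},\widetilde{x}}\overset{\sim}{\to}\widehat{\mathcal{O}}_{\mathscr{S}_{0}(p),y}$ already on the arithmetic completed local rings, hence also after reduction mod $p$ on the special fibers. Thus $\widetilde{\pi}$ is a local isomorphism of the ambient surfaces at every point of $Z_{et}$. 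Since $\widetilde{\pi}$ respects the stratifications (row 3 of the table in Theorem \ref{Stildestratification} sends $(gss,\alpha_{p^{2}}/\alpha_{p^{2}}^{*})$ to $\widetilde{Z}$), this local isomorphism carries the smooth curve $Z_{et}$ onto the smooth curve $\widetilde{Z}$, so the restriction $\widetilde{\pi}_{et}\colon Z_{et}\to\widetilde{Z}$ is finite, unramified and surjective. It is moreover bijective on $k$-points, since for $\widetilde{x}\in\widetilde{Z}$ the fibre $\widetilde{\pi}^{-1}(\widetilde{x})$ contains a unique point with $H\simeq\alpha_{p^{2},\Sigma}$. A finite, unramified, bijective morphism onto a smooth (hence normal) curve is an isomorphism, which gives the first claim.

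For the second assertion I would deliberately \emph{avoid} a direct local computation at $y\in Z_{m}$ (case I.1.b), because the local model there only pins down the map modulo $p$-th powers of the maximal ideals, precisely the flaw emphasised in \S\ref{local rings}. Instead I factor $\widetilde{\pi}_{m}$ through the inseparable map $\theta$. Comparing the moduli formula for $w_{m}=\left\langle p\right\rangle\circ\widetilde{\sigma}_{et}\circ\widetilde{\pi}_{m}$ from Corollary \ref{corollary j}(ii), namely $w_{m}(\underline{A},H)=(\underline{A}^{(p^{2})},\Fr(A^{(p)}[\Ver]))$, with $\theta=\rho_{et}\circ\pi_{m}$ from Corollary \ref{Theta}(ii), which by the formula for $\rho_{et}$ sends $(\underline{A},H)$ to the same tuple, one sees that $w_{m}=\theta$ on the multiplicative sheet. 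Using the first assertion to invert $\widetilde{\pi}_{et}$ over $\widetilde{Z}$, this identity reads $\widetilde{\sigma}_{et}\circ\widetilde{\pi}_{m}=\left\langle p\right\rangle^{-1}\circ\theta|_{Z_{m}}$, and composing on the left with $\widetilde{\pi}_{et}$ yields $\widetilde{\pi}_{m}=\widetilde{\pi}_{et}\circ\left\langle p\right\rangle^{-1}\circ\theta|_{Z_{m}}$. Now $\theta|_{Z_{m}}\colon Z_{m}\to Z_{et}$ is purely inseparable of degree $p$ (it factors through $Fr_{Z_{m}/k}$ and induces $Z_{m}^{\sigma}\overset{\sim}{\to}Z_{et}$, Corollary \ref{Theta}), the diamond operator is an automorphism, and $\widetilde{\pi}_{et}$ is the isomorphism just proved; hence $\widetilde{\pi}_{m}$ is totally ramified of degree $p$. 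As a consistency check, over a generic point of $\widetilde{Z}$ the fibre of the degree-$(p+1)$ morphism $\widetilde{\pi}$ splits as one étale point on $Z_{et}$ and one point of multiplicity $p$ on $Z_{m}$.

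The main obstacle I anticipate is not the final algebra but justifying that these moduli-theoretic identities, transparent on the dense open $Y_{m}$ and on $k$-points, persist as morphisms of schemes along the boundary curve $Z_{m}\subset\overline{Y}_{m}$. This needs the continuity/density arguments used throughout the paper, together with care about the non-reduced scheme structures flagged earlier (the preimages $\pi^{-1}(S_{gss})$ and $\widetilde{\pi}^{-1}(\widetilde{Z})$ are non-reduced along these strata): one must check that $w_{m}=\theta$ and the displayed factorisation of $\widetilde{\pi}_{m}$ hold on the \emph{reduced} curve $Z_{m}$, and that $\widetilde{\sigma}_{et}$ genuinely extends to a two-sided inverse of $\widetilde{\pi}_{et}$ over all of $\widetilde{Z}$ and not merely over $\widetilde{S}_{\mu}$. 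Once these identifications are secured, both statements follow formally from results already in hand.
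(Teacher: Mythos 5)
Your proposal is correct, and it draws on exactly the toolbox the paper uses (the local computation of \S\ref{example}, Corollaries \ref{Theta} and \ref{corollary j}, and degree counting), but it assembles these pieces along a genuinely different route, so a comparison is worthwhile. For the first assertion the paper never argues pointwise on the curves: it shows that the surface map $\widetilde{\pi}_{et}\colon Y_{et}^{\dagger}\to\widetilde{S}_{\mu}^{\dagger}$ is finite and restricts to an isomorphism over the dense open $Y_{et}\simeq\widetilde{S}_{\mu}$, and that the target is smooth by Proposition \ref{local rings-1}, hence normal; Zariski's main theorem then makes the surface map an isomorphism, and the curve statement follows by restriction. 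Your argument (\'etale at $Z_{et}$ by the Corollary in \S\ref{example}, plus bijectivity on $k$-points, plus ``finite, unramified and bijective onto a reduced curve is an isomorphism'') is also valid, with two caveats. First, the fibre count you quote from row 3 of Theorem \ref{Stildestratification} is itself justified in the paper by an appeal to ``$\widetilde{\pi}$ is an isomorphism on $Z_{et}$'', so to avoid a latent circularity you should derive injectivity directly: two distinct \'etale points of $Z_{et}$ over the same $\widetilde{x}$ would, since small neighborhoods of them in $S_{0}(p)$ miss $\overline{Y}_{m}$ and \'etale maps are open, produce two preimages in $Y_{et}$ of some $\mu$-ordinary point, contradicting $Y_{et}\simeq\widetilde{S}_{\mu}$. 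Second, as you yourself flag, your second part needs $\widetilde{\sigma}_{et}$ as a morphism over $\widetilde{Z}$, i.e.\ the surface-level isomorphism; your argument upgrades verbatim (finite, \'etale and bijective on $Y_{et}^{\dagger}$), or better, you can bypass $\widetilde{\sigma}_{et}$ entirely: both $\widetilde{\pi}_{m}$ and $\widetilde{\pi}_{et}\circ\left\langle p\right\rangle ^{-1}\circ\theta$ are morphisms defined on all of $Y_{m}^{\dagger}$ which agree on the dense open $Y_{m}$, hence agree on the reduced $Y_{m}^{\dagger}$. For the second assertion the paper poses the dichotomy of Lemma \ref{Liedtke} (the unramified direction of $\widetilde{\pi}_{m}\colon Y_{m}^{\dagger}\to\widetilde{S}_{\mu}^{\dagger}$ along $\widetilde{Z}$ is transversal or tangential) and rules out tangentiality by restricting $j_{et}\circ j_{m}=Fr^{2}$ to the gss curves and counting degrees, whereas you prove first the identity that the paper only records \emph{after} the proposition as a corollary, namely that $\widetilde{\pi}_{m}$ factors as $\widetilde{\pi}_{et}$ composed with a diamond operator and $\theta$ (your derivation gives $\left\langle p\right\rangle ^{-1}$ where the paper's corollary has $\left\langle p\right\rangle $; the discrepancy is harmless since diamond operators are automorphisms, and your sign is the one consistent with Corollary \ref{corollary j} as stated), and then import the fact that $\theta|_{Z_{m}}$ has degree $p$ from Corollary \ref{Theta}. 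What your route buys is that the transversality question never has to be raised, being absorbed into the already established degree of $\theta$ on $Z_{m}$; what the paper's route buys is independence from Corollary \ref{Theta} and a cleaner logical order relative to its own subsequent corollary. Both are degree-counting arguments ultimately resting on Corollary \ref{corollary j}, so the difference is one of organization rather than of substance.
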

\begin{proof}
Let $Y_{et}^{\dagger}=Y_{et}\cup Z_{et}$ and $\widetilde{S}_{\mu}^{\dagger}=\widetilde{S}_{\mu}\cup\widetilde{Z}$.
The map $\widetilde{\pi}_{et}:Y_{et}^{\dagger}\to\widetilde{S}_{\mu}^{\dagger}$
is finite, and induces an isomorphism between the open dense subsets
$Y_{et}\simeq\widetilde{S}_{\mu}.$ From the classification of the
completed local rings in Proposition \ref{local rings-1} it follows
that $\widetilde{S}_{\mu}^{\dagger}$ is smooth, hence its local rings
are integrally closed and $\widetilde{\pi}_{et}$ is an isomorphism.
A similar argument shows that $\widetilde{\pi}_{m}:Y_{m}^{\dagger}\to\widetilde{S}_{\mu}^{\dagger}$
is finite flat totally ramified of degree $p,$ where $Y_{m}^{\dagger}=Y_{m}\cup Z_{m}.$ 

In principle, the unramified direction (see Lemma \ref{Liedtke})
for $\widetilde{\pi}_{m}:Y_{m}^{\dagger}\to\widetilde{S}_{\mu}^{\dagger}$
at a point $\widetilde{x}\in\widetilde{Z}$ could be transversal to
$\widetilde{Z}$ or tangential to it. We claim that it is everywhere
transversal, i.e. the schematic pre-image of $\widetilde{Z}$ is $Z_{m}$
(with its reduced structure) but $\widetilde{\pi}|_{Z_{m}}$ is totally
ramified of degree $p.$ This can be seen in a variety of ways.\footnote{Were the unramified direction everywhere tangential to $\widetilde{Z}$,
the schematic pre-image of $\widetilde{Z}$ would be a nilpotent thickening
of order $p$ of $Z_{m}$, but $\widetilde{\pi}$ would be an isomorphism
on the reduced curve. In general, of course, there is also a ``mixed
option'', where the unramified direction is generically transversal,
but tangential to $\widetilde{Z}$ at finitely many points.} We shall deduce it from Corollary \ref{corollary j}. Observe first
that the maps $j_{et}$ and $j_{m}$ extend to similarly denoted maps

\[
\ensuremath{j_{et}}=\pi_{et}\circ\left\langle p\right\rangle \circ\widetilde{\sigma}_{et}\colon\widetilde{S}_{\mu}^{\dagger}\to S_{\mu}^{\dagger},\,\,\,\,\,j_{m}=\widetilde{\pi}_{m}\circ\sigma_{m}\colon S_{\mu}^{\dagger}\to\widetilde{S}_{\mu}^{\dagger},
\]
and may then be restricted to the gss curves $\widetilde{Z}$ and
$S_{gss}.$ The claim follows now from the following established facts:
(a) $\sigma_{m}:S_{gss}\simeq Z_{m}$ and $\widetilde{\sigma}_{et}:\widetilde{Z}\simeq Z_{et}$
are isomorphisms, (b) $\pi_{et}:Z_{et}\to S_{gss}$ is totally ramified
of degree $p$ (equivalently, $\overline{\pi}_{et}:Z_{et}^{(p)}\simeq S_{gss}$
is an isomorphism) (c) $j_{et}\circ j_{m}=Fr^{2}$ hence, restricted
to the curve $S_{gss}$, it is totally ramified of degree $p^{2}.$
\end{proof}
The same argument used to show that $\widetilde{\pi}_{et}$ extends
to an isomorphism on $Y_{et}^{\dagger}$, and that $\widetilde{\pi}_{m}$
extends to a totally ramified map on $Y_{m}^{\dagger}$ gives the
following.
\begin{prop}
Let $\overline{Y}_{et}$ and $\overline{Y}_{m}$ denote the closures
of $Y_{et}$ and $Y_{m}$ in $S_{0}(p)$. Then $\widetilde{\pi}_{et}$
extends to an isomorphism from $\overline{Y}_{et}$ to the closure
$\overline{\widetilde{S}}_{\mu}$ of $\widetilde{S}_{\mu}.$ The map
$\widetilde{\pi}_{m}$ extends to a totally ramified map of degree
$p$ from $\overline{Y}_{m}$ to $\overline{\widetilde{S}}_{\mu}$.
\end{prop}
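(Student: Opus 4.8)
The plan is to run the argument of Proposition \ref{pitildeonZ} once more, but now over the whole of $\overline{\widetilde{S}}_{\mu}$, the only new feature being that we must pass through the superspecial locus. First I would fix the set-theoretic picture. By Theorem \ref{Stildestratification}(ii) the closure $\overline{\widetilde{S}}_{\mu}$ is the union of the strata in rows $1,3,4,6,7$ of the table, namely $\widetilde{S}_{\mu}$, $\widetilde{S}_{gss}[\Sigma,\overline{\Sigma}]$, $\widetilde{S}_{gss}[\Sigma]$, $\widetilde{S}_{ssp}[\Sigma,\overline{\Sigma}]$ and $\widetilde{S}_{ssp}[\Sigma]$; reading off the last column, $\widetilde{\pi}^{-1}(\overline{\widetilde{S}}_{\mu})=\overline{Y}_{et}\cup\overline{Y}_{m}$, since row $1$ gives $Y_{et},Y_{m}$, row $3$ gives $Z_{et},Z_{m}$, and rows $4,6,7$ together exhaust the bases $F_{x}$ of the combs. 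Because $\widetilde{\pi}$ is finite (the flatness proposition) and proper, its restrictions $\widetilde{\pi}_{et}\colon\overline{Y}_{et}\to\widetilde{S}$ and $\widetilde{\pi}_{m}\colon\overline{Y}_{m}\to\widetilde{S}$ are finite, and being proper with dense $\mu$-ordinary image they surject onto $\overline{\widetilde{S}}_{\mu}=\overline{\widetilde{\pi}(Y_{et})}=\overline{\widetilde{\pi}(Y_{m})}$.

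The two assertions then reduce to two standard facts about a finite morphism onto a \emph{normal} target: a finite birational morphism onto a normal variety is an isomorphism, and a finite morphism from a normal variety onto a normal variety whose function-field extension is purely inseparable is a universal homeomorphism (hence bijective on $k$-points) of degree equal to that of the extension. For $\widetilde{\pi}_{et}$ I would note it restricts to the isomorphism $\widetilde{\pi}_{et}\colon Y_{et}\overset{\sim}{\to}\widetilde{S}_{\mu}$ on dense opens, hence is birational onto $\overline{\widetilde{S}}_{\mu}$; applying the first fact gives the isomorphism $\overline{Y}_{et}\overset{\sim}{\to}\overline{\widetilde{S}}_{\mu}$. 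For $\widetilde{\pi}_{m}$ the extension $k(\overline{Y}_{m})/k(\overline{\widetilde{S}}_{\mu})$ is the purely inseparable degree-$p$ extension already present on $Y_{m}$ (see Corollary \ref{corollary j}(ii), where $\widetilde{\pi}_{m}$ visibly involves Frobenius), so the second fact makes $\widetilde{\pi}_{m}$ a finite, bijective, purely inseparable map of degree $p$, i.e. totally ramified of degree $p$.

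The real work, exactly as in Proposition \ref{pitildeonZ}, is the normality input, and the novelty is that $\overline{\widetilde{S}}_{\mu}$ now runs through points where $\widetilde{S}$ itself is singular, namely the strata $\widetilde{S}_{gss}[\Sigma,\overline{\Sigma}]$ and $\widetilde{S}_{ssp}[\Sigma,\overline{\Sigma}]$ of Theorem \ref{Stildestratification}(ii). This is the step I expect to be the crux. Here I would appeal to Proposition \ref{local rings-1}: at every point of $\overline{\widetilde{S}}_{\mu}$ the completed local ring of $\widetilde{S}$ is either regular ($\widehat{\mathcal{O}}_{\widetilde{S},\widetilde{x}}\cong k[[a,b]]$, the generic $\kappa\otimes\alpha_{p}$, $\mu$-ordinary cases) or of the form $k[[a,b,c]]/(bc)$ (the cases $\alpha_{p^{2}}$, $\alpha_{p^{2}}^{*}$ and $\kappa\otimes\alpha_{p}$ with $\zeta^{p+1}=-1$), i.e. two smooth sheets crossing transversally. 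Since $\overline{\widetilde{S}}_{\mu}$ is precisely one of these sheets — it is the irreducible smooth horizontal component of $\widetilde{S}$ singled out in the discussion following Proposition \ref{local rings-1} — its completed local rings are the regular rings $k[[a,b]]$, so $\overline{\widetilde{S}}_{\mu}$ is smooth, hence normal. The same tables give $\widehat{\mathcal{O}}_{S_{0}(p),y}\cong k[[a,b,r]]/(ar)$ or $k[[a,b,r]]/(abr)$ along the $F_{x}$, with $\overline{Y}_{et}$ and $\overline{Y}_{m}$ single smooth branches; together with the smoothness of $Y_{et}^{\dagger}$ and $Y_{m}^{\dagger}$ from Theorem \ref{S_0(p)-gss}, both $\overline{Y}_{et}$ and $\overline{Y}_{m}$ are smooth. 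Thus each global horizontal component, although it traverses the normal-crossing singularities of $S_{0}(p)$ and of $\widetilde{S}$, is itself a smooth branch, which is exactly what the normality arguments require.

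Finally I would record, as a consistency check rather than an extra step, that the two maps agree on the shared curves $F_{x}=\overline{Y}_{et}\cap\overline{Y}_{m}$ (Theorem \ref{S_0(p)-ssp}(ii)): once $\widetilde{\pi}_{et}$ is known to be an isomorphism, $\widetilde{\pi}|_{F_{x}}$ is an isomorphism onto its image, so although $\widetilde{\pi}_{m}$ has generic degree $p$ its restriction to $F_{x}$ is degree $1$, the inseparable direction of $\widetilde{\pi}_{m}$ being transversal to $F_{x}$. This matches the fiber count $\#\widetilde{\pi}^{-1}(\widetilde{x})=1$ in rows $4,6,7$ of Theorem \ref{Stildestratification}: the two preimages lying over $\widetilde{S}_{\mu}$ merge into the single point of $F_{x}$.
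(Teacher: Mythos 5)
Your proposal is correct and follows essentially the same route as the paper, whose proof is simply the remark that the argument of Proposition \ref{pitildeonZ} applies verbatim: finiteness of $\widetilde{\pi}$, birationality (resp.\ pure inseparability of degree $p$) on the dense open $\mu$-ordinary locus, and normality of $\overline{\widetilde{S}}_{\mu}$, deduced from the local-model classification of Proposition \ref{local rings-1}, which exhibits $\overline{\widetilde{S}}_{\mu}$ as a single smooth sheet through every point, including the superspecial ones. Your expansion of the set-theoretic bookkeeping over $F_{x}$ and the consistency check with the fiber counts in rows 4, 6, 7 of Theorem \ref{Stildestratification} are exactly the details the paper leaves implicit.
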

A computation similar to the above, that we leave out, yields the
following.
\begin{cor}
Let $\theta:Y_{m}^{\dagger}\to Y_{et}^{\dagger}$ be the map $\theta=\rho_{et}\circ\pi_{m}$
(see Corollary \ref{Theta}). Then
\[
\left\langle p\right\rangle \circ\widetilde{\pi}_{et}\circ\theta=\widetilde{\pi}_{m}.
\]
\end{cor}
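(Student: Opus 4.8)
The plan is to verify the identity $\langle p\rangle\circ\widetilde{\pi}_{et}\circ\theta=\widetilde{\pi}_{m}$ by working entirely on the level of the moduli problems, i.e. on $R$-points for $k$-algebras $R$, using the explicit formulas already established for each constituent map. First I would unwind the left-hand side. Start with $(\underline{A},H)\in Y_m^\dagger(R)$, so $H\simeq\kappa\otimes\mu_p$ (generically $H=A[p]^m$). By definition $\theta=\rho_{et}\circ\pi_m$, and since $\pi_m$ is the isomorphism $Y_m^\dagger\simeq S_\mu^\dagger$ sending $(\underline{A},H)\mapsto\underline{A}$, followed by $\rho_{et}(\underline{A})=(\underline{A}^{(p^2)},\Fr(A^{(p)}[\Ver]))$ from Theorem \ref{S_0(p)-gss}(iv), I obtain $\theta(\underline{A},H)=(\underline{A}^{(p^2)},\Fr(A^{(p)}[\Ver]))\in Y_{et}^\dagger(R)$.

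\textbf{Applying $\widetilde{\pi}_{et}$ and comparing.} Next I would apply $\widetilde{\pi}_{et}$ to this $Y_{et}$-point. The recipe of Section \ref{maps-1} (or Proposition \ref{J}) tells us that $\widetilde{\pi}$ forms the quotient $A_1/H_1$ with its descended degree-$p^2$ polarization. So $\widetilde{\pi}_{et}\circ\theta(\underline{A},H)$ is the abelian threefold $\underline{A}^{(p^2)}/\Fr(A^{(p)}[\Ver])$ with the polarization descended from $p\phi^{(p^2)}$. On the other side, $\widetilde{\pi}_{m}(\underline{A},H)=\underline{A}/H$ with polarization descended from $p\phi$; since $H=A[p]^m$ here, this is $\underline{A}/A[p]^m$. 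The key computation is to identify $A^{(p^2)}/\Fr(A^{(p)}[\Ver])$ with $A/A[p]^m$ up to the diamond twist $\langle p\rangle$. I would do this by chasing the isogenies: the composite $\Ver\circ\Fr=p$ and the analysis of $\mu$-ordinary $p$-divisible groups show that $\Fr(A^{(p)}[\Ver])$ is precisely the multiplicative part of $A^{(p^2)}[p]$, so the quotient recovers $A$ after the two Frobenius twists are undone, and the descent of $p\phi^{(p^2)}$ matches that of $p\phi$ after applying $\langle p\rangle$. The diamond operator $\langle p\rangle$ is exactly the bookkeeping needed to reconcile the level-$N$ structures, just as in the statements of Theorems \ref{S_0(p)-1} and \ref{S_0(p)-gss}.

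\textbf{Reduction and rigor.} Since all maps in sight are morphisms of reduced schemes (or can be checked after reducing to the universal case over a reduced base, as was done repeatedly above, e.g. in the proof of Theorem \ref{S_0(p)-1}), it suffices to verify the identity on $k$-points, where the Dieudonn\'e-module descriptions of the $\mu$-ordinary and gss strata make the isogeny identifications transparent. I would first establish the identity on the dense open $Y_m$ (the genuinely $\mu$-ordinary locus), where $\Fr(A^{(p)}[\Ver])=H^{(p^2)}$ holds because $A$ is $\mu$-ordinary, and the three-step filtration of $A[p^\infty]$ splits. Both sides are then seen to equal $\langle p\rangle^{-1}$ times the same quotient, giving agreement on $Y_m$. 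Finally, since $Y_m$ is dense in $Y_m^\dagger$ and all the maps involved are continuous morphisms into the separated scheme $\widetilde{S}$, the identity extends by continuity to $Z_m$ and hence to all of $Y_m^\dagger$.

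\textbf{The main obstacle.} The hard part will be the precise matching of polarizations and level structures under the double Frobenius twist: keeping track of whether the descended polarization from $p\phi^{(p^2)}$ on $A^{(p^2)}/\Fr(A^{(p)}[\Ver])$ agrees \emph{on the nose} with the descended polarization from $p\phi$ on $A/A[p]^m$, or only after the correction by $\langle p\rangle$. This is exactly the kind of subtlety that forced the diamond twist into the statements of parts (iii)--(v) of Theorem \ref{S_0(p)-1}, where one has $A_1/K_1\simeq A$ via $\Ver$ but the level-$N$ structure acquires a factor of $\langle p\rangle$ (since $\Ver\circ\eta^{(p)}=\langle p\rangle\circ\eta$). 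I expect the bulk of the (omitted) calculation to consist in threading this factor correctly through both Frobenius twists, confirming that the single diamond operator $\langle p\rangle$ on the left-hand side is precisely what compensates, which is why the authors are content to ``leave out'' the computation and merely record the clean final formula.
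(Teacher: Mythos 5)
Your overall plan --- unwind everything on the level of moduli, verify on $k$-points of the dense open $Y_{m}$, and extend to $Y_{m}^{\dagger}$ by reducedness and continuity --- has the right shape, and the extension step is exactly how the paper argues elsewhere. But the paper's omitted ``computation similar to the above'' is in fact shorter and purely formal: by Theorem \ref{S_0(p)-gss}(iv) and Corollary \ref{corollary j}(ii), the maps $\theta=\rho_{et}\circ\pi_{m}$ and $w_{m}=\left\langle p\right\rangle \circ\widetilde{\sigma}_{et}\circ\widetilde{\pi}_{m}$ are given by the \emph{same} moduli formula $(\underline{A},H)\mapsto(\underline{A}^{(p^{2})},\Fr(A^{(p)}[\Ver]))$, whence $\theta=w_{m}$; composing with $\widetilde{\pi}_{et}$, using that diamond operators commute with $\widetilde{\pi}$ and that $\widetilde{\pi}_{et}\circ\widetilde{\sigma}_{et}=\mathrm{id}$, all the isogeny bookkeeping disappears at once. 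You instead redo the quotient computation by hand, which is legitimate in principle but is where your sketch goes wrong.

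The genuine error: you assert that $\Fr(A^{(p)}[\Ver])$ is the \emph{multiplicative} part of $A^{(p^{2})}[p]$. It is the \emph{\'etale} part --- Theorem \ref{S_0(p)-1}(v) states this explicitly, and it is precisely why $\rho_{et}$, hence $\theta$, lands in $Y_{et}$ rather than $Y_{m}$ (with a multiplicative subgroup the expression $\widetilde{\pi}_{et}\circ\theta$ would not even parse). Moreover, with your identification the quotient would be $A^{(p^{2})}/A^{(p^{2})}[p]^{m}\simeq(A/A[p]^{m})^{(p^{2})}$, which differs from $\widetilde{\pi}_{m}(\underline{A},H)$ by $Fr^{2}$; since $Fr^{2}$ is not the identity on $\widetilde{S}_{\mu}$, and ``undoing the two Frobenius twists'' is not an operation on points of the moduli space, the identity would fail along your route. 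The correct computation runs as follows: with $K=\Fr(A^{(p)}[\Ver])$ the \'etale part of $A^{(p^{2})}[p]$, the kernel $N$ of $A\overset{\Fr^{2}}{\to}A^{(p^{2})}\to A^{(p^{2})}/K$ contains $A[p]$ with $N/A[p]$ multiplicative, so dividing by $A[p]$ --- i.e.\ inserting multiplication by $p$ --- identifies $A^{(p^{2})}/K$ with $A/A[p]^{m}$ on the nose (the polarizations then match, using $\Fr^{t}\circ\phi^{(p)}\circ\Fr=p\phi$), and it is exactly this $[p]$ that twists the level structure: the composite carries $\eta$ to $h\circ[p]\circ\eta=\left\langle p\right\rangle \circ(h\circ\eta)$. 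Thus the diamond bookkeeping you deferred as ``the main obstacle'' is where the entire content lies: carrying it out yields $\widetilde{\pi}_{et}\circ\theta=\left\langle p\right\rangle \circ\widetilde{\pi}_{m}$, so that the correction appears as $\left\langle p\right\rangle ^{-1}$ when moved to the left-hand side; the sense and power of the twist must be pinned down by this computation and cannot be guessed from the analogy with Theorem \ref{S_0(p)-1} alone.
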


\subsubsection{Analysis of $\widetilde{\pi}$ along the gss screens $Y_{gss}$}

Let $W$ be an irreducible component of the closure $\overline{Y}_{gss}$
of $Y_{gss}.$ As we have seen in Theorem \ref{S_0(p)-ssp}, these
irreducible components are smooth $\mathbb{P}^{1}$-bundles over Fermat
curves, and do not intersect each other. Outside (the closure of)
$Z_{et}$ and $Z_{m}$ the restriction of $\widetilde{\pi}$ to $W,$
which we denote from now on $\widetilde{\pi}_{W},$ is étale. It is
also étale at $y\in Z_{et}(k).$ This follows from §\ref{example}.

\medskip{}

(1) We have
\begin{equation}
\widetilde{\pi}(\overline{Z}_{m}\cap W)=\widetilde{\pi}(\overline{Z}_{et}\cap W).\label{projection of Z}
\end{equation}

\emph{Proof:} $\widetilde{\pi}(\overline{Z}_{et}\cap W)$ is an irreducible
component of $\overline{\widetilde{Z}}$, the closure of the stratum
$\widetilde{Z}=\widetilde{S}_{gss}[\Sigma,\overline{\Sigma}]$. So
is $\widetilde{\pi}(\overline{Z}_{m}\cap W$). The two intersect at
the image of any point $\zeta$ which is ``a base of a tooth of a
comb'', points where $\overline{Z}_{et}$ and $\overline{Z}_{m}$
meet. Since the irreducible components of $\overline{\widetilde{Z}}$
are disjoint, the two components coincide.

\medskip{}

(2) We have

\[
\widetilde{\pi}(W)\cap\overline{\widetilde{Z}}=\widetilde{\pi}(\overline{Z}_{et}\cap W).
\]

\emph{Proof:} this follows from (1) since $\widetilde{\pi}^{-1}(\overline{\widetilde{Z}})=\overline{Z}_{et}\cup\overline{Z}_{m}.$

\medskip{}

(3) Let $W,W'$ be two components of $\overline{Y}_{gss}$. Then $\widetilde{\pi}(W)\cap\widetilde{\pi}(W')=\emptyset.$ 

\emph{Proof:} Each $\widetilde{\pi}(W)$ is an irreducible component
of $\widetilde{\pi}(\overline{Y}_{gss})$. But the irreducible components
of $\widetilde{\pi}(\overline{Y}_{gss})$ are disjoint from each other
and are uniquely determined by their intersection with $\overline{\widetilde{S}}_{\mu}$,
i.e. with $\overline{\widetilde{Z}}$. The claim follows from (2),
since $\widetilde{\pi}(\overline{Z}_{et}\cap W)\cap\widetilde{\pi}(\overline{Z}_{et}\cap W')=\emptyset$,
as $\widetilde{\pi}_{et}$ is an isomorphism.

\medskip{}

(4) We give another proof of (\ref{projection of Z}). It is based
on the following lemma, which is of independent interest. Recall that
$S$ is defined over $\kappa=\mathbb{F}_{p^{2}},$ although we consider
it over $k=\mathbb{\overline{F}}_{p^{2}}.$ It follows that $Gal(k/\kappa)$
permutes the irreducible components of $S_{gss}.$ The diamond operators
also act on these irreducible components.
\begin{lem}
Let $Z$ be an irreducible component of $S_{gss}.$ Then $Fr_{p^{2}}(Z)=\left\langle p\right\rangle (Z).$
\end{lem}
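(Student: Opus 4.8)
The plan is to reduce this statement about the (one-dimensional) components of $S_{gss}$ to a statement about the finite set $S_{ssp}(k)$, where an explicit Dieudonné-module computation settles it. First I would record the following reduction. Both $Fr_{p^{2}}$ (the generator of $\mathrm{Gal}(k/\kappa)$) and $\langle p\rangle$ are automorphisms of $S$ that preserve the stratification, hence permute the irreducible components of $S_{ss}$ and carry superspecial points to superspecial points. By the structure theorem for $S_{ss}$ recalled above (Vollaard), when $N\ge N_{0}(p)$ each irreducible component $\overline{Z}$ of $S_{ss}$ is a smooth Fermat curve carrying $p^{3}+1$ superspecial points, and two distinct components meet in at most one point. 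Consequently a component is determined by any two of its superspecial points, and for an automorphism $g$ of $S$ preserving $S_{ss}$ one has that $g(\overline{Z})$ is the unique component through $g(T)$, where $T=\overline{Z}\cap S_{ssp}(k)$. Thus it is enough to prove that $Fr_{p^{2}}$ and $\langle p\rangle$ induce the same permutation of $S_{ssp}(k)$: then $Fr_{p^{2}}(T)=\langle p\rangle(T)$ is a common set of $\ge 2$ superspecial points of both $Fr_{p^{2}}(\overline{Z})$ and $\langle p\rangle(\overline{Z})$, forcing these components to coincide. For small $N$ one runs the same argument on the blow-up $S^{\#}$ of \S\ref{blow-up}, where the components of $S_{ss}$ become disjoint smooth Fermat curves; alternatively one invokes the compatibility of both actions with change of tame level.

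Second, I would fix $x=\underline{A}=(A,\phi,\iota,\eta)\in S_{ssp}(k)$ and compute. The underlying $(A,\phi,\iota)$ is superspecial, hence isomorphic to one defined over $\kappa=\mathbb{F}_{p^{2}}$ for which the geometric $p^{2}$-Frobenius endomorphism is central: indeed, in the basis of the covariant Dieudonné module $M(A[p^{\infty}])$ exhibited in the proof of Theorem \ref{Stildestratification}(4) (see also \cite{Vo} Lemma 4.2, \cite{Bu-We} Lemma 4.1) one reads off directly that $F^{2}=[-p]$ on $M$, i.e. the Frobenius endomorphism $\pi_{A}=Fr_{p^{2}}$ equals $[-p]$. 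Because $\pi_{A}$ is a scalar, the Galois action of $Fr_{p^{2}}$ on the moduli point $\underline{A}$ — which acts on the étale group $A[N]$ through the Frobenius endomorphism $\pi_{A}$ — multiplies the level structure by $[-p]$:
\[
Fr_{p^{2}}(\underline{A})=(A,\phi,\iota,[-p]\circ\eta).
\]
Finally, $[-1]$ is an automorphism of $(A,\phi,\iota)$ carrying $\eta$ to $-\eta$, so $\langle-1\rangle=\mathrm{id}$ on $S$ and $[-p]\circ\eta$ defines the same point as $\iota(p)\circ\eta$. Hence $Fr_{p^{2}}(\underline{A})=\langle p\rangle\,\underline{A}$ for every superspecial $x$, which by the first step gives $Fr_{p^{2}}(Z)=\langle p\rangle(Z)$.

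I expect the main obstacle to be the precise bookkeeping in the second step: relating the arithmetic Galois action $Fr_{p^{2}}$ on the moduli point to the action of the Frobenius endomorphism on the prime-to-$p$ level structure, with the correct normalization so that the answer is $\langle p\rangle$ rather than $\langle p\rangle^{-1}$. The point that makes the argument work — and that must be verified honestly rather than asserted — is that $F^{2}$ is the scalar $[-p]$, so that the Galois action is a diamond operator at all, independently of the choices made in descending $(A,\phi,\iota)$ to $\kappa$ (where $\eta$ itself need not be defined); the vanishing of the sign ambiguity then follows formally from $\langle-1\rangle=\mathrm{id}$. The reduction in the first step is routine given the intersection-theoretic description of $S_{ss}$, the only subtlety there being the treatment of small $N$, for which passing to $S^{\#}$ removes all (self-)intersections.
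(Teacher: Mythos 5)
Your proposal is correct and follows essentially the same route as the paper: reduce, via compatibility with enlarging the tame level $N$ (the paper's choice; your blow-up alternative is unnecessary), to showing $Fr_{p^{2}}(x)=\left\langle p\right\rangle (x)$ at each superspecial point, and then identify the $p^{2}$-Frobenius of a $\kappa$-model of $(A,\phi,\iota)$ with the central scalar $\pm p$ acting on the level structure. The only differences are cosmetic: the paper invokes Tate--Honda to produce a $\kappa$-model of the supersingular elliptic curve with $\pi=p$ exactly (so no sign intervenes), whereas you read off $-p$ from the Dieudonn\'e table and absorb the sign via $\left\langle -1\right\rangle =\mathrm{id}$ --- which is legitimate, though note that in the covariant theory used in the paper the Frobenius isogeny induces $V$, not $F$, a harmless slip here since $V^{2}=-p$ on that module as well.
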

\begin{proof}
For the proof of the lemma we may increase $N.$ Indeed, if $N|N'$
and $Z,Z'$ are as above for $N$ and $N',$ with $Z'$ mapping to
$Z,$ then the validity of the lemma for $Z'$ implies it for $Z.$
Since the closure $\overline{Z}$ of every irreducible component of
$S_{gss}$ contains at least two superspecial points, and since when
$N$ is large enough, through any two superspecial points passes at
most one such $\overline{Z}$ \cite{Vo}, it is enough to prove that
for $x\in S_{ssp}(k)$
\[
Fr_{p^{2}}(x)=\left\langle p\right\rangle (x).
\]
Let $x=(A,\phi,\iota,\eta).$ Every supersingular elliptic curve $B$
over $k$ has a model $B_{0}$ over $\kappa$, whose Frobenius of
degree $p^{2}$ satisfies
\[
Fr_{p^{2}}=p.
\]
By the Tate-Honda theorem {[}Ta{]}, all the endomorphisms of $B$
are already defined over $\kappa$. We may therefore assume that $A\simeq B^{3}$
and $\iota$ are defined over $\kappa.$ Since $A$ admits at least
one principal polarization defined over $\kappa,$ and its endomorphisms
are all defined over $\kappa,$ $\phi$ is defined over $\kappa.$
Thus $\left(A,\phi,\iota\right)$ is invariant under $Fr_{p^{2}}.$
But the relation $Fr_{p^{2}}=p$ on $A[N]$ means that $Fr_{p^{2}}(\eta)=\left\langle p\right\rangle \circ\eta,$
which concludes the proof.
\end{proof}
Now use the relation
\[
\left\langle p\right\rangle ^{-1}\circ Fr_{p}^{2}=\left\langle p\right\rangle ^{-1}\circ j_{et}\circ j_{m}=\pi_{et}\circ\widetilde{\sigma}_{et}\circ\widetilde{\pi}_{m}\circ\sigma_{m}
\]
from Corollary \ref{corollary j}, and its extension to $S_{\mu}^{\dagger}$
from the proof of Proposition \ref{pitildeonZ}. The left hand side
fixes the irreducible components of $S_{gss}$, hence also the irreducible
components $W$ of $\overline{Y}_{gss}.$ Let $y\in\overline{Z}_{m}\cap W$.
Then $y'=\widetilde{\sigma}_{et}\circ\widetilde{\pi}_{m}(y)\in\overline{Z}_{et}\cap W,$
or
\begin{equation}
\widetilde{\pi}_{m}(y)=\widetilde{\pi}_{et}(y').\label{yandy'}
\end{equation}
This shows that $\widetilde{\pi}(\overline{Z}_{m}\cap W)=\widetilde{\pi}(\overline{Z}_{et}\cap W)$
as was to be shown.

\medskip{}

(5) The map $\widetilde{\pi}_{W}:W\to\widetilde{\pi}(W)$ is finite
flat of degree $p+1$. 

\emph{Proof: }This follows from (3) since $\widetilde{\pi}$ in the
large is finite flat of degree $p+1$.

\medskip{}

We next want to analyze how $\widetilde{\pi}$ is behaved when restricted
to a fiber $W_{x}=\pi^{-1}(x)$ of $\pi$ above a gss point $x$.
Recall that $W_{x}\simeq\mathbb{P}^{1}.$ 

(6) Let $y_{m}$ and $y_{et}$ be the unique points on $Z_{m}\cap W_{x}$
and $Z_{et}\cap W_{x}$ respectively. Then $\widetilde{\pi}(y_{m})\neq\widetilde{\pi}(y_{et}).$ 

\emph{Proof:} Equivalently, we have to show that the images under
$\pi$ of $y$ and $y'$ as in (\ref{yandy'}), which are in the same
fiber for $\widetilde{\pi},$ are distinct. But $\pi(y')=\left\langle p\right\rangle ^{-1}\pi(y)^{(p^{2})}.$
We claim that if $\pi(y)=(A,\phi,\iota,\eta)$ then already $(A,\phi,\iota)$
is not defined over $\kappa,$ so is not isomorphic to $(A^{(p^{2})},\phi^{(p^{2})},\iota^{(p^{2})}).$
This follows from the fact, established in \cite{Vo}, that when $N=1$
any irreducible curve $Z$ in the supersingular locus of the coarse
moduli space associated with the algebraic stack $S$ is defined over
$\kappa$, and is birationally isomorphic to the Fermat curve
\[
\mathcal{\mathscr{C}}:x^{p+1}+y^{p+1}+z^{p+1}=0.
\]
Let $\mathcal{\mathscr{C}}\to Z$ be the normalization of $Z$. This
$\mathcal{\mathscr{C}}$ has $p^{3}+1$ $\kappa$-rational points,
which are precisely the points mapping to superspecial points on $Z.$
Furthermore, all the self-intersections of $Z$ are at $\kappa$-rational
points. It follows that no $x\in Z(k)$ which is gss is fixed under
$Fr_{p}^{2}.$ Since the diamond operators do not affect $(A,\phi,\iota)$,
\emph{a fortiori }$\pi(y')\neq\pi(y).$

Starting with $x=x^{(1)}\in S_{gss}(k)$ we may now form a sequence
of points $x^{(1)},\dots,x^{(r)}$ such that if $y_{m}^{(i)}$ and
$y_{et}^{(i)}$ are the respective points on $W_{x^{(i)}}$ then 
\[
\widetilde{\pi}(y_{m}^{(i+1)})=\widetilde{\pi}(y_{et}^{(i)}).
\]
This sequence becomes periodic after $d$ steps, where $d$ is the
minimal number so that $\left\langle p\right\rangle ^{-d}\circ Fr_{p}^{2d}(x)=x.$

\medskip{}

(7) The map $\widetilde{\pi}:W_{x}\to\widetilde{\pi}(W_{x})$ is a
birational isomorphism. 

\emph{Proof: }We have to show that the map is generically 1-1. For
that it is enough to find a single point $y\in W_{x}$ so that $\widetilde{\pi}$
is étale at $y$ and $\widetilde{\pi}^{-1}(\widetilde{\pi}(y))=\{y\}.$
In view of (6), the unique point on $Z_{et}\cap W_{x}$ is such a
point.

We do not answer the question whether $\widetilde{\pi}$ is everywhere
1-1. We summarize the discussion of this section in the following
theorem.
\begin{thm}
The map $\widetilde{\pi}$ induces a bijection between the vertical
irreducible components of $\widetilde{S}$ and of $S_{0}(p)$. The
map $\pi$ induces a bijection between the vertical irreducible components
of $S_{0}(p)$ and the irreducible components of the curve $S_{ss}.$
The vertical irreducible components of $\widetilde{S}$ are mutually
disjoint. Let $W$ be a vertical irreducible component of $S_{0}(p).$
Then $\widetilde{\pi}_{W}$ is finite flat of degree $p+1$ and is
étale outside $W\cap\overline{Z}_{m}.$ The restriction of $\widetilde{\pi}_{W}$
to $W_{x}=\pi^{-1}(x)$ for $x\in S_{gss}$ is a birational isomorphism
and maps the unique intersection points of $W_{x}$ with $Z_{et}$
and $Z_{m}$ to distinct points.
\end{thm}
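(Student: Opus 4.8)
The plan is to treat this theorem as a consolidation of the facts (1)--(7) established earlier in the subsection together with Theorems~\ref{S_0(p)-gss} and~\ref{S_0(p)-ssp} and the stratification of Theorem~\ref{Stildestratification}, organizing the argument around the three independent assertions: the two bijections on vertical components, the disjointness statement on $\widetilde{S}$, and the fibrewise behaviour of $\widetilde{\pi}_W$. Since every ingredient is already available, the task is to arrange them coherently and verify that nothing is omitted.

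First I would settle the bijection induced by $\pi$. By Theorem~\ref{S_0(p)-ssp}(iii) each vertical irreducible component $W$ of $S_0(p)$ is a $\mathbb{P}^1$-bundle over an irreducible component $\mathscr{C}=\pi(W)$ of $S_{ss}$, and distinct such $W$ do not intersect. Conversely, using that $S_{gss}$ is the nonsingular locus of $S_{ss}$, so that each irreducible component of $S_{ss}$ contains exactly one component of $S_{gss}$, and that $Y_{gss}=\pi^{-1}(S_{gss})^{red}$ is a $\mathbb{P}^1$-bundle over $S_{gss}$ (Theorem~\ref{S_0(p)-gss}(i)), I would conclude that $W\mapsto\pi(W)$ is a bijection onto the set of irreducible components of $S_{ss}$.

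Next I would handle the map $\widetilde{\pi}$. The disjointness of the images, point (3), shows at once both that the vertical components of $\widetilde{S}$ are mutually disjoint and that $W\mapsto\widetilde{\pi}(W)$ is injective; surjectivity holds because every vertical component of $\widetilde{S}$ is an irreducible component of $\widetilde{\pi}(\overline{Y}_{gss})$ by Theorem~\ref{Stildestratification}, hence of the form $\widetilde{\pi}(W)$. Combined with the $\pi$-bijection this gives the asserted bijection between the vertical components of $\widetilde{S}$ and of $S_0(p)$. For a fixed $W$, that $\widetilde{\pi}_W$ is finite flat of degree $p+1$ is point (5), and étaleness outside $W\cap\overline{Z}_m$ combines the remark opening this subsection (étaleness away from $\overline{Z}_{et}\cup\overline{Z}_m$) with the computation of \S\ref{example} (étaleness along $Z_{et}$). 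Finally, the fibrewise statement is exactly points (6) and (7): point (7) gives that $\widetilde{\pi}_W|_{W_x}$ is a birational isomorphism for $x\in S_{gss}$, and point (6) gives that the two marked points $W_x\cap Z_{et}$ and $W_x\cap Z_m$ have distinct images.

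The proof therefore contains no genuinely new computation, and the only care needed is bookkeeping: verifying that ``vertical irreducible component'' refers consistently across the three surfaces to the closures of the components of $Y_{gss}$, $S_{gss}$ and their $\widetilde{\pi}$-images, and checking surjectivity of the two component maps so that no component is overlooked. The one place that rests on more than formal assembly is the distinctness in point (6), which ultimately depends on the Lemma $Fr_{p^2}(Z)=\langle p\rangle(Z)$ and on Vollaard's description of the supersingular curves as Fermat curves whose superspecial and self-intersection points are precisely the $\kappa$-rational ones; this is what guarantees that a gss point is never fixed by $\langle p\rangle^{-1}\circ Fr_p^2$, and is the real content underlying the otherwise routine summary.
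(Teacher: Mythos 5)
Your proposal is correct and matches the paper exactly: the paper offers no separate argument for this theorem, stating explicitly that it ``summarizes the discussion of this section,'' i.e.\ points (1)--(7) together with Theorems \ref{S_0(p)-gss}, \ref{S_0(p)-ssp} and \ref{Stildestratification}, which is precisely the assembly you carry out (the $\pi$-bijection from the $\mathbb{P}^{1}$-bundle structure, injectivity and disjointness from (3) with surjectivity via the description of the vertical part of $\widetilde{S}$ as the closure of $\widetilde{\pi}(Y_{gss})$, degree and flatness from (5), \'etaleness from the section-opening remark and \S\ref{example}, and the fibre statement from (6)--(7)). You also correctly isolate the one non-formal ingredient, the distinctness in (6) resting on Vollaard's Fermat-curve description of $S_{ss}$ (with the Lemma $Fr_{p^{2}}(Z)=\left\langle p\right\rangle (Z)$ feeding in through (4) and Corollary \ref{corollary j}).
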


\section{Appendix}

\subsection{The classification of the gss Dieudonné modules}

In the appendix we perform some computations on the covariant Dieudonné
module of a gss abelian variety. We first recall their classification,
following Vollaard \cite{Vo}.

Fix $\delta\in\mu_{p^{2}-1}\subset W(\kappa)\subset W(\kappa)_{\mathbb{Q}}=E_{p}$
such that
\[
\delta^{\sigma}=\delta^{p}=-\delta.
\]
Let $\mathbf{M}$ be the free $W(\kappa)$-module on $e_{1},e_{2},e_{3},f_{1},f_{2},f_{3}$
and let $\mathcal{O}_{E}$ act on the $e_{i}$ via $\Sigma$ (the
canonical embedding of $E$ in $E_{p})$ and on the $f_{i}$ via $\overline{\Sigma}.$
Let $F$ be the $\sigma$-linear endomorphism\footnote{In the appendix we depart from our habit of writing $F$ as a \emph{linear}
map from $\mathbf{M}^{(p)}$ to $\mathbf{M}.$} of $\mathbf{M}$ whose matrix w.r.t. the above basis is
\[
\left(\begin{array}{cccccc}
 &  &  & 1\\
 &  &  &  & p\\
 &  &  &  &  & 1\\
p\\
 & 1\\
 &  & p
\end{array}\right),
\]
i.e. $F(e_{1})=pf_{1},$$F(e_{2})=f_{2},\dots,F(f_{3})=e_{3}.$ Let
$V$ be the $\sigma^{-1}$-linear endomorphism with the same matrix.
Note that $\tau=V^{-1}F$ is the identity on $\mathbf{M}.$ Let $\mathbf{M}_{k}=W(k)\otimes_{W(\kappa)}\mathbf{\mathbf{M}}$
and extend $F,V$ semi-linearly as usual. Then $\tau$ becomes $\sigma^{2}$-linear.

Let $\left\langle ,\right\rangle $ be the alternating pairing on
$\mathbf{M}_{k}$ satisfying
\[
\left\langle e_{i},f_{j}\right\rangle =\delta\cdot\delta_{ij},\,\,\,\,\,\left\langle e_{i},e_{j}\right\rangle =\left\langle f_{i},f_{j}\right\rangle =0.
\]
This $\mathbf{M}_{k}$ is the Dieudonné module of $A_{x}[p^{\infty}]$
for any $x\in S_{ssp}(k).$ It is isomorphic\footnote{The change in notation is made to conform with \cite{Vo}. Previously
we tried to match \cite{Bu-We}.} to the module used in part (4) of the proof of Theorem \ref{Stildestratification}.
The Lie algebra of $A_{x}$ is identified with $\mathbf{M}_{k}/p\mathbf{M}_{k}[V]=V^{-1}p\mathbf{M}_{k}/p\mathbf{M}_{k}=F\mathbf{M}_{k}/p\mathbf{M}_{k}\simeq(\mathbf{M}_{k}/V\mathbf{M}_{k})^{(p)}$
and is spanned over $k$ by $\overline{e}_{1},\overline{e}_{3},\overline{f}_{2}$.

Following \cite{Vo} we denote $\mathbf{M}(\Sigma)=\left\langle e_{1},e_{2},e_{3}\right\rangle _{W(\kappa)}$
by $\mathbf{M}_{0}$ and $\mathbf{M}(\overline{\Sigma})$ by $\mathbf{M}_{1}$.
We introduce on $\mathbf{M}_{0}$ the skew-hermitian form
\[
\{x,y\}=\left\langle x,Fy\right\rangle .
\]
We extend it to a bi-additive form on $\mathbf{M}_{0,k}$ which is
linear in the first variable and $\sigma$-linear in the second. It
satisfies
\[
\{x,y\}=-\{y,\tau^{-1}(x)\}^{\sigma},\,\,\,\,\,\,\{\tau(x),\tau(y)\}=\{x,y\}^{\sigma^{2}}.
\]
We denote the unitary isocrystal $\mathbb{Q}\otimes\mathbf{M}$ by
$\mathbf{N}=\mathbf{N}_{0}\oplus\mathbf{N}_{1}$ and write also $C$
for $\mathbf{N}_{0}.$ When we base-change to the field of fractions
of $W(k)$ we shall add, as before, the subscript $k$. Note that
the $\mathbb{Q}_{p}$-group $\boldsymbol{J}=GU(C,\{,\})$ is isomorphic,
in our case, to $\boldsymbol{G}_{/\mathbb{Q}_{p}}.$ (In general,
it might be an inner form of it.)

If $\Lambda\subset C$ is a $W(\kappa)$-lattice we let
\[
\Lambda^{\vee}=\{x\in C|\,\{x,\Lambda\}\subset W(\kappa)\}.
\]

If $\mathbf{M}_{k}$ were the Dieudonné module of $A_{x}[p^{\infty}]$
for a superspecial point $x,$ then the components of $S_{ss}$ passing
through $x$ are classified, as we have seen before, by the set
\[
\mathcal{J}=\{(1:\zeta)\in\mathbb{P}^{1}(W(\kappa))|\,\,\zeta^{p+1}+1=0\}.
\]
The vertices of the Bruhat-Tits tree of $\boldsymbol{J}$ are of two
types. The \emph{special} (s) lattices $\mathscr{L}^{(1)}$ are the
lattices $\Lambda'$ for which
\[
\Lambda'\subset\Lambda'^{\vee},\,\,\,length_{W(\kappa)}(\Lambda'^{\vee}/\Lambda')=2.
\]
For example, $\mathbf{M}_{0}\in\mathscr{L}^{(1)}.$ The \emph{hyperspecial
}(hs) lattices $\mathscr{L}^{(3)}$ are those satisfying $\Lambda=\Lambda^{\vee}.$
Finally, the edges of the tree connect a lattice $\Lambda'$ of type
(s) to a vertex $\Lambda$ of type (hs) if $\Lambda'\subset\Lambda\subset\Lambda'^{\vee}.$
One computes that the $p+1$ vertices of type (hs) adjacent to $\mathbf{M}_{0}$
are the lattices
\[
\Lambda_{\zeta}=\left\langle e_{1},e_{2},e_{\zeta}\right\rangle _{W(\kappa)}
\]
where $\zeta\in\mathcal{J}$ and $e_{\zeta}=p^{-1}(e_{1}+\zeta e_{3})$.

Fix $\zeta$ and let $\Lambda=\Lambda_{\zeta},$ $V=\Lambda/p\Lambda$,
a vector space over $\kappa$ with basis $\overline{e}_{1},\overline{e}_{2},\overline{e}_{\zeta}.$
The skew-hermitian pairing $(,)=\{,\}\mod p$ is given in this basis
by the matrix
\[
\delta\left(\begin{array}{ccc}
 &  & 1\\
 & 1\\
1
\end{array}\right).
\]
Theorems 2 and 3 of \cite{Vo} imply the following. The $k$-points
of the irreducible component of $S_{ss}$ passing through the superspecial
point $x$ and labeled by $\zeta$ are in one-to-one correspondence
with
\[
Y_{\Lambda}(k)=\{U\subset V_{k}|\,\,\dim U=2,\,\,U^{\perp}\subset U\}.
\]
Here
\[
U^{\perp}=\{x\in V_{k}|\,(x,U)=0\}.
\]
Caution has to be taken as we are over $k$ and not $\kappa:$ $(U^{\perp})^{\perp}=\tau(U)$
and not $U$. The point $x$ corresponds to $\mathbf{U}=\left\langle \overline{e}_{1},\overline{e}_{2}\right\rangle .$
In general, let $a,b\in k$ and 
\[
U_{a,b}=\left\langle \overline{e}_{1}+a\overline{e}_{\zeta},\overline{e}_{2}+b\overline{e}_{\zeta}\right\rangle .
\]
Then
\[
U_{a,b}^{\perp}=\left\langle \overline{e}_{1}-b^{p}\overline{e}_{2}-a^{p}\overline{e}_{\zeta}\right\rangle 
\]
is contained in $U_{a,b}$ if and only if
\[
a^{p}+a-b^{p+1}=0.
\]
It follows (\cite{Vo}, Lemma 4.6) that the irreducible components
of $S_{ss}$ are isomorphic to the smooth projective curve whose equation
is
\[
x^{p}z+xz^{p}-y^{p+1}=0.
\]
This is just the Fermat curve $x^{p+1}+y^{p+1}+z^{p+1}=0$ in disguise.

Moreover, the Dieudonné module of the abelian variety $A_{a,b}$ ``sitting''
at the point $(a,b)$ is
\[
M_{a,b}=M_{a,b}^{0}\oplus M_{a,b}^{1}
\]
where
\[
M_{a,b}^{0}=\left\langle e_{1}+[a]e_{\zeta},e_{2}+[b]e_{\zeta},pe_{\zeta}\right\rangle _{W(k)}
\]
and
\[
M_{a,b}^{1}=\left\langle f_{1}-[b]p^{-1}f_{2}-[a]f_{\zeta},f_{2},pf_{\zeta}\right\rangle .
\]
Here $[a]$ is the Teichmüller representative of $a$ and $f_{\zeta}=p^{-1}(f_{3}-\zeta f_{1}).$

The matrices for $F$ and $V$ can now be computed. To simplify the
notation let
\[
\epsilon_{1}=e_{1}+[a]e_{\zeta},\,\,\epsilon_{2}=e_{2}+[b]e_{\zeta},\,\,\epsilon_{3}=pe_{\zeta},
\]

\[
\phi_{1}=f_{1}-[b]p^{-1}f_{2}-[a]f_{\zeta},\,\,\phi_{2}=f_{2},\,\,\phi_{3}=pf_{\zeta}.
\]
Then relative to the basis $\{\epsilon_{1},\epsilon_{2},\epsilon_{3},\phi_{1},\phi_{2},\phi_{3}\}$
\[
F=\left(\begin{array}{cccccc}
 &  &  & 1\\
 &  &  & -[b^{p}] & p\\
 &  &  & \gamma & -[b] & 1\\
p\\{}
[b] & 1\\{}
[a^{p}]+[a] & [b^{p}] & p
\end{array}\right)
\]
and
\[
V=\left(\begin{array}{cccccc}
 &  &  & 1\\
 &  &  & -[b^{1/p}] & p\\
 &  &  & \sigma^{-1}(\gamma) & -[b] & 1\\
p\\{}
[b] & 1\\{}
[a^{1/p}]+[a] & [b^{1/p}] & p
\end{array}\right)
\]
with
\[
\gamma=-p^{-1}([a^{p}]+[a]-[b^{p+1}])\in W(k).
\]

\subsection{The quotient $A/H$ for gss $A$}

Let $(a,b)\in k^{2}$ but not in $\kappa^{2}.$ This guarantees that
$A_{a,b}$ is gss, and every gss $A$ is of this sort, for an appropriate
$x\in S_{ssp}(k)$ and an appropriate $\zeta\in\mathcal{J}$. Let
$H\subset A[p]$ be an isotropic Raynaud subgroup scheme. Let $A'=A/H$.

We know that $M(H)\subset M_{a,b}/pM_{a,b}$ must contain $\ker V\cap\ker F=\left\langle \overline{\epsilon}_{3}\right\rangle _{k}.$
In addition, $M(H)$ should contain a vector $\overline{\eta}$ from
$M_{a,b}^{1}/pM_{a,b}^{1}$ such that $F\overline{\eta}=\overline{\epsilon}_{3}.$
We see that the most general form of such an $\eta$ is
\[
\eta=u(\phi_{2}+[b^{p}]\phi_{3})+v(\phi_{2}+[b^{1/p}]\phi_{3}),
\]
$(u:v)\in\mathbb{P}^{1}(W(k)).$ Thus
\[
M(H)=\left\langle \overline{\epsilon}_{3},\overline{\eta}\right\rangle _{k}.
\]
Note that by the assumption that $(a,b)$ is not in $\kappa^{2},$
neither $a$ nor $b$ lies in $\kappa.$ Thus $H$ is uniquely classified
by $(\overline{u}:\overline{v})\in\mathbb{P}^{1}(k)$. The point $\overline{u}=0$
corresponds to an $H$ such that $M(H)$ is killed by $F$, or $H$
is killed by $Ver$. This $H$ will be of type $\alpha_{p^{2},\Sigma}$
and $(\underline{A},H)$ will lie then on $Z_{et}.$ The point $\overline{v}=0$
will correspond to an $H$ such that $M(H)$ is killed by $V,$ or
$H$ is killed by $Frob.$ This $H$ will be of type $\alpha_{p^{2},\Sigma}^{*}$
and $\left(\underline{A},H\right)$ will lie then on $Z_{m}.$

Assume from now on that we are not in these two special cases, so
that $H$ is of type $\mathfrak{G}_{\Sigma}[p].$ Then $M(A'[p^{\infty}])=M'$
will sit in an exact sequence
\[
0\to M\to M'\to M(H)\to0,
\]
and inside $\mathbf{N}_{k}$, $M'=\left\langle \epsilon_{1},\epsilon_{2},p^{-1}\epsilon_{3},\phi_{1},p^{-1}\eta,\phi_{3}\right\rangle _{W(k)},$
provided $\overline{u}\neq-\overline{v}$. If $\overline{u}=-\overline{v}$
the same basis works, if we replace $\phi_{3}$ by $\phi_{2}$. Assume
from now on that $\overline{u}\neq-\overline{v}$. We calculate the
matrices of $F$ and $V$ in this basis as we did for $M=M_{a,b}$
before. The matrix of $F$ comes out to be
\[
\left(\begin{array}{cccccc}
 &  &  & 1\\
 &  &  & -[b^{p}] & u^{\sigma}+v^{\sigma}\\
 &  &  & p\gamma & u^{\sigma}([b^{p^{2}}]-[b]) & p\\
p\\
p[b](u+v)^{-1} & p(u+v)^{-1}\\{}
[a^{p}]+[a]-[b]w & [b^{p}]-w & 1
\end{array}\right)
\]
where we put $w=(u[b^{p}]+v[b^{1/p}])(u+v)^{-1},$ while the one of
$V$ is
\[
\left(\begin{array}{cccccc}
 &  &  & 1\\
 &  &  & -[b^{1/p}] & u^{\sigma^{-1}}+v^{\sigma^{-1}}\\
 &  &  & p\sigma^{-1}(\gamma) & v^{\sigma^{-1}}([b^{1/p^{2}}]-[b]) & p\\
p\\
p[b](u+v)^{-1} & p(u+v)^{-1}\\{}
[a^{1/p}]+[a]-[b]w & [b^{1/p}]-w & 1
\end{array}\right).
\]
We see that $M'/pM'[V]\cap M'/pM'[F]$ is spanned by the images modulo
$pM'$ of $\phi_{3}$ and of $x\epsilon_{1}+y\epsilon_{2}+zp^{-1}\epsilon_{3}$
provided $x,y,z\in W(k)$ are such that
\[
x^{\sigma}([a^{p}]+[a]-[b]w)+y^{\sigma}([b^{p}]-w)+z^{\sigma}\equiv0\mod p
\]
\[
x^{\sigma^{-1}}([a^{1/p}]+[a]-[b]w)+y^{\sigma^{-1}}([b^{1/p}]-w)+z^{\sigma^{-1}}\equiv0\mod p.
\]
These two equations are equivalent to
\[
x([b^{1+1/p}]-[b^{1/p}]w^{\sigma^{-1}})+y([b]-w^{\sigma^{-1}})+z\equiv0\mod p,
\]
\[
x([b^{p+1}]-[b^{p}]w^{\sigma})+y([b]-w^{\sigma})+z\equiv0\mod p.
\]
The solution set $(x,y,z)\mod p$ to these two equations is 1-dimensional,
unless $\overline{w}\in\kappa$ and $\overline{b}^{p-1/p}=1$, where
it is 2-dimensional. This last condition however translates into $\overline{b}\in\kappa$,
which we assumed not to be the case. We conclude that $M'/pM'[V]\cap M'/pM'[F]$
is always two-dimensional, of type $(\Sigma,\overline{\Sigma}).$
This settles the $a$-type of $A'$ in the cases that were deferred
to the appendix in the proof of Theorem \ref{Stildestratification}.

\end{document}